\documentclass[10pt,reqno,a4paper,twoside]{amsart}
\usepackage{amssymb,amsmath,amsthm,mathrsfs,graphicx,color,cite}
\theoremstyle{plain}
\newtheorem{theorem}{Theorem}[section]
\newtheorem{conjecture}{Conjecture}[section]
\newtheorem{proposition}[theorem]{Proposition}
\newtheorem{definition}[theorem]{Definition}
\newtheorem{lemma}[theorem]{Lemma}
\newtheorem{corollary}[theorem]{Corollary}

\theoremstyle{remark}
\newtheorem{remark}[theorem]{Remark}

\usepackage
[bookmarks=true,
   bookmarksnumbered=false,
   bookmarkstype=toc]
{hyperref}

\numberwithin{equation}{section}

\newcommand{\C}{\mathbb{C}}
\newcommand{\R}{\mathbb{R}}
\newcommand{\Z}{\mathbb{Z}}
\newcommand{\N}{\mathbb{N}}

\renewcommand{\Im}{\operatorname{Im}}
\renewcommand{\Re}{\operatorname{Re}}
\newcommand{\I}{\infty}
\newcommand{\abs}[1]{\left\lvert #1\right\rvert}
\newcommand{\Jbr}[1]{\left\langle #1 \right\rangle}
\newcommand{\norm}[1]{\left\lVert #1\right\rVert}

\newcommand{\tnorm}[1]{\lVert #1\rVert}

\newcommand{\IN}{\quad\text{in }}
\newcommand{\wIN}{\quad\text{weakly in }}

\def\({\left(}
\def\){\right)}
\def\<{\left\langle}
\def\>{\right\rangle}
\def\le{\leqslant}
\def\ge{\geqslant}

\newcommand{\qtq}[1]{\quad\text{#1}\quad}

\def\d{{\partial}}
\def\l{\lambda}

\newcommand{\eps}{\varepsilon}

\DeclareMathOperator{\sign}{sign}


\begin{document}
\title[NLS with potential]{Global dynamics below excited solitons for the non-radial NLS with potential}
\author[S. Masaki]{Satoshi Masaki}
\address{Department of Systems Innovation\\
Graduate School of Engineering Science\\
Osaka University\\
Toyonaka Osaka, 560-8531, Japan}
\email{masaki@sigmath.es.osaka-u.ac.jp}

\author[J. Murphy]{Jason Murphy}
\address{Department of Mathematics \& Statistics \\ Missouri University of Science \& Technology \\ Rolla, MO, USA}
\email{jason.murphy@mst.edu}

\author[J. Segata]{Jun-ichi Segata}
\address{Faculty of Mathematics, Kyushu University, Fukuoka, 819-0395, Japan}
\email{segata@math.kyushu-u.ac.jp}

\begin{abstract} We consider the global dynamics of solutions to the $3d$ cubic nonlinear Schr\"odinger equation in the presence of an external potential, in the setting in which the equation admits both ground state solitons and excited solitons at small mass.  We prove that small mass solutions with energy below that of the excited solitons either scatter to the ground states or grow their $H^1$-norm in time.  In particular, we give an extension of the result of Nakanishi \cite{Nakanishi} from the radial to the non-radial setting.
\end{abstract}

%



\maketitle


\section{Introduction}
We consider the following cubic nonlinear Schr\"odinger equation (NLS) in three space dimensions:
\begin{equation}\label{e:nls}
\begin{cases}
i \d_t u+ H u =  \sigma |u|^2 u, \\
u|_{t=0} = u_0 \in H^1(\R^3),
\end{cases}
\end{equation}
where $H=-\Delta + V$ for some external potential $V:\R^3\to\R$, and the sign $\sigma\in\{\pm 1\}$ gives the \emph{focusing} and \emph{defocusing} equations, respectively.  We consider sufficiently smooth and decaying potentials supporting a single bound state.  In this setting, one finds that \eqref{e:nls} supports small solitary waves (ground states), along with excited solitons in the focusing setting.  

When $V\equiv0$ and the nonlinearity is defocusing, all $H^{1}$ solutions are global and scatter to free solutions as $t\to\pm\infty$ \cite{GV}; in the focusing case, $H^{1}$ solutions with mass-energy below the ground state either scatter or blow-up in finite time \cite{AkahoriNawa, DHR}. In the case that $V$ has negative eigenvalues, (\ref{e:nls}) has small solitary waves, leading to more complicated dynamics for solutions. When $V$ has a single negative eigenvalue, solutions with sufficiently small data in $H^1$ scatter to the family of ground states as $t\to\pm\infty$ \cite{GNT} (see also \cite{SW,Kirr1,Kirr2, Mizumachi1,Mizumachi2}). The case when $V$ has several negative eigenvalues has also been studied in several works (see e.g. \cite{CM,NPT,SW3, TY1,TY2,TY3,TY3}), although this case is not as well-understood in general.

In \cite{Nakanishi}, Nakanishi considered the dynamics of radial solutions with small mass and with energy below that of the first excited state, in the case that $V$ has a single negative eigenvalue.  He proved that solutions either blow up in finite time or scatter to the family of ground states (with a criterion in terms of the initial data to distinguish these two cases).  In further work, he classified the behavior of radial solutions with small mass and energy slightly above the first excited state \cite{Nakanishi2}.  In this paper, we prove a result analogous to the one in \cite{Nakanishi}, but in the \emph{non-radial} setting.  In particular, in the focusing case, we establish a scattering/grow-up dichotomy below the excited energy and under a small mass assumption.  We also obtain scattering to the ground states for small mass and arbitrary energy in the defocusing case (under the assumption that there are no excited solitons in this setting; cf. Conjecture~\ref{conjecture}). 

Before we can state our main result precisely, we need to introduce some notation.  First, we denote the conserved \emph{energy} for \eqref{e:nls} by
\[
\mathbb{E}_V:= \mathbb{H}_V - \mathbb{G},
\]
which is the sum of the linear and nonlinear components, namely, 
\begin{equation}\label{HVandG}
\begin{aligned}
\mathbb{H}_V(\varphi) & :=   \int_{\R^3}\tfrac12 |\nabla \varphi(x)|^2 +  \tfrac12 V(x) |\varphi(x)|^2\,dx,  \\
\mathbb{G} (\varphi) &:=  \tfrac{\sigma}4 \int_{\R^3} |\varphi(x)|^4  dx.
\end{aligned}
\end{equation}
We will write $\mathbb{H}_0$ and $\mathbb{E}_0$ to designate the functionals corresponding to the free case, i.e. with $V\equiv 0$.  The conserved \emph{mass} is denoted by
\[
\mathbb{M}(\varphi) :=  \tfrac12\int_{\R^3} |\varphi(x)|^2  dx.	
\]

The precise assumptions made on the potential will be discussed in Section~\ref{S:potential} below.  The essential requirements are that $V$ be sufficiently smooth and decaying, and that $H=-\Delta+V$ has one simple, negative eigenvalue. This assumption implies that $V$ has a nonzero negative part.  The ground states and excited states for \eqref{e:nls} are discussed in detail in Section~\ref{S:solitons}.  The set of $H^1$ soliton profiles will be denoted by $\mathscr{S}$.  The quantity $\mathscr{E}_0(\mu)$ denotes the ground state energy at mass $\mu$, with $\mathscr{S}_0(\mu)$ denoting the set of soliton profiles with mass $\mu$ and energy $\mathscr{E}_0(\mu)$.  Similarly, $\mathscr{E}_1(\mu)$ denotes the excited soliton energy at mass $\mu$, with $\mathscr{S}_1(\mu)$ denoting the set of excited soliton profiles. We will show that
\begin{equation}\label{e:me_comparison}
\mu \mathcal{E}_1(\mu) < \mathbb{M}(Q) \mathbb{E}_{0}(Q)
\end{equation}
for small $\mu$, where $Q$ is the ground state for the standard cubic NLS; that is, the mass-energy level of the excited state is below that of the NLS ground state (see Theorem~\ref{P:excited}).  This inequality is reasonable, as $V$ has a non-trivial negative part.  We remark that, however, it is not clear whether or not this holds under the assumption of radial symmetry.

We may parametrize the ground state solitons as $\Phi=\Phi[z]$, where $z\in\C$ belongs to a sufficiently small disk $D$ (see Lemma~\ref{L:small-solitons}). We say that a solution $u$ to \eqref{e:nls} \emph{scatters to $\mathscr{S}_0$} if there exists a $C^1$ function $z:\R\to D$, $z_\pm\in\C$, and  $u_\pm\in H^1$ such that
\[
\lim_{t\to\pm\infty}\|u(t)-\Phi[z(t)]-e^{-it\Delta}u_\pm\|_{H^1} = 0\qtq{and}\lim_{t\to\pm\infty}|z(t)|=z_{\pm}. 
\]

Our main result is the following theorem. 

\begin{theorem}\label{T}  There exists $\mu_0>0$ sufficiently small such that the following holds:
\begin{itemize}
\item[(i)] Suppose $\sigma=1$.  Let $u_0\in H^1(\R^3)$ satisfy
\[
\mathbb{M}(u_0)\leq \mu_0 \qtq{and} \mathbb{E}_V(u_0)<\mathscr{E}_1(\mathbb{M}(u_0)),
\]
and let $u:[T_{-},T_{+}]\times\R^3\to\C$ denote the maximal-lifespan solution to \eqref{e:nls}.  If
\begin{equation}\label{grow-up-condition}
\mathbb{K}_{V,2}(u_0)<0 \qtq{and} \|\nabla u_0\|_{L^2}>1,
\end{equation}
then $\lim_{t\to T_\pm} \|u(t)\|_{H^1}=\infty$. Otherwise, $u$ is global and scatters to $\mathscr{S}_0$.
\item[(ii)] Suppose $\sigma=-1$ and assume Conjecture~\ref{conjecture}.  If $u_0\in H^1(\R^3)$ satisfies $\mathbb{M}(u_0)\leq \mu_0$, then $u$ is global and scatters to $\mathscr{S}_0$. 
\end{itemize} 
\end{theorem}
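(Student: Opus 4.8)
The plan is to follow the Kenig--Merle concentration-compactness/rigidity strategy, adapted to the potential setting in the spirit of Nakanishi's work, with the novel input being the handling of the non-radial geometry.  First I would set up the variational structure: using the small-data modulation theory of Lemma~\ref{L:small-solitons}, decompose a solution with small mass and sub-excited energy as $u(t) = \Phi[z(t)] + \eta(t)$, where the "dispersive remainder" $\eta$ is symplectically orthogonal to the ground-state manifold, and derive the effective Hamiltonian/energy for $\eta$.  The functional $\mathbb{K}_{V,2}$ appearing in \eqref{grow-up-condition} should be the scaling (virial) derivative of the energy, so the sign of $\mathbb{K}_{V,2}(u_0)$, together with the conservation laws, splits the sub-excited region into a "trapped by the ground state / scattering" component and a "grow-up" component — this is the analogue of the sign dichotomy in the Kenig--Merle and Nakanishi analyses.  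The key preliminary fact \eqref{e:me_comparison}, namely $\mu\mathcal{E}_1(\mu) < \mathbb{M}(Q)\mathbb{E}_0(Q)$ for small $\mu$ (Theorem~\ref{P:excited}), is what guarantees that below the excited energy one is strictly below the standard NLS ground-state threshold, so the usual focusing NLS theory ($H^1$ solutions below $\mathbb{M}(Q)\mathbb{E}_0(Q)$ scatter or blow up, \cite{AkahoriNawa,DHR}) is available as a black box to control the remainder away from the soliton.

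Next I would prove the scattering half.  The strategy is a contradiction/compactness argument: if scattering to $\mathscr{S}_0$ fails for some solution in the admissible set, then by a profile decomposition adapted to $e^{itH}$ (linear propagator with potential) and the usual nonlinear profile construction, one extracts a minimal non-scattering solution whose orbit, after modulating out the soliton parameters, is precompact in $H^1$.  One then has to kill this critical element.  Away from the soliton this is handled by a virial/Morawetz estimate: because $V$ is smooth and decaying with a definite negative part, the localized virial identity for $i\partial_t u + Hu = |u|^2 u$ produces a coercive lower bound on $\int \mathbb{K}_{V,2}$-type quantities modulo the soliton contribution and error terms from the potential's tail; combined with precompactness this forces the critical element to be a soliton, contradicting that it does not scatter.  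In the non-radial case one cannot use the radial Strauss/virial simplifications Nakanishi exploited, so the Morawetz/virial weight must be chosen carefully (e.g. a truncated $|x|$ weight) and the error terms $\int (x\cdot\nabla V)|u|^2$ must be absorbed — this uses the decay hypotheses on $V$ stated in Section~\ref{S:potential}.

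For the grow-up half, under \eqref{grow-up-condition} I would run a convexity/virial argument: on the region $\mathbb{K}_{V,2}(u(t))<0$, which is shown to be invariant by a continuity argument using energy conservation and \eqref{e:me_comparison}, the second derivative of a localized variance functional is bounded above by a negative multiple of (energy $-$ excited energy) plus controllable errors, which — since finite-time blow-up need not occur in the potential setting — yields instead $\|\nabla u(t)\|_{L^2}\to\infty$; the extra assumption $\|\nabla u_0\|_{L^2}>1$ rules out the small-data case that would otherwise scatter.  Part~(ii), the defocusing case, is comparatively soft: with $\sigma=-1$ the nonlinear energy has a favorable sign, small mass keeps the solution near the ground-state manifold, and assuming Conjecture~\ref{conjecture} (no excited solitons) there is no energy obstruction, so the same concentration-compactness argument — now with a strictly coercive virial estimate and no blow-up channel — gives global existence and scattering to $\mathscr{S}_0$ for all small-mass data.

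The main obstacle I expect is the compactness/rigidity step in the non-radial regime: controlling the translation parameter of the minimal non-scattering solution (which can run to spatial infinity, where $V$ vanishes and the problem degenerates to the translation-invariant free NLS) while simultaneously tracking the soliton modulation parameters, and choosing a virial weight that is coercive enough to exclude the critical element yet whose error terms are dominated by the decay of $V$ and its derivatives.  This is precisely where the argument genuinely goes beyond \cite{Nakanishi}; the variational inequality \eqref{e:me_comparison} and the standard focusing NLS dichotomy do much of the remaining bookkeeping.
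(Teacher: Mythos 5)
Your outline matches the paper's overall strategy (modulation decomposition $u=\Phi[z]+\eta$, concentration--compactness, localized virial rigidity, grow-up via a virial/convexity argument, and the threshold inequality \eqref{e:me_comparison} as the key variational input), but it leaves unresolved exactly the two points where the paper does its real work. First, the profile decomposition cannot be taken ``adapted to $e^{itH}$'': after projecting, the remainder $\xi=P_c\eta$ solves an equation whose linear part is the \emph{time-dependent} linearized operator around the moving ground state, $i\partial_t\xi+H\xi=B[z(t)]\xi+\cdots$, so the decomposition must be built from the non-autonomous, non-$\C$-linear propagator $\mathcal{L}(t,s;z)$ of Definition~\ref{def:Ltsz}. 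Establishing orthogonality, energy decoupling, and the vanishing of the remainder for profiles of the form $\mathcal{L}(0,s_n^j;z_n)P_cT_{y_n^j}\varphi^j$ (and the fact that $\mathcal{L}$ is only asymptotically isometric as $|y_n|\to\infty$) is the content of Sections~\ref{S:linear}--\ref{S:LPD} and is not a routine transplant of the $e^{itH}$ theory. Second, you flag the escape of the spatial translation parameter to infinity as ``the main obstacle'' but do not resolve it; the paper's resolution is that this obstacle does not arise in the critical element at all. Profiles with $|y_n^j|\to\infty$ see neither $V$ nor $\Phi[z]$ in the limit, so their nonlinear evolution is governed by the free cubic NLS, and by \eqref{e:me_comparison} their mass--energy lies strictly below $\mathbb{M}(Q)\mathbb{E}_0(Q)$ with $\mathbb{K}_{0,2}>0$, so they \emph{scatter} and therefore cannot constitute the minimal non-scattering solution. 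Consequently the critical element is precompact in $H^1$ with \emph{no} moving spatial center, and the localized virial argument then runs exactly as in the radial case: one does not need to ``choose a virial weight coercive enough to exclude a translating critical element,'' and the rigidity step does not conclude that the critical element is a soliton but rather that the bounded quantity $\int R\zeta_R\tfrac{x}{|x|}\cdot\Im(\bar u_*\nabla u_*)$ has a uniformly negative derivative $\le-\kappa_*$, which is impossible.

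The grow-up half is also under-specified. The paper does not bound the second derivative of a localized variance by a negative multiple of $\mathbb{E}_V-\mathscr{E}_1$; it first uses the variational characterization \eqref{e:variational_characterization} together with the trichotomy of Proposition~\ref{p:smt} to show that \eqref{grow-up-condition} is preserved in time and that $\mathbb{K}_{V,2}(u(t))\le-\kappa<0$ uniformly. The argument is then by contradiction: assuming the $H^1$-norm does \emph{not} grow, the exterior gradient is bounded, one proves that the exterior mass $\int_{|x|\ge R}|u|^2$ stays below a fixed small threshold $m_0$ for all time (this is the delicate step, via a bootstrap on the virial identity), hence the localized functional $\mathbb{K}_{V,2}^R$ stays within $\kappa/2$ of $\mathbb{K}_{V,2}$, and then the virial identity forces $0\le(W_R,|u(t)|^2)\le C t-\kappa t^2$, a contradiction. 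Your phrase ``since finite-time blow-up need not occur in the potential setting'' inverts the logic: the reason one obtains only grow-up rather than blow-up is that the convexity argument is run under the contradiction hypothesis of bounded exterior $H^1$-norm, not because of any property of $V$. I would also note that the invariance of the sign condition uses \eqref{e:variational_characterization} (the quantitative gap in $|\mathbb{K}_{V,2}|$ below the excited energy), not merely energy conservation and \eqref{e:me_comparison}.
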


Theorem~\ref{T} is a non-radial analogue of the main result in \cite{Nakanishi}.  It is not a complete extension of \cite{Nakanishi} in the sense that under the condition \eqref{grow-up-condition}, we have only established `grow-up' of the $H^1$-norm, as opposed to finite-time blowup. 

In the remainder of the introduction, we will briefly describe the strategy of the proof of Theorem~\ref{T}, focusing primarily on the scattering result.  The overarching strategy of the proof is the same as in the work of Nakanishi \cite{Nakanishi}, who proceeded by suitably adapting the concentration-compactness approach to induction on energy (also called the `Kenig--Merle roadmap').  Let us therefore briefly recall Nakanishi's argument for Theorem~\ref{T} in the radial setting.  We will then describe the main steps needed to extend this argument to the non-radial case.

The basic idea of the argument is as follows: first, one shows that if the theorem fails, then one can construct a non-scattering solution enjoying strong compactness properties in $H^1$.  One then derives a contradiction by showing that this solution is incompatible with some monotonicity formula satisfied by the nonlinear equation (in this case, the virial identity).  

Nakanishi's argument involved significant modifications to the scheme just described, due to the fact that one is now considering scattering to the family of ground states.  In particular, one decomposes the full solution to \eqref{e:nls} in the form $u(t)=\Phi[z(t)]+\eta(t)$ and essentially seeks to establish scattering for $\xi(t):=P_c\eta(t)$ and convergence of $|z(t)|$; here $P_c$ is the projection away from the underlying linear eigenfunction, and we may write $\eta=R[z]\xi$ for a suitable operator $R[\cdot]$ (cf. Section~\ref{S:solitons}).  The nonlinear evolution for $u(t)$ reduces to a coupled ODE-PDE system for $(z(t),\xi(t))$.  Proving scattering for $\xi$ is complicated by the fact that in the nonlinear equation the underlying linear part is now time-dependent (in particular, one obtains the linearized equation around the moving ground state $\Phi[z(t)]$).  In order to follow the `roadmap' in this setting, Nakanishi proceeded by developing concentration-compactness tools (in particular, a linear profile decomposition) adapted to the time-dependent \emph{linearized} equation.  In what follows, we describe a slight variation of Nakanishi's approach that aligns with the presentation in this paper; however, the fundamental ideas remain the same.

Turning to the details, under the assumption that the scattering theorem fails, one may construct a sequence of solutions $u_n=\Phi[z_n]+R[z_n]\xi_n$ approaching some `critical' mass-energy (below the excited energy) such that certain space-time norms of $\xi_n$ diverge as $n\to\infty$, both forward and backward in time.  The key is to prove strong $H^1$ convergence of $u_n(0)$ along some subsequence, for then the solution to \eqref{e:nls} with data given by $\lim_{n\to\infty} u_n(0)$ can be shown to have pre-compact orbit in $H^1$.  To achieve this, the first step is to decompose $u_n(0)$ in a linear profile decomposition.  This entails writing $u_n(0)$ as a sum of profiles of the form $\mathcal{L}(0,s_n^j;z_n)P_c \varphi^j$, where $\mathcal{L}(t,s;z)$ is the solution operator for the linearized equation around $\Phi[z(\cdot)]$ (see Definition~\ref{def:Ltsz}), plus an error whose `linearized evolution' is small in $L_t^\infty L_x^4$.  Under the assumption that more than one profile is present (so that convergence of $u_n(0)$ fails), one can then develop a corresponding \emph{nonlinear profile decomposition} for the sequence $u_n$, wherein one constructs scattering nonlinear solutions corresponding to each profile $\varphi^j$.  The fact that these solutions scatter relies on the `criticality' of the mass-energy of the sequence $u_n$.  By exploiting orthogonality properties of these profiles and stability theory for the nonlinear equation, one can then deduce uniform space-time bounds for the functions $\xi_n$, leading to the desired contradiction.  As mentioned above, with the pre-compact solution in hand, one can utilize the virial identity to obtain a contradiction and complete the proof; here the condition \eqref{grow-up-condition} plays a key role, as the failure of this condition helps to guarantee the coercivity of the virial identity in this setting. 

In \cite{Nakanishi}, Nakanishi relies on the radial assumption in an essential way, which can be traced back to the proof of the linearized profile decomposition.  The basic strategy for proving such a decomposition consists of iteratively identifying and removing `bubbles of concentration' from a given bounded sequence.  More precisely, these `bubbles' consist of fixed profiles together with sequences of parameters, which typically include space-time translations and scales, and the profiles themselves are obtained by applying these symmetries and passing to suitable weak limits.  Due to the subcritical nature of \eqref{e:nls}, no scaling parameters are needed in this setting.  In the radial case, space translations may also be avoided completely, as the compactness of the embedding $H^1_{\text{rad}}\hookrightarrow L^4$ allows one to obtain nontrivial weak limits directly.  Thus, in the linearized profile decomposition of \cite{Nakanishi}, one must contend only with time translations.  The same is then true in the nonlinear profile decomposition.  This ultimately implies that once one obtains the desired non-scattering solution, the solution automatically has pre-compact orbit in $H^1$; in particular, one does not need to incorporate any moving spatial center to obtain compactness.  Such solutions are then well-adapted to the localized virial argument. 

To address the non-radial case, we must first establish a linear profile decomposition that incorporates space translations when constructing the linear profiles.  A key observation in this setting is the fact that in the regime $|x|\to\infty$, the effects of both the external potential $V$ and the time-dependent potentials (involving the ground states) become weak.  In particular, at the level of the linear profile decomposition, we are able to utilize approximation by the linear propagator $e^{-it\Delta}$.  This observation plays an important role in the nonlinear profile decomposition, as well; in particular, in the regime $|x|\to\infty$, we can employ approximation by the standard cubic NLS.  Here we rely in an essential way on the inequality \eqref{e:me_comparison}; indeed, this estimate ensures that working below the excited solitons actually puts us in the scattering region for the underlying cubic NLS  (cf. \cite{DHR}).  In particular, this implies that nonlinear profiles in the regime $|x|\to\infty$ must scatter, which ultimately means that such profiles are not present in the construction of the non-scattering solution.  Consequently, the non-scattering solution once again has pre-compact orbit in $H^1$, without the need to incorporate a moving spatial center.  This puts us in the same position as \cite{Nakanishi}, and hence the rest of the argument (i.e. the localized virial argument) follows as in that work.  Thus, the primary contribution of the present work is to extend both the linear and nonlinear profile decompositions to the non-radial setting.  

The observation that the effect of the potential and ground states becomes weak in the regime $|x|\to\infty$, and the fact that the non-scattering solution is consequently compact in $H^1$ without any moving spatial center, is related to several other recent works on scattering for NLS in the presence of external potentials (see e.g. \cite{KMVZZ, KMVZ}).  In the present setting, the underlying linear and nonlinear problems are quite a bit more complicated (e.g. due to the presence of the time-dependent ground states), and the amount of technical work needed to implement this idea in a rigorous way is rather substantial.  For the details, see Sections~\ref{S:LPD}~and~\ref{S:NPD}.

The rest of this paper is organized as follows:
\begin{itemize}
\item In Section~\ref{S:notation}, we collect some basic notation and discuss the assumptions needed on the external potential in more detail.
\item In Section~\ref{S:solitons}, we discuss the existence and properties of ground state solitons and excited solitons.  We note that proving the existence of the excited solitons in the non-radial setting requires some new arguments.
\item In Section~\ref{S:dynamics}, we first discuss global existence for \eqref{e:nls}.  We then present the decomposition of small-mass solutions around the ground state solitons described above, deriving the coupled ODE-PDE system for $(z(t),\xi(t))$.  We then turn to the scattering result in Theorem~\ref{T}.  Taking for granted a key convergence result (Proposition~\ref{p:key}) that implies the existence of the compact non-scattering solution (Proposition~\ref{p:criticalelement}), we carry out the localized virial argument to complete the proof of scattering in Theorem~\ref{T}.  Finally, we prove the grow-up result in Theorem~\ref{T}.  The remainder of the paper is then dedicated to establishing Proposition~\ref{p:key}, as follows.
\item In Section~\ref{S:linear}, we discuss the linearized equation.  In particular, we collect some Strichartz and dispersive estimates for this equation, and we also establish some small-data scattering and stability results. 
\item In Section~\ref{S:LPD}, we prove the linear profile decomposition adapted to the linearized equation.
\item Finally, in Section~\ref{S:NPD}, we establish the key convergence result, Proposition~\ref{p:key}, by means of the nonlinear profile decomposition described above. 
\end{itemize}

\subsection*{Acknowledgements} S.M. was supported by JSPS KAKENHI Grant Numbers JP17K14219, JP17H02854,  JP18KK0386, JP21H00991, and JP21H00993.
J. M. was supported by a Simons Collaboration Grant. 
J.S was supported by JSPS KAKENHI Grant Numbers JP17H02851, JP19H05597, 
JP20H00118, JP21H00993, and  JP21K18588.

\subsection{Notation and preliminaries}\label{S:notation}
We write $A\lesssim B$ to denote the inequality $A\leq CB$ for some $C>0$. We write
\[
(f,g) = \int \bar f(x) g(x)\,dx
\]
for the standard $L^2$ inner product.  We define
\[
\|f\|_{H^s_r} = \| \langle \nabla \rangle^s f\|_{L^r}. 
\]
Space-time norms are denoted by $\|\cdot\|_{L_t^q L_x^r}$ or $\|\cdot\|_{L_t^q H^s_r}$. 

We write $Q$ for the cubic NLS ground state, that is, the unique nonnegative, radial, decaying solution to $-Q+\Delta Q + Q^3 =0$ (see e.g. \cite[Appendix~B]{Tao}). 

We also define here several important functionals, including the virial functional $\mathbb{K}_{V,2}$.  Recall that $\mathbb{H}_V$ and $\mathbb{G}$ were defined in \eqref{HVandG}. 

\begin{definition}\label{def:KI}
For any $\varphi \in H^1$, we define
\begin{align*}
\mathbb{K}_{V,2}(\varphi) &:=   2 \mathbb{H}_0(\varphi) - 3 \mathbb{G}(\varphi)- \int_{\R^3}  \tfrac12 (x\cdot \nabla V(x)) |\varphi(x)|^2 dx,\\	
\mathbb{I}_{V}(\varphi) &:= \mathbb{E}_V(\varphi) - \tfrac12 \mathbb{K}_{V,2}(\varphi) \\
&= \tfrac12 \mathbb{G}(\varphi) + \tfrac14 \int (x\cdot \nabla V + 2 V)( x)|\varphi(x)|^2  dx.
\end{align*}
\end{definition}

\subsection{Assumptions on the external potential} \label{S:potential}

For the potential $V$, we make the following assumptions: Let $L^\I_0$ be the completion of $C_0^\I$ with respect to the supremum norm. We assume the following: 
\begin{itemize}
\item[(A1)] $\lim_{|x|\to\I}|V(x)|=0$;
\item[(A2)] $V,$ $x\cdot \nabla V,$ $|x|^2 |\nabla^2 V| \in (L^2+L_0^\I)(\R^3)$. 
\item[(A3)] $-\Delta + V $ on $L^2 (\R^3)$ has only one negative eigenvalue $e_0<0$, which is simple;
\item[(A4)] The wave operator $W= \lim_{t\to\I} e^{itH} e^{it\Delta}$ and its adjoint $W^*$ are bounded on the Sobolev space$H^k_p(\R^3)$
for $k=1,2$ and some $p>8$\footnote{The condition (A4) is satisfied if $H$ admits no eigenfunction or resonance at zero energy, $V\in L^{p}$ and $\widehat{V}\in \dot{B}^{1/2}_{2,1}$ (cf. \cite{Beceanu}).}.
\end{itemize}
These are essentially the same assumptions as in \cite{Nakanishi}. Note that we do not assume radial symmetry on $V$.  Note also that in \cite{Nakanishi}, the additional assumption $V/|x| \in L^1(\R^3)$ is added to (A2), which is used in the proof of Conjecture~\ref{conjecture} in the radial setting.

In addition, we suppose 
\begin{itemize}
\item[(A5)] $V \in H^{1}_{3/2};$
\item[(A6)]
\begin{itemize}
\item $V$, $x\cdot \nabla V \in L^\I$;
\item $V$ and $x\cdot \nabla V$ are continuous at the origin;
\item $V(0)=\min_{x\in\R^3}V(x)<0.$
\end{itemize}
\end{itemize}

\begin{remark} Assumption (A6) warrants some discussion.  This assumption is used in the proof that $\mathcal{E}_1$ obeys the bound \eqref{e:me_comparison} and that there exists an excited state for small mass.  The assumption $V(0)=\min_{x\in\R^3}V(x)$  is rather a characterization of the origin.  If a potential $V$ satisfies $V(x_0)=\min_{x\in\R^3}V(x)$ for some $x_0 \in \R^3$, and if $V$ and $(x-x_0)\cdot \nabla V$ are continuous at $x=x_0$, then an application of a suitable space translation to \eqref{e:nls} enables us to assume $x_0=0$.  Note that the validity of all other assumptions holds under space translation.  Thus, we are essentially supposing that the existence of such a point $x_0 \in \R^3$.  We also remark that the inequality $\min_{x\in\R^3}V(x)  <0$
follows from (A3). Indeed, if $V$ does not have a negative part, we see that $H\ge0$ and there exists no negative eigenvalue.
\end{remark}

\section{Ground states and excited states}\label{S:solitons}

In this section, we discuss the existence of soliton solutions to \eqref{e:nls}.  We denote by $\mathscr{S}$ the set of functions $\varphi\in H^1$ satisfying
\begin{equation}\label{e:solitoneq}
(H+ \omega) \varphi = \sigma |\varphi|^2 \varphi\qtq{for some}\omega\in\R.
\end{equation}
In particular, for $\varphi\in\mathscr{S}$, we have that $u(t,x)=e^{-i\omega t}\varphi$ is a soliton solution to \eqref{e:nls}.  As we will see, solitons arise as energy minimizers.  In particular, given a fixed mass $\mu>0$, we first define 
\begin{align*}
\mathscr{E}_0(\mu) &:= \inf\{ \mathbb{E}_V (\varphi) 
\ |\  \varphi \in \mathscr{S},\ \mathbb{M} (\varphi)= \mu \}, \\
\mathscr{S}_0 (\mu) &:= \{ \varphi \in H^1(\R^3) \ |\  \varphi \in \mathscr{S},\, \mathbb{M}(\varphi)=\mu,\, \mathbb{E}_V (\varphi) = \mathscr{E}_0(\mu) \}.
\end{align*}
This minimization problem will yield the ground state solitons.  We also define 
\begin{align*}
\mathscr{E}_1(\mu) &:= \inf\{ \mathbb{E}_V (\varphi) \ |\  \varphi \in \mathscr{S},\, \mathbb{M} (\varphi)= \mu,\, \mathbb{E}_V (\varphi) > \mathscr{E}_0(\mu) \}, \\
\mathscr{S}_1 (\mu) &:= \{ \varphi \in H^1(\R^3) \ |\  \varphi \in \mathscr{S},\, \mathbb{M}(\varphi)=\mu,\, \mathbb{E}_V (\varphi) = \mathscr{E}_1(\mu) \}.
\end{align*}
In the focusing case, this minimization problem will yield the excited solitons. 

%
%

The existence of small solitary waves for \eqref{e:nls} was addressed in \cite{SW,SW2,GNT}.  We import the results we need here. We write $\phi_0 $ for the normalized eigenfunction of $H$ associated with the unique negative eigenvalue $e_0$, i.e.,
\[
H \phi_0 = e_0 \phi_0, \quad \mathbb{M}(\phi_0)=1.
\]

\begin{lemma}\label{L:small-solitons}
There exists $z_0>0$ and a $C^1$-map 
\[
(\Phi, \Omega) : \{ z \in \C \ |\ |z|^2 < 2 z_0 \} \to H^1 \times \R,
\]
such that $(\Phi[z],\Omega[z])$ satisfies
\begin{equation}\label{e:Phizeq}
(H + \Omega[z]) \Phi[z]  = \sigma |\Phi[z] |^2 \Phi[z].
\end{equation}
The parameters satisfy the following:
\begin{itemize}
\item $(\Phi[z],\phi_0)=z$ (so that $\Phi[z]-z\phi_0\perp\phi_0)$,
\item $\Phi[e^{i\theta}z]=e^{i\theta}\Phi[z]$ for any $\theta \in \R$, and
\item if $z$ is real, then $\Phi[z]$ is real-valued. 
\end{itemize}
Moreover, the following hold as $|z|\to0$:
\[
\Phi[z] = z \phi_0 + o(|z|^2) \IN H^1,\quad \Omega[z] = -e_0 + o(|z|) \IN \C.
\]
Finally,
\[
\mathbb{M}(\Phi[z]) = |z|^2 + o(|z|^4)
\]
is an increasing bijective function of $|z|$.
\end{lemma}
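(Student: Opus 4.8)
The plan is to construct the map $(\Phi,\Omega)$ by the implicit function theorem (or a fixed-point argument), bifurcating from the linear bound state $\phi_0$ at eigenvalue $e_0$. First I would set up the ansatz $\Phi = z\phi_0 + \psi$ with $\psi \perp \phi_0$ and $\Omega = -e_0 + \nu$, and substitute into \eqref{e:Phizeq}. Writing $P_c$ for the spectral projection away from $\phi_0$, the equation $(H+\Omega)\Phi = \sigma|\Phi|^2\Phi$ splits into two pieces: the projection onto $\phi_0$, which determines $\nu$ in terms of $z$ and $\psi$ (essentially $\nu|z|^2 = \sigma(\phi_0,|\Phi|^2\Phi)\bar z^{-1}\cdot(\text{something})$ — more precisely one pairs the equation with $\phi_0$), and the projection $P_c$, which reads $(H+\Omega)\psi = \sigma P_c(|z\phi_0+\psi|^2(z\phi_0+\psi))$. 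On the range of $P_c$ the operator $H - e_0$ is invertible (since $e_0$ is simple and isolated by (A3)), so for $\Omega$ near $-e_0$ the operator $P_c(H+\Omega)P_c$ is invertible with bounded inverse, and I would rewrite the $P_c$-equation as a fixed-point problem $\psi = \sigma[P_c(H+\Omega)P_c]^{-1}P_c(|z\phi_0+\psi|^2(z\phi_0+\psi))$. A contraction-mapping argument on a small ball $\{\|\psi\|_{H^1}\lesssim |z|^3\}$ then yields a unique small solution $\psi = \psi[z]$ depending $C^1$ (indeed smoothly) on the real and imaginary parts of $z$, since the nonlinearity is smooth. Feeding this back into the $\phi_0$-component determines $\nu = \Omega[z]+e_0$, and one checks $\nu = o(|z|)$; since $\psi = O(|z|^3)$, in particular $\Phi[z] = z\phi_0 + O(|z|^3) = z\phi_0 + o(|z|^2)$.

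Next I would verify the symmetry and reality properties. The gauge covariance $\Phi[e^{i\theta}z] = e^{i\theta}\Phi[z]$ follows from uniqueness in the fixed-point argument together with the observation that $e^{-i\theta}\Phi[e^{i\theta}z]$ solves the same equation with the same normalization $(\cdot,\phi_0)=z$ (using that $\phi_0$ can be taken real and the nonlinearity $|\cdot|^2\cdot$ is gauge-covariant and $H$ is real). Reality of $\Phi[z]$ for real $z$ follows similarly: $\overline{\Phi[z]}$ solves the same equation with the same (real) normalization, so by uniqueness $\overline{\Phi[z]} = \Phi[z]$. The normalization condition $(\Phi[z],\phi_0) = z$ is built into the construction: we impose $\psi \perp \phi_0$ so that $(\Phi[z],\phi_0) = z(\phi_0,\phi_0) = z$ since $\mathbb{M}(\phi_0)=1$ means $\|\phi_0\|_{L^2}^2 = 2$ — so I would be careful with the factor and perhaps normalize $\phi_0$ so that $(\phi_0,\phi_0)=1$, or absorb the constant; this is a bookkeeping point to settle at the start.

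Finally, for the mass statement I would compute $\mathbb{M}(\Phi[z]) = \tfrac12\|z\phi_0 + \psi[z]\|_{L^2}^2 = \tfrac12|z|^2\|\phi_0\|_{L^2}^2 + O(|z|^4)$ using $\psi \perp \phi_0$ and $\|\psi\|_{L^2} = O(|z|^3)$; with the normalization fixed this is $|z|^2 + o(|z|^4)$. That $\mathbb{M}(\Phi[z])$ depends only on $|z|$ is immediate from the gauge covariance, and strict monotonicity as a function of $r = |z|$ follows since $\tfrac{d}{dr}\mathbb{M} = r\|\phi_0\|^2 + O(r^3) > 0$ for $r$ small; being a continuous strictly increasing function on an interval starting at $0$, it is a bijection onto its image. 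The main obstacle I expect is not any single estimate but rather the careful setup ensuring the fixed-point map is a genuine contraction on a ball of the right radius (here one must exploit that the nonlinearity is cubic, so the leading correction to $\psi$ is cubic in $z$, which is what makes the $o(|z|^2)$ in $\Phi[z] - z\phi_0$ work) and tracking constants consistently through the $\phi_0$/$P_c$ splitting so that the claimed expansions for $\Omega[z]$ and $\mathbb{M}(\Phi[z])$ come out with the right normalization. The $C^1$ regularity in $z$ (viewed as a map on $\R^2$, since the problem is not complex-analytic) is a standard consequence of the implicit function theorem in the real setting once the linearized operator is shown to be invertible.
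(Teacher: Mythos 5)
Your Lyapunov--Schmidt/bifurcation argument is correct in outline and is the standard construction; note that the paper itself gives no proof of this lemma but imports it from \cite{SW,SW2,GNT}, where essentially your argument (decompose $\Phi=z\phi_0+\psi$ with $\psi\perp\phi_0$, invert $P_c(H+\Omega)P_c$ on the continuous spectral subspace, and run a contraction on a ball of radius $O(|z|^3)$) is carried out. The only points worth settling carefully are the ones you already flag: the normalization $\mathbb{M}(\phi_0)=1$ means $\|\phi_0\|_{L^2}^2=2$, which is exactly what makes $\mathbb{M}(\Phi[z])=|z|^2+o(|z|^4)$ come out right, and the $C^1$ regularity of $\Omega[z]$ at $z=0$ (where one divides the $\phi_0$-projection by $z$) is most cleanly obtained by first using gauge covariance to reduce to real $z>0$.
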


The small solitons are energy minimizers for fixed small mass (see \cite{SW, SW2, GNT}): 

\begin{proposition} Let $z_0$ be as in Lemma~\ref{L:small-solitons}.  There exists $z_1 \le z_0$ and $\mu_1>0$ such that
\[
\{ \varphi \in \mathscr{S}_0 \ |\ \mathbb{M}(\varphi)<\mu_1 \} = \{ \Phi[z] \ |\ |z| <z_1 \}.
\]
Moreover, if $0<\mu<\mu_1$ then 
\[
\mathscr{E}_0(\mu)= \mathbb{E}_V (\Phi[\zeta_0(\mu)])\sim e_0 \mu <0,
\]
where $\zeta_0$ is an inverse function of 
$z \mapsto \mathbb{M}(\Phi[z]) \in (0,\mu_1).$
\end{proposition}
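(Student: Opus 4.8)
The plan is to establish that the small solitons $\Phi[z]$ exhaust the ground-state set $\mathscr S_0$ at small mass, and simultaneously identify the value of the minimum. I would organize the argument in three stages: (a) an \emph{a priori} smallness/structure estimate on any element of $\mathscr S_0$ of small mass; (b) a variational (energy) comparison showing that the ground-state energy at small mass is negative and is achieved by a solution lying in the one-dimensional ``bound-state direction'' $\phi_0$; (c) matching this with the branch $\{\Phi[z]\}$ from Lemma~\ref{L:small-solitons} by the uniqueness built into that lemma.

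For stage (a), suppose $\varphi\in\mathscr S_0$ with $\mathbb M(\varphi)=\mu$ small. First I would show $\varphi$ is small in $H^1$: pairing the soliton equation \eqref{e:solitoneq} with $\varphi$ gives $2\mathbb H_V(\varphi)+\omega\,2\mathbb M(\varphi)=\sigma\|\varphi\|_{L^4}^4$, and combined with the energy identity one controls $\|\nabla\varphi\|_{L^2}^2$ in terms of $\mu$ and $\|\varphi\|_{L^4}^4$; Sobolev embedding $H^1\hookrightarrow L^4$ then closes a bootstrap (for $\mu$ small the $L^4$ term is absorbed), giving $\|\varphi\|_{H^1}^2\lesssim\mu$. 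Spectral analysis of $H$ — one simple negative eigenvalue $e_0$ with eigenfunction $\phi_0$, the rest of the spectrum in $[0,\infty)$ — forces the Lagrange multiplier $\omega$ to be near $-e_0>0$ and forces $\varphi$ to be, to leading order, a multiple of $\phi_0$; writing $\varphi=z\phi_0+\psi$ with $\psi\perp\phi_0$, the equation on the orthogonal complement together with the coercivity $\langle(H+\omega)\psi,\psi\rangle\gtrsim\|\psi\|_{H^1}^2$ there yields $\|\psi\|_{H^1}=o(|z|)$ and $|z|^2=\mu+o(\mu)$. Thus every small-mass element of $\mathscr S_0$ has exactly the structure produced by Lemma~\ref{L:small-solitons}.

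For stage (b), the negativity $\mathscr E_0(\mu)<0$ is the easy direction: evaluating $\mathbb E_V$ on the trial function $\sqrt{2\mu}\,\phi_0$ gives $\mathbb E_V=\mu e_0+O(\mu^2)<0$ for small $\mu$, since the negative linear term $\mu e_0$ dominates the $O(\mu^2)$ quartic term; hence $\mathscr E_0(\mu)\le\mathbb E_V(\Phi[\zeta_0(\mu)])\le\mu e_0+o(\mu)$. The matching lower bound, and hence $\mathscr E_0(\mu)=\mathbb E_V(\Phi[\zeta_0(\mu)])\sim e_0\mu$, comes from stage (a): any minimizing $\varphi$ has the form $z\phi_0+o(|z|)$ with $|z|^2\sim\mu$, so $\mathbb E_V(\varphi)=|z|^2 e_0+o(\mu)\ge\mu e_0+o(\mu)$, and by the uniqueness in Lemma~\ref{L:small-solitons} (the only solution to \eqref{e:Phizeq} with that structure and mass $\mu$ is $\Phi[\zeta_0(\mu)]$, using $(\Phi[z],\phi_0)=z$ and the phase covariance to fix $z$ real up to rotation) we get $\varphi=\Phi[\zeta_0(\mu)]$ up to phase. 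This gives the set equality $\{\varphi\in\mathscr S_0:\mathbb M(\varphi)<\mu_1\}=\{\Phi[z]:|z|<z_1\}$ after choosing $\mu_1,z_1$ small enough that the bootstrap constants in (a) are valid and that $\zeta_0$ maps $(0,\mu_1)$ into $\{|z|<z_1\}$ bijectively, which is exactly the monotonicity of $|z|\mapsto\mathbb M(\Phi[z])$ from Lemma~\ref{L:small-solitons}.

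The main obstacle is stage (a): making rigorous the claim that \emph{every} critical point of the constrained energy with small mass is forced onto the $\phi_0$-branch, i.e. ruling out exotic small-mass solitons with energy equal to $\mathscr E_0(\mu)$ but lying away from span$\{\phi_0\}$. This is where the spectral gap of $H$ and the coercivity of $H+\omega$ on $\phi_0^\perp$ are essential, and one must be careful that $\omega$ genuinely stays in the coercive range $(-e_0-\delta,\infty)$ rather than drifting; controlling $\omega$ uniformly in $\mu$ via the pairing identities, and then propagating the smallness of $\psi$ through the nonlinearity with Sobolev/Hölder, is the technical heart. The remaining steps — the trial-function upper bound and the identification via Lemma~\ref{L:small-solitons} — are routine once (a) is in place.
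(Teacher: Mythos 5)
The paper does not actually prove this proposition; it imports it from \cite{SW,SW2,GNT}, and your three-stage outline is essentially the standard argument from those references (decomposition $\varphi=z\phi_0+\psi$ with $\psi\perp\phi_0$, coercivity of $H+\omega$ on $\phi_0^\perp$, identification with the branch via the uniqueness underlying Lemma~\ref{L:small-solitons}, equivalently Proposition~\ref{p:coordinate}). Stages (b) and (c) are sound, with the small caveat that the competitor for the upper bound on $\mathscr{E}_0(\mu)$ must be the genuine soliton $\Phi[\zeta_0(\mu)]$ rather than a multiple of $\phi_0$, since $\mathscr{E}_0(\mu)$ is an infimum over $\mathscr{S}$ only; the expansion $\Phi[z]=z\phi_0+o(|z|^2)$ then gives $\mathbb{E}_V(\Phi[\zeta_0(\mu)])=e_0\mu+o(\mu)<0$ directly.

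The genuine gap is in stage (a), in the focusing case. The energy and pairing identities only yield an inequality of the shape $\|\nabla\varphi\|_{L^2}^2\le C\mu+C\|\varphi\|_{L^4}^4\le C\mu+C\mu^{1/2}\|\nabla\varphi\|_{L^2}^3$, and a superlinear term like this cannot be ``absorbed'' without an a priori smallness assumption or a continuity parameter, neither of which exists for a static variational problem. Worse, the large alternative is actually realized: by Theorem~\ref{P:excited}, at every small mass $\mu$ the set $\mathscr{S}$ contains excited solitons with $\mathbb{H}_0\gtrsim\mu^{-1}$, so $H^1$-smallness is simply false for general small-mass solitons and must be deduced from energy minimality, not from the equation and the mass alone. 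The repair is to run stage (b) first: having established $\mathscr{E}_0(\mu)\le e_0\mu+o(\mu)<0$, take $\varphi\in\mathscr{S}_0(\mu)$, note $\mathbb{K}_{V,2}(\varphi)=0$, and invoke the trichotomy of Proposition~\ref{p:smt} to get either $\mathbb{H}_0(\varphi)\lesssim\mu$ or $\mathbb{G}(\varphi)\gtrsim\mathbb{H}_0(\varphi)\gtrsim\mu^{-1}$; in the latter case $\mathbb{E}_V(\varphi)=\mathbb{I}_V(\varphi)=\tfrac12\mathbb{G}(\varphi)+O(\mu)\gtrsim\mu^{-1}>0$, contradicting $\mathbb{E}_V(\varphi)=\mathscr{E}_0(\mu)<0$. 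Only after the large branch is excluded in this way does your coercivity/uniqueness argument apply. (In the defocusing case the quartic term has the favorable sign and your absorption does close.)
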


We will also need the following important `trichotomy' for functions in $H^1$ of small mass.  This result appears as \cite[Lemma~2.3]{Nakanishi}.

\begin{proposition}[Trichotomy]\label{p:smt}
There exist $C,\mu_3>0$ such that
if $\varphi \in H^1$ satisfies 
\[
\mathbb{M}(\varphi) \le \mu_3\qtq{and}\mathbb{K}_{V,2} (\varphi) \le C \mathbb{M}(\varphi)^{-1},
\]
then one of the following hold:
\begin{enumerate}
\item $\mathbb{H}_0(\varphi) \lesssim \mathbb{M}(\varphi)$;
\item $\mathbb{K}_{V,2}(\varphi) \gtrsim \mathbb{M}(\varphi)$ and $\mathbb{K}_{V,2}(\varphi) \sim \mathbb{E}_V (\varphi)\sim \mathbb{H}_0(\varphi)$;
\item $\sigma=1$ and $\mathbb{G}(\varphi) \gtrsim \mathbb{H}_0 (\varphi)  \gtrsim \mathbb{M}(\varphi)^{-1}$.
\end{enumerate}
\end{proposition}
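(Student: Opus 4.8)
The plan is to reduce the whole statement to a comparison of the three scalars $\mathbb{M}(\varphi)$, $\mathbb{H}_0(\varphi)$ and $\mathbb{G}(\varphi)$, after first observing that the $V$-dependent parts of $\mathbb{H}_V$, $\mathbb{E}_V$ and $\mathbb{K}_{V,2}$ are of lower order. For that first observation I would use assumption (A2): write $V=V_1+V_2$ and $x\cdot\nabla V=W_1+W_2$ with $V_1,W_1\in L^2$ and $\|V_2\|_{L^\infty},\|W_2\|_{L^\infty}$ as small as we wish (any function in $L^2+L^\infty_0$ splits this way), and then combine H\"older's inequality with the three-dimensional Gagliardo--Nirenberg bound $\|\varphi\|_{L^4}^2\lesssim\|\nabla\varphi\|_{L^2}^{3/2}\|\varphi\|_{L^2}^{1/2}\sim\mathbb{H}_0(\varphi)^{3/4}\mathbb{M}(\varphi)^{1/4}$ and Young's inequality to obtain: for every $\delta>0$ there is $C_\delta>0$, depending also on $V$, with
\[
\abs{\int_{\R^3}V|\varphi|^2\,dx}+\abs{\int_{\R^3}(x\cdot\nabla V)|\varphi|^2\,dx}\le\delta\,\mathbb{H}_0(\varphi)+C_\delta\,\mathbb{M}(\varphi).
\]
The same Gagliardo--Nirenberg inequality gives the second basic input, $\abs{\mathbb{G}(\varphi)}\lesssim\mathbb{H}_0(\varphi)^{3/2}\mathbb{M}(\varphi)^{1/2}$.

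With these in hand I would fix $\delta=\tfrac1{200}$, let $C_\delta$ be the resulting constant, and then choose a large $K_0$ (depending on $V$) with $K_0\ge 200\,C_\delta$. If $\mathbb{H}_0(\varphi)\le K_0\,\mathbb{M}(\varphi)$ we are in alternative (1), so assume $\mathbb{H}_0(\varphi)>K_0\,\mathbb{M}(\varphi)$; then $C_\delta\,\mathbb{M}(\varphi)\le\tfrac1{200}\mathbb{H}_0(\varphi)$, the two potential terms above are $\le\tfrac1{100}\mathbb{H}_0(\varphi)$, and consequently $\mathbb{H}_V(\varphi)\sim\mathbb{H}_0(\varphi)$ while $\mathbb{K}_{V,2}(\varphi)=2\mathbb{H}_0(\varphi)-3\mathbb{G}(\varphi)+O(\tfrac1{100}\mathbb{H}_0(\varphi))$ and $\mathbb{E}_V(\varphi)=\mathbb{H}_0(\varphi)-\mathbb{G}(\varphi)+O(\tfrac1{100}\mathbb{H}_0(\varphi))$. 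I would then split on $\mathbb{G}(\varphi)$. If $\mathbb{G}(\varphi)>\tfrac12\mathbb{H}_0(\varphi)$, then $\mathbb{G}(\varphi)>0$ forces $\sigma=1$, and rearranging $\tfrac12\mathbb{H}_0(\varphi)<\mathbb{G}(\varphi)\lesssim\mathbb{H}_0(\varphi)^{3/2}\mathbb{M}(\varphi)^{1/2}$ gives $\mathbb{H}_0(\varphi)\gtrsim\mathbb{M}(\varphi)^{-1}$, hence alternative (3). If $\abs{\mathbb{G}(\varphi)}\le\tfrac12\mathbb{H}_0(\varphi)$, the two identities above immediately give $\mathbb{K}_{V,2}(\varphi)\sim\mathbb{E}_V(\varphi)\sim\mathbb{H}_0(\varphi)\gtrsim\mathbb{M}(\varphi)$, i.e.\ alternative (2). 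The one remaining case, $\mathbb{G}(\varphi)<-\tfrac12\mathbb{H}_0(\varphi)$, can occur only when $\sigma=-1$; here I would use the hypothesis $\mathbb{K}_{V,2}(\varphi)\le C\mathbb{M}(\varphi)^{-1}$, which (since then $\mathbb{K}_{V,2}(\varphi)\ge\mathbb{H}_0(\varphi)+\abs{\mathbb{G}(\varphi)}$) forces $\mathbb{H}_0(\varphi)\mathbb{M}(\varphi)\lesssim1$, upgrading the Gagliardo--Nirenberg bound to $\abs{\mathbb{G}(\varphi)}\lesssim\mathbb{H}_0(\varphi)$ and thus returning us to $\mathbb{K}_{V,2}(\varphi)\sim\mathbb{E}_V(\varphi)\sim\mathbb{H}_0(\varphi)\gtrsim\mathbb{M}(\varphi)$, i.e.\ alternative (2) again.

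The two places I expect to need genuine care are: (i) the lower-order estimate for the potential, which is exactly where the structural assumption $V,x\cdot\nabla V\in L^2+L^\infty_0$ enters, together with the subcriticality of the cubic nonlinearity — the exponent $\tfrac32<2$ in the Gagliardo--Nirenberg bound being precisely what permits absorption into $\delta\,\mathbb{H}_0+C_\delta\,\mathbb{M}$; and (ii) the defocusing branch $\mathbb{G}(\varphi)<-\tfrac12\mathbb{H}_0(\varphi)$, in which a priori $\abs{\mathbb{G}(\varphi)}$, and hence $\mathbb{K}_{V,2}(\varphi)$, could dwarf $\mathbb{H}_0(\varphi)$: this is the single point at which the standing hypothesis $\mathbb{K}_{V,2}(\varphi)\le C\mathbb{M}(\varphi)^{-1}$ is indispensable, since it rules out exactly that degeneration. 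Everything else is bookkeeping, the only subtlety being the order of quantifiers — fix $\delta$, then $C_\delta$, then $K_0$ — so that the potential terms are uniformly negligible once $\mathbb{H}_0(\varphi)>K_0\,\mathbb{M}(\varphi)$ and the trichotomy on $\mathbb{G}(\varphi)$ leaves no gap.
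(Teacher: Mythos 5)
The paper does not actually prove this proposition: it is imported verbatim as \cite[Lemma~2.3]{Nakanishi}, so there is no in-paper argument to compare yours against. Judged on its own, your proof is correct and is the natural/standard route for such statements. The two load-bearing ingredients are exactly right: the absorption estimate $\bigl|\int (|V|+|x\cdot\nabla V|)|\varphi|^2\,dx\bigr|\le \delta\,\mathbb{H}_0(\varphi)+C_\delta\,\mathbb{M}(\varphi)$, which uses precisely (A2) ($V,\ x\cdot\nabla V\in L^2+L^\infty_0$) together with the subcritical Gagliardo--Nirenberg exponent, and the trichotomy on the ratio $\mathbb{G}(\varphi)/\mathbb{H}_0(\varphi)$ once $\mathbb{H}_0(\varphi)>K_0\,\mathbb{M}(\varphi)$. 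Your handling of the defocusing branch $\mathbb{G}(\varphi)<-\tfrac12\mathbb{H}_0(\varphi)$ is the one place where the hypothesis $\mathbb{K}_{V,2}(\varphi)\le C\,\mathbb{M}(\varphi)^{-1}$ is genuinely needed, and the bootstrap $\mathbb{H}_0(\varphi)\mathbb{M}(\varphi)\lesssim 1\Rightarrow|\mathbb{G}(\varphi)|\lesssim\mathbb{H}_0(\varphi)$ closes that case correctly (the implied constants in alternative (2) then depend on $C$, which is harmless since $C$ is fixed in the statement). Two cosmetic remarks: your argument never uses the small-mass hypothesis $\mathbb{M}(\varphi)\le\mu_3$, so you prove a slightly stronger statement than claimed; and in alternative (3) one should note $\varphi\neq0$ (forced by $\mathbb{G}(\varphi)>\tfrac12\mathbb{H}_0(\varphi)\ge 0$ being strict) before concluding $\sigma=1$, which you implicitly do.
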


We turn to the existence of excited states.  We consider first the focusing equation (i.e. $\sigma=1$) and work under the assumptions on $V$ detailed in Section~\ref{S:potential}.  The analogue of the following result was established in \cite[Proposition~2.5]{Nakanishi} in the radial setting.  In particular, the argument in \cite{Nakanishi} relies on the radial symmetry via the compactness of the embedding $H_{\text{rad}}^1\hookrightarrow L^4$.  To address the non-radial case, we instead utilize a profile decomposition

\begin{theorem}[Excited states in the focusing case]\label{P:excited}
There exist
$\mu_2 \in (0, \mu_1]$ and a constant $C \in (0,|V(0)|)$
such that for $0<\mu<\mu_2$, $\mathscr{E}_1(\mu)$ is continuous and decreasing in $\mu$, and
\begin{align}
\mathscr{E}_1(\mu) ={}& \inf \{ \mathbb{E}_V(\varphi) \ |\ \varphi \in H^1 ,\, \mathbb{M}(\varphi)=\mu, \, \mathbb{K}_{V,2}(\varphi)=0, \, \mathbb{G}(\varphi) \ge 1 \} \nonumber\\
={}& \inf \{ \mathbb{I}_V(\varphi) \ |\ \varphi \in H^1 ,\, \mathbb{M}(\varphi)\le\mu, \,\mathbb{K}_{V,2}(\varphi)\le0,\, \mathbb{G}(\varphi) \ge 1 \} \nonumber\\
\le{}& \mu^{-1} \mathbb{E}_0(Q) \mathbb{M}(Q)-C\mu, \label{e:e1bound}
\end{align}
with $\mathscr{S}_1(\mu)$ nonempty. Moreover,
for any $c>0$ there exists $\mu_2'\in (0, \mu_2]$
such that
\begin{equation}\label{e:e1boundsharp}
	\mu^{-1} (\mathbb{E}_0(Q) \mathbb{M}(Q)-c)<
	\mathscr{E}_1(\mu) < \mu^{-1} \mathbb{E}_0(Q) \mathbb{M}(Q)+(V(0) + c)\mu
\end{equation}
holds for $\mu\in (0,\mu_2']$.
Furthermore, there exists a continuous map 
\begin{equation}\label{e:variational_characterization}\kappa : (0,\mu_2) \times \R_+ \to (0,\tfrac12)\qtq{such that}
	|\mathbb{K}_{V,2}(\varphi)| \ge \kappa (\mathbb{M}(\varphi), \delta )
\end{equation}
for any $\varphi \in H^1$ satisfying 
\[
\mathbb{M}(\varphi)<\mu_2,\quad\mathbb{E}_V (\varphi) \le \mathscr{E}_1 (\mu) - \delta,\qtq{and}\norm{\nabla \varphi}_{L^2} \ge 1.
\]
\end{theorem}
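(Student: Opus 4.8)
The plan is to work throughout with the variational quantity
\[
m(\mu):=\inf\{ \mathbb{E}_V(\varphi) \ |\ \varphi\in H^1(\R^3),\ \mathbb{M}(\varphi)=\mu,\ \mathbb{K}_{V,2}(\varphi)=0,\ \mathbb{G}(\varphi)\ge1 \},
\]
and to prove $\mathscr{E}_1(\mu)=m(\mu)$ only at the end; all the quantitative statements are proved for $m$. First I would dispatch the equality of $m(\mu)$ with the relaxed infimum in \eqref{e:e1bound}. Since $\mathbb{I}_V=\mathbb{E}_V-\tfrac12\mathbb{K}_{V,2}$ coincides with $\mathbb{E}_V$ on $\{\mathbb{K}_{V,2}=0\}$, one inequality is immediate; for the converse, given a competitor $\varphi$ with $\mathbb{M}(\varphi)\le\mu$ and $\mathbb{K}_{V,2}(\varphi)\le0$ one applies the $L^2$-invariant rescaling $\varphi\mapsto t^{3/2}\varphi(t x)$ and checks, using $\mathbb{G}(\varphi)\ge1$ and the smallness of $\mathbb{M}(\varphi)$, that $\mathbb{K}_{V,2}$ can be driven up to $0$ without increasing $\mathbb{I}_V$; the constraint $\mathbb{M}\le\mu$ then makes $m$ non-increasing in $\mu$. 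This part is essentially as in \cite[Lemma~2.3, Proposition~2.5]{Nakanishi} and is routine.

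Next I would establish the two-sided bound \eqref{e:e1boundsharp}, which contains \eqref{e:e1bound} (take any $C\in(0,|V(0)|)$ in the upper estimate). For the upper bound I would use the explicit competitor $Q_\lambda(x):=\lambda Q(\lambda x)$ with $\lambda:=\mathbb{M}(Q)/\mu$, which concentrates at the origin. Then $\mathbb{M}(Q_\lambda)=\mu$, $\mathbb{E}_0(Q_\lambda)=\mu^{-1}\mathbb{M}(Q)\mathbb{E}_0(Q)$, $\mathbb{G}(Q_\lambda)\to\infty$, and $\mathbb{K}_{0,2}(Q_\lambda)=0$ by the Pohozaev identity for $Q$ together with scaling. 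Since $V$ is bounded and continuous at $0$ with $V(0)=\min V<0$ and $x\cdot\nabla V$ is continuous and vanishes at $0$ (assumption (A6)), dominated convergence gives $\tfrac12\int V|Q_\lambda|^2=\mu V(0)+o(\mu)$ and $\mathbb{K}_{V,2}(Q_\lambda)=o(\mu)$; a further $L^2$-invariant rescaling of size $o(\mu^2)$ puts the competitor exactly on $\{\mathbb{K}_{V,2}=0\}$ at a cost $o(\mu)$ in energy, giving $m(\mu)\le\mu^{-1}\mathbb{M}(Q)\mathbb{E}_0(Q)+\mu V(0)+o(\mu)$. For the lower bound, any near-minimizer $\varphi$ satisfies $\mathbb{K}_{0,2}(\varphi)=\mathbb{K}_{V,2}(\varphi)+\tfrac12\int(x\cdot\nabla V)|\varphi|^2=O(\mu)$, so it almost lies on the cubic Nehari manifold; the sharp Gagliardo--Nirenberg inequality $\|f\|_{L^4}^4\le C_{\mathrm{GN}}\|\nabla f\|_{L^2}^3\|f\|_{L^2}$, saturated by $Q$, then yields $\|\nabla\varphi\|_{L^2}^2\gtrsim\mu^{-1}$ and, on refining the constant, $\mathbb{E}_0(\varphi)=\tfrac16\|\nabla\varphi\|_{L^2}^2+\tfrac13\mathbb{K}_{0,2}(\varphi)\ge\mu^{-1}\mathbb{M}(Q)\mathbb{E}_0(Q)-O(\mu)$, whence $\mathbb{E}_V(\varphi)\ge\mathbb{E}_0(\varphi)-\|V\|_{L^\infty}\mu\ge\mu^{-1}\mathbb{M}(Q)\mathbb{E}_0(Q)-O(\mu)$.

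The heart of the matter is the existence of a minimizer for $m(\mu)$, i.e. that $\mathscr{S}_1(\mu)\ne\emptyset$; here the radial compactness argument of \cite{Nakanishi} must be replaced by a profile decomposition. I would take a minimizing sequence $\varphi_n$ and apply a linear profile decomposition in $H^1(\R^3)$ whose only nontrivial parameters are spatial translations $(x_n^j)$ (no scaling parameters are needed, by subcriticality). The decisive point is that a profile with $|x_n^j|\to\infty$ does not feel $V$, $x\cdot\nabla V$, or $\nabla^2V$, so its mass--energy is governed by the free cubic NLS; by the lower bound just proved with $V\equiv0$ (the sharp Gagliardo--Nirenberg bound), such a profile carrying mass $\mu^j>0$ contributes at least $(\mu^j)^{-1}\mathbb{M}(Q)\mathbb{E}_0(Q)$ to the energy, and the strict convexity of $\mu\mapsto\mu^{-1}$ makes splitting mass strictly costly. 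Combined with the \emph{strict} inequality $m(\mu)<\mu^{-1}\mathbb{M}(Q)\mathbb{E}_0(Q)$ from \eqref{e:e1boundsharp}, this forces all of the mass to remain in a single profile sitting at bounded distance from the origin, whence $\varphi_n\to\varphi_*$ strongly in $H^1$ along a subsequence. The constraints pass to the limit: $\mathbb{M}(\varphi_*)=\mu$ and $\mathbb{K}_{V,2}(\varphi_*)=0$ by strong convergence, and $\mathbb{G}(\varphi_*)\ge1$ because the trichotomy (Proposition~\ref{p:smt}), given $\mathbb{K}_{V,2}(\varphi_*)=0$ and $\mathbb{M}(\varphi_*)$ small, places $\varphi_*$ in its third alternative, where $\mathbb{G}(\varphi_*)\gtrsim\mathbb{M}(\varphi_*)^{-1}\gg1$. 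I expect this step---in particular, making rigorous the decoupling of the potential terms in $\mathbb{E}_V$ and $\mathbb{K}_{V,2}$ from profiles escaping to spatial infinity---to be the main obstacle.

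Finally I would close the loop. A Lagrange-multiplier analysis of the constrained minimizer $\varphi_*$ shows it solves $(H+\omega)\varphi_*=|\varphi_*|^2\varphi_*$: the multiplier attached to the constraint $\mathbb{K}_{V,2}=0$ vanishes because the $L^2$-invariant rescaling used above moves $\mathbb{K}_{V,2}$ to first order while fixing $\mathbb{M}$ and changing $\mathbb{E}_V$ compatibly, and the constraint $\mathbb{G}\ge1$ is inactive since $\mathbb{G}(\varphi_*)\gg1$. Thus $\varphi_*\in\mathscr{S}$ and $\mathbb{E}_V(\varphi_*)=m(\mu)>\mathscr{E}_0(\mu)$, the strict inequality because a small ground state $\Phi[z]$ has $\mathbb{G}(\Phi[z])=o(1)<1$; hence $\mathscr{E}_1(\mu)\le m(\mu)$. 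Conversely, any $\varphi\in\mathscr{S}$ with $\mathbb{M}(\varphi)=\mu$ and $\mathbb{E}_V(\varphi)>\mathscr{E}_0(\mu)$ satisfies $\mathbb{K}_{V,2}(\varphi)=0$ by the Pohozaev identity for \eqref{e:solitoneq} and, by the trichotomy again, $\mathbb{G}(\varphi)\gtrsim\mu^{-1}\ge1$, so $\varphi$ is admissible for $m(\mu)$; therefore $\mathscr{E}_1(\mu)=m(\mu)$ and $\varphi_*\in\mathscr{S}_1(\mu)$. Continuity of $\mathscr{E}_1$ then follows from upper semicontinuity (perturbed competitors) and lower semicontinuity (compactness of minimizers, as above), and strict monotonicity from a mass-increasing rescaling of a minimizer. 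For the coercivity statement \eqref{e:variational_characterization} I would argue by contradiction: a sequence $\varphi_n$ with $\mathbb{M}(\varphi_n)\to\mu_*\in[0,\mu_2]$, $\|\nabla\varphi_n\|_{L^2}\ge1$, $\mathbb{K}_{V,2}(\varphi_n)\to0$ and $\mathbb{E}_V(\varphi_n)\le\mathscr{E}_1(\mathbb{M}(\varphi_n))-\delta$ would, by the trichotomy, lie in the third alternative (the first two are excluded by $\|\nabla\varphi_n\|_{L^2}\ge1$ and the smallness of $\mathbb{M}(\varphi_n)$), hence $\mathbb{G}(\varphi_n)\gtrsim\mathbb{M}(\varphi_n)^{-1}\ge1$; correcting $\varphi_n$ by a rescaling to land on $\{\mathbb{K}_{V,2}=0\}$ at cost $o(1)$ makes it an admissible competitor for $m(\mathbb{M}(\varphi_n))=\mathscr{E}_1(\mathbb{M}(\varphi_n))$, so $\mathbb{E}_V(\varphi_n)\ge\mathscr{E}_1(\mathbb{M}(\varphi_n))-o(1)$, contradicting the $-\delta$ for large $n$. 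Taking $\kappa(\cdot,\delta)$ to be a continuous positive minorant of the resulting (positive) infimum of $|\mathbb{K}_{V,2}|$ over the admissible set, capped at $\tfrac12$, gives the required map into $(0,\tfrac12)$.
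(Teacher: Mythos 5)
Your overall architecture (relaxed $\mathbb{I}_V$-formulation, rescaled-$Q$ competitor for the upper bound, profile decomposition for existence, contradiction-compactness for \eqref{e:variational_characterization}) matches the paper's, and two of your ingredients are fine: the upper bound is essentially the paper's Proposition~\ref{p:e1comparison1}, and your lower bound via the sharp Gagliardo--Nirenberg inequality applied directly to admissible competitors (using $|\mathbb{K}_{0,2}(\varphi)|\le\|x\cdot\nabla V\|_{L^\infty}\mathbb{M}(\varphi)$) is a legitimate and simpler route than the paper's Step~5, which runs a second profile decomposition on the rescaled minimizers; your error term is $O(1)$ rather than $O(\mu)$, but that still gives \eqref{e:e1boundsharp}.

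The existence step, however, has a genuine gap. Your exclusion of escaping profiles rests on the claim that a profile $u_j$ with $|y_n^j|\to\infty$ carrying mass $\mu^j$ "contributes at least $(\mu^j)^{-1}\mathbb{M}(Q)\mathbb{E}_0(Q)$ to the energy." That cost bound is the characterization \eqref{e:QcharacteriztionI}, and it applies only when $\mathbb{K}_{0,2}(u_j)\le 0$. The constraint $\mathbb{K}_{V,2}(\varphi_n)=0$ decouples only as the inequality \eqref{e:K2decomp}, which says nothing about the sign of an individual $\mathbb{K}_{0,2}(u_j)$; and in fact the correct deduction (the paper's Step~2, obtained by combining \eqref{e:QcharacteriztionI} with the strict upper bound $E_\infty\le\mu^{-1}\mathbb{M}(Q)\mathbb{I}_0(Q)-C\mu$) is the \emph{opposite}: every escaping profile has $\mathbb{K}_{0,2}(u_j)\ge 0$, hence contributes $\mathbb{I}_0(u_j)=\tfrac12\mathbb{G}(u_j)\ge 0$ --- nonnegative but possibly arbitrarily small. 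So escaping bubbles are cheap, not expensive, and the convexity of $\mu\mapsto\mu^{-1}$ does not force the mass into a single local profile. The dangerous scenario your argument does not exclude is precisely the one the paper's Step~3 is devoted to: the weak limit $u_0$ is a small, ground-state-like object in the \emph{first} alternative of Proposition~\ref{p:smt} ($\mathbb{H}_0(u_0)\lesssim\mathbb{M}(u_0)$, $\mathbb{G}(u_0)\ll 1$), while essentially all of $\mathbb{G}(\varphi_n)$ escapes to spatial infinity in a bubble with $\mathbb{K}_{0,2}$ slightly positive. Ruling this out requires the quantitative bookkeeping of Step~3 (a unique $j_1$ carrying $(1-\mu)\mathbb{M}(Q)\mathbb{I}_0(Q)$ of the product $\mathbb{M}\cdot\mathbb{I}_0$, the bound $0\le\mathbb{K}_{0,2}(u_{j_1})\lesssim\mu^2$, and an $e^{\tau_0}$-rescaling with $\tau_0\lesssim\mu^3$ that contradicts \eqref{e:QcharacteriztionI}); none of this is in your proposal. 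Relatedly, your fallback that the trichotomy "places $\varphi_*$ in its third alternative" given only $\mathbb{K}_{V,2}(\varphi_*)=0$ and small mass is false: the small solitons $\Phi[z]$ satisfy both hypotheses and sit in the first alternative with $\mathbb{G}(\Phi[z])=o(1)<1$. Without Step~3 the constraint $\mathbb{G}\ge 1$ does not pass to the limit and the minimizer is not produced.
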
  
\begin{remark}
In Nakanishi \cite{Nakanishi}, the radial version of $\mathcal{E}_1(\mu)$, which we denote $\mathcal{E}_{1,\mathrm{rad}}(\mu)$, is considered.  When $V$ is radial, both minimization quantities make sense, and by definition, we have $\mathcal{E}_{1}(\mu)\le \mathcal{E}_{1,\mathrm{rad}}(\mu)$.
For $\mathcal{E}_1(\mu)$, the estimate \eqref{e:e1boundsharp} holds.  On the other hand, we do not have a similar upper bound for $\mathcal{E}_{1,\mathrm{rad}}(\mu)$; Instead, we merely have $\lim_{\mu\to 0} \mu\mathcal{E}_{1,\mathrm{rad}}(\mu)=\mathbb{E}_0(Q) \mathbb{M}(Q)$.
\end{remark}

 To prove the theorem, we introduce the quantity
\begin{equation}\label{E1tildedef}
\tilde{\mathscr{E}}_1(\mu) := \inf \{ \mathbb{E}_V(\varphi) \ |\ \varphi \in H^1 ,\, \mathbb{M}(\varphi)=\mu, \, \mathbb{K}_{V,2}(\varphi)=0, \, \mathbb{G}(\varphi) \ge 1 \} .
\end{equation}
In fact, we will ultimately prove that $\tilde{\mathscr{E}}_1(\mu)={\mathscr{E}}_1(\mu)$.

\begin{proof}
The identity
\begin{equation}\label{E1tilde}
\tilde{\mathscr{E}}_1(\mu) = \inf \{ \mathbb{I}_V(\varphi) \ |\ \varphi \in H^1 ,\, \mathbb{M}(\varphi)\le\mu, \,\mathbb{K}_{V,2}(\varphi)\le0,\, \mathbb{G}(\varphi) \ge 1 \} 
\end{equation}
can be established by the same argument as in \cite{Nakanishi}.  Note that the set in question is nonempty, so that $\tilde{\mathscr{E}}_1(\mu)$ is finite.  It also follows that $\mu\mapsto\tilde{\mathscr{E}}_1(\mu)$ is decreasing.

The rest of the proof is divided into six steps.  The first four steps are devoted to the proof of the existence of a minimizer to ${\mathscr{E}}_1(\mu)$.  We also establish the existence of the map $\kappa$. Most of the proof of these two statements may be done in parallel.  To give a unified treatment, we introduce a parameter $E_\I \in (0, \tilde{\mathscr{E}_1} (\mu)]$.  In what follows, we will need the following inequality
for some $C>0$ and $\mu_*>0$: 
\begin{equation}\label{3.10.5}
E_\I	 \le{} \tilde{\mathscr{E}}_1(\mu) \le  {\tfrac1{\mu} \mathbb{M}(Q)  \mathbb{I}_0 (Q) - C \mu}
\end{equation}
for $\mu \in (0,\mu_*]$.
We will prove the inequality below (see Proposition~\ref{p:e1comparison1} and observe that $\mathbb{I}_0(Q)=\mathbb{E}_0(Q)$).
\medskip

\noindent
\underline{\bf Step 1}. 
Let $\{ \varphi_n \}_{n\in \N} \subset H^1$ be a sequence such that
\[
\mathbb{E}_V(\varphi_n) \to E_\I,\quad
\mathbb{M}(\varphi_n) \to \mu, \quad
\mathbb{K}_{V,2}(\varphi_n)\to 0\qtq{as}n\to\infty,
\]
with $\mathbb{H}_0(\varphi_n) \ge 1$ for all $n$.
By Gagliardo--Nirenberg and Young's inequality, 
\begin{align*}
	\mathbb{H}_0 (\varphi_n) ={}& 3\mathbb{E}_V (\varphi_n) - \mathbb{K}_{V,2} (\varphi_n) - \tfrac12 \int (x\cdot\nabla V + 3V)|\varphi_n|^2 dx \\
	\le {}& 3\mathbb{E}_V (\varphi_n) - \mathbb{K}_{V,2} (\varphi_n) + C \mathbb{M} (\varphi_n) + \tfrac12 \mathbb{H}_0 (\varphi_n),
\end{align*}
so that $\mathbb{H}_0 (\varphi_n)  \le 6 E_\I + 2C \mu + 1$ for large $n$; in particular, $\{\varphi_n\}$ is bounded in $H^1$. 

Now, as $\mathbb{K}_{V,2}(\varphi_n)\to 0$ as $n\to\I$, we deduce from $\mathbb{H}_0(\varphi_n) \ge 1$ and the trichotomy of Proposition~\ref{p:smt} that
\[
	\mu^{-1} \lesssim \mathbb{H}_0(\varphi_n) \lesssim  \mathbb{G}(\varphi_n)
\]
for large $n$. Furthermore, using the fact that $\mathbb{K}_{V,2} (\varphi_n) \to 0$ as $n\to\I$, we may also deduce that that $\mathbb{H}_0(\varphi_n) \sim \mathbb{G}(\varphi_n)$ by absorbing the potential term as above.  In particular, we obtain boundedness of $\mathbb{G}(\varphi_n)$. Passing to a subsequence if necessary, we may therefore assume that $\mathbb{H}_0(\varphi_n)$ and $\mathbb{G}(\varphi_n)$ converge as $n\to\I$ to some limits ${H}_\I$ and $G_\I$, which then satisfy $G_\I \sim H_\I \gtrsim \mu^{-1}$.

We now apply a profile decomposition to $\{\varphi_n\}$ with remainder vanishing in $L^4$ (see e.g. \cite{Hmidi}).  This takes the form
\[
\varphi_n = u_0 + \sum_{j=1}^J T_{y_n^j} u_j + r_n^J,
\]
where $(T_y f)(x) := f(x-y)$ denotes translation.  In particular,
\begin{align}
\lim_{n\to\infty} |y_n^j-y_n^k|\to \I\qtq{for any}j\neq k, &\label{jorthogonal} \\
T_{y_n^j}^{-1} r_n^J \rightharpoonup 0\qtq{weakly in }H^1\qtq{for each}j\leq J&, \nonumber \\
 \lim_{J\to\I} \varlimsup_{n\to\I} \norm{r_n^J}_{L^4} =0.& \label{e:L4error}
\end{align}

Furthermore, we have
\begin{equation}\label{e:excitemassdecomp}
\mu \ge \mathbb{M} (u_0) + \sum_{j=1}^\I \mathbb{M}(u_j), 
\end{equation}
\begin{equation}\label{e:exciteH0decomp}
H_\I \ge \mathbb{H}_0 (u_0) + \sum_{j=1}^\I \mathbb{H}_0 (u_j),
\end{equation}
and (making use of \eqref{e:L4error}) 
\begin{equation}\label{e:Gdecomp}
G_\I = \mathbb{G} (u_0) + \sum_{j=1}^\I \mathbb{G} (u_j).
\end{equation}
Note also that for any $f \in L^2 + L^\I$, we have
\begin{equation}\label{e:potentialdecomp}
	\lim_{n\to\I} \int f |\varphi_n|^2\,dx= \int f |u_0|^2\,dx.
\end{equation}

Using \eqref{e:potentialdecomp}, \eqref{e:exciteH0decomp}, and \eqref{e:Gdecomp}, 
 we obtain
\begin{equation}\label{e:K2decomp}
\begin{aligned}
	0={}& \lim_{n\to\infty} \mathbb{K}_{V,2} (\varphi_n)\\
={}&
2H_\infty -3G_\infty - \int_{\R^3}  \tfrac12 (x\cdot \nabla V(x)) |u_0(x)|^2 dx \\
\ge{}& \mathbb{K}_{V,2} (u_0) + \sum_{j=1}^\I  \mathbb{K}_{0,2} (u_j),
\end{aligned}
\end{equation}
and (recalling the definition of $\mathbb{I}_V$ from Definition~\ref{def:KI})
\begin{equation}\label{e:IVdecomp}
	E_\I = \lim_{n\to\I} \mathbb{I}_V (\varphi_n) = \mathbb{I}_V (u_0) + \sum_{j=1}^\I \mathbb{I}_0 (u_j).
\end{equation}
\medskip

\noindent
\underline{\bf Step 2}.
In this step, we prove
\begin{equation}\label{e:exciteclaim1}
\mathbb{K}_{V_,2}(u_0) \le 0.
\end{equation}
In view of \eqref{e:K2decomp}, it is a consequence of
\begin{equation}\label{e:exciteclaim1.5}
\mathbb{K}_{0,2} (u_j) \ge 0, \quad \forall j \ge 1.
\end{equation}
Let us prove \eqref{e:exciteclaim1.5}.
We fix $j_0 \ge 1$ and first observe that if $u_{j_0}=0$ then $\mathbb{K}_{0,2} (u_{j_0}) = 0$, so we suppose instead that $u_{j_0}\neq 0$.

Using 
\begin{equation}\label{e:e1existencepf1}
\mathbb{I}_V (u_0) \ge - \tfrac12\norm{(x\cdot \nabla V + 2 V)_-}_{L^\I} \mathbb{M}(u_0),
\end{equation}
we first observe that
\begin{equation}\label{e:Iujbound}
	 \mathbb{I}_0(u_{j_0}) < \tfrac1{\mu} \mathbb{M}(Q)  \mathbb{I}_0 (Q) - C \mu + \tfrac12\norm{(x\cdot \nabla V + 2 V)_-}_{L^\I} \mathbb{M}(u_0).
\end{equation}
As $\mathbb{M}(u_{j_0}) \le \mu - \mathbb{M}(u_0)$, we obtain
\begin{align*}
\mathbb{M}(u_{j_0}) \mathbb{I}_0(u_{j_0})<{}& \tfrac{ \mu - \mathbb{M}(u_0)}{\mu } \mathbb{M}(Q)  \mathbb{I}_0 (Q)  - C \mu \mathbb{M}(u_{j_0})\\
 &+ \tfrac12\norm{(x\cdot \nabla V + 2 V)_-}_{L^\I}(\mu - \mathbb{M}(u_0)) \mathbb{M}(u_0).
\end{align*}
Now observe that if $\mu \lesssim_{Q,V} 1$ then the right-hand side is decreasing in $\mathbb{M}(u_0) \in [0, \mu]$. Thus we obtain the bound
\begin{equation}\label{e:ujbound}
\mathbb{M}(u_{j_0}) \mathbb{I}_0(u_{j_0}) < \mathbb{M}(Q)  \mathbb{I}_0 (Q)  - C \mu \mathbb{M}(u_{j_0}).
\end{equation}
Recalling that 
\begin{equation}\label{e:QcharacteriztionI}
\mathbb{M}(Q)  \mathbb{I}_0 (Q) = \inf \{ \mathbb{M}(\varphi)  \mathbb{I}_0 (\varphi) \ |\ \mathbb{K}_{0,2} (\varphi) \le 0, \,\varphi \neq0\},
\end{equation}
we see from \eqref{e:ujbound} that $\mathbb{K}_{0,2}(u_{j_0}) >0$.   Thus $\mathbb{K}_{0,2} (u_j) \ge 0$ for all $j \ge 1$, as desired.
\medskip

\noindent
\underline{\bf Step 3}.
We next claim
\begin{equation}\label{e:exciteclaim2}
\mathbb{G} (u_0)\geq 1.
\end{equation}
Notice that \eqref{e:exciteclaim1} and Proposition~\ref{p:smt} imply one of the following two alternatives:
\begin{equation}\label{e:e1existence_falsecase}
\mathbb{H}_0(u_0) \lesssim \mathbb{M}(u_0) \le \mu,
\end{equation}
\begin{equation}\label{e:e1existence_truecase}
\mathbb{G}(u_0) \gtrsim \mathbb{H}_0 (u_0)  \gtrsim \mathbb{M}(u_0)^{-1} \ge \mu^{-1}.
\end{equation}
To obtain the desired conclusion, it suffices to prove that \eqref{e:e1existence_truecase} holds.

Now suppose towards a contradiction that \eqref{e:e1existence_falsecase} occurs.  In this case, the Gagliardo-Nirenberg inequality implies 
\begin{equation}\label{GNcase111}
\mathbb{G}(u_0) \lesssim \mathbb{M}(u_0)^2 
\end{equation}
which yields
\begin{equation}\label{e:e1existencepf2}
\mathbb{K}_{V,2} (u_0) \ge  - C \mathbb{M}(u_0)^2 - \norm{(x\cdot \nabla V)_+}_{L^\I} \mathbb{M}(u_0)\ge - C'\mathbb{M}(u_0).
\end{equation}

We now claim that there exists unique $j_1\ge1$ such that
\begin{equation}\label{e:e1existencepf2.5}
\mathbb{M}(u_{j_1})  \mathbb{I}_0 (u_{j_1}) \ge (1-\mu) \mathbb{M}(Q)  \mathbb{I}_0 (Q) .
\end{equation}
The uniqueness of such a number is proved as follows: by Cauchy--Schwarz, embedding of $\ell^p$ spaces, the mass decomposition, \eqref{e:IVdecomp}, and \eqref{3.10.5}, we have 
\begin{align*}
\sum_{j=1}^\I \mathbb{M}(u_j)  \mathbb{I}_0 (u_j) &\le \sum_{j=1}^\infty \mathbb{M}(u_j)\cdot\sum_{j=1}^\infty \mathbb{I}_0 (u_j) \\
&\le [\mu-\mathbb{M}(u_0)][E_\infty-\mathbb{I}_V(u_0)] \\
 &  \le [\mu-\mathbb{M}(u_0)][\tfrac{1}{\mu}\mathbb{M}(Q)\mathbb{I}_0(Q)-C\mu +C_V \mathbb{M}(u_0)]\\
&\le \mathbb{M}(Q)  \mathbb{I}_0 (Q) - C \mu^2+ f_\mu(\mathbb{M}(u_0)),
\end{align*}
where 
\[
f_\mu(x):=\mu C_Vx - \tfrac{1}{\mu}\mathbb{M}(Q)\mathbb{I}_0(Q)x + C\mu x - C_V x^2
\]
is negative for sufficiently small $\mu$ and $x\in[0,\mu]$.  Hence, \eqref{e:e1existencepf2.5} does not hold for more than one $j_1$.

For the existence of such $j_1$, we suppose instead that 
\begin{equation}\label{e:e1existencepf3}
	\mathbb{M}(u_{j})  \mathbb{I}_0 (u_{j}) < (1-\mu) \mathbb{M}(Q)  \mathbb{I}_0 (Q) 
\end{equation}
for all $j \ge 1$.  Then, by the sharp Gagliardo--Nirenberg inequality,
\begin{align*}
\mathbb{K}_{0,2} (u_j) &\ge 2 \mathbb{H}_0 (u_j) - 3 \mathbb{G} (u_j)^\frac13 \( \frac{\mathbb{G}(Q)}{\mathbb{M}(Q)^{1/2}\mathbb{H}_0(Q)^{3/2}} \mathbb{M}(u_j)^{\frac12}\mathbb{H}_0(u_j)^{\frac32} \)^\frac23 \\
&> 2 (1-(1-\mu)^\frac13) \mathbb{H}_0 (u_j)> \tfrac\mu3 \mathbb{H}_0 (u_j)
\end{align*}
for all $j\ge 1$, where we have used  $\mathbb{I}_0 = \frac12 \mathbb{G}$, $\mathbb{H}_0(Q) = \tfrac32 \mathbb{G}(Q)$, and
\eqref{e:e1existencepf3}.  Thus, observing that
\[
\mathbb{H}_0 (u_j)\ge \tfrac32 \mathbb{G}(u_j) \Leftrightarrow \mathbb{K}_{0,2}(u_j) \ge 0,
\]
we see from \eqref{e:K2decomp}, \eqref{e:exciteclaim1.5}, \eqref{e:Gdecomp}, \eqref{GNcase111}, 
\eqref{e:excitemassdecomp}, and the bound $G_\infty \gtrsim \mu^{-1}$ that
\begin{align*}
- \mathbb{K}_{V,2} (u_0) \ge{}& \sum_{j=1}^\I \mathbb{K}_{0,2} (u_j) > \tfrac\mu{3} \sum_{j=1}^\I \mathbb{H}_0 (u_j) \ge \tfrac\mu{2} \sum_{j=1}^\I \mathbb{G} (u_j) \\	
={}& \tfrac\mu{2}(G_\I -  \mathbb{G}(u_0)) > C(1 - \mu^3),
\end{align*}
which contradicts \eqref{e:e1existencepf2} if $\mu$ is small.  Thus we obtain $j_1$ such that \eqref{e:e1existencepf2.5} holds, and by relabeling indices we may assume $j_1=1$.  In particular, 
\begin{equation}\label{e:e1existencepf4}
	 \mathbb{I}_0 (u_{1}) \ge \tfrac1{2\mu} \mathbb{M}(Q)  \mathbb{I}_0 (Q).
\end{equation}

In view of \eqref{e:Iujbound} and \eqref{e:e1existencepf2.5},
\[
\mu - \mathbb{M}(u_0) \ge \mathbb{M}(u_1)= \tfrac{ \mathbb{M}(u_1)  \mathbb{I}_0 (u_{1})}{ \mathbb{I}_0 (u_{1})} \ge \mu(1-\mu) \tfrac{ \mathbb{M}(Q)  \mathbb{I}_0 (Q) }{ \mathbb{M}(Q)  \mathbb{I}_0 (Q) - C_1 \mu^2 + C_2 \mu \mathbb{M}(u_0) } ,
\]
where $C_1$ and $C_2$ are constants depending only on $V$.  Regarding this as a quadratic inequality with respect to $\mathbb{M}(u_0)$ and solving the inequality under the constraint $\mathbb{M}(u_0) \le \mu$, we obtain that in fact, 
\[
\mathbb{M}(u_0) \lesssim \mu^2. 
\]
Using the assumption \eqref{e:e1existence_falsecase} and Gagliardo--Nirenberg, we also see that $\mathbb{H}_0(u_0) \lesssim \mu^2$ and $\mathbb{G}(u_0) \lesssim \mu^4$.
In particular, together with \eqref{e:K2decomp} and 
\eqref{e:exciteclaim1.5},
\begin{equation}\label{e:e1existencepf5}
	0 \le \mathbb{K}_{0,2} (u_1) \le - \mathbb{K}_{V,2} (u_0) \lesssim \mathbb{G}(u_0) + \mathbb{M}(u_0) \lesssim \mu^2.
\end{equation}

Now, let 
\[
u_1^\tau=e^{\frac{3\tau}{2}}u_1(e^\tau x),
\]
so that
\[
	\mathbb{K}_{0,2} (u_1^\tau) = 2e^{2\tau} \mathbb{H}_0 (u_1) - 3e^{3\tau} \mathbb{G} (u_1)
	= 2e^{2\tau} \mathbb{K}_{0,2} (u_1) - 6e^{2\tau}(e^\tau-1) \mathbb{I}_0 (u_1).
\]
By \eqref{e:e1existencepf4} and \eqref{e:e1existencepf5}, there exists $0< \tau_0 \lesssim \mu^3$ such that
$\mathbb{K}_{0,2} (u_1^{\tau_0})=0$. Then $\mathbb{M}(u_1^{\tau_0})=\mathbb{M}(u_1) \le \mu$ and
\begin{align*}
\mathbb{I}_0 (u_1^{\tau_0}) = e^{3\tau_0} \mathbb{I}_0 (u_1) \le{}& (1+ O(\mu^3)) \( \tfrac1{\mu} \mathbb{M}(Q)  \mathbb{I}_0 (Q) - C_1' \mu   \)\\
\le {}& \tfrac1{\mu} \mathbb{M}(Q)  \mathbb{I}_0 (Q) - \tfrac12C_1' \mu
\end{align*}
if $\mu$ is small. Thus, $u_1^{\tau_0}$ is a nonzero function satisfying
\[
\mathbb{M}(u_1(\tau_0))\mathbb{I}_0 (u_1(\tau_0)) < \mathbb{M}(Q)  \mathbb{I}_0 (Q)\qtq{and} \mathbb{K}_{0,2} (u_1(\tau_0))=0,
\]
contradicting the characterization \eqref{e:QcharacteriztionI} of $Q$.  Thus we conclude that \eqref{e:e1existence_falsecase} fails, so that \eqref{e:e1existence_truecase} holds. 
\medskip

\noindent
\underline{\bf Step 4}.
Now, we see from \eqref{e:excitemassdecomp},
\eqref{e:exciteclaim1}, and \eqref{e:exciteclaim2} that $u_0$ satisfies 
\[
\mathbb{M}(u_0) \le \mu,\quad\mathbb{K}_{V_,2}(u_0) \le 0,\qtq{and} \mathbb{G} (u_0)\geq 1,
\]
One then deduces from the characterization \eqref{E1tilde} of $\tilde{\mathscr{E}}_1(\mu)$ that
\[
\mathbb{I}_V (u_0) \ge \tilde{\mathscr{E}}_1 (\mu).
\]
Furthermore, by \eqref{e:IVdecomp},
\[
\tilde{\mathscr{E}}_1 (\mu) \le \mathbb{I}_V (u_0) \le E_\I \le \tilde{\mathscr{E}}_1(\mu).
\]
Thus, we have $E_\I=\tilde{\mathscr{E}}_1 (\mu)$ and $u_0$ is a minimzer to $\tilde{\mathscr{E}}_1 (\mu)$.  In fact, this also shows that if $E_\I < \tilde{\mathscr{E}}_1 (\mu)$, then there is no sequence such that
\[
\mathbb{E}_V(\varphi_n) \to E_\I,\quad\mathbb{M}(\varphi_n) \to \mu, \qtq{and}\mathbb{K}_{V,2}(\varphi_n)\to 0 \qtq{as}n\to\infty
\]
and such that $\inf_n \mathbb{H}_0( \varphi_n)\ge 1 $. In particular, for small $\delta>0$, we have
\[
\inf \{ |\mathbb{K}_{V,2}(\varphi)| \ :\ \mathbb{E}_V(\varphi)< \tilde{\mathscr{E}}_1 (\mu) - \delta, \, \mathbb{M}(\varphi) \le \mu, \, \mathbb{H}_0( \varphi)\ge 1 \} \gtrsim_\delta 1.
\]
This will imply \eqref{e:variational_characterization} once we establish $\tilde{\mathscr{E}}_1 (\mu) = {\mathscr{E}}_1 (\mu)$.  In particular, we will complete the proof once we establish this identity and the existence of a minimizer to ${\mathscr{E}}_1 (\mu)$.

We first show $\tilde{\mathscr{E}}_1(\mu)\le\mathscr{E}_1(\mu)$.  We select $\varphi_0 \in \mathscr{S}$ such that $\mathbb{M}(\varphi_0)=\mu$ and $\mathbb{E}_V(\varphi_0) > \mathscr{E}_0(\mu)$.  If there is no such function then ${\mathscr{E}}_1 (\mu)=\I$ and the result follows.  Otherwise, $\varphi_0 \in \mathscr{S}$ implies that $\mathbb{K}_{V,2}(\varphi_0)=0$.
Then, by Proposition~\ref{p:smt}, we have either $\mathbb{H}_0 \lesssim \mathbb{M} \lesssim \mu$ or $\mu^{-1} \lesssim \mathbb{H}_0 \lesssim \mathbb{G} (\varphi_0)$. In the former case we see from Proposition \ref{p:coordinate} below that{\footnote{This requires $\mu_2 \le \mu_4$.}} $\mathbb{E}_V(\varphi_0)=\mathscr{E}_0(\mu)$, which is precluded.
Hence, we have the latter case. In particular, $\mathbb{G} (\varphi_0)\geq1$, so that
\[
\mathbb{E}_V(\varphi_0) \ge \tilde{\mathscr{E}}_1 (\mu).
\]
Taking the infimum over such $\varphi_0$, we obtain $\tilde{\mathscr{E}}_1 (\mu) \le {\mathscr{E}}_1 (\mu)$.

We next show $\tilde{\mathscr{E}}_1(\mu)\ge \mathscr{E}_1(\mu)$. We let $\varphi_1 $ be the minimizer to $\tilde{\mathscr{E}}_1 (\mu)$ constructed above.  Using a scaling argument, we may obtain that $\mathbb{M}(\varphi_1)=\mu$ and $\mathbb{K}_{V,2}(\varphi_1) =0$.  Furthermore, arguing as in \cite{Nakanishi}, we may obtain that $\varphi_1 \in \mathscr{S}$.
Noting that 
\[
\mathbb{E}_{V}(\varphi_1) \gtrsim \mu^{-1} \ge \mu \sim \mathscr{E}_0(\mu),
\]
we therefore obtain
\[
\mathbb{E}_{V} (\varphi_1) \ge {\mathscr{E}}_1 (\mu).
\]
Hence, the desired inequality follows from $\tilde{\mathscr{E}}_1(\mu)=\mathbb{E}_{V} (\varphi_1)$.
We may also observe that $\varphi_1$ is in fact a minimizer to $\mathscr{E}_1(\mu)$.
\medskip

\noindent
\underline{\bf Step 5}.
Let us next establish the bound \eqref{e:e1boundsharp}.
The upper bound follows from Proposition~\ref{p:e1comparison1} and $\mathcal{E}_1(\mu)=\tilde{\mathcal{E}}_1(\mu)$.
Hence, we consider the lower bound. 
It suffices to show $\mu \mathcal{E}_1(\mu)$ tends to $\mathbb{E}_0(Q) \mathbb{M}(Q)$ as $\mu\to0$.
To see this, we let $\varphi(\mu)$ be the minimizer for $\mathcal{E}_1(\mu)$ and set $Q_\mu(x):=\mu \varphi(\mu)(\mu x)$. 
A computation shows $\mathbb{M}(Q_\mu)=1$ and, thanks to \eqref{e:e1existence_truecase}, $\mathbb{G}(Q_\mu) \gtrsim 1$. Further,
\begin{align*}
	\mu \mathcal{E}_1(\mu)=\mu\mathbb{E}_V(\varphi(\mu))=
	 \mathbb{H}_0(Q_\mu)-\mathbb{G}(Q_\mu) + \tfrac{\mu^2}2 \int V(\mu x)
	 |Q_\mu|^2 dx
\end{align*}
and
\[
	0=\mu\mathbb{K}_{V,2}(\varphi(\mu))=
	 2\mathbb{H}_0(Q_\mu)-3\mathbb{G}(Q_\mu) + \tfrac{\mu^2}4 \int (x\cdot \nabla V +2V)(\mu x)
	 |Q_\mu|^2 dx
\]
follow. Combining these two identities and using the upper bound in \eqref{e:e1boundsharp}, $\mathbb{M}(Q_\mu)=1$, and the assumption $V, x\cdot \nabla V \in L^\infty$, one has
\[
	\mathbb{H}_0(Q_\mu) < 3 \mathbb{E}_0(Q) \mathbb{M}(Q)  + C \mu^2,
\]
where the constant $C$ depends only on $V$. Hence, $\{ Q_\mu \}_{0<\mu \le \mu_2}$ is a bounded sequence in $H^1$.
Further, 
\[
	1 \lesssim \varlimsup_{\mu \to 0} \mathbb{G}(Q_\mu) =
	\varlimsup_{\mu \to 0} \tfrac23 \mathbb{H}_0(Q_\mu) \le
	2 \mathbb{E}_0(Q) \mathbb{M}(Q)
\]

Now, pick a sequence $\mu_n \to 0$. Subtracting a subsequence if necessary,
we suppose that $\mathbb{H}_0(Q_{\mu_n})$ and $\mathbb{G}(Q_{\mu_n})$ converge.
We apply the profile decomposition to $\{ Q_{\mu_n} \}_n$ with remainder vanishing in $L^4$, as in Step 1. This now takes the form
\[
	Q_{\mu_n} = \sum_{j=0}^J T_{y_n^j} \tilde{Q}_j + \rho_n^J
\]
up to a subsequence. Note that we have the bounds
\begin{equation}\label{e:pfStep5_1}
	\sum_{j=0}^\infty \mathbb{M}(\tilde{Q}_j) \le 1, \quad
	\sum_{j=0}^\infty \mathbb{G}(\tilde{Q}_j) = \lim_{n\to\infty}
	\mathbb{G}(Q_{\mu_n}) \le	2 \mathbb{E}_0(Q) \mathbb{M}(Q).
\end{equation}
If $\tilde{Q}_j=0$ for all $j\ge0$ then we have $\sum_{j=0}^\infty \mathbb{G}(\tilde{Q}_j)=0$, a contradiction.
Hence,  $\tilde{Q}_j\neq0$ for some $j\ge0$.
Then, since
\[
	\sum_{j=0} \mathbb{K}_{0,2}(\tilde{Q}_j)
	\le 2 \lim_{n\to\infty} \mathbb{H}_0(Q_{\mu_n}) - 3
	 \lim_{n\to\infty} \mathbb{G}(Q_{\mu_n})=0,
\]
there exists $j_2\ge0$ such that $\tilde{Q}_{j_2}\neq0$ and
$\mathbb{K}_{0,2}(\tilde{Q}_{j_2})\le 0$.
Then, recalling $\mathbb{I}_0(Q)=\mathbb{E}_0(Q)=\mathbb{G}(Q)/2$, we see from \eqref{e:QcharacteriztionI} and \eqref{e:pfStep5_1} that
\[
	\mathbb{E}_0(Q) \mathbb{M}(Q) =\mathbb{I}_0(Q) \mathbb{M}(Q)\le \mathbb{I}_0(\tilde{Q}_{j_2}) \mathbb{M}(\tilde{Q}_{j_2}) = \tfrac12
	\mathbb{G}(\tilde{Q}_{j_2}) \mathbb{M}(\tilde{Q}_{j_2}) \le \mathbb{E}_0(Q) \mathbb{M}(Q).
\]
In particular, $\mathbb{G}(\tilde{Q}_{j_2}) \mathbb{M}(\tilde{Q}_{j_2})=2\mathbb{E}_0(Q) \mathbb{M}(Q)$ holds.
This also implies that $\mathbb{M}(\tilde{Q}_{j_2})=1$ in view of \eqref{e:pfStep5_1}.
Hence $\tilde{Q}_j=0$ for all $j\neq j_2$ and further $Q_{\mu_n}$ converges 
strongly to $\tilde{Q}_{j_2}$ in $H^1$.
Thus, 
\[
	\lim_{n\to\infty} \mu_n \mathcal{E}_1(\mu_n)=
	\tfrac12 \mathbb{G}(\tilde{Q}_{j_2}) = \mathbb{E}_0(Q) \mathbb{M}(Q).
\]
Since $\{\mu_n\}_n$ is arbitrary, we obtain the desired convergence.
\medskip

\noindent
\underline{\bf Step 6}.
Finally, let us prove continuity of $\mathscr{E}_1(\mu)$.
Let $\varphi(\mu) $ denote the minimizer for $\mathscr{E}_1(\mu)$.  We define 
\[
{\varphi}^\tau(x) = e^{6\tau/5} (\varphi(\mu))( e^{\tau} x).
\]
It follows that
\begin{align*}
&\tfrac{d}{d\tau}\mathbb{M} ({\varphi}^\tau)|_{\tau=0} = -\tfrac35 \mu, \\
&\tfrac{d}{d\tau}\mathbb{E}_V ({\varphi}^\tau)|_{\tau=0}  = \tfrac75 \mathbb{H}_0(\varphi(\mu)) - \tfrac95 \mathbb{G}(\varphi(\mu)) +O(1)= \tfrac15 \mathbb{H}_0(\varphi(\mu)) + O(1), \\
&\tfrac{d}{d\tau}\mathbb{K}_{V,2} ({\varphi}^\tau)|_{\tau=0}  = \tfrac{14}5 \mathbb{H}_0(\varphi(\mu)) - \tfrac{27}5 \mathbb{G}(\varphi(\mu)) +O(1)= -\tfrac45 \mathbb{H}_0(\varphi(\mu)) + O(1). 
\end{align*}
This shows that $\mathbb{E}_V ({\varphi}^\tau) \le \mathscr{E}_1(\mu) + O(\tau\mu^{-1})$ for small $\tau>0$. Thus, for any $\eps>0$ there exists $\delta_0=O(\mu^2 \eps)$ so that if $\mu' \in (\mu-\delta_0,\mu)$ then
\[
\mathscr{E}_1(\mu) \le \mathscr{E}_1(\mu') \le \mathscr{E}_1(\mu) + \eps.
\]
This implies left continuity of $\mathscr{E}_1(\mu)$.  The right continuity is shown similarly. \end{proof}

We turn now to the estimate on $\tilde{\mathscr{E}}_1 (\mu) $ that was needed in the proof above. 

\begin{proposition}\label{p:e1comparison1}  For any $\eps>0$ there exists $\mu_2' >0$ such that
\[
\tilde{\mathscr{E}}_1 (\mu) \le  \tfrac1{\mu} \mathbb{E}_0 (Q) \mathbb{M}(Q)  + (V(0)+\eps) \mu \qtq{for all}0<\mu<\mu_2',
\]
where $\tilde{\mathscr{E}}_1 (\mu)$ is defined in \eqref{E1tildedef}.
\end{proposition}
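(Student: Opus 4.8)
The plan is to test $\tilde{\mathscr{E}}_1(\mu)$ against a rescaled cubic NLS ground state concentrated near the origin, where $V$ is minimal. For $\lambda>0$ set $\varphi_\lambda(x):=(\mu/\mathbb{M}(Q))^{1/2}\lambda^{3/2}Q(\lambda x)$; a change of variables gives $\mathbb{M}(\varphi_\lambda)=\mu$ for every $\lambda$, together with $\mathbb{H}_0(\varphi_\lambda)=\mu\lambda^2\mathbb{M}(Q)^{-1}\mathbb{H}_0(Q)$ and $\mathbb{G}(\varphi_\lambda)=\mu^2\lambda^3\mathbb{M}(Q)^{-2}\mathbb{G}(Q)$. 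The heuristic is that the critical scale $\lambda\approx\mathbb{M}(Q)/\mu$ annihilates the $V$-free part $2\mathbb{H}_0-3\mathbb{G}$ of $\mathbb{K}_{V,2}$ (using the Pohozaev relations $\mathbb{K}_{0,2}(Q)=0$ and $\mathbb{H}_0(Q)=\tfrac32\mathbb{G}(Q)$), pushes $\lambda\to\infty$ as $\mu\to0$, and thereby localizes $\varphi_\lambda$ so tightly at the origin that $\tfrac12\int V|\varphi_\lambda|^2\to V(0)\mu$ while the remaining energy tends to $\tfrac1\mu\mathbb{E}_0(Q)\mathbb{M}(Q)$.

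\textbf{Choosing the scale.} Substitute $\lambda=\mathbb{M}(Q)t/\mu$ and set $g(t):=\mathbb{K}_{V,2}(\varphi_{\mathbb{M}(Q)t/\mu})$. A direct computation with $\mathbb{H}_0(Q)=\tfrac32\mathbb{G}(Q)$ gives
\[
g(t)=\frac{3\mathbb{G}(Q)\mathbb{M}(Q)}{\mu}\,t^2(1-t)-\frac{\mu}{2\mathbb{M}(Q)}\,P\Big(\tfrac{\mathbb{M}(Q)t}{\mu}\Big),\qquad P(\lambda):=\int\Big(\tfrac{y}{\lambda}\cdot\nabla V\big(\tfrac{y}{\lambda}\big)\Big)Q(y)^2\,dy.
\]
By (A6), $x\cdot\nabla V\in L^\infty$ and is continuous at $0$ with $(x\cdot\nabla V)(0)=0$; hence $|P(\lambda)|\le\|x\cdot\nabla V\|_{L^\infty}\|Q\|_{L^2}^2$ and, by dominated convergence, $P(\lambda)\to0$ as $\lambda\to\infty$. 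Since $t\mapsto\varphi_{\mathbb{M}(Q)t/\mu}$ is continuous into $H^1$ and $\mathbb{K}_{V,2}$ is continuous on $H^1$ (for the potential term use $x\cdot\nabla V\in L^\infty$), $g$ is continuous; and for $\mu$ small $g(1/2)>0>g(3/2)$ because the first term, of size $\sim\mu^{-1}$, dominates the $O(\mu)$ second term. By the intermediate value theorem there is $t_\mu\in(1/2,3/2)$ with $g(t_\mu)=0$; put $\lambda_\mu:=\mathbb{M}(Q)t_\mu/\mu$, so $\mathbb{K}_{V,2}(\varphi_{\lambda_\mu})=0$ and $\lambda_\mu\ge\mathbb{M}(Q)/(2\mu)\to\infty$. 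Feeding this back into $g(t_\mu)=0$ gives $t_\mu^2(1-t_\mu)=\tfrac{\mu^2}{6\mathbb{G}(Q)\mathbb{M}(Q)^2}P(\lambda_\mu)=o(\mu^2)$, since $\lambda_\mu\to\infty$ forces $P(\lambda_\mu)\to0$; as $t_\mu\in(1/2,3/2)$ this bootstraps to $t_\mu=1+s_\mu$ with $s_\mu=o(\mu^2)$.

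\textbf{Energy of the test function and conclusion.} Using the scalings and the expansion $(1+s)^2\mathbb{H}_0(Q)-(1+s)^3\mathbb{G}(Q)=\mathbb{E}_0(Q)+s\,\mathbb{K}_{0,2}(Q)+O(s^2)=\mathbb{E}_0(Q)+O(s^2)$, we get $\mathbb{H}_0(\varphi_{\lambda_\mu})-\mathbb{G}(\varphi_{\lambda_\mu})=\tfrac{\mathbb{M}(Q)}{\mu}(\mathbb{E}_0(Q)+O(s_\mu^2))=\tfrac1\mu\mathbb{E}_0(Q)\mathbb{M}(Q)+o(\mu)$; and by dominated convergence (using $V\in L^\infty$, continuity of $V$ at $0$ from (A6), and $\int Q^2=2\mathbb{M}(Q)$), $\tfrac12\int V|\varphi_{\lambda_\mu}|^2\,dx=\tfrac{\mu}{2\mathbb{M}(Q)}\int V(y/\lambda_\mu)Q(y)^2\,dy=V(0)\mu+o(\mu)$. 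Hence $\mathbb{E}_V(\varphi_{\lambda_\mu})=\tfrac1\mu\mathbb{E}_0(Q)\mathbb{M}(Q)+V(0)\mu+o(\mu)$. Moreover $\mathbb{G}(\varphi_{\lambda_\mu})=\tfrac{\mathbb{M}(Q)}{\mu}(1+s_\mu)^3\mathbb{G}(Q)\to\infty$, so for small $\mu$ the function $\varphi_{\lambda_\mu}$ satisfies $\mathbb{M}(\varphi_{\lambda_\mu})=\mu$, $\mathbb{K}_{V,2}(\varphi_{\lambda_\mu})=0$, $\mathbb{G}(\varphi_{\lambda_\mu})\ge1$, i.e. it is admissible in \eqref{E1tildedef}. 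Therefore $\tilde{\mathscr{E}}_1(\mu)\le\mathbb{E}_V(\varphi_{\lambda_\mu})$, and given $\eps>0$ we pick $\mu_2'>0$ so that the $o(\mu)$ term is $\le\eps\mu$ (and $\mathbb{G}(\varphi_{\lambda_\mu})\ge1$) for $0<\mu<\mu_2'$, which yields the claimed bound.

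\textbf{Main obstacle.} The delicate point is producing $\lambda_\mu$ with $\mathbb{K}_{V,2}(\varphi_{\lambda_\mu})=0$ \emph{exactly} while retaining the quantitative control $\lambda_\mu\mu/\mathbb{M}(Q)=1+o(\mu^2)$. The intermediate value step is routine, but the bootstrap is essential: the crude bracket $t_\mu\in(1/2,3/2)$ first forces $\lambda_\mu\to\infty$, hence $P(\lambda_\mu)\to0$, which then upgrades $t_\mu$ to $1+o(\mu^2)$; without this refinement one would be left with an $O(\mu)$ error in the energy that cannot be absorbed into $\eps\mu$.
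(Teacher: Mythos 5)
Your proof is correct, and it takes a route that differs from the paper's in one substantive technical respect. Both arguments test the infimum against the same one-parameter family of dilated ground states $\lambda^{3/2}Q(\lambda x)$ concentrated at the origin at scale $\lambda\sim\mathbb{M}(Q)/\mu$, and both extract $V(0)\mu$ from the potential term by dominated convergence. The difference is in how the virial constraint is handled. The paper first passes to the equivalent characterization \eqref{E1tilde} of $\tilde{\mathscr{E}}_1(\mu)$ as an infimum of $\mathbb{I}_V$ over the \emph{relaxed} set $\{\mathbb{M}\le\mu,\ \mathbb{K}_{V,2}\le 0,\ \mathbb{G}\ge 1\}$; it then only needs $\mathbb{K}_{V,2}\le 0$, which it arranges by an explicit small dilation $\tau=\eps^2\mu^2$ making $\mathbb{K}_{0,2}$ negative of size $\gtrsim\eps^2\mu^2$, enough to absorb the $o(\mu^2)$ potential contribution, while costing only $O(\eps\mu^2)$ in $\tfrac12\mathbb{G}$. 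You instead work directly with the definition \eqref{E1tildedef}, which demands $\mathbb{K}_{V,2}=0$ \emph{exactly}, and you achieve this by an intermediate value argument in the dilation parameter followed by the bootstrap $t_\mu=1+o(\mu^2)$ (in fact $t_\mu=1+o(\mu)$ already suffices, since the linear term in your expansion vanishes because $\mathbb{K}_{0,2}(Q)=0$, leaving an error $\mathbb{M}(Q)s_\mu^2/\mu=o(\mu)$). What each buys: your argument is logically self-contained with respect to \eqref{E1tildedef} and does not invoke the identity \eqref{E1tilde} (which the paper imports from Nakanishi), at the price of the IVT/bootstrap step; the paper's argument avoids solving the constraint exactly but leans on the reformulation. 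Both are sound; your appeal to $(x\cdot\nabla V)(0)=0$ and to continuity of $\mathbb{K}_{V,2}$ on $H^1$ via $x\cdot\nabla V\in L^\infty$ matches the paper's own use of assumption (A6).
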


\begin{proof}
Recalling \eqref{E1tilde} and using the rescaling $\psi (x) = \mu \varphi(\mu x)$, we may write
\begin{equation}\label{e:e1comparisonpf1}
	\mu \tilde{\mathscr{E}}_1 (\mu) = \inf_{\psi \in A_\mu} \biggl[ \tfrac12 \mathbb{G}(\psi) + \tfrac14 \int \mu^2(x\cdot \nabla V + 2 V)(\mu x)|\psi(x)|^2  dx\biggr],
\end{equation}
where
\[
A_\mu = \bigl\{ \psi \in H^1 \ |\ \mathbb{M}(\psi)\le1 ,\, \mathbb{K}_{0,2}(\psi)\le \tfrac12 \int \mu^2(x\cdot \nabla V )(\mu x)|\psi(x)|^2  dx ,\, \mathbb{G}(\psi)\ge\mu \bigr\}.
\]

Now set $\psi_1^a = e^{a} Q(e^a x)$ for $a\in \R$, where $Q$ is the NLS ground state as above.   We recall that
\[
\mathbb{K}_{0,2} (Q) =0, \quad \mathbb{G}(Q) =  2\mathbb{E}_{0}(Q), 
\]
and thus
\begin{align*}
&\mathbb{M} (\psi_1^a) = e^{-a} \mathbb{M} (Q), \\
&\tfrac12\mathbb{G}(\psi_1^a) = \tfrac12 e^{a}	\mathbb{G}(Q)=  e^{a}\mathbb{E}_0(Q), \\
&\mathbb{K}_{0,2} (\psi_1^a) =e^{a}\mathbb{K}_{0,2} (Q)=0.
\end{align*}
We then let $\tilde{\psi}_1:= \psi_1^{a_0}$ with $a_0 = \log \mathbb{M}(Q)$. Thus
\begin{align*}
&\mathbb{M} (\tilde{\psi}_1) = 1, \\
&\tfrac12\mathbb{G}(\tilde{\psi}_1) = \mathbb{E}_0(Q)\mathbb{M} (Q), \\
&\mathbb{K}_{0,2} (\tilde{\psi}_1) = 2\mathbb{H}_0(\tilde{\psi}_1) - 3\mathbb{G}(\tilde{\psi}_1)  =0.
\end{align*}
Finally, set $\psi_2^\tau = e^{3\tau/2} \tilde{\psi}_1(e^\tau x)$ for $\tau\in \R$. Then
\begin{align}
&\mathbb{M} (\psi_2^\tau) =\mathbb{M} (\tilde{\psi}_1) = 1, \label{e:e1comparisonpf2}\\
&\tfrac12\mathbb{G}(\psi_2^\tau) =\tfrac12 e^{3\tau} \mathbb{G}(\tilde{\psi}_1)=  e^{3\tau} \mathbb{E}_0(Q)\mathbb{M} (Q), \label{e:e1comparisonpf22}\\
&\mathbb{K}_{0,2} (\psi_2^\tau) = 2e^{2\tau} \mathbb{H}_0(\tilde{\psi}_1) - 3e^{3\tau} \mathbb{G}(\tilde{\psi}_1) = 6e^{2\tau}(1-e^\tau) \mathbb{E}_0(Q)\mathbb{M} (Q).\label{e:e1comparisonpf3}
\end{align}

We will choose $\tau$ so that $\psi_2^\tau \in A_\mu$. To this end, we first observe that \eqref{e:e1comparisonpf2}-- \eqref{e:e1comparisonpf22} yield the mass constrant, while the constraint $\mathbb{G}(\psi_2^\tau) \ge \mu$ is also satisfied provided $|\tau| \lesssim_Q 1$  and $\mu \lesssim_Q 1 $.  To obtain the constraint on $\mathbb{K}_{0,2} (\psi_2^\tau)$, we claim that
\begin{equation}\label{e:e1comparisonpf4}
	\tfrac12 \int \mu^2(x\cdot \nabla V )(\mu x)|\psi_2^\tau(x)|^2  dx
	= o(\mu^2)
\end{equation}
as $\mu\to0$, locally uniformly in $\tau$. Indeed, since $x\cdot \nabla V $ is bounded and continuous at the origin, the dominated convergence theorem implies 
\[
	\lim_{\mu\to0} \tfrac12 \int (x\cdot \nabla V )(\mu x)|\psi_2^\tau(x)|^2  dx
	= \tfrac12 \int (x\cdot \nabla V )(0)|\psi_2^\tau(x)|^2  dx = 0
\]
locally uniformly in $\tau$, which yields \eqref{e:e1comparisonpf4}. Thus, for any $\eps>0$ there exists $\tilde{\mu}>0$ such that 
\[
	\abs{  \int (x\cdot \nabla V )(\mu x)|\psi_2^\tau(x)|^2} \le 12 \eps^2 \mathbb{E}_0(Q)\mathbb{M} (Q)\qtq{for}\mu \in (0,\tilde{\mu}). 
\]
The choice $\tau = \eps^2 \mu^2$ in \eqref{e:e1comparisonpf3} then yields
\[
	\mathbb{K}_{0,2} (\psi_2^\tau) \le - 6 \eps^2 \mu^2   \mathbb{E}^0(Q)\mathbb{M} (Q)
	\le - \mu^2\abs{ \tfrac12 \int (x\cdot \nabla V )(\mu x)|\psi_2^\tau(x)|^2  dx}\le0
\]
for $\mu \in (0,\tilde{\mu})$, which implies $\psi_2^\tau \in A_\mu$.

Next, arguing as in the proof of \eqref{e:e1comparisonpf4}, we observe that
\[
	\tfrac14 \int (x\cdot \nabla V + 2 V)(\mu x)|\psi_2^\tau(x)|^2\,dx \to V(0) \mathbb{M}(\psi_2^\tau)=V(0) <0
\]
as $\mu\to0$ by virtue of \eqref{e:e1comparisonpf2}.
We remark that this covergence also holds locally uniformly in $\tau$.
Thus, for any $\eps>0$ there exists $\tilde{\mu}' >0$ such that
\begin{equation}\label{e:e1comparisonpf5}
	\abs{ \tfrac14 \int (x\cdot \nabla V + 2 V)(\mu x)|\psi_2^\tau(x)|^2  dx	 -V(0) } <\frac{\eps}2
\end{equation}
if $\mu \in (0,\tilde{\mu}')$ and $|\tau|\le 1$.
We now take $\eps>0$ so small that
\[
\eps\le \frac{1}{6(e-1)\mathbb{E}_0(Q)\mathbb{M} (Q)}
\]
is satisfied
and fix the corresponding $\tilde{\mu}$ and $\tilde{\mu}'$.
Then, there exists $\mu_2' \in (0,\min (\tilde{\mu},\tilde{\mu}')]$ such that the choice $\tau = \eps^2 \mu^2$ yields $\tau \le 1$ and
\begin{equation}\label{e:e1comparisonpf6}
\begin{aligned}
		\tfrac12\mathbb{G}(\psi_2^\tau) ={}&
	e^{3\tau} \mathbb{E}_0(Q)\mathbb{M} (Q) \\
\le{}& (1+3(e-1)\tau) \mathbb{E}_0(Q)\mathbb{M} (Q)  
\\
= {}& \mathbb{E}_0(Q)\mathbb{M} (Q)  + (3(e-1)\eps \mathbb{E}_0(Q)\mathbb{M} (Q)) \eps \mu^2\\
\le {}& \mathbb{E}_0(Q)\mathbb{M} (Q)  + \tfrac{\eps}2  \mu^2
\end{aligned}
\end{equation}
for $\mu \in (0,\mu_2')$, where we have used \eqref{e:e1comparisonpf22}. 
Letting $\tau = \eps^2 \mu^2 $ and
plugging $\psi_2^\tau \in A_\mu$, \eqref{e:e1comparisonpf5}, and \eqref{e:e1comparisonpf6} 
to the formula \eqref{e:e1comparisonpf1}, one concludes that
\begin{align*}
\mu \tilde{\mathscr{E}}_1 (\mu) &{}\le \tfrac12\mathbb{G}(\psi_2^\tau) + \tfrac14 \int \mu^2(x\cdot \nabla V + 2 V)(\mu x)|\psi_2^\tau(x)|^2  dx \\
&{}\le  \mathbb{E}_0(Q)\mathbb{M} (Q) + \tfrac{ \eps}2 \mu^2 + \(V(0) +\tfrac{\eps}2\)\mu^2 \\
&{}= \mathbb{E}_0(Q)\mathbb{M} (Q) + ({V(0)}+\eps)\mu^2 
\end{align*}
for $\mu \in (0,\mu_2')$, which yields the desired result.  \end{proof}

We turn our attention now to the defocusing case, $\sigma=-1$, in which case we expect that the excited solitons are not present (see e.g. \cite[Proposition~2.6]{Nakanishi} for the radial case). In the present setting, we can immediately see that there are no solutions to \eqref{e:solitoneq} in the case $\omega\ge-e_0$.  Indeed, taking the $L^{2}$ inner product of (\ref{e:solitoneq}) with $\bar{\varphi}$, we have
\[
(H\varphi,\varphi)+\omega\|\varphi\|_{L^{2}}^{2}=-\|\varphi\|_{L^{4}}^{4}.
\]
Since, $H\ge e_{0}$, we obtain
\[
(e_{0}+\omega)\|\varphi\|_{L^{2}}^{2}\le-\|\varphi\|_{L^{4}}^{4},
\]
which implies the claim.  More generally, we conjecture the following:

\begin{conjecture}[Solitons in the defocusing case]\label{conjecture} Let $\sigma=-1$.  
The equation \eqref{e:solitoneq} has a unique positive solution $\varphi_\omega$ for $\omega \in (0,-e_0)$. The set $\mathscr{S}$ is characterized as
\[
	\mathscr{S} = \{ e^{i\theta} \varphi_\omega \ |\  \omega \in (0,-e_0), \, \theta \in \R \}.
\]
The map $(0,-e_0) \ni \omega \mapsto \mathbb{M} (\varphi_\omega) \in (0,\I)$ is monotone decreasing $C^1$ function. Denoting the inverse function by $\omega_0(\mu)$, we have $\omega_0'(\mu)<0$ and
\[
\mathscr{E}_0 (\mu) = \mathbb{E}_V (\varphi_{\omega_0(\mu)}) \in (e_0 \mu, 0)\qtq{for all}\mu>0.
\]
On the other hand, $\mathscr{E}_1(\mu)=\I$ for all $\mu>0$.
\end{conjecture}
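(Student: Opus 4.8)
The plan is to follow the skeleton of \cite{Nakanishi} --- construct the ground state at each mass as a constrained minimizer and then force uniqueness --- with the one new ingredient that the radial ODE uniqueness used in \cite{Nakanishi} must be replaced by a hidden-convexity argument that survives without symmetry.

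\textbf{Step 1 (ground states as minimizers).} For $\mu>0$ put $m(\mu):=\inf\{\mathbb{E}_V(\varphi):\mathbb{M}(\varphi)=\mu\}$. Testing with $\varphi_\lambda(x)=\lambda^{3/2}\psi(\lambda x)$, $\mathbb{M}(\psi)=\mu$, and sending $\lambda\to0$: the kinetic and quartic terms vanish while $\int V(x)|\varphi_\lambda(x)|^2\,dx=\int V(y/\lambda)|\psi(y)|^2\,dy\to 2V(0)\mu$ by (A6) and dominated convergence, so $m(\mu)\le V(0)\mu<0$. The defocusing sign together with the form-boundedness of the negative part of $V$ makes $\mathbb{E}_V$ coercive on $\{\mathbb{M}=\mu\}$, so minimizing sequences are bounded in $H^1$. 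Applying the $L^4$-profile decomposition exactly as in the proof of Theorem~\ref{P:excited}, profiles escaping to spatial infinity see the potential $V\equiv0$ and therefore carry nonnegative free energy; combined with $m(\mu)<0$ and the strict subadditivity $m(\mu+\mu')<m(\mu)$ (which in turn follows from $V(0)<0$ via a scaling argument at the minimizer), this rules out both vanishing and dichotomy, so a minimizing sequence converges, up to translation, to a minimizer $\varphi_\mu$. It may be taken nonnegative (diamagnetic inequality), hence positive (Harnack), and solves $(H+\omega(\mu))\varphi=-\varphi^3$; the $L^2$-pairing computation displayed above gives $\omega(\mu)<-e_0$, while $\omega(\mu)=-\tfrac{d}{d\mu}\mathscr{E}_0(\mu)>0$ by strict monotonicity of $\mathscr{E}_0=m$. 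Finally $\mathbb{E}_V(\varphi)\ge\mathbb{H}_V(\varphi)>e_0\mu$ unless $\varphi$ is (a phase times) $\sqrt{\mu}\,\phi_0$, so $\mathscr{E}_0(\mu)=\mathbb{E}_V(\varphi_{\omega(\mu)})\in(e_0\mu,0)$.

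\textbf{Step 2 (uniqueness at fixed frequency, and the branch).} For fixed $\omega\in(0,-e_0)$, a positive $H^1$ solution of $(H+\omega)\varphi=-\varphi^3$ is precisely a critical point, in the variable $\rho=\varphi^2\ge0$, of
\[
J_\omega(\rho):=\int_{\R^3}|\nabla\sqrt{\rho}|^2+(V+\omega)\rho+\tfrac12\rho^2\,dx ,
\]
which is strictly convex (the map $\rho\mapsto\int|\nabla\sqrt{\rho}|^2$ is convex, $\rho\mapsto\int\rho^2$ is strictly convex, and the middle term is linear). A strictly convex functional has at most one critical point, so the positive soliton is unique. Its linearization $L_\omega=H+\omega+3\varphi_\omega^2=(H+\omega+\varphi_\omega^2)+2\varphi_\omega^2$ is strictly positive, since $\varphi_\omega>0$ is the principal (zero) eigenfunction of $H+\omega+\varphi_\omega^2$; hence $\varphi_\omega$ is nondegenerate, and by the implicit function theorem together with Crandall--Rabinowitz bifurcation at $(\omega,\varphi)=(-e_0,0)$ and a continuation argument, the positive solutions form a $C^1$ curve over $\omega\in(0,-e_0)$. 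Differentiating the equation gives $\partial_\omega\varphi_\omega=-L_\omega^{-1}\varphi_\omega$, whence $\tfrac{d}{d\omega}\mathbb{M}(\varphi_\omega)=-(\varphi_\omega,L_\omega^{-1}\varphi_\omega)<0$: the mass is a strictly decreasing $C^1$ bijection of $(0,-e_0)$ onto $(0,\infty)$ (tending to $0$ as $\omega\to-e_0$ and to $\infty$ as $\omega\to0^+$), its inverse is the desired $\omega_0$ with $\omega_0'<0$, and $\mathscr{E}_0(\mu)=\mathbb{E}_V(\varphi_{\omega_0(\mu)})$.

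\textbf{Step 3 (reduction of $\mathscr{S}$; $\mathscr{E}_1\equiv\infty$).} Given any $\varphi\in\mathscr{S}$ with frequency $\omega$, write $\varphi=e^{i\theta(x)}w$ with $w=|\varphi|$; the imaginary part of \eqref{e:solitoneq} yields $\dvg(w^2\nabla\theta)=0$, hence $\int w^2|\nabla\theta|^2=0$ and $\theta$ is constant, so $w\ge0$ solves the real equation and $w>0$ by Harnack. Standard decay asymptotics at spatial infinity exclude positive $L^2$ solutions when $\omega\le0$, and the $L^2$-pairing gives $\omega<-e_0$, so $\omega\in(0,-e_0)$ and Step 2 forces $w=\varphi_\omega$. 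Therefore $\mathscr{S}=\{e^{i\theta}\varphi_\omega:\omega\in(0,-e_0),\ \theta\in\R\}$, its intersection with a fixed mass level $\mu$ is the single orbit $\{e^{i\theta}\varphi_{\omega_0(\mu)}\}$, and every soliton of mass $\mu$ has energy exactly $\mathscr{E}_0(\mu)$; in particular none has energy greater than $\mathscr{E}_0(\mu)$, i.e.\ $\mathscr{E}_1(\mu)=\infty$. The expected main obstacle is Step 2: one must choose the correct space for $\rho$ (with $\sqrt{\rho}\in H^1$), justify the convexity and lower semicontinuity of $J_\omega$ and the identification of its critical points with positive solitons, and --- most delicately --- secure the a priori bounds needed to continue the branch all the way to $\omega\to0^+$ and to exclude a second component of positive solutions, all without radial symmetry. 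This is exactly where \cite{Nakanishi} invokes the extra hypothesis $V/|x|\in L^1(\R^3)$ together with radial ODE uniqueness, and finding a substitute valid for general $V$ is why the statement is only conjectured.
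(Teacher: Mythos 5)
First, note that the statement you are proving is stated in the paper as a \emph{conjecture}, not a theorem: the authors explicitly say that all of it except the claim $\mathscr{E}_1(\mu)=\I$ is contained in \cite{GNT}, and that this last claim is precisely what remains open in the non-radial setting (in the radial case Nakanishi proves it using radial ODE arguments together with the extra hypothesis $V/|x|\in L^1$). So there is no proof in the paper to compare against; the question is whether your argument actually closes the open part. It does not.

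Your Steps 1--2 are a reasonable and essentially standard route to the existence, positivity, uniqueness and monotonicity of the \emph{positive} branch $\omega\mapsto\varphi_\omega$; in particular the Benguria--Brezis--Lieb hidden-convexity argument for $J_\omega(\rho)=\int|\nabla\sqrt\rho|^2+(V+\omega)\rho+\tfrac12\rho^2$ is a genuinely symmetry-free uniqueness mechanism, and the nondegeneracy $L_\omega=(H+\omega+\varphi_\omega^2)+2\varphi_\omega^2>0$ together with $\tfrac{d}{d\omega}\mathbb{M}(\varphi_\omega)=-(\varphi_\omega,L_\omega^{-1}\varphi_\omega)<0$ is correct. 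The fatal gap is Step 3. The identity $\mathscr{E}_1(\mu)=\I$ is exactly the assertion that $\mathscr{S}$ contains \emph{no} solitons other than the ground-state orbit --- i.e.\ no sign-changing real solutions and no genuinely complex (vortex-type) solutions --- and your reduction of a general $\varphi\in\mathscr{S}$ to a positive function times a constant phase does not establish this. The polar decomposition $\varphi=e^{i\theta}w$ requires $\theta$ to be globally single-valued, which fails wherever $\varphi$ vanishes (and a priori $\varphi$ may vanish on a large set or carry nontrivial winding); the passage from $\dvg(w^2\nabla\theta)=0$ to $\int w^2|\nabla\theta|^2=0$ requires pairing the divergence identity with $\theta$ itself, which is exactly what is unavailable; and even for real $\varphi$, $w=|\varphi|$ satisfies the equation only via Kato's inequality $\Delta|\varphi|\ge \Re(\sign(\bar\varphi)\Delta\varphi)$, with equality only when the phase is already constant --- so the argument is circular at the point where it matters. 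The hidden-convexity functional $J_\omega$ sees only $|\varphi|$ (via the diamagnetic inequality $\int|\nabla|\varphi||^2\le\int|\nabla\varphi|^2$, strict for non-real $\varphi$), so it says nothing about excited states. In short, you have relocated the known difficulty (continuing the positive branch, which you flag at the end) but silently assumed away the actual open problem, namely the absence of non-ground-state solitons without radial symmetry.
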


In fact, most of Conjecture~\ref{conjecture} may be found in \cite{GNT}; the only claim that has not yet been established is that $\mathscr{E}_1(\mu)=\infty$ for all $\mu>0$ in the non-radial case.

\section{Global dynamics}\label{S:dynamics}

In this section, we prove the main result (Theorem~\ref{T}), taking for granted one key proposition (Proposition~\ref{p:key}).  The remaining sections will then be devoted to establishing Proposition~\ref{p:key}.

We begin with the following result concerning global existence for \eqref{e:nls}. 

\begin{proposition}[Global existence]\label{l:gwp}
Let $\mu_2$ be as in Proposition~\ref{P:excited}.  Suppose $u$ is a solution to \eqref{e:nls} such that
 \[
 \mathbb{M}(u) = \mu \le \mu_2\qtq{and}\mathbb{E}_V(u) < \mathscr{E}_1(\mathbb{M}(u)).
\]

If $\sigma=1$ and
\[
\|\nabla u(t)\|_{L^2}>1 \qtq{and} K_{V,2}(u(t))<0
\]
fails at some time $t\in\R$, then it fails for all $t\in\R$ and the solution $u$ is global.  Furthermore, the solution satisfies one of the following: 
\begin{itemize}
\item We have $\sup_{t\in\R}\norm{u(t)}_{H^1}^2 \lesssim \mu$ and scattering to $\mathscr{S}_0$ holds in both time directions.
\item For all $t\in\R$, we have  
\[
\mu \lesssim \norm{u(t)}_{H^1}^2 \lesssim \mathbb{E}_V(u) + \mu\qtq{and} \mathbb{K}_{V,2} (u(t)) \ge \kappa
\]
for some $\kappa>0$, with the implicit constants is independent of $\kappa$.
\end{itemize}

If $\sigma=-1$, then the solution is global and one of the two alternatives above holds. 
\end{proposition}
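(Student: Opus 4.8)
The plan is to control the maximal-lifespan solution $u:I\times\R^3\to\C$ by conserved quantities alone. Conservation of $\mathbb{M}$ and $\mathbb{E}_V$, the trichotomy of Proposition~\ref{p:smt}, and the variational estimate \eqref{e:variational_characterization} will confine $u(t)$, for all $t\in I$, to a single one of a few mutually compatible regimes, in each of which $\|u(t)\|_{H^1}$ stays comparable to $\mu$ or bounded by $\mathbb{E}_V(u)+\mu$; the standard local theory for the $H^1$-subcritical equation \eqref{e:nls} then upgrades such a bound to $I=\R$, and in the small regime scattering to $\mathscr{S}_0$ is supplied by the small-data theory of \cite{GNT}, applied with an arbitrary reference time.

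First I would fix $\delta>0$ with $\mathbb{E}_V(u(t))\equiv\mathbb{E}_V(u)\le\mathscr{E}_1(\mu)-\delta$ on $I$, and check that Proposition~\ref{p:smt} applies uniformly in $t$: since $\mathbb{K}_{V,2}=2(\mathbb{E}_V-\mathbb{I}_V)$ by Definition~\ref{def:KI}, $\mathbb{I}_V(u(t))\ge -C\mathbb{M}(u)$ by \eqref{e:e1existencepf1} (which holds for any $H^1$ function), and $\mathscr{E}_1(\mu)\lesssim\mu^{-1}$ by Theorem~\ref{P:excited}, we get $\mathbb{K}_{V,2}(u(t))\lesssim\mu^{-1}\lesssim\mathbb{M}(u)^{-1}$, so for $\mu_2$ small (in particular $\mu_2\le\mu_3$) one of the three alternatives of Proposition~\ref{p:smt} holds at every $t\in I$, the third requiring $\sigma=1$. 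Note that membership of $\varphi$ in the set $\mathcal{B}:=\{\varphi\in H^1:\|\nabla\varphi\|_{L^2}>1,\ \mathbb{K}_{V,2}(\varphi)<0\}$ — equivalently, the validity of \eqref{grow-up-condition} — forces alternative~(3) (alternative~(1) is excluded since then $\mathbb{H}_0(\varphi)\geq\tfrac12-O(\mathbb{M}(\varphi))\gg\mathbb{M}(\varphi)$, and alternative~(2) since $\mathbb{K}_{V,2}(\varphi)<0$), and hence $\|\nabla\varphi\|_{L^2}^2=2\mathbb{H}_0(\varphi)-\int V|\varphi|^2\gtrsim\mathbb{M}(\varphi)^{-1}\gg1$; in particular $\mathcal{B}$ is empty on the level $\{\mathbb{M}=\mu\le\mu_2\}$ when $\sigma=-1$.

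The key step is that $\mathcal{B}$ is invariant under the flow of \eqref{e:nls} on the mass--energy level $\{\mathbb{M}(\varphi)=\mu,\ \mathbb{E}_V(\varphi)\le\mathscr{E}_1(\mu)-\delta\}$. Indeed, suppose $u(t_0)\in\mathcal{B}$ and let $t_*\in I$, $t_*>t_0$, be a first exit time. For $t\in[t_0,t_*)$ we have $u(t)\in\mathcal{B}$, so alternative~(3) and hence $\|\nabla u(t)\|_{L^2}^2\gtrsim\mu^{-1}$ hold there; letting $t\to t_*$ gives $\|\nabla u(t_*)\|_{L^2}^2\gtrsim\mu^{-1}\gg1$, so the exit cannot occur through $\|\nabla u(t_*)\|_{L^2}=1$ and must occur through $\mathbb{K}_{V,2}(u(t_*))=0$; but $\|\nabla u(t_*)\|_{L^2}\ge1$ then contradicts \eqref{e:variational_characterization}. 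Time reversal gives two-sided invariance of $\mathcal{B}$, hence of $\mathcal{B}^c$. Therefore, if \eqref{grow-up-condition} fails at some time then $u(t)\in\mathcal{B}^c$ for all $t\in I$; and, by the last remark of the previous paragraph, the same holds whenever $\sigma=-1$.

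It remains to analyze $u$ on $\mathcal{B}^c$, where alternative~(3) is excluded so that alternative~(1) or~(2) holds at each time. Alternative~(1) gives $\|u(t)\|_{H^1}^2\lesssim\mathbb{M}(u)=\mu$, while alternative~(2) gives $\mathbb{H}_0(u(t))\sim\mathbb{E}_V(u)$ and hence $\|u(t)\|_{H^1}^2\lesssim\mathbb{E}_V(u)+\mu$; in either case $\|u(t)\|_{H^1}$ is uniformly bounded on $I$, so $I=\R$ by the blow-up criterion. For the dichotomy, I would split on the size of the conserved energy: if $\mathbb{E}_V(u)\le C^*\mu$ for a suitable constant $C^*$ (depending only on the trichotomy constants), both alternatives give $\mathbb{H}_0(u(t))\lesssim\mu$, hence $\sup_t\|u(t)\|_{H^1}^2\lesssim\mu$ — the first alternative — and scattering to $\mathscr{S}_0$ follows from \cite{GNT}. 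If $\mathbb{E}_V(u)>C^*\mu$, then alternative~(1) is impossible (by Gagliardo--Nirenberg it would force $|\mathbb{E}_V(u)|\lesssim\mu$), and, since the $\mathbb{H}_0$-windows of alternatives~(1) and~(2) are then disjoint, continuity of $t\mapsto\mathbb{H}_0(u(t))$ forces alternative~(2) for all $t$; this yields $\mathbb{K}_{V,2}(u(t))\gtrsim\mu=:\kappa>0$ and, together with $\|u(t)\|_{H^1}^2\ge2\mathbb{M}(u)=2\mu$ from mass conservation, the second alternative. The same reasoning applies for $\sigma=-1$. I expect the main obstacle to be the no-crossing argument of the third paragraph together with its companion gap argument: one has to exploit the quantitative separations — alternative~(3) puts $\|\nabla u\|_{L^2}^2$ far above $1$, \eqref{e:variational_characterization} forbids $\mathbb{K}_{V,2}=0$ once $\|\nabla u\|_{L^2}\ge1$, and the $\mathbb{H}_0$-windows of alternatives~(1),(2) separate at large energy — while keeping the various trichotomy thresholds mutually consistent.
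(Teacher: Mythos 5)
Your overall strategy---flow-invariance of the set $\mathcal{B}=\{\|\nabla\varphi\|_{L^2}>1,\ \mathbb{K}_{V,2}(\varphi)<0\}$ via the variational lower bound \eqref{e:variational_characterization} together with the trichotomy, followed by a priori $H^1$ bounds and subcriticality on $\mathcal{B}^c$, with \cite{GNT} supplying scattering in the small regime---is the same as the paper's, and your exit-time argument for the invariance of $\mathcal{B}$ and your continuity argument for the final dichotomy are correct (if more detailed than the paper's one-line versions).

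There is, however, one genuine gap: the assertion that on $\mathcal{B}^c$ ``alternative~(3) is excluded so that alternative~(1) or~(2) holds at each time.'' What you established earlier is only the implication $\mathcal{B}\Rightarrow(3)$; you need the converse, $(3)\Rightarrow\mathcal{B}$, and that does not follow. Alternative~(3) only gives $\mathbb{G}(\varphi)\ge c\,\mathbb{H}_0(\varphi)$ for some unspecified constant $c$, which does not imply $3\mathbb{G}>2\mathbb{H}_0+O(\mathbb{M})$, i.e.\ does not imply $\mathbb{K}_{V,2}(\varphi)<0$; moreover the three alternatives of Proposition~\ref{p:smt} are not stated to be mutually exclusive. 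So at a time with $\|\nabla u(t)\|_{L^2}>1$ and $\mathbb{K}_{V,2}(u(t))\ge 0$ (which is one of the two ways to lie in $\mathcal{B}^c$), the trichotomy may deliver only alternative~(3), and your subsequent case analysis produces no upper bound on $\mathbb{H}_0(u(t))$---which is exactly the bound needed for global existence in this regime. The paper closes this by a direct computation that you omit: when $\mathbb{K}_{V,2}(u(t))\ge 0$, the identity
\[
\mathbb{H}_0(u(t)) = 3\mathbb{E}_V(u)-\mathbb{K}_{V,2}(u(t))-\tfrac12\int_{\R^3}(x\cdot\nabla V+3V)\,|u(t)|^2\,dx
\]
together with Sobolev embedding and Young's inequality (to absorb the potential term into $\tfrac12\mathbb{H}_0+C\mathbb{M}$) yields $\mathbb{H}_0(u(t))\le 6\mathbb{E}_V(u)+2C\mathbb{M}(u)$. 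Inserting this in place of your appeal to ``alternative~(1) or~(2)'' repairs both the global existence step and the identification of the second alternative (where $\mathbb{K}_{V,2}(u(t))\ge\kappa$ then follows from \eqref{e:variational_characterization}). A secondary caveat: your uniform verification of the trichotomy hypothesis $\mathbb{K}_{V,2}\le C\mathbb{M}^{-1}$ uses $\mathscr{E}_1(\mu)\lesssim\mu^{-1}$, which is a focusing statement; for $\sigma=-1$ the constraint $\mathbb{E}_V<\mathscr{E}_1=\infty$ is vacuous, so that step does not transfer verbatim (though for $\sigma=-1$ the a priori bound is immediate from energy conservation, as in the paper).
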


\begin{proof}
If $\sigma=-1$, then conservation of energy yields an \emph{a priori} bound on the $H^1$ norm of $u$ (note that the contribution of the potential is controlled by the small mass assumption).  In particular, the solution is global.  Furthermore, the dichotomy follows from the coercivity of $\mathbb{K}_{V,2}$ under the small mass assumption (cf. Proposition~\ref{p:smt}).

Suppose instead $\sigma=1$.  Using \eqref{e:variational_characterization} and the trichotomy of Proposition~\ref{p:smt}, we find that the validity of 
\[
\mathbb{H}_0(u(t)) >1\qtq{and}  \mathbb{K}_{V,2} (u(t)) <0
\]
is preserved in time.  Thus, the failure of this property is preserved as well. 

Now, if $\mathbb{H}_0(u(t)) \le 1$ and $\mathbb{K}_{V,2} (u(t)) <0$ then we see from the trichotomy of Proposition~\ref{p:smt} that
$\mathbb{H}_0(u(t)) \lesssim \mu$, which implies that the first alternative holds (see \cite{GNT} for the scattering result). 

If instead $\mathbb{K}_{V,2} (u(t)) \ge 0$ then Sobolev embedding and Young's inequality yield
\begin{align*}
	\mathbb{H}_0(u(t)) &= 3\mathbb{E}_V(u) - \mathbb{K}_{V,2} (u(t)) - \tfrac12\int_{\R^3} (x\cdot \nabla V + 3V)|u(t,x)|^2 dx \\
	&\le 3\mathbb{E}_V(u) + C \mathbb{M}(u) + \tfrac12 \mathbb{H}_0(u(t))
\end{align*}
for some $C>0$.  In particular, we have $\mathbb{H}_0(u(t)) \le 6\mathbb{E}_V(u) + 2C \mathbb{M}(u)$, which guarantees global existence via energy-subcriticality. The remaining bounds are again a consequence of Proposition~\ref{p:smt}. \end{proof}

For global solutions to \eqref{e:nls} with small mass, we will introduce a decomposition into a ground state part and a `radiation' part.  The equation \eqref{e:nls} then reduces to a coupled ODE-PDE system for the ground state parameter and the radiation (see \eqref{def:PDEODE} below). 

We begin by recalling the ground state map $\Phi$ introduced in Lemma~\ref{L:small-solitons}.  We introduce $D_j$ to denote the derivative with respect to the real and imaginary part of $z=z^1+iz^2$.  That is, we view $\Phi(z) = \Phi(z^1,z^2)$ and denote
\begin{align*}
D_1 \Phi(z) &{}= \tfrac{\partial}{\partial z^1} \Phi(z^1,z^2), &
D_2 \Phi(z) &{}= \tfrac{\partial}{\partial z^2} \Phi(z^1,z^2).
\end{align*}
For $z \in \{z \in \mathbb{C}: |z| < 2z_1 \}\backslash\{0\} $, we define
\[
P_c [z] H^1 := \{ \varphi \in H^1 \ |\ \Re (i \varphi, D_j \Phi[z])=0\text{ for }j=1,2 \}
\]
and for $z=0$:
\[
P_c [0] H^1 := \{ \varphi \in H^1 \ |\  ( \varphi, \phi_0 )=0 \}.
\]
To simplify notation, we write $P_c H^1 = P_c [0] H^1$.

We define $D\Phi(z):\R^2\to\R^2$ via 
\[
(D\Phi(z)) w := \begin{pmatrix} D_1 \Phi(z) &  D_2 \Phi(z) \end{pmatrix}
\begin{pmatrix} w^1 \\ w^2 \end{pmatrix},
\]
which we may also view as a map from $\C\to\C$ via the identification 
\[
z^1+iz^2\leftrightarrow \begin{pmatrix} z^1\\z^2\end{pmatrix}.
\] 
Observe that $D\Phi$ is $\R$-linear but not $\C$-linear.  Indeed,
\[
	(D\Phi(z)) (iw) = 
	\begin{pmatrix} D_1 \Phi(z) &  D_2 \Phi(z) \end{pmatrix}
	\begin{pmatrix} - \Im w \\ \Re w \end{pmatrix}
\]
and
\[
	i(D\Phi(z)) w = i \begin{pmatrix} D_1 \Phi(z) &  D_2 \Phi(z) \end{pmatrix}
	\begin{pmatrix} \Re w \\ \Im w \end{pmatrix}
\]
are not identical in general.

We note also that
\begin{equation}\label{e:derivativeDP}
	\tfrac{d}{dt} \Phi (z(t)) =\begin{pmatrix} D_1 \Phi(z) &  D_2 \Phi(z) \end{pmatrix}
	\begin{pmatrix} \dot{z}^1 \\ \dot{z}^2 \end{pmatrix}= (D\Phi(z)) \dot{z},
\end{equation}
and that by the gauge invariance $\Phi (r e^{i\theta}) = e^{i\theta}  \Phi(r)$, one has
\begin{equation}\label{e:gaugeDP}
(D\Phi(z))(iz) = i \Phi(z).
\end{equation}

In what follows, we use the notation
\[
	\mathcal{B}_\mu := \{ \varphi \in H^1(\R^3) \ |\ \mathbb{M} (\varphi) < \mu \}.
\]
The following result (appearing as \cite[Lemma~2.3]{GNT}) allows us to isolate the small soliton component for $H^1$ functions with small mass.
\begin{proposition}[Decomposition around ground states]\label{p:coordinate}
There exist $\mu_4\in (0, \mu_0]$ and a unique mapping
\[
	\mathcal{B}_{\mu_4} \ni \varphi \mapsto (z,\eta)
	\in \{z \in \mathbb{C} : |z| < 2z_1 \}  \times \mathcal{B}_{\mu_4}
\]
such that
\begin{align*}
	&\varphi = \Phi[z] + \eta, \qquad
	 \eta \in \mathcal{B}_{\mu_4} \cap P_c [z] H^1 . 
\end{align*}
Moreover,
\[
	\mathbb{M} (\varphi) = \mathbb{M} (\Phi[z]) + \mathbb{M} (\eta).
\]
The map $\varphi \mapsto (z,\eta)$ is smooth and injective from $\mathcal{B}_{\mu_4}$ to $\C \times H^1$.
\end{proposition}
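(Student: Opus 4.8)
\emph{Proof sketch.} The plan is to obtain the decomposition as the solution of a finite–dimensional equation via the implicit function theorem, and then to extract the mass identity and the uniqueness from the gauge structure of $\Phi$ recorded in \eqref{e:gaugeDP}. For $\varphi\in H^1$ and $z$ in the disk $\{|z|<2z_1\}$, set
\[
\mathcal{G}_j(\varphi,z):=\Re (i(\varphi-\Phi[z]),D_j\Phi[z]),\qquad j=1,2,
\]
and regard $\mathcal{G}=(\mathcal{G}_1,\mathcal{G}_2)$ as an $\R^2$-valued map. Demanding that $\eta:=\varphi-\Phi[z]$ lie in $P_c[z]H^1$ is precisely the equation $\mathcal{G}(\varphi,z)=0$; at $z=0$ this reduces to $(\varphi,\phi_0)=0$ since $\Re(i\varphi,\phi_0)=\Im(\varphi,\phi_0)$ and $\Re(i\varphi,i\phi_0)=\Re(\varphi,\phi_0)$, consistently with the definition of $P_c[0]H^1$. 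Because $(\Phi,\Omega)$ is $C^1$ by Lemma~\ref{L:small-solitons}, $\mathcal{G}$ is smooth in $z$, and it depends on the data only through the quantities $\Re(i\varphi,D_j\Phi[z])$, which are controlled by $\|\varphi\|_{L^2}\,\|D_j\Phi[z]\|_{L^2}$.

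First I would verify the hypotheses of the implicit function theorem at $(\varphi,z)=(0,0)$. Since $\Phi[0]=0$ we have $\mathcal{G}(0,0)=0$, and using $\Phi[z]=z\phi_0+o(|z|^2)$ in $H^1$, hence $D_1\Phi[0]=\phi_0$ and $D_2\Phi[0]=i\phi_0$, a direct computation gives
\[
\partial_z\mathcal{G}(0,0)=-\begin{pmatrix} \Re(i\phi_0,\phi_0) & \Re(-\phi_0,\phi_0) \\ \Re(i\phi_0,i\phi_0) & \Re(-\phi_0,i\phi_0) \end{pmatrix}=\|\phi_0\|_{L^2}^2\begin{pmatrix} 0 & 1 \\ -1 & 0 \end{pmatrix},
\]
which is invertible. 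The one delicate point --- and the main obstacle --- is that $\mathcal{B}_{\mu_4}$ is a ball in the mass ($L^2$) norm rather than in $H^1$, so the $H^1$ implicit function theorem does not directly see the relevant smallness. Instead I would run a quantitative contraction for $z$ alone on a fixed small disk, using that $\mathcal{G}(\varphi,\cdot)-\mathcal{G}(0,\cdot)$ and its $z$-derivative are $O(\mathbb{M}(\varphi)^{1/2})$ there, uniformly in $z$ (since $\|D_j\Phi[z]\|_{L^2}$ is bounded on a compact sub-disk). This yields, for every $\varphi$ with $\mathbb{M}(\varphi)$ sufficiently small, a unique $z=z(\varphi)$ in that disk with $\mathcal{G}(\varphi,z(\varphi))=0$, depending smoothly on $\varphi$, with $z(\varphi)\to0$ as $\mathbb{M}(\varphi)\to0$.

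It then remains to set $\eta:=\varphi-\Phi[z(\varphi)]\in P_c[z]H^1$, establish the mass identity, promote the local uniqueness of $z(\varphi)$ to uniqueness over the whole disk, and record smoothness and injectivity. The mass identity is where \eqref{e:gaugeDP} enters: writing $z=z^1+iz^2$, the identity $(D\Phi(z))(iz)=i\Phi[z]$ reads $-z^2 D_1\Phi[z]+z^1 D_2\Phi[z]=i\Phi[z]$, so combining with $\Re(i\eta,D_j\Phi[z])=0$ gives
\[
\Re(\Phi[z],\eta)=\Re(i\eta,i\Phi[z])=-z^2\Re(i\eta,D_1\Phi[z])+z^1\Re(i\eta,D_2\Phi[z])=0,
\]
whence $\mathbb{M}(\varphi)=\mathbb{M}(\Phi[z])+\mathbb{M}(\eta)$. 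In particular $\mathbb{M}(\eta)\le\mathbb{M}(\varphi)<\mu_4$, so $\eta\in\mathcal{B}_{\mu_4}$, while $\mathbb{M}(\Phi[z])\le\mathbb{M}(\varphi)$ forces $|z|$ to be small (as $\mathbb{M}(\Phi[\cdot])$ is an increasing bijection of $|z|$ vanishing at $0$); after shrinking $\mu_4$, this shows that \emph{any} solution of $\mathcal{G}(\varphi,\cdot)=0$ in the full disk already lies in the small disk above, which upgrades uniqueness. Smoothness of $\varphi\mapsto(z,\eta)$ follows from smoothness of $z(\cdot)$ and the formula $\eta=\varphi-\Phi[z(\varphi)]$, and injectivity is immediate from uniqueness of the decomposition. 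One shrinks $\mu_4$ a final time if needed so that $\mu_4\le\mu_0$. \qed
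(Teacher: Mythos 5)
The paper does not prove Proposition~\ref{p:coordinate} at all: it is imported verbatim as \cite[Lemma~2.3]{GNT}, so there is no in-paper argument to compare against. Your sketch reconstructs the standard proof from that reference — impose the two real orthogonality conditions $\Re(i(\varphi-\Phi[z]),D_j\Phi[z])=0$, solve for $z$ by a quantitative implicit-function/contraction argument whose smallness is measured in $L^2$ (correctly exploiting that $\varphi$ enters only through pairings against $D_j\Phi[z]\in L^2$), and then derive the mass decoupling from the gauge identity \eqref{e:gaugeDP} via $\Re(\eta,\Phi[z])=\Re(i\eta,(D\Phi(z))(iz))=0$ — and the computations (the symplectic Jacobian $\|\phi_0\|_{L^2}^2\left(\begin{smallmatrix}0&1\\-1&0\end{smallmatrix}\right)$, the upgrade of local to global uniqueness via $\mathbb{M}(\Phi[z])\le\mathbb{M}(\varphi)$ and the monotonicity of $|z|\mapsto\mathbb{M}(\Phi[z])$) all check out. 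The only caveat is that your contraction uses $\partial_z$ of $D_j\Phi[z]$, hence $C^2$ (not merely $C^1$) regularity of $z\mapsto\Phi[z]$; this exceeds what Lemma~\ref{L:small-solitons} literally states, but the paper itself freely uses $D_jD_k\Phi$ in deriving the modulation equations and asserts smoothness of the decomposition, so this is an implicit standing assumption rather than a gap in your argument.
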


We introduce the $\C$-linear, bounded projection operator $P_c:H^1\to P_c H^1$ via
\[
P_c \phi := \phi - (\phi,\phi_0)\phi_0 .
\] 
We then have the following lemma relating $P_c$ and $P_c[z]$. 

\begin{lemma}  There exists $\mu_5\in (0, \mu_4]$ and $z_5 \in (0, z_1]$ such that if $|z| \le 2z_5$, then $P_c|_{P_c[z]H^1}$ is invertible.
The inverse $R[z]$ is of the form
\[
	R[z] \phi = \phi - \rho(z,\phi)  \phi_0,\quad \phi\in P_c H^1,
\]
where $\rho\in\mathbb{C}$ is linear with respect to $\phi$ and satisfies
\begin{equation}\label{e:rhoest}
	 |\rho (z,\phi)| \lesssim \sum_{j=1,2} |\Re (i\phi, D_j \Phi[z])| = \sum_{j=1,2} |\Re(i\phi, D_j (\Phi[z]-z\phi_0))|.
\end{equation}
\end{lemma}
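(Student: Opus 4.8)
The plan is to reduce the statement to the invertibility of an explicit $2\times2$ real matrix depending continuously on $z$, and then to read off $R[z]$ and the bound \eqref{e:rhoest} from that matrix. Write $d_j(z):=D_j\Phi[z]$ for $j=1,2$. The starting point is that, by the $C^1$ regularity of $z\mapsto\Phi[z]$ from Lemma~\ref{L:small-solitons} together with the expansion $\Phi[z]=z\phi_0+o(|z|^2)$ in $H^1$, the maps $z\mapsto d_j(z)$ are continuous into $H^1$ near $z=0$, with $d_1(0)=\phi_0$ and $d_2(0)=i\phi_0$. Since $P_c$ is the bounded projection of $H^1$ onto $P_c H^1$ with kernel $\C\phi_0$, solving $P_c\psi=\phi$ with $\psi\in P_c[z]H^1$, for a given $\phi\in P_c H^1$, is the same as finding $c=c^1+ic^2\in\C$ such that $\psi:=\phi+c\phi_0$ satisfies $\Re\bigl(i(\phi+c\phi_0),d_j(z)\bigr)=0$ for $j=1,2$. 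Expanding, this is a real $2\times2$ linear system $M(z)\,(c^1,c^2)^{\top}=-\bigl(\Re(i\phi,d_1(z)),\,\Re(i\phi,d_2(z))\bigr)^{\top}$, where the matrix $M(z)$ has entries built from $\Re(\phi_0,d_j(z))$ and $\Im(\phi_0,d_j(z))$.

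First I would evaluate $M(0)$: using $d_1(0)=\phi_0$, $d_2(0)=i\phi_0$ and that $\|\phi_0\|_{L^2}$ is a fixed positive constant, $M(0)$ is, up to this positive scalar, a rotation of the plane by $\pm\pi/2$, hence invertible. By the continuity of $z\mapsto d_j(z)$ there is then $z_5\in(0,z_1]$ such that $M(z)$ is invertible with $\sup_{|z|\le2z_5}\|M(z)^{-1}\|<\infty$. For such $z$ the system has a unique solution $c=c(z,\phi)$; it is $\R$-linear in $\phi$ because the right-hand side is $\R$-linear in $\phi$ and $M(z)^{-1}$ is a fixed linear map. Setting $\rho(z,\phi):=-c(z,\phi)\in\C$ and $R[z]\phi:=\phi-\rho(z,\phi)\phi_0$, the construction gives $R[z]\phi\in P_c[z]H^1$ and $P_c R[z]\phi=\phi$, so $R[z]$ is a bounded right inverse of $P_c|_{P_c[z]H^1}$ (boundedness because $|\rho(z,\phi)|\lesssim\sum_j|\Re(i\phi,d_j(z))|\lesssim\|\phi\|_{H^1}$, using $\sup_{|z|\le2z_5}\|d_j(z)\|_{H^1}<\infty$).

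To upgrade this to a two-sided inverse I would observe that running the same system with $\phi=0$ shows $P_c[z]H^1\cap\C\phi_0=\{0\}$, i.e. $P_c|_{P_c[z]H^1}$ is injective; combined with the right inverse $R[z]$, this yields bijectivity with inverse $R[z]$. The estimate \eqref{e:rhoest} then splits into two parts: the inequality $|\rho(z,\phi)|\lesssim\sum_j|\Re(i\phi,D_j\Phi[z])|$ follows at once from $\rho(z,\phi)=-M(z)^{-1}\bigl(\Re(i\phi,d_1(z)),\Re(i\phi,d_2(z))\bigr)^{\top}$ and the uniform bound on $\|M(z)^{-1}\|$; and the final equality holds because $\phi\in P_c H^1$ means $(\phi,\phi_0)=0$, so that, since $D_j(z\phi_0)\in\{\phi_0,i\phi_0\}$, one has $\Re(i\phi,D_j(z\phi_0))=0$ and hence $\Re(i\phi,D_j\Phi[z])=\Re(i\phi,D_j(\Phi[z]-z\phi_0))$. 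Finally, shrinking if necessary, I would fix $\mu_5\in(0,\mu_4]$ so small that $\mathbb{M}(\Phi[z])<\mu_5$ whenever $|z|\le2z_5$, using the increasing bijection $|z|\mapsto\mathbb{M}(\Phi[z])$ of Lemma~\ref{L:small-solitons}, so as to remain consistent with the decomposition of Proposition~\ref{p:coordinate}.

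The only point needing care is the bookkeeping with the real (not complex) linear structure: the pairings $\Re(i\,\cdot\,,\,\cdot\,)$ are only $\R$-bilinear and $D\Phi$ is only $\R$-linear, so one must identify the matrix $M(z)$ — and in particular its value $M(0)$ — correctly. Beyond that, the proof is a routine perturbation/continuity argument around $z=0$, with no genuine analytic obstacle.
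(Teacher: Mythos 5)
Your proposal is correct and follows essentially the same route as the paper: both reduce the claim to the invertibility of the explicit $2\times2$ real coefficient matrix of the orthogonality conditions $\Re(i(\phi-\rho\phi_0),D_j\Phi[z])=0$, obtain that invertibility by perturbation from $z=0$ (the paper via $\Phi[z]-z\phi_0=O(z^2)$ in $H^1$, you via continuity of $D_j\Phi$ and the explicit value $M(0)$ --- the same fact), and then read off $\rho$, the bound \eqref{e:rhoest}, and the final equality $\Re(i\phi,D_j(z\phi_0))=0$ for $\phi\in P_cH^1$ in the identical way. Your additional remarks (explicit injectivity for the two-sided inverse, the choice of $\mu_5$) only make explicit what the paper leaves implicit.
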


\begin{remark}  The operator $R[z]$ is a compact perturbation of the identity operator 
on any space between $H^{2}\cap H^{1}_{1}$ and $H^{-2}+L^{\infty}$. 
\end{remark}

\begin{proof} The map $\rho$ will be defined to impose that $R[z]\phi\in P_c[z]H^1$, which requires the orthogonality conditions
\[
0=\Re (i R[z] \phi, D_j \Phi[z]) = \Re i(\phi, D_j \Phi[z]) - \Re i \rho(z,\phi) (\phi_0, D_j \Phi[z])
\]
for $j=1,2.$ Using $D_1 (z\phi_0)=\phi_0$ and $D_2 (z\phi_0)=i \phi_0$, we see that for $\phi \in P_c$ these  conditions are equivalent to
\begin{align*}
&	\Re i \rho(z,\phi) (\phi_0, D_1 \Phi[z]) = 
	\Re i \rho(z,\phi) (1+(\phi_0, D_1(\Phi[z]-z\phi_0))),\\
&	\Re i \rho(z,\phi) (\phi_0, D_2 \Phi[z]) = 
	\Re i \rho(z,\phi) (i+(\phi_0, D_2(\Phi[z]-z\phi_0))).
\end{align*}
Thus, $\rho$ may be defined via
\begin{align*}
	&\begin{bmatrix}-\Im (\phi_0, D_1(\Phi[z]-z\phi_0)) & -1 -\Re (\phi_0, D_1(\Phi[z]-z\phi_0)) \\
	-1 - \Im(\phi_0, D_2(\Phi[z]-z\phi_0)) & -\Re (\phi_0, D_2(\Phi[z]-z\phi_0)) \end{bmatrix}
	\begin{bmatrix} \Re  \rho(z,\phi) \\ \Im  \rho(z,\phi) \end{bmatrix} \\
	&=
	\begin{bmatrix} \Re i(\phi, D_1 \Phi[z]) \\ \Re i(\phi, D_2 \Phi[z]) \end{bmatrix},
\end{align*}
provided the coefficient matrix above is invertible. In particular, invertibility from the fact that $\Phi[z]-z\phi_0 =O(z^2)$ in $H^1$ if $z_5$ is sufficiently small.  Furthermore, we obtain the estimate
\[
	|\rho (z,\phi)| \le 2 \sum_{j=1,2} |\Re (i\phi, D_j \Phi[z])|.
\]
As
\[
	\Re (i\phi, D_j \Phi[z])= \Re (i\phi, D_j (\Phi[z]-z\phi_0)),
\]
this completes the proof.\end{proof}

We now consider a small mass solution $u(t)$ to \eqref{e:nls}, which we write as
\[
u(t)=\Phi[z(t)]+\eta(t)
\]
according to Proposition~\ref{p:coordinate}.  We can then derive the evolution equations for $\eta(t)$ and $z(t)$ as follows.  We consider only the focusing case $\sigma=+1$, noting that trivial modifications handle the defocusing case. 

Using \eqref{e:nls}, \eqref{e:Phizeq}, \eqref{e:derivativeDP}, and \eqref{e:gaugeDP}, the equation for $\eta$ may be written
\begin{equation}\label{the-equation-for-eta}
	i \d_t \eta + H \eta = \tilde{B}[z] \eta + N(z,\eta) - i (D\Phi(z))( \dot{z} + i\Omega z) ,
\end{equation}
where $\tilde{B}[z]$ is the $\R$-linear operator
\[
	\tilde{B}[z] f := 2 |\Phi[z]|^2 f + \Phi[z]^2 \bar{f}
\]
and $N(z,\eta)$ collects the quadratic and cubic nonlinear terms as follows:
\[
N(z,\eta) :=   2 |\eta|^2 \Phi[z] + \eta^2 \overline{ \Phi[z] } + |\eta|^2 \eta.
\]

Writing $\xi:=P_c\eta$, we have $P_c \d_t \eta = \d_t \xi$, and hence $\d_t \eta = R[z] \d_t \xi.$ Furthermore,
\[
	H \xi = H P_c \eta = H \eta - H \phi_0 (\eta ,\phi_0) = H \eta - \phi_0 (H\eta , \phi_0) = P_c H \eta,
\]
so that $H \eta = R[z] H \xi.$  Thus we obtain the following equation for $\xi$: 
\begin{equation}\label{e:sysderivation1}
	i\d_t \xi + H \xi = B[z] \xi + P_c N(z,R[z]\xi) - iP_c (D\Phi(z)) ( \dot{z} + i\Omega z),
\end{equation}
where $B[z]\xi:= P_c \tilde B[z]R[z]\xi.$

We turn to the equation for $z$. We first differentiate the orthogonality conditions
\[
	\Re (i \eta, D_j \Phi(z)) =0
\]
with respect to $t$. Then, using \eqref{the-equation-for-eta} and the identities 
\[
H D_j \Phi + D_j (\Omega \Phi) = D_j (|\Phi|^2\Phi),
\]
we obtain
\begin{align*}
	\Re (i \eta, (D(D_j\Phi))\dot{z}) &+ \Re (\eta, \Omega D_j \Phi)  
	- \Re (i (D\Phi)(\dot{z} + i\Omega z), D_j \Phi) \\
	& = - \Re (N(z,\eta), D_j \Phi).
\end{align*}
Furthermore, since $\eta \in P_c[z] H^1$,
\[
\Re (\eta, \Omega D_j \Phi) = \Re (i \eta, \Omega D(D_j \Phi) iz )  .
\]
Thus,
\[
\Re (i (D\Phi)(\dot{z} + i\Omega z), D_j \Phi)-\Re (i \eta, (D(D_j\Phi))(\dot{z}+i \Omega z))=  \Re (N(z,\eta), D_j \Phi),
\]
we may rewrite in the form
\[
	\begin{pmatrix} M_{11} & M_{12} \\
	M_{13} & M_{14} \end{pmatrix}
	\begin{pmatrix} \Re (\dot{z} + i\Omega z) \\ \Im (\dot{z} + i\Omega z) \end{pmatrix}
	=
	\begin{pmatrix}  \Re (N(z,\eta), D_1 \Phi) \\  \Re (N(z,\eta), D_2 \Phi) \end{pmatrix}
\]
with
\[
M_{jk} =M_{jk}(z,\eta) = \Re (i D_k\Phi , D_j\Phi)- \Re (i \eta, D_jD_k\Phi).
\]
Thus, we obtain
\begin{equation}\label{e:sysderivation2}
	\dot{z} + i\Omega z
	=\underline{N}(z,\eta):= \begin{pmatrix}1 & i \end{pmatrix} M^{-1} \begin{pmatrix}  \Re (N(z,\eta), D_1 \Phi) \\  \Re (N(z,\eta), D_2 \Phi) \end{pmatrix}.
\end{equation}
Introducing the notation 
\begin{equation}\label{def:tildeN}
	\tilde{N} (z,\xi) := P_c (N(z,R[z]\xi) - i (D\Phi(z))\underline{N}(z,R[z]\xi))
\end{equation}
and combining \eqref{e:sysderivation2} and \eqref{e:sysderivation1}, we obtain the following PDE-ODE system for the pair $(\xi,z)$:
\begin{equation}\label{def:PDEODE}
	\left\{
	\begin{aligned}
	& i\d_t \xi + H \xi = B[z] \xi + \tilde{N} (z,\xi),\\
	& \dot{z} + i\Omega z=\underline{N}(z,R[z]\xi).
	\end{aligned}
	\right.
\end{equation}

We now turn to the scattering result of Theorem~\ref{T}, first in the focusing case.  For $\mu>0$ and $A \in \R$, we define $\mathrm{GS}(\mu,A)$ to be the set of global, $H^1$-bounded solutions $u$ to \eqref{e:nls} with $\mathbb{M}(u) \le \mu$ and $\mathbb{E}_V(u) \le A.$  We then define 
\begin{align*}
	\mathrm{ST}(\mu,A) :={}&  \sup\{ \norm{\xi}_{L^8 L^4 (\R) }\ |\ \Phi[z] + R[z] \xi \in \mathrm{GS}(\mu,A) \},  \\
	\mathcal{X} :={}& \{ (\mu,A) \in \R_+\times \R \ |\ \mathrm{ST}(\mu,A)<\I  \}.
\end{align*}
Observing that $\mathrm{ST}(\mu,A)<\infty$ yields scattering to $\mathscr{S}_0$ (see e.g. \cite[Lemma~6.2]{Nakanishi}), the scattering portion of Theorem~\ref{T} in the focusing case may be rewritten as follows: \label{check111}
\begin{theorem}[Scattering to the ground states]\label{Trewrite} For $\mu>0$ sufficiently small, 
\[
 (\mu,A) \in \mathcal{X} \Longleftrightarrow A < \mathcal{E}_1(\mu).
\]
\end{theorem}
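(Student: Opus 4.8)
The plan is to prove the two implications separately. The direction $(\mu,A)\in\mathcal{X}\Rightarrow A<\mathcal{E}_1(\mu)$ is the softer one: it follows by a contradiction/variational argument. If $A\ge\mathcal{E}_1(\mu)$, then using the variational characterization of $\mathcal{E}_1$ from Theorem~\ref{P:excited} (in particular \eqref{e:variational_characterization}) together with the trichotomy of Proposition~\ref{p:smt} and the global-existence dichotomy of Proposition~\ref{l:gwp}, one produces solutions in $\mathrm{GS}(\mu,A)$ living in the ``grow-up side'' of the trichotomy (those with $\mathbb{K}_{V,2}\ge\kappa>0$ and $\|u(t)\|_{H^1}^2\gtrsim\mu$ uniformly), for which $\|\xi\|_{L_t^8L_x^4}$ cannot be finite — indeed, a finite Strichartz norm for $\xi$ forces scattering of $\xi$ and convergence of $|z|$ (via the decomposition \eqref{def:PDEODE} and the small-data/stability theory cited from Section~\ref{S:linear}), hence $\|u(t)\|_{H^1}\to\|z_\pm\|$-sized quantities incompatible with the lower bound on the orbit. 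One should take a bit of care: strictly at the threshold $A=\mathcal{E}_1(\mu)$ one also uses that the excited soliton itself (which exists by Theorem~\ref{P:excited}) is a global $H^1$-bounded non-scattering solution realizing $\mathbb{E}_V=\mathcal{E}_1$, so $\mathrm{ST}(\mu,\mathcal{E}_1(\mu))=\infty$.

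\textbf{The main direction: $A<\mathcal{E}_1(\mu)\Rightarrow(\mu,A)\in\mathcal{X}$.} This is the heart of the matter and is proved by the Kenig--Merle concentration-compactness/induction-on-energy scheme adapted, following Nakanishi \cite{Nakanishi}, to the coupled system \eqref{def:PDEODE} and scattering to $\mathscr{S}_0$. First I would record small-data theory: there is $\delta_0>0$ such that $(\mu,A)\in\mathcal{X}$ whenever the relevant ``size'' of the data is $<\delta_0$ (from Section~\ref{S:linear}). Then I would argue by contradiction: if the set $\{A:(\mu,A)\in\mathcal{X}\}$ does not exhaust $(-\infty,\mathcal{E}_1(\mu))$, there is a critical level $A_c<\mathcal{E}_1(\mu)$ and a sequence of solutions $u_n=\Phi[z_n]+R[z_n]\xi_n\in\mathrm{GS}(\mu,A_n)$ with $A_n\uparrow A_c$ (or mass $\uparrow\mu_c$) whose Strichartz norms $\|\xi_n\|_{L^8L^4(\R)}\to\infty$ both as $t\to+\infty$ and $t\to-\infty$. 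The crucial input — this is exactly Proposition~\ref{p:key}, which we take for granted here — is that along a subsequence $u_n(0)\to u_\infty$ strongly in $H^1$; the solution $u_c$ with data $u_\infty$ is then global, has $\mathbb{M}(u_c)\le\mu$, $\mathbb{E}_V(u_c)\le A_c<\mathcal{E}_1(\mu)$, does not scatter, and (because no spatial translations survive in the profile decomposition — here one uses \eqref{e:me_comparison} to rule out profiles escaping to spatial infinity, as those would scatter via comparison with the free cubic NLS below the mass-energy threshold of $Q$) has pre-compact orbit $\{u_c(t):t\in\R\}$ in $H^1$. This produces the compact ``critical element'' of Proposition~\ref{p:criticalelement}.

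\textbf{Closing the argument via the virial identity.} With the compact non-scattering solution $u_c$ in hand, one runs the localized virial/Morawetz argument exactly as in \cite{Nakanishi}: writing $u_c=\Phi[z_c]+\eta_c$, one evaluates the truncated virial quantity $\frac{d}{dt}\int\chi_R(x)\,x\cdot\Im(\bar u_c\nabla u_c)\,dx$, which produces the virial functional $\mathbb{K}_{V,2}(u_c(t))$ plus error terms that are controlled by compactness (tails are small uniformly in $t$ because of pre-compactness, and $z_c(t)$ contributes only lower-order soliton terms by the smallness of the ground states). Since $u_c$ has small mass and does not scatter, the trichotomy of Proposition~\ref{p:smt} together with $\mathbb{E}_V(u_c)<\mathcal{E}_1(\mu)$ and the variational estimate \eqref{e:variational_characterization} forces $\mathbb{K}_{V,2}(u_c(t))$ to have a definite sign with $|\mathbb{K}_{V,2}(u_c(t))|\gtrsim1$ uniformly in $t$ (this is where the hypothesis $A<\mathcal{E}_1(\mu)$, rather than $\le$, is essential, and where one must be on the side where $\|\nabla u_c(t)\|_{L^2}$ is either $\le1$ for all $t$ — giving scattering, a contradiction — or $>1$ for all $t$, giving coercivity). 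Integrating the virial identity over a long time interval then contradicts the boundedness of the virial quantity, and the contradiction completes the proof that $A_c$ cannot be $<\mathcal{E}_1(\mu)$, i.e. $\{A:(\mu,A)\in\mathcal X\}\supseteq(-\infty,\mathcal E_1(\mu))$.

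\textbf{Main obstacle.} The genuinely hard step is Proposition~\ref{p:key} — the strong $H^1$-convergence of $u_n(0)$ — which requires the linear profile decomposition adapted to the time-dependent linearized operator $i\partial_t+H-B[z(t)]$ (Section~\ref{S:LPD}) and the matching nonlinear profile decomposition (Section~\ref{S:NPD}), including the delicate point that profiles translating to spatial infinity must be compared with the free cubic NLS and hence scatter by virtue of \eqref{e:me_comparison}. In the present write-up that proposition is assumed, so within this section the only real subtleties are (a) correctly handling the threshold case $A=\mathcal{E}_1(\mu)$ in the easy direction and (b) verifying the coercivity of the localized virial functional in the presence of the moving ground state $\Phi[z_c(t)]$, which is handled exactly as in \cite{Nakanishi} once one knows $z_c$ stays in the small disk $D$ and the orbit is compact.
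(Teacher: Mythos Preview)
Your overall approach matches the paper's, but the first half of your easy-direction paragraph is off. The estimate \eqref{e:variational_characterization} only applies when $\mathbb{E}_V\le\mathcal{E}_1(\mu)-\delta$, so it gives you nothing at or above threshold; and the solutions you describe (those with $\mathbb{K}_{V,2}\ge\kappa>0$ and $\|u(t)\|_{H^1}^2\gtrsim\mu$) are precisely the ones in the second alternative of Proposition~\ref{l:gwp}, which the main direction of the theorem asserts \emph{do} scatter---so arguing that their Strichartz norm cannot be finite is circular. The correct argument is the one you state at the end: the excited soliton $e^{-i\omega t}\varphi_1$ with $\varphi_1\in\mathscr{S}_1(\mu)$ is a global $H^1$-bounded periodic solution with $\mathbb{M}=\mu$, $\mathbb{E}_V=\mathcal{E}_1(\mu)$, and $\xi\not\equiv0$, hence $\|\xi\|_{L^8_tL^4_x(\R)}=\infty$; monotonicity of $\mathrm{ST}$ in $A$ then gives $\mathrm{ST}(\mu,A)=\infty$ for all $A\ge\mathcal{E}_1(\mu)$. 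This is exactly what the paper does (in one line, citing only the existence result Theorem~\ref{P:excited}).

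For the $\Longleftarrow$ direction your outline is accurate and matches the paper: define the critical pair $(M_*,E_*)$, extract a sequence satisfying \eqref{ME-contradiction-assumption}--\eqref{Kbd-contradiction-assumption}, invoke Proposition~\ref{p:criticalelement} (via the granted Proposition~\ref{p:key}) to obtain a compact critical element with $\inf_t\mathbb{K}_{V,2}(u_*(t))\ge\kappa_*>0$, and close with the localized virial identity. One small correction: the positivity $\mathbb{K}_{V,2}(u_*(t))\ge\kappa_*$ is not deduced from a dichotomy on $\|\nabla u_*\|_{L^2}$ as you suggest, but is inherited directly from the sequence $u_n$ via \eqref{Kbd-contradiction-assumption} and Proposition~\ref{p:key}.
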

The $\implies$ direction of Theorem~\ref{Trewrite} follows from the existence of an excited state, i.e. Proposition~\ref{P:excited}.  Thus it remains to prove the $\Longleftarrow$ direction.

By the small data theory, we have that for small enough $\mu>0$, we have that $(\mu,A)\in\mathcal{X}$ for all $A$ sufficiently small. To prove the theorem, we suppose towards a contradiction that there exists $(\mu_0,A_0)$ and $\delta_0>0$ such that 
\[
\mu_0 \ll1 ,\quad A_0 = \mathscr{E}_1(\mu_0)-\delta_0<\mathscr{E}_1(\mu_0),\qtq{and} \mathrm{ST}(\mu_0,A_0) =\I.
\]
We then define
\begin{equation}\label{MstarEstar}
\begin{aligned}
E_*&:= \sup\{ A< A_0 \ |\ (\mu_0,A) \in \mathcal{X} \} \le A_0,\\
M_*& := \sup\{ \mu< \mu_0 \ |\ (\mu,E_*) \in \mathcal{X} \} \le \mu_0.
\end{aligned}
\end{equation}
As $\mu \mapsto \mathscr{E}_1(\mu)$ is decreasing, we have
\begin{equation}\label{e:basicEMrelation}
E_* \le \mathscr{E}_1 (\mu_0) - \delta_0 \le \mathscr{E}_1 (M_*) - \delta_0 \le \tfrac1{M_*} \mathbb{M}(Q)\mathbb{E}(Q) -C M_* - \delta_0 .
\end{equation}

Next, we observe that 
\begin{equation}\label{e:MEinside}
\bigl((0, M_*] \times (0,E_*]\bigr)\setminus (M_*,E_*) \subset \mathcal{X}.
\end{equation}
Indeed, by definition of $E_*$, we have that $0<E<E_*\implies(\mu_0,E) \in \mathcal{X}$.  Furthermore, by definition of $\mathrm{ST}(\mu,A)$, if $(\mu_0,E) \in \mathcal{X}$ then $(\mu,E) \in \mathcal{X}$ for all $0< \mu \le \mu_0$. Thus $(0,\mu_0] \times (0,E_*) \subset \mathcal{X}.$  As the definition of $M_*$ implies $(0,M_*) \times \{E_*\} \subset \mathcal{X}$, we obtain \eqref{e:MEinside}.  

We also claim that 
\begin{equation}\label{e:MEoutside}
(M_*,\I) \times ( E_*,\I) \subset \mathcal{X}^c.
\end{equation}
To see this, we argue as follows:

First, if $(M_*,E_*)=(\mu_0,E_0) \not \in \mathcal{X}$, then \eqref{e:MEoutside} follows immediately. 

Second, if $M_*<\mu_0 $ then the definition of $M_*$ shows that for any $\eps>0$ there exists $M^*\in (M_*,M_*+\eps)$ such that $(M^*,E_*) \not\in \mathcal{X}$, which implies that $[M_*+\eps,\I) \times [E_*,\I) \subset \mathcal{X}^c$. As $\eps>0$ may be taken arbitrarily, \eqref{e:MEoutside} follows. 

Finally, if $M_*=\mu_0$ and $E_*<E_0$ then the definition of $E_*$ shows that for any $\eps>0$ there exists $E^*\in (E_*,E_*+\eps)$ such that
$(\mu_0,E^*) \not\in \mathcal{X}$, which implies that $[\mu_0,\I) \times [E_*+\eps,\I) \subset \mathcal{X}^c$.  Again, since $\eps>0$ may be taken arbitrarily, \eqref{e:MEoutside} follows.

Now, using the facts that $M_* \le \mu_0$ and $E_*\leq A_0 <\mathscr{E}_1(\mu_0)$, we see from the continuity of $\mathscr{E}_1(\mu)$ that 
$(M_* +\eps_0,E_* + \eps_0) \in \{ (\mu,A) \ |\ A < \mathscr{E}_1 (\mu)  \}
$
for sufficiently small $\eps_0$. Thus there exists a sequence of global solutions $u_n = \Phi[z_n] + R[z_n] \xi_n$ obeying
\begin{equation}\label{ME-contradiction-assumption}
\mathbb{M}(u_n) = M_*+o_n(1), \quad \mathbb{E}_V(u_n) = E_*+o_n(1),
\end{equation}
and
\begin{equation}\label{84-contradiction-assumption}
\min\bigl\{ \norm{\xi_n}_{L^8_tL^4_x(\R_+)}, \norm{\xi_n}_{L^8_tL^4_x(\R_-)}\bigr\} \to \I\qtq{as}n\to\infty,
\end{equation}
with
\[
\mathbb{E}_V(u_n) \le \mathscr{E}_1 (\mathbb{M}(u_n)) - \tfrac{\delta_0}2\qtq{for all}n.
\]
Furthermore, Proposition~\ref{l:gwp} implies 
\begin{equation}\label{Kbd-contradiction-assumption}
\inf_{n\geq1} \inf_{t\in\R} \mathbb{K}_{V,2}(u_n(t)) \ge \kappa(\delta_0)\qtq{and} \sup_n \norm{u_n}_{L^\I (\R,H^1 )}^2 \lesssim E_* + M_*.
\end{equation}

Continuing from above, we seek to obtain a contradiction under the assumption
\begin{equation}\label{e:fakeassumption}
	M_*  \le \mu_0, \quad
	E_*\le \mathscr{E}_1 (\mu_0)-\delta_0 .
\end{equation}

The key ingredient is the following: 

\begin{proposition}[Existence of a compact non-scattering solution]\label{p:criticalelement} Suppose that \eqref{e:fakeassumption} holds. Then there exists a global solution 
\[
u_*(t)=\Phi[z_*(t)] + R[z_*(t)]\xi_*(t) \in C(\R,H^1)
\]
to \eqref{e:nls} such that
\[
\mathbb{M}(u_*) = M_*, \quad \mathbb{E}_V(u_*) =  E_* ,\qtq{and}\norm{u_*}_{L^\I H^1 (\R)}^2 \lesssim E_*+M_*.
\]
Furthermore, $u_*$ does not scatter to $\mathscr{S}_0$ in either time direction, i.e. 
\[
\norm{\xi_*}_{L^8_t L^4_x(\R_+)} = \norm{\xi_*}_{L^8_t L^4_x(\R_-)}=\I.
\]
Finally,
\begin{equation}\label{COMPACT}
\{ u_*(t) \ |\ t \in \R\} \qtq{is pre-compact in} H^1.
\end{equation}
\end{proposition}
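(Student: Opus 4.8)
The strategy is to obtain $u_*$ as a strong $H^1$-limit along a subsequence of the sequence $u_n=\Phi[z_n]+R[z_n]\xi_n$ supplied by \eqref{ME-contradiction-assumption}--\eqref{Kbd-contradiction-assumption}, and then to read off the non-scattering and compactness from the convergence result Proposition~\ref{p:key} together with the stability theory for \eqref{def:PDEODE}. First I would translate each $u_n$ in time so that the critical time sits at $t=0$; using the continuity of $t\mapsto\|\xi_n\|_{L^8_tL^4_x}$ one can arrange that $\|\xi_n\|_{L^8_tL^4_x(\R_+)}\to\infty$ and $\|\xi_n\|_{L^8_tL^4_x(\R_-)}\to\infty$ at the same time (this is the symmetric splitting; it is harmless since \eqref{84-contradiction-assumption} already forces both half-line norms to diverge). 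Since $|z_n(0)|<2z_5$, after passing to a subsequence $z_n(0)\to z_*(0)$, and Proposition~\ref{p:key} furnishes a further subsequence along which $\xi_n(0)\to\xi_*(0)$ strongly in $H^1$. By continuity of $\Phi$ and of $z\mapsto R[z]$ we conclude that the data converge, $u_n(0)\to u_*(0):=\Phi[z_*(0)]+R[z_*(0)]\xi_*(0)$ in $H^1$.

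Next I would take $u_*$ to be the maximal-lifespan solution to \eqref{e:nls} with datum $u_*(0)$. Conservation of mass and energy together with \eqref{ME-contradiction-assumption} give $\mathbb{M}(u_*)=M_*$ and $\mathbb{E}_V(u_*)=E_*$, while \eqref{e:basicEMrelation} shows $E_*<\mathscr{E}_1(M_*)$ and the $H^1$-convergence of the data combined with \eqref{Kbd-contradiction-assumption} gives $\mathbb{K}_{V,2}(u_*(0))\ge\kappa(\delta_0)>0$, so that the grow-up condition \eqref{grow-up-condition} fails. Proposition~\ref{l:gwp} then yields that $u_*$ is global with $\|u_*\|_{L^\infty_tH^1(\R)}^2\lesssim E_*+M_*$, and $u_*\in C(\R,H^1)$ by the local theory. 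If $u_*$ were to scatter to $\mathscr{S}_0$ in one time direction, say $\|\xi_*\|_{L^8_tL^4_x(\R_+)}<\infty$, then, since $u_n(0)\to u_*(0)$ in $H^1$ and $z_n(0)\to z_*(0)$, the stability theory for \eqref{def:PDEODE} (Section~\ref{S:linear}) would give a uniform bound $\sup_{n\ge N}\|\xi_n\|_{L^8_tL^4_x(\R_+)}<\infty$, contradicting \eqref{84-contradiction-assumption}; the backward direction is identical, so $\|\xi_*\|_{L^8_tL^4_x(\R_+)}=\|\xi_*\|_{L^8_tL^4_x(\R_-)}=\infty$.

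For the pre-compactness \eqref{COMPACT}, I would apply the same reasoning to time-translates of $u_*$. Given an arbitrary sequence $(t_k)\subset\R$, set $u_{*,k}(t):=u_*(t+t_k)$; this again solves \eqref{e:nls} with $\mathbb{M}(u_{*,k})=M_*$ and $\mathbb{E}_V(u_{*,k})=E_*$, and writing $u_{*,k}=\Phi[z_{*,k}]+R[z_{*,k}]\xi_{*,k}$ we have $\xi_{*,k}(t)=\xi_*(t+t_k)$. Since $u_*$ scatters in neither direction we have $\|\xi_*\|_{L^8_tL^4_x([T,\infty))}=\|\xi_*\|_{L^8_tL^4_x((-\infty,T])}=\infty$ for every $T$, hence $\|\xi_{*,k}\|_{L^8_tL^4_x(\R_\pm)}=\infty$ for all $k$; thus the hypotheses of Proposition~\ref{p:key} are met (with the $o_n(1)$ corrections identically zero). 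Proposition~\ref{p:key}, together with compactness of $\{|z|<2z_5\}$, then provides a subsequence along which $u_{*,k}(0)=u_*(t_k)$ converges in $H^1$, which is precisely \eqref{COMPACT}.

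The genuine difficulty is concentrated entirely in Proposition~\ref{p:key}: its proof rests on the linear and nonlinear profile decompositions adapted to the linearized flow, together with the observation (relying on \eqref{e:me_comparison}) that profiles escaping to spatial infinity fall in the scattering regime of the free cubic NLS and therefore cannot appear in a minimal non-scattering sequence, so that the decomposition collapses to a single profile with no moving spatial center. Granting Proposition~\ref{p:key}, the remaining work above is bookkeeping --- tracking mass and energy through the $H^1$-limit, invoking the global-existence dichotomy of Proposition~\ref{l:gwp}, and upgrading convergence of data to persistence of non-scattering via stability --- with the only delicate point being that $u_*$ must fail to scatter in \emph{both} directions, which is why the critical time is centered at $t=0$ and why one uses that \eqref{84-contradiction-assumption} already blows up both half-line norms.
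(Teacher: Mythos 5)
Your proposal is correct and follows essentially the same route as the paper: take $u_*$ to be the solution produced by Proposition~\ref{p:key} applied to the contradiction sequence, and obtain the pre-compactness \eqref{COMPACT} by applying Proposition~\ref{p:key} again to the time-translates $u_*(\cdot+t_k)$, whose hypotheses are verified exactly as you do. The only difference is cosmetic: the global existence, the mass/energy identities, the $H^1$ bound, and the failure of scattering in both directions are already part of the \emph{conclusion} of Proposition~\ref{p:key}, so your re-derivations of these via conservation laws, Proposition~\ref{l:gwp}, and stability are sound but not needed.
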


Proposition~\ref{p:criticalelement} is in turn a consequence of the following.

\begin{proposition}[Key convergence result]\label{p:key}
Suppose that \eqref{e:fakeassumption} holds. Let 
\[
u_n (t) =\Phi[z_n(t)] + R[z_n(t)] \xi_n(t)
\]
 be a sequence of global solutions to \eqref{e:nls} such that \eqref{ME-contradiction-assumption}, \eqref{84-contradiction-assumption}, and \eqref{Kbd-contradiction-assumption} hold.  Then, there exist a subsequence in $n$ and a function $u_{0,\I} \in H^1$ such that
\[
u_n(0) \to u_{0,\I} \IN H^1\qtq{as}n\to\infty
\]
along this subsequence. The solution 
\[
u_\I(t)=\Phi[z_\I(t)] + R[z_\I(t)]\xi_\I(t) \in C(\R,H^1)
\]
to \eqref{e:nls} with data $u_\I(0)=u_{0,\I}$ exists globally in time and satisfies the following:
\begin{align*}
&\mathbb{M}(u_\I) = M_*, \quad \mathbb{E}_V(u_\I) =  E_*,\quad
\inf_{t \in \R} \mathbb{K}_{V,2}(u_\I(t)) \ge \kappa(\delta_0),
\end{align*}
and $\norm{u_\I}_{L^\I H^1 (\R)}^2 \lesssim E_*+M_*.$  Moreover, $u_\I(t)$ does not scatter to $\mathscr{S}_0$ in either time direction, i.e. $
\norm{\xi_\I}_{L^8_t L^4_x(\R_+)} = \norm{\xi_\I}_{L^8_t L^4_x(\R_-)}=\I.$
\end{proposition}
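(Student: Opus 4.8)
The plan is to run the concentration--compactness (Kenig--Merle) argument for the coupled system \eqref{def:PDEODE}, adapted to the time-dependent linearized flow $\mathcal{L}(t,s;z)$ of Definition~\ref{def:Ltsz}, following the scheme of Nakanishi \cite{Nakanishi} but now carrying spatial translations in the profiles. First I would apply the linear profile decomposition of Section~\ref{S:LPD} to the $H^1$-bounded sequence $\xi_n(0)=P_c\eta_n(0)$, writing
\[
\xi_n(0)=\sum_{j=1}^{J}\mathcal{L}(0,s_n^j;z_n)\,P_c T_{y_n^j}\varphi^j+w_n^J,
\]
where the parameters $(s_n^j,y_n^j)$ are pairwise orthogonal and the remainder $w_n^J$ has linearized evolution tending to $0$ in $L_t^\infty L_x^4$ as $J\to\infty$. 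The relevant (linearized) mass and energy functionals decouple along this decomposition as in \cite{Nakanishi}, with the external-potential and ground-state contributions surviving only on profiles whose spatial parameter stays bounded: for a profile with $|y_n^j|\to\infty$, $\int V(x)\,|T_{y_n^j}\varphi^j(x)|^2\,dx\to0$ by (A1). Passing to a subsequence, each $s_n^j$ converges in $[-\infty,+\infty]$, each $y_n^j$ either converges in $\R^3$ or has $|y_n^j|\to\infty$, and, since the $z_n(0)$ lie in the fixed disk $D$, $z_n(0)\to z_\infty(0)$.

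Next I would construct the nonlinear profiles $U_n^j$: solutions of \eqref{def:PDEODE} whose linearized evolution matches $\mathcal{L}(\cdot,s_n^j;z_n)P_cT_{y_n^j}\varphi^j$ near $t=0$ --- via the local theory of Section~\ref{S:linear} when $s_n^j\to s^j$ is finite, and via a wave-operator construction for the moving-ground-state flow when $s_n^j\to\pm\infty$. The point is that every such profile scatters, for the appropriate reason: a profile with bounded $y_n^j$ and mass-energy strictly below the critical pair $(M_*,E_*)$ lies in $\mathcal{X}$ by \eqref{e:MEinside} and hence has finite $L^8_tL^4_x(\R)$ norm; a profile with $s_n^j\to\pm\infty$ is, near the corresponding time direction, a small-data solution, hence has finite space-time norm there; and a profile with $|y_n^j|\to\infty$ has free mass-energy product bounded by $M_*E_*<\mathbb{M}(Q)\mathbb{E}_0(Q)$ by \eqref{e:basicEMrelation} and, being coercive by \eqref{Kbd-contradiction-assumption}, sits strictly below the cubic ground-state threshold, so, since in the region where $\varphi^j$ concentrates both $V$ and $\Phi[z_n(\cdot)]$ are negligible, it is well-approximated by a scattering solution of the translation-invariant cubic NLS \cite{DHR}. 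Consequently, if $\varphi^1$ does \emph{not} carry all the mass-energy, then every nonlinear profile scatters; superposing the $U_n^j$ and invoking the stability theory for \eqref{def:PDEODE} from Section~\ref{S:linear} --- using the orthogonality of the $(s_n^j,y_n^j)$ to control the cross terms and absorb the error $w_n^J$ --- one obtains a uniform bound $\|\xi_n\|_{L^8_tL^4_x(\R)}\lesssim1$, contradicting \eqref{84-contradiction-assumption}.

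Hence exactly one profile is present and it carries the full mass-energy, so $w_n^J\to0$ in $H^1$ and all other profiles vanish. If $s_n^1\to\pm\infty$ or $|y_n^1|\to\infty$, then the single nonlinear profile --- and hence, by stability, $\xi_n$ for $n$ large --- has finite $L^8_tL^4_x$ norm in at least one time half-line, again contradicting \eqref{84-contradiction-assumption}. Therefore $s_n^1\to s^1$ and $y_n^1\to y^1$ are finite; absorbing them into the profile and using $z_n(0)\to z_\infty(0)$, we get that $\xi_n(0)$ converges strongly in $H^1$, whence $u_n(0)=\Phi[z_n(0)]+R[z_n(0)]\xi_n(0)\to u_{0,\infty}$ in $H^1$ along the subsequence. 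Finally, continuity of $\mathbb{M}$ and $\mathbb{E}_V$ on $H^1$ together with conservation gives $\mathbb{M}(u_\infty)=M_*$ and $\mathbb{E}_V(u_\infty)=E_*$; since \eqref{e:fakeassumption} holds, Proposition~\ref{l:gwp} applies to $u_\infty$ and yields global existence together with its dichotomy, and the scattering alternative is ruled out --- it would, by the stability argument above, force $\|\xi_n\|_{L^8_tL^4_x(\R)}$ bounded --- so $\inf_{t\in\R}\mathbb{K}_{V,2}(u_\infty(t))\ge\kappa(\delta_0)$ and $\|u_\infty\|_{L^\infty_tH^1(\R)}^2\lesssim E_*+M_*$. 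Running the same stability argument in each time direction separately shows $\|\xi_\infty\|_{L^8_tL^4_x(\R_+)}=\|\xi_\infty\|_{L^8_tL^4_x(\R_-)}=\infty$.

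The main obstacle I expect is the construction and control of the nonlinear profiles for the time-dependent linearized system. Foremost among these is the rigorous approximation of $U_n^j$ by the constant-coefficient cubic NLS when $|y_n^j|\to\infty$: this requires quantitative estimates showing that both $V$ and $\Phi[z_n(t)]$ are uniformly small on the (moving) support of the translated profile, together with careful handling of the coupled $z$-ODE along that evolution. Closely related is the wave-operator construction of solutions of \eqref{def:PDEODE} with prescribed behavior at $t=\pm\infty$ for the moving-ground-state flow. The concluding superposition and stability step, while conceptually standard, is technically heavy because the linear part $B[z(t)]$ of \eqref{def:PDEODE} is genuinely time-dependent, so one must work entirely with the Strichartz and dispersive estimates of Section~\ref{S:linear}.
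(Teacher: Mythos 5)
Your plan is correct and follows essentially the same route as the paper: the linear profile decomposition adapted to $\mathcal{L}(t,s;z)$, nonlinear profiles built as weak limits of the full solution for bounded spatial centers, comparison with the translation-invariant cubic NLS (via \eqref{e:basicEMrelation} and \cite{DHR}) for profiles with $|y_n^j|\to\infty$, scattering of all subcritical profiles from the inductive hypothesis \eqref{e:MEinside}, and a stability/superposition argument contradicting \eqref{84-contradiction-assumption}. The paper's execution differs only in how the superposition is organized — an induction over time slabs indexed by the equivalence classes of the $s_n^j$ (Proposition~\ref{p:keykey}), which is how it resolves the coupling to the moving $z_n(t)$ that you flag as the main obstacle — and in ruling out a critical profile at a nonzero time translation by an energy-doubling contradiction rather than your one-half-line smallness argument; both are legitimate ways to finish.
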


\begin{proof}[Proof of Proposition \ref{p:criticalelement}, assuming Proposition~\ref{p:key}] We have already constructed solutions obeying the assumptions of Proposition~\ref{p:key}.  Thus we obtain the global solution $u_\infty$, and it remains only to verify the pre-compactness. In fact, given $\{t_n\}\subset\R$,  $u_n(t):= u_\infty (t+t_n)$ is a sequence of solutions obeying the assumptions of Proposition~\ref{p:key}.  Thus we we can extract a subsequence such that $u_n(0)=u_*(t_n)$ converges strongly in $H^1$. \end{proof}

The proof of Proposition~\ref{p:key} (given in Section~\ref{S:key}) in the non-radial setting is the key new ingredient in this paper.  For now, we take Proposition~\ref{p:key} (and hence Proposition~\ref{p:criticalelement}) for granted and complete the proof of the scattering part of Theorem~\ref{T} (cf. Theorem~\ref{Trewrite} above).  As in \cite{Nakanishi}, the proof relies on a localized virial argument, which shows that a solution as in Proposition~\ref{p:criticalelement} cannot exist.  In \cite{Nakanishi}, an essential role played by the radial assumption is to guarantee that the critical element is compact in $H^1$ without the need for a moving spatial center.  Proposition~\ref{p:criticalelement} asserts that the same is true in the non-radial setting; however, proving this is a non-trivial task, comprising the majority of the work in this paper. 

\begin{proof}[Proof of Theorem~\ref{Trewrite}] Continuing from above, our task is to reach a contradiction under the assumption \eqref{e:fakeassumption}.  To this end, we apply Proposition~\ref{p:criticalelement} to obtain the solution $u_*$, which satisfies
\[
	\inf_{t \in \R} \mathbb{K}_{V,2} (u_*(t)) \ge \kappa_* >0.
\]
Let $\zeta (x)$ be a non-negative radial smooth function such that
\begin{equation}\label{def-zeta}
\zeta (x) = \left\{\begin{aligned} &|x| && |x| \le 1,\\
& \tfrac32 && |x|\ge 2.  \end{aligned}\right. 
\end{equation}
and $\zeta_R (x)=\zeta(x/R)$ for $R>0$. We define
\begin{align}
\mathbb{K}_{V,2}^R(f) 
&= \int_{\R^3} \zeta_{0,R}|\nabla f|^2 + \zeta_{1,R}|\tfrac{x}{|x|}\cdot\nabla f|^2 + 
\left(\tfrac{1}{R}\right)^2\zeta_{2,R}|f|^2+\zeta_{3,R}|f|^4\nonumber \\
&\qquad- \tfrac12\zeta_{0,R}(x\cdot\nabla V)|f|^2\,dx, \label{Kv}
\end{align}
where
\begin{align*}
 \zeta_0 &{}= 1- |x|^{-1} \zeta, &
\zeta_1 &{}= |x|^{-1} \zeta - \d_r \zeta ,\\ 
\zeta_2 &{}= \tfrac14\Delta[\partial_r + \tfrac{2}{|x|}], & \zeta_3 &{}= -\tfrac14[-3+(\partial_r + \tfrac{2}{|x|})\zeta].
\end{align*}
Then, we have
\begin{align*}
-\d_t  \int_{\R^3} R \zeta_R \tfrac{x}{|x|} \cdot \Im (\bar u_* {\nabla u_*} ) dx 
= 2 \{\mathbb{K}_{V,2}(u_*) -\mathbb{K}_{V,2}^{R}(u_*)\}.
\end{align*}

Since $\zeta_{j,R} =0$ for $|x| \le R$ (for $j=0,1,2,3$), we see from pre-compactness in $H^1$ that there exists $R_*>0$ such that
\[
\sup_{t\in\R} \d_t  \int_{\R^3} R_* \zeta_{R_*} \tfrac{x}{|x|} \cdot \Im (\bar u_* {\nabla u_*} ) dx
\le -\inf_{t \in \R} \mathbb{K}_{V,2} (u_*(t)) \le -\kappa_* <0.
\]
However, this contradicts the fact that
\[
\sup_{t\in\R} \abs{  \int_{\R^3} R_* \zeta_{R_*} \tfrac{x}{|x|} \cdot \Im (\bar u_* {\nabla u_*} ) dx }\lesssim R_* M_* \sup_{t} \norm{\nabla u_*(t)}_{L^2} < \I.
\]
\end{proof}

In the rest of this section, we establish the remaining part of Theorem~\ref{T} in the focusing setting.  In particular, focusing on the forward time direction, we will prove the following:

\begin{proposition}[Growup result] 
Let $\sigma=1$. Let $u_0 \in H^1(\R^3)$ satisfy 
\[
\mathbb{M}(u_0) \le \mu_0\qtq{and} \mathbb{E}_V (u_0) < \mathscr{E}_1 (\mathbb{M}(u_0)).
\]
If $\mathbb{K}(u_0)<0$ {and} $\mathbb{H}_0(u_0)\ge 1,$ then the corresponding solution $u$ to \eqref{e:nls} satisfies
\[
\lim_{t \uparrow T_{\max}} \norm{u(t)}_{H^1} =\I,
\]
where $T_{\max}$ is the maximal time of existence.
\end{proposition}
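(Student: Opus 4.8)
The plan is to run a localized virial argument, in the spirit of the proof of Theorem~\ref{Trewrite}, but exploiting the fact that here $\mathbb{K}_{V,2}$ is \emph{uniformly negative} along the flow rather than uniformly positive. First one reduces to $T_{\max}=\infty$: if $T_{\max}<\infty$ then $\|u(t)\|_{H^1}\to\infty$ as $t\uparrow T_{\max}$ by the standard blow-up alternative for the $H^1$-subcritical equation \eqref{e:nls}, so there is nothing to prove. Thus assume $u$ is global in forward time. Next, one records that the hypotheses propagate: exactly as in the proof of Proposition~\ref{l:gwp}, the variational estimate \eqref{e:variational_characterization} and the trichotomy of Proposition~\ref{p:smt} imply that the property ``$\mathbb{H}_0(u(t))\ge1$ and $\mathbb{K}_{V,2}(u(t))<0$'' is preserved in time. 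Since $\mathbb{M}$ and $\mathbb{E}_V$ are conserved, $\mathbb{E}_V(u(t))=\mathscr{E}_1(\mathbb{M}(u(t)))-\delta_0$ with $\delta_0:=\mathscr{E}_1(\mathbb{M}(u_0))-\mathbb{E}_V(u_0)>0$, and $\mathbb{H}_0(u(t))\ge1$ together with conservation of mass forces $\|\nabla u(t)\|_{L^2}\ge1$ for $\mu_0$ small. Hence \eqref{e:variational_characterization} gives a uniform gap
\[
\mathbb{K}_{V,2}(u(t))\le-\kappa\qquad\text{for all }t\ge0,
\]
where $\kappa=\kappa(\delta_0)>0$ depends only on $\delta_0$ (and on $\mathbb{M}(u_0)$, $V$).

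Now set up the localized virial with a carefully chosen cutoff. Pick a smooth radial $\zeta$ with $\zeta(x)=|x|$ near the origin, $\zeta$ concave and bounded; then, in the notation of \eqref{Kv}, the weights $\zeta_0=1-|x|^{-1}\zeta$, $\zeta_1=|x|^{-1}\zeta-\partial_r\zeta$ are nonnegative (concavity of $\zeta$ with $\zeta(0)=0$ gives $\zeta(r)\ge r\zeta'(r)$ and $\zeta(r)\le r$), and the condition $\partial_r\zeta+\tfrac{2}{|x|}\zeta\le3$ characterizing $\zeta_3\ge0$ holds automatically in this regime. Writing $\zeta_R(x)=\zeta(x/R)$ and $\mathcal{M}_R(t)=\int_{\R^3}R\,\zeta_R\,\tfrac{x}{|x|}\cdot\Im(\bar u(t)\nabla u(t))\,dx$, the Morawetz identity displayed in the proof of Theorem~\ref{Trewrite} reads $\partial_t\mathcal{M}_R(t)=-2\mathbb{K}_{V,2}(u(t))+2\mathbb{K}_{V,2}^R(u(t))$.

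The point of the cutoff is error control. Because $\zeta_0,\zeta_1,\zeta_3\ge0$, the only terms of $\mathbb{K}_{V,2}^R(u(t))$ that can be negative are the one carrying $(1/R)^2\zeta_{2,R}$ (bounded by $CR^{-2}\mathbb{M}(u_0)$, since $|\zeta_2|\lesssim1$) and the one carrying $\zeta_{0,R}(x\cdot\nabla V)$ (bounded by $C\sup_{|x|\ge R}|x\cdot\nabla V|\cdot\mathbb{M}(u_0)$, since $\zeta_{0,R}$ is supported in $|x|\ge R$); using the decay assumptions on $V$ together with conservation of mass, both tend to $0$ as $R\to\infty$, uniformly in $t$. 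Hence $\mathbb{K}_{V,2}^R(u(t))\ge-\theta(R)$ with $\theta(R)\to0$. Fixing $R$ so large that $\theta(R)\le\kappa/2$, we obtain $\partial_t\mathcal{M}_R(t)\ge2\kappa-2\theta(R)\ge\kappa>0$ for all $t\ge0$, hence $\mathcal{M}_R(t)\ge\mathcal{M}_R(0)+\kappa t\to\infty$. On the other hand, Cauchy--Schwarz gives $|\mathcal{M}_R(t)|\le R\,\mathbb{M}(u_0)^{1/2}\|\nabla u(t)\|_{L^2}$, so $\|u(t)\|_{H^1}\ge\|\nabla u(t)\|_{L^2}\gtrsim R^{-1}\mathbb{M}(u_0)^{-1/2}(\mathcal{M}_R(0)+\kappa t)\to\infty$ as $t\uparrow T_{\max}$; this even yields the full limit, not merely a $\limsup$. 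The backward direction is identical.

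I expect the main obstacle to be precisely the control of $\mathbb{K}_{V,2}^R$: in the proof of Theorem~\ref{Trewrite} this was handled using pre-compactness of the critical element (so that $\mathbb{K}_{V,2}^R(u_*(t))\to0$ uniformly as $R\to\infty$), which is unavailable for grow-up solutions. Here it must instead be extracted from the judicious choice of the profile $\zeta$ — forcing the otherwise-dangerous gradient and quartic tail contributions (which are comparable to $\|u(t)\|_{L^4}^4$ and hence unbounded in time) to carry favorable signs — together with the smallness of the mass and the decay of $V$, leaving only a genuinely small remainder $\theta(R)$.
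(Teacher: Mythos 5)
Your reduction to $T_{\max}=\infty$, the propagation of the uniform bound $\mathbb{K}_{V,2}(u(t))\le-\kappa$, and the treatment of the $\zeta_2$-term are all fine, and the first-order monotonicity strategy is natural. But the argument breaks exactly at the point you identify as the crux: the quartic tail of $\mathbb{K}_{V,2}^R$ cannot be given a favorable sign by any choice of $\zeta$. To see this without chasing constants, look at $|x|\gg R$, where $R\zeta_R$ is constant: there the truncated virial weight $W_R$ is essentially constant, so $\partial_t\mathcal{M}_R$ receives essentially no contribution from that region, and hence $\mathbb{K}_{V,2}(u)-\mathbb{K}_{V,2}^R(u)$ must reproduce the full integrand of $\mathbb{K}_{V,2}$ on the core and essentially nothing far out. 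This forces the quartic part of $\mathbb{K}_{V,2}^R(u)$ to be $\approx-\tfrac34\int_{|x|\gg R}|u|^4\,dx<0$ in the focusing case; equivalently, a favorable sign would require $\Delta W_R\ge3$ outside the core, impossible for a weight with bounded gradient. (The sign in the displayed formula for $\zeta_3$ should not be trusted here; note that the paper's own estimate is written with $|\zeta_{3,R}|$.) The quantity $\int_{|x|\ge R}|u(t)|^4\,dx$ is precisely what you cannot control: for a grow-up solution $\mathbb{G}(u(t))$ is unbounded in $t$ (since $\mathbb{H}_0-\mathbb{G}$ is essentially conserved and $\mathbb{H}_0\to\infty$), and nothing prevents the $L^4$ mass from living in $|x|\ge R$. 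Indeed, if your sign arrangement were available, the same computation would give $\partial_t^2(W_R,|u|^2)\le-2\kappa$ and hence finite-time blow-up for arbitrary non-radial infinite-variance $H^1$ data with negative virial, a well-known open problem. A second, independent gap is the potential tail: you bound it by $\sup_{|x|\ge R}|x\cdot\nabla V|\cdot\mathbb{M}(u_0)$, but (A2) and (A6) only give $x\cdot\nabla V\in(L^2+L_0^\infty)\cap L^\infty$; the $L^2$ component need not decay pointwise, and estimating it by H\"older reintroduces $\|u(t)\|_{L^4(|x|\ge R)}^2$, again unbounded in $t$.

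This is exactly why the paper's proof is structured as a contradiction and bootstrap rather than a direct monotonicity: one assumes the exterior gradient stays bounded, $\sup_t\int_{|x|\ge R_0}|\nabla u|^2\le h_0<\infty$, controls $\int_{|x|\ge R}|u|^4\lesssim h_0^{3/2}\|u\|_{L^2(|x|\ge R)}$ by Gagliardo--Nirenberg and the potential term by $\|x\cdot\nabla V\|_{L^\infty}\|u\|_{L^2(|x|\ge R)}^2$, proves by a continuity argument (using the second-order virial identity with the weight $W_R$) that the exterior mass stays below a threshold $m_0$ for all time, thereby obtains $\mathbb{K}_{V,2}^R(u(t))\ge-\tfrac12\kappa$, and only then runs the convexity argument $0\le(W_R,|u(t)|^2)\le C(u_0)t-\kappa t^2$ to reach a contradiction. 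The conclusion of that argument is correspondingly the escape of the gradient to spatial infinity under the negation, not a pointwise-in-time monotone functional. To repair your proof you would have to supply, at minimum, a time-uniform bound on $\int_{|x|\ge R}|u(t)|^4\,dx$, which is essentially the content of the paper's two bootstrap claims.
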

\begin{proof} We argue as in \cite{AkahoriNawa}.  If $T_{\max}$ is finite we obtain the result by the standard blowup criterion.  We will prove that if $T_{\max}=\I$, 
\[
\inf_{R>0} \limsup_{t \to \I} \int_{|x| \ge R} |\nabla u(t)|^2 dx = \I,
\]
which implies the result.  Suppose instead that for some $R_0>0$, we have
\begin{equation}\label{e:growup_h0}
h_0:= \sup_{t \ge 0 } \int_{|x| \ge R_0 } |\nabla u(t) |^2dx <\I.
\end{equation}
As $u\in C([0,\infty); H^1)$, we see from \eqref{e:variational_characterization} that there exists $\kappa>0$ such that
\begin{equation}\label{e:growup_kappa}
\mathbb{K}_{V,2}(u(t)) \le - \kappa\qtq{for all}t\ge 0.
\end{equation}

We now define
\[
W_R (x) = R^2 \int_0^{|x|/R} \zeta (r) dr,
\]
regarding $\zeta$ as a function of $r$. We then have the virial identity 
\begin{align*}
(W_R, |u(t)|^2) ={}& 	(W_R, |u_0|^2) - 2t\int_{\R^3} R \zeta_R \frac{x}{|x|} \cdot \Im (\bar u_0 {\nabla u_0} ) dx\\
&+ 4\left\{\int_0^t \int_0^s \mathbb{K}_{V,2} (u(\sigma)) d\sigma ds - \int_0^t \int_0^s \mathbb{K}_{V,2}^R (u(\sigma)) d\sigma ds \right\},
\end{align*}
where $\mathbb{K}_{V,2}^R$ is given by (\ref{Kv}).

We first claim that there exists $m_0>0$ such that if $v \in H^1(\R^3)$ satisfies $\mathbb{K}_{V,2}^R (v) \le -\tfrac12\kappa$,  $\mathbb{M}(v) \le \mathbb{M}(u_0)$, and
\[
	\int_{|x|\ge R} |\nabla v|^2 dx \le h_0\qtq{for some}R\ge 1,
\]
then
\[
\int_{|x|\ge R} |v|^2 dx \ge 2m_0
\]
for the same $R$, where $h_0$ is the constant given in \eqref{e:growup_h0}.
Indeed, pick such $v \in H^1(\R^3)$.
Then, since $\zeta_{0,R} \ge0$ and $\zeta_{1,R} \ge0$,
\begin{align*}
\tfrac12\kappa \le - \mathbb{K}_{V,2}^R(v) \le{}& \int |\zeta_{3,R}| |v|^4 dx + (\norm{\zeta_2}_{L^\I} + \norm{x\cdot \nabla V}_{L^\I}) \norm{v}_{L^2(|x|\ge R)}^2\\
\lesssim{}& (\norm{\nabla v}_{L^2(|x|\ge R)}^3 + \norm{v}_{L^2}) \norm{v}_{L^2(|x|\ge R)} \\
\le{}& (h_0^{\frac32} + \norm{u_0}_{L^2}) \norm{v}_{L^2(|x|\ge R)},
\end{align*}
showing the desired lower bound.

We next claim that if $R$ is sufficently large then
\begin{equation}\label{e:growup_claim2}
	\int_{|x| \ge R} |u(t)|^2 dx \le m_0
\end{equation}
for all $t\ge 0$. To this end, we first observe that if $R>0$ is large then \eqref{e:growup_claim2} is true at $t=0$. Now, suppose towards a contradiction that
\[
T_R := \sup \{ T\ge0 \ |\ \eqref{e:growup_claim2}\text{ is true on }[0,T] \}<\infty.
\]
As $u\in C([0,\infty); H^1)$, we have $T_R>0$ and 
\begin{equation}\label{e:growup_claim2pf1}
\int_{|x| \ge R} |u(T_R)|^2 dx = m_0.
\end{equation}
Then, by the previous claim, we see that one of $\mathbb{K}_{V,2}^R (u(T_R)) \le -\tfrac12\kappa$, $\mathbb{M}(u(T_R)) \le \mathbb{M}(u_0)$, or $\int_{|x|\ge R} |\nabla u(T_R)|^2 dx \le h_0$ must fail. By mass conservation, the second of these is true. Furthermore, if $R\ge R_0$ then the third item holds. Thus, if $R \ge R_0$,
\[
\mathbb{K}_{V,2}^R (u(T_R)) > -\tfrac12\kappa.
\]

Now, by the virial identity, we have
\begin{align*}
(W_R, |u(T_R)|^2) \leq{}& 	(W_R, |u_0|^2) - 2T_R\int_{\R^3} R \zeta_R \frac{x}{|x|} \cdot \Im (\overline{u_0} {\nabla u_0} ) dx -  \kappa T_R^2 \\
={}& (W_R, |u_0|^2) 	- \kappa \(T_R + \frac{1}{\kappa} \int_{\R^3} R \zeta_R \frac{x}{|x|} \cdot \Im (\overline{u_0} {\nabla u_0} ) dx\)^2 \\
& + \tfrac1{\kappa} \(\int_{\R^3} R \zeta_R \frac{x}{|x|} \cdot \Im (\overline{u_0} {\nabla u_0} ) dx\)^2\\
\le{}& (W_R, |u_0|^2)+ \tfrac1{2\kappa} \norm{\nabla u_0}_{L^2}^2  \norm{ R\zeta_R u_0 }_{L^2}^2.
\end{align*}
We now claim that the right-hand side is $o(R^2)$ as $R\to\I$. Indeed, for any $\eps>0$, there exists $R_\eps$ such that
\[
\int_{|x|\ge R_\eps} |u_0|^2 dx \le \eps.
\]
Furthermore, as $\norm{W_R}_{L^\I} \lesssim R^2$ and $\norm{\zeta_R}_{L^\I} \le 3/2$, we have
\[
\int_{|x|\ge R_\eps} W_R |u_0|^2 dx  +  \int_{|x|\ge R_\eps} R^2 \zeta_R^2 |u_0|^2 dx \lesssim \eps R^2.
\] 
Finally, if $R \ge R_\eps$ then $W_R = |x|^2$ and $R\zeta_R = |x|$ for $|x| \le R_\eps$ and hence
\[
\int_{|x|\le R_\eps} W_R |u_0|^2 dx  +  \int_{|x|\le R_\eps} R^2\zeta_R^2 |u_0|^2 dx\lesssim R_\eps^2 \norm{u_0}_{L^2}^2.
\]
Combining these two estimate, we obtain the claim.

Choosing $R$ sufficiently large, we therefore have that
\[
(W_R, |u(T_R)|^2) \le \tfrac12 m_0 R^2.
\]
Consequently, 
\[
R^2 \int_{|x|\ge R}| u(T_R)|^2 dx  \le (W_R, |u(T_R)|^2) \le \frac12 m_0 R^2,
\]
which contradicts \eqref{e:growup_claim2pf1}.  Thus, we obtain \eqref{e:growup_claim2} for all $t\geq 0$.

Combining above two claims, we see that there exists $R>0$ such that
\[
	\sup_{t\ge 0} \int_{|x| \ge R} |u(t)|^2 dx \le m_0, \quad \inf_{t\ge 0} \mathbb{K}_{V,2}^R (u(t)) \ge -\tfrac12\kappa.
\]
However, by the virial identity, one then obtains 
\[
0 \le (W_R, |u(t)|^2) \le C(u_0) t - \kappa t^2
\]
for all $t\ge0$, which yields a contradiction for sufficiently large $t$. 
\end{proof}

So far, we have restricted our attention primarily to the focusing case of \eqref{e:nls}.  Straightforward modifications suffice to treat the defocusing case, as well.  In particular, due to the absence of excited solitons (cf. Conjecture~\ref{conjecture}), the condition $\mathbb{E}_V(u_0)<\mathscr{E}_1(\mathbb{M}(u_0))$ does not impose any restriction on the size of the energy.  Moreover, the condition \eqref{grow-up-condition} cannot be satisfied in the defocusing case (cf. Proposition~\ref{p:smt}); thus, in the defocusing case, only the scattering alternative occurs. 

\section{Strichartz estimates and stability for the linearized equation}\label{S:linear} 

In the previous section, we saw that any small mass solution to \eqref{e:nls} admits a decomposition of the form $u=\Phi[z]+R[z]\xi$.  In this section, our interest is in the equation satisfied by $\xi(t)$, as well as the underlying linear part of this equation.

We introduce the following set of functions from \cite{Nakanishi}. 

\begin{definition}
Let $\mathrm{SBC}$ denote the set of continuous functions $z(t):\R \to \C$ satisfying either of the following:
\begin{itemize}
\item $z(t)$ arises from a global solution $u(t)$ to \eqref{e:nls} such that
\[
\mathbb{M}(u) \le \mu_0\qtq{and}\sup_{t\in\R} \mathbb{H}_0(u(t))\lesssim \mu^{-1}
\]
via the decomposition $u(t) = \Phi[z(t)] + \eta(t)$.
\item $z(t)$ arises from a solution $u(t)$ to \eqref{e:nls} defined on an interval $I$ via the decomposition $u(t)=\Phi[z(t)]+\eta(t)$. Outside of $I$, $z(t)$ is constant. 
\end{itemize}
An element $z(t)$ of $\mathrm{SBC}$ is continuously differentiable on $\R$ with at most two exceptional points and obeys the bounds
\[
\norm{z}_{L^\I (\R)} \le \mu_0\qtq{and}\norm{\dot{z} }_{L^\I (\R)} \lesssim 1.
\]
\end{definition}

\begin{remark} By the Arzel\'a--Ascoli Theorem, any sequence $\{z_n\}_n \subset \mathrm{SBC}$ has a subsequence that converges locally uniformly to a continuous bounded function, $z_\I$ on $\R$.  Using the fact that a weak limit of bounded solutions to \eqref{e:nls} is again a solution, it follows that $z_\I\in \mathrm{SBC}$.
\end{remark}

We next introduce the solution operator for the linearized equation.  Recall that $B[z]:=P_c\tilde B[z]R[z]$, where 
\[
\tilde B[z]f = 2|\Phi[z]|^2 f + \Phi[z]^2 \bar f,
\]
with $z\mapsto\Phi[z]$ parametrizing the ground states and $R[z]$ giving the inverse of $P_c|_{P_c[z]H^1}$ (cf. Proposition~\ref{p:coordinate} and the subsequent discussion). 

\begin{definition}\label{def:Ltsz}
For $z \in \mathrm{SBC}$, we define $\mathcal{L}(t,s;z):P_cH^1 \to P_cH^1$ as follows:
\[
v (t) := \mathcal{L}(t,s;z) \varphi
\qtq{is the unique solution to}
\left\{\begin{aligned}
& i \d_t v + H v =B[z] v, \quad \\
& v(s) = \varphi.
\end{aligned}\right.
\]
Then $\mathcal{L}(t,s;z)$ is a bounded operator, which is $\R$-linear but not $\C$-linear (except for the case $\mathcal{L}(t,s;0)=e^{i(t-s)H}$).
\end{definition}

We then have the following: 

\begin{lemma}\label{L:properties-of-L} For $z \in \mathrm{SBC}$, $\mathcal{L}(t,s;z)$ obeys the following: for any $t,s,r \in \R$,
\begin{itemize}
\item $\mathcal{L} (t,t;z)= \mathrm{Id}$;
\item $\mathcal{L} (t,s;z)\mathcal{L} (s,r;z)=\mathcal{L} (t,r;z)$;
\item $\mathcal{L} (t,s;z)=\mathcal{L} (t-r,s-r;z(\cdot + r))$.
\end{itemize}
\end{lemma}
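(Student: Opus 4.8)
The plan is to verify the three identities directly from the defining initial-value problem in Definition~\ref{def:Ltsz}, using uniqueness of solutions to the linear equation $i\d_t v + Hv = B[z]v$. For the first identity, $\mathcal{L}(t,t;z) = \mathrm{Id}$ is immediate: the function $v(\tau) \equiv \varphi$ trivially solves the equation with data $\varphi$ prescribed at time $t$ only in the trivial sense; more precisely, $v(\tau) := \mathcal{L}(\tau,t;z)\varphi$ evaluated at $\tau = t$ returns $\varphi$ by the imposed initial condition $v(t) = \varphi$, so $\mathcal{L}(t,t;z)\varphi = \varphi$.

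For the cocycle (semigroup-type) property $\mathcal{L}(t,s;z)\mathcal{L}(s,r;z) = \mathcal{L}(t,r;z)$, the argument is the standard one: fix $r$ and $\varphi \in P_c H^1$, and set $v(\tau) := \mathcal{L}(\tau,r;z)\varphi$, so that $v$ solves $i\d_\tau v + Hv = B[z]v$ with $v(r) = \varphi$. Now consider $w(\tau) := \mathcal{L}(\tau,s;z)\,v(s) = \mathcal{L}(\tau,s;z)\mathcal{L}(s,r;z)\varphi$. By definition, $w$ solves the same linear equation with $w(s) = v(s)$. But $v$ itself solves this equation with the same value $v(s)$ at time $s$. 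By uniqueness of solutions to the linearized equation (which underlies the well-posedness asserted in Definition~\ref{def:Ltsz}), we conclude $w \equiv v$ on $\R$; evaluating at $\tau = t$ gives $\mathcal{L}(t,s;z)\mathcal{L}(s,r;z)\varphi = \mathcal{L}(t,r;z)\varphi$. One should also note that the intermediate quantities remain in $P_c H^1$, which is part of the setup since $\mathcal{L}(\cdot,\cdot;z)$ maps $P_c H^1$ to itself.

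For the translation identity $\mathcal{L}(t,s;z) = \mathcal{L}(t-r,s-r;z(\cdot+r))$, fix $r$ and $\varphi$, and let $v(\tau) := \mathcal{L}(\tau,s;z)\varphi$. Define $\tilde v(\tau) := v(\tau + r)$. Then $i\d_\tau \tilde v(\tau) + H\tilde v(\tau) = (i\d_\tau v + Hv)(\tau+r) = B[z(\tau+r)]v(\tau+r) = B[(z(\cdot+r))(\tau)]\,\tilde v(\tau)$, so $\tilde v$ solves the linearized equation with coefficient path $z(\cdot+r) \in \mathrm{SBC}$ and satisfies $\tilde v(s-r) = v(s) = \varphi$. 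By uniqueness again, $\tilde v(\tau) = \mathcal{L}(\tau, s-r; z(\cdot+r))\varphi$, and evaluating at $\tau = t-r$ yields $v(t) = \mathcal{L}(t-r,s-r;z(\cdot+r))\varphi$, i.e. $\mathcal{L}(t,s;z)\varphi = \mathcal{L}(t-r,s-r;z(\cdot+r))\varphi$. Here one uses that $z(\cdot+r)$ is again an element of $\mathrm{SBC}$ (a time-shift of an $\mathrm{SBC}$ path is $\mathrm{SBC}$), so that the right-hand operator is well-defined.

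The only genuine point requiring care — the ``main obstacle,'' though a mild one — is ensuring uniqueness (and the requisite well-posedness) for the nonautonomous linear problem $i\d_t v + Hv = B[z]v$ with $z \in \mathrm{SBC}$: the coefficient $B[z(t)]$ is only bounded and merely continuous (indeed $\dot z \in L^\infty$, so $z$ is Lipschitz and $B[z(\cdot)]$ is at least continuous in $t$), and it is $\R$-linear rather than $\C$-linear. However, $B[z]$ is a bounded operator on $P_c H^1$ uniformly in $t$ (since $\Phi[z]$ is small in $H^1$ and $R[z]$ is a bounded perturbation of the identity), so the Duhamel representation $v(t) = e^{i(t-s)H}\varphi - i\int_s^t e^{i(t-\tau)H}B[z(\tau)]v(\tau)\,d\tau$ together with a Gronwall/contraction argument gives existence and uniqueness on all of $\R$; this is exactly the well-posedness already built into Definition~\ref{def:Ltsz}, so in the write-up it suffices to invoke it. With uniqueness in hand, all three identities follow by the ``construct-two-solutions-with-the-same-data'' mechanism above.
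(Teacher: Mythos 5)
Your proof is correct and is exactly the standard argument the paper has in mind: the paper states these flow-map properties without proof, as immediate consequences of the uniqueness/well-posedness built into Definition~\ref{def:Ltsz}, and your verification via the ``two solutions with the same data at one time'' mechanism (plus the observation that $z(\cdot+r)\in\mathrm{SBC}$) is precisely the omitted routine check.
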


For the inhomogeneous problem, we have the following Duhamel formula. 

\begin{proposition}[Duhamel formula]
Let $z \in \mathrm{SBC}$, $I \subset \R$, and $N: I \to P_c H^1$. The $w(t)$ to the inhomogeneous linearized equation
\[
\left\{\begin{aligned}
& i \d_t w + H w =B[z] w + N, \quad \\
& w(s) = \varphi
\end{aligned}\right.
\]
is given by
\begin{equation}
w(t) = \mathcal{L} (t,s;z) \varphi + \mathcal{D} (t,s;z)N,
\end{equation}
where
\begin{equation}\label{def:D}
\mathcal{D} (t,s;z)N := -  \int_s^t \mathcal{L}(t,r;z) (i N(r)) dr
\end{equation}
\end{proposition}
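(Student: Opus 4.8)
The plan is to verify directly that the function $w(t)$ defined by the right-hand side of the claimed identity solves the stated initial value problem, and then to deduce uniqueness from Definition~\ref{def:Ltsz}. First I would check the initial condition: since $\mathcal{L}(s,s;z)=\mathrm{Id}$ by Lemma~\ref{L:properties-of-L} and $\mathcal{D}(s,s;z)N=0$ directly from \eqref{def:D}, one gets $w(s)=\varphi$. I would also record here that $\mathcal{D}(t,s;z)N$ indeed takes values in $P_cH^1$: since $\phi_0$ is real, $P_cH^1$ is invariant under multiplication by $i$, so $iN(r)\in P_cH^1$; applying $\mathcal{L}(t,r;z)$ keeps us in $P_cH^1$ by Definition~\ref{def:Ltsz}; and integrating a $P_cH^1$-valued function again lands in $P_cH^1$.

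Next I would differentiate $w$ in $t$. For the homogeneous term, Definition~\ref{def:Ltsz} gives $i\partial_t[\mathcal{L}(t,s;z)\varphi]=(-H+B[z])\mathcal{L}(t,s;z)\varphi$. For the Duhamel term I would apply the Leibniz rule for an integral with variable upper endpoint: the boundary contribution is $-\mathcal{L}(t,t;z)(iN(t))=-iN(t)$, which, after multiplication by $i$, produces exactly the source term $N(t)$; differentiating the integrand and again invoking Definition~\ref{def:Ltsz} reproduces $(-H+B[z])\mathcal{D}(t,s;z)N$, where one pulls the operators out of the integral. Adding the two contributions gives $i\partial_t w+Hw=B[z]w+N$, as desired. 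Here I would be careful to note that although $B[z]$ is only $\R$-linear, it — like $H$, which is $\C$-linear — commutes with the $\R$-linear operation of integration, so both may be moved in and out of the integral without difficulty.

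The one point that genuinely requires care is regularity: since $z\in\mathrm{SBC}$ is only $C^1$ away from at most two exceptional times, $t\mapsto\mathcal{L}(t,r;z)$ need not be differentiable everywhere, so I would phrase the argument at the level of the mild (integral) formulation $w(t)=\mathcal{L}(t,s;z)\varphi-\int_s^t\mathcal{L}(t,r;z)(iN(r))\,dr$, which uses only continuity in $t$ together with the composition identity $\mathcal{L}(t,r;z)\mathcal{L}(r,\tau;z)=\mathcal{L}(t,\tau;z)$ of Lemma~\ref{L:properties-of-L}, and then recover the pointwise equation wherever the coefficients are smooth. For uniqueness, the difference of two solutions solves the homogeneous linearized equation with zero data and hence vanishes by the uniqueness built into Definition~\ref{def:Ltsz} (equivalently, it equals $\mathcal{L}(t,s;z)\,0=0$). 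I do not expect any serious obstacle: the only delicate bookkeeping concerns the $\R$-linearity of $B[z]$ and $\mathcal{L}$ and the finitely many non-smooth times of $z$.
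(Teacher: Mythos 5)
Your verification is correct, and the paper in fact states this proposition without proof, so the standard Duhamel check you carry out (initial condition, differentiation of the variable-endpoint integral, and uniqueness from Definition~\ref{def:Ltsz}) is exactly what is implicitly intended. You also correctly isolate the one genuinely non-routine point, namely that $\mathcal{L}(t,r;z)$ and $B[z]$ are only $\R$-linear — which is precisely why the integrand must be $\mathcal{L}(t,r;z)(iN(r))$ rather than $i\,\mathcal{L}(t,r;z)N(r)$, as the paper itself remarks immediately after the statement — and your observation that $\R$-linearity together with continuity still suffices to commute these operators with the integral closes that gap.
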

In general we have $\mathcal{L}(t,r;z) (i N(r))\neq i \mathcal{L}(t,r;z)  N(r)$, as $\mathcal{L}(t,s;z)$ is not $\C$-linear.

For the usual Strichartz spaces, we follow \cite{Nakanishi} and define
\begin{equation}\label{4.26}
\mathrm{Str}^s := L^\I_t H^s_x \cap L^2_t B^{s}_{6,2}, \quad \mathrm{Str}^{*s} := L^1_t H^s_x + L^2_t B^{s}_{6/5,2}
\end{equation}
and observe the embeddings $\mathrm{Str}^{1/2} \hookrightarrow L^4_t B^{1/2}_{3,2}  \hookrightarrow L^4_t L^6_x$.  We record here the following uniform Strichartz estimate for the linearized flow from \cite[Lemma~4.3]{Nakanishi}. Here the uniformity is with respect to the choice of $z\in\mathrm{SBC}$. 

\begin{proposition}[Uniform Strichartz estimate]\label{Prop:Strichartz}
Let $z \in \mathrm{SBC}$ and $I\subset\R$.  Then for any $\varphi \in P_c H^1$ and $\theta\in[0,1]$, we have
\[
	\norm{\mathcal{L}(\cdot, 0;z)\varphi }_{\mathrm{Stz}^\theta (\R)} \lesssim  \norm{\varphi}_{H^\theta}.
\]
Furthermore, 
\[
\inf_{t\in \R}\norm{\mathcal{L}(t, 0;z)\varphi }_{H^\theta} \sim \norm{\varphi}_{H^\theta}\sim \sup_{t\in \R} 	\norm{\mathcal{L}(t, 0;z)\varphi }_{H^\theta}.
\]
Finally, for any $s_0 \in I$,
\[
\norm{ \mathcal{D}(\cdot,s_0;z)(P_c F) }_{\mathrm{Stz}^\theta (I)}	\lesssim \norm{F}_{\mathrm{Stz}^{\theta*}(I)}.
\]
In the above estimates, the implicit constants are independent of the choice of $z(t)$.
\end{proposition}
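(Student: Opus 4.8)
The plan is to deduce all three assertions from the corresponding estimates for the linear propagator $e^{itH}P_c$ — i.e.\ the case $z\equiv0$, in which $\mathcal{L}(t,s;0)=e^{i(t-s)H}$ — and then to pass to general $z\in\mathrm{SBC}$ by a Duhamel argument that absorbs the linearized term $B[z]$, using that this operator is small and spatially localized uniformly in $z$. For the case $z\equiv0$ I would invoke the wave operator hypothesis (A4): since $W,W^*$ are bounded on $H^k_p$ for $k=1,2$ and some $p>8$, and (being partial isometries onto the continuous subspace) also on $L^2$, interpolation gives their boundedness on the Sobolev and Besov spaces entering $\mathrm{Str}^\theta$ and $\mathrm{Str}^{*\theta}$ for $\theta\in[0,1]$. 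Combined with the intertwining relation $e^{itH}P_c=We^{-it\Delta}W^*$ and the classical homogeneous and inhomogeneous Strichartz estimates for $e^{-it\Delta}$, this yields $\|e^{i\cdot H}P_c\varphi\|_{\mathrm{Str}^\theta(\R)}\lesssim\|\varphi\|_{H^\theta}$, the matching inhomogeneous bound with $\mathrm{Str}^{*\theta}$ data, and $\|e^{itH}P_c\varphi\|_{H^\theta}\sim\|\varphi\|_{H^\theta}$ uniformly in $t$ (cf.\ \cite{Nakanishi}).

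The perturbative step rests on the uniform bound $\|B[z]v\|_{\mathrm{Str}^{*\theta}(I)}\lesssim\mu_0^2\,\|v\|_{\mathrm{Str}^\theta(I)}$, valid for all $z\in\mathrm{SBC}$ and all intervals $I$. Recall $B[z]=P_c\tilde B[z]R[z]$ with $\tilde B[z]f=2|\Phi[z]|^2f+\Phi[z]^2\bar f$. For $z\in\mathrm{SBC}$ one has $\|z\|_{L^\infty(\R)}\le\mu_0$, and by Lemma~\ref{L:small-solitons} together with elliptic regularity (using the decay and regularity hypotheses on $V$), $\Phi[z]\in H^2$ is exponentially decaying with $\Phi[z]=z\phi_0+o(|z|^2)$; hence $\tilde B[z]$ is multiplication by a function which, with its first two derivatives, lies in $L^{3/2}\cap L^\infty$ with norm $O(\mu_0^2)$. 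Moreover $R[z]\phi=\phi-\rho(z,\phi)\phi_0$ is a rank-one perturbation of the identity of norm $\lesssim1$, the correction term being of size $O(\mu_0^2)$ in view of \eqref{e:rhoest}. Placing the output in $L^2_tB^\theta_{6/5,2}$, H\"older in space and a fractional Leibniz rule (putting $\langle\nabla\rangle^\theta$ on $v$ or on the $\Phi[z]$-factors) yield the bound, since $\|v\|_{L^2_tB^\theta_{6,2}(I)}\le\|v\|_{\mathrm{Str}^\theta(I)}$.

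With this in hand, the homogeneous estimate follows from the Duhamel identity $\mathcal{L}(t,0;z)\varphi=e^{itH}P_c\varphi-i\int_0^te^{i(t-r)H}(B[z]\mathcal{L}(r,0;z)\varphi)\,dr$: applying the $z\equiv0$ estimates and the bound above gives $\|\mathcal{L}(\cdot,0;z)\varphi\|_{\mathrm{Str}^\theta(I)}\le C\|\varphi\|_{H^\theta}+C\mu_0^2\|\mathcal{L}(\cdot,0;z)\varphi\|_{\mathrm{Str}^\theta(I)}$ on any bounded $I\ni0$, where the left side is a priori finite; for $\mu_0$ small the last term is absorbed, and since $C$ is independent of $I$ one lets $I\uparrow\R$. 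The bound $\sup_t\|\mathcal{L}(t,0;z)\varphi\|_{H^\theta}\lesssim\|\varphi\|_{H^\theta}$ is the $L^\infty_tH^\theta$ component. For the matching lower bound I would use Lemma~\ref{L:properties-of-L}: $\mathcal{L}(0,t;z)\mathcal{L}(t,0;z)=\mathrm{Id}$ and $\mathcal{L}(0,t;z)=\mathcal{L}(-t,0;z(\cdot+t))$ with $z(\cdot+t)\in\mathrm{SBC}$ by translation invariance, so the already-proved upper bound gives $\|\varphi\|_{H^\theta}=\|\mathcal{L}(0,t;z)\mathcal{L}(t,0;z)\varphi\|_{H^\theta}\lesssim\|\mathcal{L}(t,0;z)\varphi\|_{H^\theta}$; taking the infimum and the supremum over $t\in\R$ and using $\mathcal{L}(0,0;z)=\mathrm{Id}$ yields the equivalence. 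The inhomogeneous estimate is proved identically: $w=\mathcal{D}(\cdot,s_0;z)(P_cF)$ solves $i\d_tw+Hw=B[z]w+P_cF$ with $w(s_0)=0$, hence $w(t)=-i\int_{s_0}^te^{i(t-r)H}(B[z]w(r)+F(r))\,dr$, and the same application of the $z\equiv0$ bounds plus absorption gives $\|w\|_{\mathrm{Str}^\theta(I)}\lesssim\|F\|_{\mathrm{Str}^{*\theta}(I)}$.

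The main obstacle is the uniform control of $B[z]$ on the fractional Besov scale $B^\theta_{6/5,2}$: one must commute $\langle\nabla\rangle^\theta$ past multiplication by the $\Phi[z]$-factors via fractional Leibniz, which requires the uniform smoothness and spatial decay of $\Phi[z]$ furnished by elliptic regularity under the hypotheses on $V$; and one must check that the $L^2_t$-based absorption constant $C\mu_0^2$ is genuinely independent of the time interval, so that the global-in-time statements survive the limit $I\uparrow\R$.
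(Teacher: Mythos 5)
The paper does not actually prove this proposition---it imports it verbatim from \cite[Lemma~4.3]{Nakanishi}---but your perturbative route (Strichartz for $e^{itH}P_c$ via the wave-operator intertwining from (A4), then Duhamel with respect to $e^{itH}$ and absorption of the small, spatially localized term $B[z]$ uniformly in $z\in\mathrm{SBC}$, plus the group and translation properties of Lemma~\ref{L:properties-of-L} for the lower $H^\theta$ bound) is exactly the standard argument and is the same technique the paper itself deploys in the proof of the lemma immediately following the proposition. Your proof is correct; the one step to tighten is the a priori finiteness of the $L^2_tB^{\theta}_{6,2}$ component needed before absorbing (which does not follow from $C_tH^\theta$ alone---run a continuity argument in the right endpoint of the interval, or first bound the Duhamel inhomogeneity in $L^1_{t,\mathrm{loc}}H^\theta$ using that $\mathcal{L}$ is a priori bounded on $H^\theta$), a point you flag but do not fully justify.
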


The next lemma also a consequence of \cite[Lemma~4.3]{Nakanishi}.  For the sake of completeness, we provide a proof here. 

\begin{lemma} For any $\varphi \in P_c H^1$,
\[
	\sup_{z \in \mathrm{SBC}} \norm{ \mathcal{L}(t,0;z) \varphi }_{L^2 H^1_6 ([0,\I))}
	\lesssim \inf_{z \in \mathrm{SBC}} \norm{ \mathcal{L}(t,0;z) \varphi }_{L^2 H^1_6 ([0,\I))}
\]
A similar estimate holds on the interval $(-\I,0]$.
\end{lemma}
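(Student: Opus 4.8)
The plan is to reduce the statement to a \emph{uniform two-sided comparison}: there is a constant $C$, independent of $\varphi\in P_cH^1$ and of $z_1,z_2\in\mathrm{SBC}$, such that
\[
\|\mathcal{L}(t,0;z_1)\varphi\|_{L^2 H^1_6([0,\infty))}\le C\,\|\mathcal{L}(t,0;z_2)\varphi\|_{L^2 H^1_6([0,\infty))};
\]
taking the supremum over $z_1$ and the infimum over $z_2$ then yields the asserted inequality, the finiteness of the left side following from the uniform Strichartz estimate of Proposition~\ref{Prop:Strichartz}. We stress that we do \emph{not} try to pin down the common order of magnitude of these quantities in terms of a fixed norm of $\varphi$: a genuine two-sided bound such as $\|e^{itH}\varphi\|_{L^2 H^1_6([0,\infty))}\sim\|\varphi\|_{H^1}$ is not available for a space--time norm with $L^2$ integrability in time. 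Instead we compare the two moving-potential flows to each other directly, treating the \emph{difference} $B[z_1]-B[z_2]$ of the time-dependent potentials as the perturbation.

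To carry this out, set $v_i(t):=\mathcal{L}(t,0;z_i)\varphi\in P_cH^1$ and $w:=v_1-v_2$. Subtracting the two linearized equations and using that $B[z_1]$ is $\R$-linear, one finds
\[
i\partial_t w + Hw = B[z_1]w + G,\qquad G:=(B[z_1]-B[z_2])v_2\in P_cH^1,\qquad w(0)=0,
\]
so by the Duhamel formula $w=\mathcal{D}(\,\cdot\,,0;z_1)G$. Applying the uniform inhomogeneous Strichartz estimate of Proposition~\ref{Prop:Strichartz} with $\theta=1$ on $I=[0,\infty)$ (with constant uniform over $z_1\in\mathrm{SBC}$), together with the embedding $\mathrm{Str}^1\hookrightarrow L^2_t H^1_6$, gives
\[
\|w\|_{L^2 H^1_6([0,\infty))}\le\|w\|_{\mathrm{Str}^1([0,\infty))}\lesssim\|G\|_{\mathrm{Str}^{*1}([0,\infty))}.
\]
It therefore remains to bound $\|G\|_{\mathrm{Str}^{*1}([0,\infty))}$ by $\|v_2\|_{L^2 H^1_6([0,\infty))}$ --- the \emph{weak} space--time norm of $v_2$, not its full Strichartz norm. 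This is exactly what keeps the argument from becoming circular, and it is the crux of the proof.

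For the product estimate, recall $B[z]=P_c\tilde B[z]R[z]$ with $\tilde B[z]f=2|\Phi[z]|^2f+\Phi[z]^2\bar f$ and $R[z]$ a bounded perturbation of the identity. The coefficients entering $B[z_1]-B[z_2]$ are built from $|\Phi[z_i]|^2$, $\Phi[z_i]^2$, $\Phi[z_i]-z_i\phi_0$, and the functionals $\rho(z_i,\cdot)$; by Lemma~\ref{L:small-solitons}, the bound \eqref{e:rhoest}, and elliptic regularity and decay for \eqref{e:Phizeq}, these are smooth, rapidly decaying in $x$, and of size $O(\mu_0^2)$, uniformly for $|z_i|\le\mu_0$. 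Consequently, multiplication by such a coefficient gains spatial integrability and, pointwise in $t$, maps $H^1_6\to H^1_{6/5}$ with operator norm $\lesssim\mu_0^2$ (Hölder with $\tfrac{1}{3/2}+\tfrac16=\tfrac56$ and the product rule); placing all of $G$ into the $L^2_t B^1_{6/5,2}$ component of $\mathrm{Str}^{*1}$ via the embedding $H^1_{6/5}\hookrightarrow B^1_{6/5,2}$ yields $\|G\|_{\mathrm{Str}^{*1}([0,\infty))}\lesssim\mu_0^2\|v_2\|_{L^2_t H^1_6([0,\infty))}$. Combining with the previous step, $\|v_1-v_2\|_{L^2 H^1_6([0,\infty))}\lesssim\mu_0^2\|v_2\|_{L^2 H^1_6([0,\infty))}$, hence $\|v_1\|_{L^2 H^1_6}\le(1+C\mu_0^2)\|v_2\|_{L^2 H^1_6}$; exchanging the roles of $z_1$ and $z_2$ gives the reverse inequality, and the comparison follows. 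The interval $(-\infty,0]$ is handled identically (taking $I=(-\infty,0]$, $s_0=0$ in Proposition~\ref{Prop:Strichartz}), or by time reversal. The only genuinely delicate point is the product estimate for $G$: one must verify that the decay of the ground-state coefficients provides just enough extra spatial integrability to land in the dual Strichartz space while consuming only the $L^2_t L^6_x$-strength norm of $v_2$, and that all constants are uniform over $z_1,z_2\in\mathrm{SBC}$.
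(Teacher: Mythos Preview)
Your argument is correct and follows essentially the same perturbative strategy as the paper: treat the potential term $B[z]$ as a small forcing, place it in the dual endpoint space $L^2_t H^1_{6/5}$ via H\"older (using that the $\Phi[z]$--coefficients lie in $H^1_{3/2}$ with norm $O(\mu_0^2)$), and absorb the resulting small multiple of the $L^2_t H^1_6$ norm. The only difference is organizational: the paper compares each $\mathcal{L}(t,0;z)\varphi$ to the fixed reference $e^{itH}\varphi$ (using Strichartz for $e^{itH}P_c$), whereas you compare $\mathcal{L}(t,0;z_1)\varphi$ directly to $\mathcal{L}(t,0;z_2)\varphi$ by invoking the heavier Proposition~\ref{Prop:Strichartz} for $\mathcal{D}(\cdot,0;z_1)$ --- both routes yield the same smallness gain and the same conclusion.
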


\begin{proof} Let $z \in \mathrm{SBC}$ and $\varphi \in P_c H^1$. Writing $u(t) = \mathcal{L} (t,0;z) \varphi$, we use the Duhamel with respect to $e^{itH}$ to write
\[
	u(t) = e^{it H} \varphi - i \int_0^t e^{i(t-s)H} B[z(s)] u(s)\,ds.
\]
Thus, using the Strichartz estimate for $e^{itH}P_c$, we have
\[
	\norm{u- e^{it H} \varphi}_{L^2 H^1_6([0,\I))} \le 
	C \norm{z}_{L^\I_t(\R)}^2 \norm{u}_{L^2 H^1_6([0,\I))} \ll \norm{u}_{L^2 H^1_6([0,\I))},
\]
where the implicit small constant depends only on upper bound on $\norm{z}_{L^\I_t(\R)}$.  The result follows. \end{proof}

We also need the following vanishing lemma.

\begin{proposition}\label{p:linearunifdecaytau} For any $\varphi \in P_c H^1$, 
\[
	\lim_{\tau\to\I} \sup_{z \in \mathrm{SBC}} \norm{ \mathcal{L}(t,0;z) \varphi }_{L^2 H^1_6 ((-\I,-\tau]\cup[\tau,\I))}
	=0.
\]
\end{proposition}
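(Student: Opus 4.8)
The plan is to argue by contradiction, using the compactness of $\mathrm{SBC}$ under locally uniform convergence (Arzel\`a--Ascoli, as in the remark following the definition of $\mathrm{SBC}$) to pass to a limiting parameter $z_\infty\in\mathrm{SBC}$, and then to reduce the asserted uniform decay to a stability estimate for the linearized flow as the parameter varies. The point is that $v_\infty:=\mathcal{L}(\cdot,0;z_\infty)\varphi$ is a \emph{fixed} function lying in $L^2H^1_6(\R)$, so its tail at $|t|\ge\tau$ automatically goes to $0$; uniformity in $z$ will then come from comparing $\mathcal{L}(\cdot,0;z_n)\varphi$ to $v_\infty$.

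Concretely, I would suppose the statement fails for some $\varphi\in P_cH^1$, so that there are $\eps_0>0$, $\tau_n\to\infty$, and $z_n\in\mathrm{SBC}$ with $\norm{\mathcal{L}(t,0;z_n)\varphi}_{L^2H^1_6(\{|t|\ge\tau_n\})}\ge\eps_0$. Splitting this norm into its contributions on $(-\infty,-\tau_n]$ and $[\tau_n,\infty)$ and passing to a subsequence, I may assume (say) $\norm{\mathcal{L}(t,0;z_n)\varphi}_{L^2H^1_6([\tau_n,\infty))}\ge\eps_0/\sqrt2$ for all $n$; after a further subsequence I may assume $z_n\to z_\infty$ locally uniformly with $z_\infty\in\mathrm{SBC}$. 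Writing $v_n:=\mathcal{L}(\cdot,0;z_n)\varphi$ and $v_\infty:=\mathcal{L}(\cdot,0;z_\infty)\varphi$, the uniform Strichartz estimate (Proposition~\ref{Prop:Strichartz}) gives $v_\infty\in\mathrm{Str}^1(\R)\hookrightarrow L^2H^1_6(\R)$, so $\norm{v_\infty}_{L^2H^1_6([\tau_n,\infty))}\to0$; hence it suffices to prove the stability bound $\norm{v_n-v_\infty}_{\mathrm{Str}^1(\R)}\to0$, since this contradicts $\norm{v_n}_{L^2H^1_6([\tau_n,\infty))}\ge\eps_0/\sqrt2$.

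To prove that bound, I would set $d_n:=v_n-v_\infty$; subtracting the two linearized equations and recalling $B[z]=P_c\tilde B[z]R[z]$, one finds
\[
i\partial_t d_n + H d_n - B[z_n]d_n = P_c\bigl(\tilde B[z_n]R[z_n]v_\infty - \tilde B[z_\infty]R[z_\infty]v_\infty\bigr),\qquad d_n(0)=0,
\]
so the Duhamel formula and the uniform Strichartz estimate yield, with constant independent of $n$,
\[
\norm{d_n}_{\mathrm{Str}^1(\R)}\lesssim\norm{\tilde B[z_n]R[z_n]v_\infty-\tilde B[z_\infty]R[z_\infty]v_\infty}_{\mathrm{Str}^{*1}(\R)}.
\]
I would then estimate the right-hand side in the $L^2_tB^1_{6/5,2}$ component of $\mathrm{Str}^{*1}$ and apply dominated convergence in $t$: for each fixed $t$ the integrand tends to $0$ because $z_n(t)\to z_\infty(t)$ and the maps $z\mapsto\Phi[z]$, $z\mapsto R[z]$ are continuous (cf.\ Lemma~\ref{L:small-solitons} and \eqref{e:rhoest}), while the bilinear structure of $\tilde B$ together with $\norm{\Phi[z]}\lesssim|z|\le\mu_0$ in the relevant norms and the boundedness of $R[z]$ gives the $t$-integrable domination $\norm{(\tilde B[z_n]R[z_n]v_\infty-\tilde B[z_\infty]R[z_\infty]v_\infty)(t)}_{B^1_{6/5,2}}\lesssim\mu_0^2\norm{v_\infty(t)}_{B^1_{6,2}}\in L^2_t(\R)$, the last containment from $v_\infty\in\mathrm{Str}^1(\R)$.

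The main obstacle is precisely this verification: checking that $\tilde B[z_n]R[z_n]v_\infty-\tilde B[z_\infty]R[z_\infty]v_\infty$ is simultaneously dominated, uniformly in $n$, by a fixed $L^2_tB^1_{6/5,2}$ function and converges to $0$ pointwise in $t$. It rests on the quantitative smallness $\norm{\Phi[z]}\lesssim|z|\le\mu_0$ (which is also what keeps the $B[z_n]d_n$ term harmless, via the small-mass hypothesis) and on the continuity in $z$ of the ground-state and $R[z]$ maps established earlier. I note that, conveniently, no density reduction in $\varphi$ is needed here, since the uniform Strichartz bound already places $v_\infty$ in $L^2H^1_6(\R)$ for every $\varphi\in P_cH^1$; and the argument is insensitive to whether the divergent tail sits at $+\infty$ or $-\infty$, since $\norm{v_n-v_\infty}_{L^2H^1_6(\R)}\to0$ controls both.
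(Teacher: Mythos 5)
Your proof is correct, but the closing step is genuinely different from the paper's. Both arguments begin identically: argue by contradiction, use Arzel\`a--Ascoli to extract $z_\infty\in\mathrm{SBC}$, and observe that $v_\infty=\mathcal{L}(\cdot,0;z_\infty)\varphi$ has small $L^2H^1_6$ tail beyond some $\tau^*$. The paper, however, writes the Duhamel formula for $v_n$ around the \emph{limiting} flow $\mathcal{L}(\cdot,\cdot;z_\infty)$, so its source term is $(B[z_n]-B[z_\infty])v_n$; since the difference of potentials there acts on the $n$-dependent function $v_n$ and $z_n\to z_\infty$ only locally uniformly, this yields convergence only in $L^\infty_tH^1$ on the compact interval $[0,\tau^*]$, and the tail $[\tau^*,\infty)$ must then be handled by the group property of $\mathcal{L}$ together with the comparison lemma immediately preceding the proposition (the statement $\sup_{z\in\mathrm{SBC}}\norm{\mathcal{L}(\cdot,0;z)\varphi}_{L^2H^1_6([0,\infty))}\lesssim\inf_{z\in\mathrm{SBC}}\norm{\mathcal{L}(\cdot,0;z)\varphi}_{L^2H^1_6([0,\infty))}$). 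You instead apply Duhamel with respect to the flow $\mathcal{L}(\cdot,\cdot;z_n)$, which places the difference $B[z_n]-B[z_\infty]$ on the \emph{fixed} function $v_\infty$; this is precisely what makes dominated convergence applicable globally in time (pointwise convergence in $t$ from $z_n(t)\to z_\infty(t)$, domination by a constant multiple of $\norm{v_\infty(t)}_{H^1_6}\in L^2_t(\R)$), and together with the uniform Strichartz estimate it yields the stronger global bound $\norm{v_n-v_\infty}_{\mathrm{Stz}^1(\R)}\to0$ without invoking the comparison lemma at all. The ingredients you need beyond the paper's proof are the Lipschitz bound $\norm{(B[z_1]-B[z_2])f}_{H^1_{6/5}}\lesssim|z_1-z_2|\,\norm{f}_{H^1_6}$ and the bilinear bound $\norm{B[z]f}_{H^1_{6/5}}\lesssim|z|^2\norm{f}_{H^1_6}$, but both are used freely elsewhere in the paper (including in the proof of the preceding lemma), so your argument is complete as it stands and is, if anything, more economical.
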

\begin{proof}  It suffices to consider the forward time direction.  If the proposition fails, then there exists a sequence $\tau_n\to\I$, $z_n\in\mathrm{SBC}$, and $c_0>0$ such that
\begin{equation}\label{e:Lunifpf1}
	\norm{ \mathcal{L}(t,0;z_n) \varphi }_{L^2 H^1_6 ([\tau_n ,\I))}
	\ge c_0.
\end{equation}
Passing to a subsequence, $z_n$ converges locally uniformly to some $z_\I \in \mathrm{SBC}$.  We then choose $\tau^*>0$ so that
\[
	\norm{ \mathcal{L}(t,0;z_\I) \varphi }_{L^2 H^1_6 ([\tau^* ,\I))} \le \tfrac{c_0}{3C_0},
\]
where $C_0$ is the implicit constant in the preceding lemma. 

Now set $u_n(t) = \mathcal{L}(t,0;z_n) \varphi $. By the Duhamel formula with respect to
$\mathcal{L}(t,0;z_\I)$, one has
\[
	u_n(t) = \mathcal{L}(t,0;z_\I) \varphi
	- \int_0^t \mathcal{L}(t,s;z_\I) i((B[z_n(s)]-B[z_\I(s)])u_n(s) ) ds
\]
By the uniform Strichartz estimate, we have
\begin{align*}
	\norm{ u_n(t) - \mathcal{L}(t,0;z_\I) \varphi }_{L^\I H^1 ([0,\tau^*])}
	&\lesssim \norm{ (B[z_n]-B[z_\I])u_n  }_{L^2 H^1_{6/5} ([0,\tau^*])} \\
	&\lesssim \norm{z_n-z_\I}_{L^\I ([0,\tau^*])} \norm{u_n}_{L^2 H^1_6 (\R)} \\
	&\lesssim \norm{z_n-z_\I}_{L^\I ([0,\tau^*])} \norm{\varphi}_{H^1} \\
	&\to 0\qtq{as}n\to\infty.
\end{align*}
Thus, by the preceding lemma and the Strichartz estimate, we have 
\begin{align*}
	 \| \mathcal{L} &(t,0;z_n) \varphi \|_{L^2 H^1_6 ([\tau^*,\I))} \\
	&= \norm{ \mathcal{L}(t,0;z_n(\cdot+ \tau^*)) \mathcal{L}(\tau^*,0;z_n) \varphi }_{L^2 H^1_6 ([0,\I))} \\
	&\le C_0 \norm{ \mathcal{L}(t,0;z_\I(\cdot+ \tau^*)) \mathcal{L}(\tau^*,0;z_n) \varphi }_{L^2 H^1_6 ([0,\I))} \\
	&= C_0 \norm{ \mathcal{L}(t,\tau^*;z_\I) \mathcal{L}(\tau^*,0;z_n) \varphi }_{L^2 H^1_6 ([\tau^*,\I))} \\
	&\le  C_0 \norm{ \mathcal{L}(t,0;z_\I) \varphi }_{L^2 H^1_6 ([\tau^*,\I))} \\
	&\quad + C_0 \norm{ \mathcal{L}(t,\tau^*;z_\I) (u_n(\tau^*) - \mathcal{L}(\tau^*,0;
	{z_{\infty}}) \varphi) }_{L^2 H^1_6 ([\tau^*,\I))} \\
	&\le \tfrac{c_0}3 + o_n(1) \le \tfrac{2c_0}{3}
\end{align*}
for large $n$, which contradicts \eqref{e:Lunifpf1} since $\tau_n > \tau^*$ for large $n$. \end{proof}

Finally, we need a Strichartz estimate with non-admissible pairs from \cite{Nakanishi}.

\begin{lemma}[Non-admissible Strichartz estimate]\label{L:NAS}
Let $(q_0,r_0)$, $(q_1,r_1) \in (1,\I) \times (2,6]$ and $\sigma_j = \tfrac{2}{q_j} + 3(\tfrac{1}{r_j} - \tfrac{1}{2})$ satisfy
\[
	\sigma_0 + \sigma_1 =0,\quad   \max_{j=0,1}(\sigma_j-\tfrac{1}{q_j})<0, \quad |\sigma_j| \le \tfrac{2}{3}.
\]
Then, for any $z \in \mathrm{SBC}$ and $s_0 \in I \subset \R$,
\[
	\norm{ \mathcal{D}(\cdot,s_0;z)(P_c F)  }_{L^{q_0}L^{r_0} (I) }
	\lesssim \norm{F}_{ L^{q_1'}L^{r_1'} (I) }.
\]
\end{lemma}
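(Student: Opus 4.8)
The plan is to reduce the statement to the classical inhomogeneous (non-admissible) Strichartz machinery of Kato, Foschi, and Vilela, whose abstract form needs only two inputs — both available, uniformly in $z\in\mathrm{SBC}$, from the linear theory of this section (and, as with the preceding lemmas, ultimately from \cite[Lemma~4.3]{Nakanishi}, since nothing in the construction of $\mathcal{L}$ or $B[z]$ uses radial symmetry): (a) the pointwise dispersive bound for the linearized propagator,
\[
\norm{\mathcal{L}(t,s;z)\varphi}_{L^r_x}\lesssim|t-s|^{-3(\frac12-\frac1r)}\norm{\varphi}_{L^{r'}_x},\qquad 2\le r\le\infty,
\]
together with the energy bound $\norm{\mathcal{L}(t,s;z)\varphi}_{L^2_x}\sim\norm{\varphi}_{L^2_x}$; and (b) the admissible inhomogeneous Strichartz estimate for $\mathcal{D}$ from Proposition~\ref{Prop:Strichartz} (the case $\theta=0$). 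Ingredient (a) is one of the dispersive estimates collected earlier in this section; it descends from the analogous bound for $e^{itH}P_c$ — itself a consequence of the wave-operator intertwining in (A4) — by a Duhamel iteration off $e^{itH}P_c$ that exploits the smallness $B[z]=O(\mu_0)$ and the rapid spatial decay of $\Phi[z]\approx z\phi_0$.

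Granting (a)--(b), I would argue as follows. By time-reversal symmetry and splitting at $s_0$ (extending $F$ by zero outside $I$), it suffices to bound the forward retarded operator $\mathcal{I}F(t):=\int_{-\infty}^t\mathcal{L}(t,\tau;z)(iP_cF(\tau))\,d\tau$ on $\R$. Decompose $\mathcal{I}=\sum_{k\in\Z}\mathcal{I}_k$ according to the dyadic size of the time lag, $\mathcal{I}_kF(t):=\int_{2^k\le t-\tau<2^{k+1}}\mathcal{L}(t,\tau;z)(iP_cF(\tau))\,d\tau$. On each shell, estimate the $L^{q_0}_tL^\infty_x$-norm of $\mathcal{I}_kF$ via (a) with $r=\infty$ and Young's inequality in time (the kernel being supported in $\{2^k\le|t-\tau|<2^{k+1}\}$ and pointwise $\lesssim2^{-3k/2}$), and estimate an $L^2$-based admissible norm of $\mathcal{I}_kF$ via (b); bilinear interpolation in the spatial variable between these two endpoints — favoring the $L^\infty_x$ endpoint for large $k$ and the $L^2_x$ endpoint for small $k$ — yields, for each $k$, a bound $\norm{\mathcal{I}_kF}_{L^{q_0}_tL^{r_0}_x}\lesssim 2^{-c_\pm k}\norm{F}_{L^{q_1'}_tL^{r_1'}_x}$ for $\pm k\ge0$, with $c_\pm>0$. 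The relation $\sigma_0+\sigma_1=0$ is precisely what makes the $k$-dependence purely geometric, and the conditions $\max_j(\sigma_j-\frac1{q_j})<0$ and $|\sigma_j|\le\frac23$ are precisely what force $c_\pm>0$ (so the series converges at both ends). Summing in $k$ gives the lemma, with constants uniform in $z\in\mathrm{SBC}$ because (a)--(b) are.

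A cosmetic point: $\mathcal{L}(t,s;z)$ is only $\R$-linear for $z\neq0$, but the argument above uses only the pointwise-in-$(t,\tau)$ operator-norm bounds of (a)--(b), which are insensitive to $\C$- versus $\R$-linearity; alternatively one may recast the linearized flow as a $2\times2$ real system and run the same argument verbatim.

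I expect the only genuinely substantive point to be ingredient (a): obtaining the pointwise dispersive estimate for $\mathcal{L}(t,s;z)$ \emph{uniformly} over $z\in\mathrm{SBC}$. Since $B[z(t)]$ is time-dependent and merely $O(\mu_0)$-small, one cannot perturb at the level of Strichartz norms alone; instead one iterates the Duhamel expansion around $e^{itH}P_c$ and controls the iterated space-time integrals using both the smallness and the localization of $\Phi[z]$ (which renders $B[z]$ effectively short-range), invoking the weighted local-decay dispersive estimates for $e^{itH}P_c$. Once (a) is in hand — as it is, being part of the linear theory imported from \cite{Nakanishi} — everything else is soft.
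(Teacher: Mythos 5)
Your route is genuinely different from the paper's. The paper does not reprove a Foschi/Vilela-type theorem for $\mathcal{L}$ from dispersive bounds; it quotes the non-admissible inhomogeneous estimates for the \emph{free} propagator from \cite{Foc,Vi} and transfers them to $e^{itH}P_c$ via Yajima's wave-operator argument, using (A4) together with interpolation to get boundedness of $W,W^*$ on $L^r$ for $2\le r\le 6$ (which is exactly why the lemma restricts $r_0,r_1\in(2,6]$); the $B[z]$ part is then perturbative. That route needs no dispersive estimate for $\mathcal{L}$ at all.

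The substantive problem with your proposal is ingredient (a). You assume the single-exponent dispersive bound
$\norm{\mathcal{L}(t,s;z)\varphi}_{L^r_x}\lesssim|t-s|^{-3(\frac12-\frac1r)}\norm{\varphi}_{L^{r'}_x}$ for all $2\le r\le\infty$, and your dyadic-shell estimate explicitly uses the $r=\infty$ endpoint. This is not available, and it is not ``part of the linear theory imported from Nakanishi.'' What the paper (Lemma~\ref{L:weighted-dispersive}) and Nakanishi actually prove is the weaker two-exponent, sum/intersection-space estimate
$\norm{\mathcal{L}(t,s;z)P_c\varphi}_{L^{p_1}+L^{p_2}}\lesssim f(t-s)\norm{\varphi}_{L^{p_1'}\cap L^{p_2'}}$ with $2\le p_1<6<p_2\le p$ and $p>8$ finite from (A4). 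The Duhamel bootstrap off $e^{itH}$ used there hinges on the convolution bound $\sup_t f(t)^{-1}\int_0^t f(t-s)f(s)\,ds\lesssim\norm{f}_{L^1_t}$, which requires $f\in L^1(\R)$; a single rate $f(t)=|t|^{-3(\frac12-\frac1r)}$ is never integrable on all of $\R$ (it fails at $0$ for $r\ge6$ and at $\infty$ for $r\le6$), which is precisely why the two rates must straddle $|t|^{-1}$, i.e.\ $p_1<6<p_2$. So the perturbation argument cannot yield a single-$r$ dispersive bound for $\mathcal{L}$ for \emph{any} fixed $r>2$, let alone $r=\infty$; and even for $e^{itH}P_c$ alone, (A4) (wave operators on $H^k_p$ for some finite $p>8$) does not give $L^1\to L^\infty$ decay. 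Your dyadic argument therefore rests on an input the linear theory does not supply. A Foschi-style proof run directly on $\mathcal{L}$ would have to be reworked around the sum-space estimate (with all exponents kept strictly below $p$), which is considerably more delicate than you suggest; the wave-operator transference plus perturbation in $B[z]$ is the cleaner and intended path.
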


\begin{proof} The proof is based on the argument in \cite{Y}. We first see that the estimate holds for the case $V\equiv0$ by \cite{Foc,Vi}. Then, by assumption (A4), we see that the wave operator $W=\lim_{t\to\infty}e^{itH}e^{it\Delta}$ and its adjoint $W^{\ast}$ are bounded in $L^{p}$, $2\le p\le6$. Combining these facts, we obtain the estimate.
\end{proof}

At a few points we will also make use of the following dispersive estimates for $\mathcal{L}$.

\begin{lemma}\label{L:weighted-dispersive}  
The following estimates hold uniformly for $z\in \mathrm{SBC}$:  For any $2\leq p_1<6<p_2
{\leq p}$, where $p$ is given by Assumption $(A4)$ and $t\neq s$,
\[
\|\mathcal{L}(t,s;z)P_c\varphi\|_{L^{p_1}+L^{p_2}} \lesssim f(t-s)\|\varphi\|_{L^{p_1'}\cap L^{p_2'}},\quad  f(t):=\min_{j=1,2}\bigl\{ |t|^{-(\frac32-\frac3{p_j})}\bigr\}.
\]
\end{lemma}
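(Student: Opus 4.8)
The plan is to run a perturbative Duhamel bootstrap against the free flow $e^{itH}P_c$, exploiting the smallness of the potential term $B[z]$ (guaranteed by $\|z\|_{L^\infty}\le\mu_0$) together with an integrability (``algebra'') property of the weight $f$.

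\emph{Unperturbed estimate.} First I would record the analogue of the claim for $e^{itH}P_c$. Via the intertwining identity $e^{itH}P_c=We^{itH_0}W^*$ (with $H_0=-\Delta$) and Assumption (A4) — which, exactly as in the proof of Lemma~\ref{L:NAS}, gives boundedness of $W$ and $W^*$ on $L^q(\R^3)$ for $q$ in the relevant range (here $q\in[p_2',p_2]\supseteq[p_1',p_1]$) — the classical dispersive bound $\|e^{itH_0}g\|_{L^q}\lesssim|t|^{-3(\frac12-\frac1q)}\|g\|_{L^{q'}}$ transfers to $\|e^{itH}P_c\varphi\|_{L^{p_j}}\lesssim|t|^{-(\frac32-\frac3{p_j})}\|\varphi\|_{L^{p_j'}}$ for $j=1,2$. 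Since $\|w\|_{L^{p_1}+L^{p_2}}\le\min\bigl(\|w\|_{L^{p_1}},\|w\|_{L^{p_2}}\bigr)$, this yields $\|e^{itH}P_c\varphi\|_{L^{p_1}+L^{p_2}}\lesssim f(t)\,\|\varphi\|_{L^{p_1'}\cap L^{p_2'}}$.

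\emph{Mapping property of $B[z]$.} Recall $B[z]=P_c\tilde B[z]R[z]$ with $\tilde B[z]g=2|\Phi[z]|^2g+\Phi[z]^2\bar g$. Using $\Phi[z]=z\phi_0+o(|z|^2)$ in $H^1$ together with elliptic regularity for \eqref{e:Phizeq} (so $\Phi[z]\in H^2\hookrightarrow L^\infty$ with $\|\Phi[z]\|_{L^\infty}\lesssim|z|$) and the decay of $\phi_0$, one gets $\bigl\||\Phi[z]|^2\bigr\|_{L^a}\lesssim\|z\|_{L^\infty}^2$ for every $a\in[1,\infty]$. Since $p_1'\le2\le p_1<6<p_2$ and $p_2'<6/5$, applying Hölder's inequality to each of the two pieces of $g\in L^{p_1}+L^{p_2}$ shows $\|\tilde B[z]g\|_{L^{p_1'}\cap L^{p_2'}}\lesssim\|z\|_{L^\infty}^2\|g\|_{L^{p_1}+L^{p_2}}$; the rank-one corrections in $R[z]$ and $P_c$ are harmless because $\phi_0\in L^q$ for all $q$. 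Hence, for $z\in\mathrm{SBC}$,
\[
\|B[z(r)]g\|_{L^{p_1'}\cap L^{p_2'}}\lesssim\|z\|_{L^\infty}^2\,\|g\|_{L^{p_1}+L^{p_2}}\lesssim\mu_0^2\,\|g\|_{L^{p_1}+L^{p_2}}.
\]

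\emph{Algebra property of $f$ and the bootstrap.} The key computation is that $f\in L^1(\R)$ and, more sharply, $\int_s^t f(t-r)f(r-s)\,dr\le C_*f(t-s)$ with $C_*$ depending only on $p_1,p_2$; this follows by splitting the time integral near its two endpoints and using that $f(\tau)\sim|\tau|^{-(\frac32-\frac3{p_1})}$ for $|\tau|\le1$ (exponent $<1$, as $p_1<6$) while $f(\tau)\sim|\tau|^{-(\frac32-\frac3{p_2})}$ for $|\tau|\ge1$ (exponent $>1$, as $p_2>6$). Now write $v(t):=\mathcal{L}(t,s;z)\varphi$ and use Duhamel with respect to $e^{itH}$, $v(t)=e^{i(t-s)H}P_c\varphi-i\int_s^t e^{i(t-r)H}P_c\bigl(B[z(r)]v(r)\bigr)\,dr$ (legitimate since $B[z]$ has range in $P_cH^1$). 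Combining the previous two steps with Minkowski's inequality,
\[
\|v(t)\|_{L^{p_1}+L^{p_2}}\le Cf(t-s)\|\varphi\|_{L^{p_1'}\cap L^{p_2'}}+C\mu_0^2\int_s^t f(t-r)\,\|v(r)\|_{L^{p_1}+L^{p_2}}\,dr .
\]
For $\varphi\in P_cH^1\setminus\{0\}$ and $T>0$, set $\Lambda_T:=\sup_{0<|t-s|\le T}f(t-s)^{-1}\|v(t)\|_{L^{p_1}+L^{p_2}}\big/\|\varphi\|_{L^{p_1'}\cap L^{p_2'}}$; this is finite, since $\sup_{t,s}\|v(t)\|_{H^1}\lesssim\|\varphi\|_{H^1}$ by the uniform Strichartz estimate (Proposition~\ref{Prop:Strichartz}) and $f(t-s)^{-1}$ stays bounded as $t\to s$ while $v(t)\to\varphi$ in $H^1\hookrightarrow L^{p_1}+L^{p_2}$. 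Inserting $\|v(r)\|_{L^{p_1}+L^{p_2}}\le\Lambda_T f(r-s)\|\varphi\|_{L^{p_1'}\cap L^{p_2'}}$ (valid since $|r-s|\le|t-s|\le T$) and using the algebra property gives $\Lambda_T\le C+CC_*\mu_0^2\,\Lambda_T$. Taking $\mu_0$ small enough that $CC_*\mu_0^2\le\tfrac12$ — permissible, as $\mu_0$ is at our disposal — we obtain $\Lambda_T\le2C$ uniformly in $T$ and in $z\in\mathrm{SBC}$; letting $T\to\infty$ yields the stated estimate (the general case follows by density). \emph{Main obstacle.} The crux is precisely the algebra inequality $\int_s^t f(t-r)f(r-s)\,dr\lesssim f(t-s)$: it is what makes the perturbative bootstrap close, and it relies essentially on the two-sided structure $p_1<6<p_2$, i.e. on having one time-decay exponent below $1$ (for short-time integrability) and the other above $1$ (for long-time integrability). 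The remaining ingredients — transfer of the free dispersive estimate through the wave operators, and the Hölder bound on $B[z]$ — are routine given (A4).
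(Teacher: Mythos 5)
Your proposal is correct and follows essentially the same route as the paper: Duhamel with respect to $e^{itH}$, the bound $\|B[z]g\|_{L^{p_1'}\cap L^{p_2'}}\lesssim\|z\|_{L^\infty}^2\|g\|_{L^{p_1}+L^{p_2}}$, the convolution inequality $\int_s^t f(t-r)f(r-s)\,dr\lesssim f(t-s)$ (which the paper likewise isolates as the key point, proving it by splitting at the midpoint and using $f\in L^1$), and absorption of the small perturbative term. Your write-up is somewhat more explicit about the finiteness of the bootstrap quantity and the wave-operator transfer of the free dispersive estimate, but the argument is the same.
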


\begin{proof} The assumptions on $V$ imply that the standard dispersive estimates hold for $e^{-itH}$.  We will use the Duhamel formula and the smallness of $z\in\mathrm{SBC}$ to transfer the estimates to $\mathcal{L}(t,0;z)$.  Writing $u(t)=\mathcal{L}(t,0;z)P_c\varphi,$ we have
\[
u(t) = e^{itH}P_c\varphi -i\int_0^t e^{i(t-s)H}B[z(s)]u(s)\,ds.
\]
We note that by translating $z$, we may assume $s=0$. Then, noting that
\[
\|B[z(s)]u\|_{L^{p_1'}\cap L^{p_2'}}\lesssim \|z\|_{L^\infty}^2\|u\|_{L^{p_1}+L^{p_2}}, 
\]
we fix $T>0$ and $t\in[0,T]$ and use the dispersive estimate for $e^{itH}$ to obtain
\begin{align*}
\tfrac{1}{f(t)}&\|u(t)\|_{L^{p_1}+L^{p_2}} \\ & \lesssim \|\varphi\|_{L^{p_1'}\cap L^{p_2'}} + \tfrac{1}{f(t)}\int_0^t f(t-s)f(s)\,ds\cdot\|z\|_{L_t^\infty}^2 \sup_{t\in[0,T]} \tfrac{1}{f(t)}\|u(t)\|_{L^{p_1}+L^{p_2}} \\
& \lesssim \|\varphi\|_{L^{p_1'}\cap L^{p_2'}} + c\biggl[\tfrac{1}{f(t)}\int_0^t f(t-s)f(s)\,ds\biggr] \sup_{t\in[0,T]} \tfrac{1}{f(t)}\|u(t)\|_{L^{p_1}+L^{p_2}}
\end{align*}
for some $0<c\ll 1$.  Thus, the result follows as before, provided we have that
\[
\sup_{t>0} \tfrac{1}{f(t)}\int_0^t f(t-s)f(s)\,ds \lesssim 1. 
\]
Indeed, the assumption that $p_1<6<p_2$ guarantees that $f\in L_t^1$, and so the estimate follows by observing that
\begin{align*}
\tfrac{1}{f(t)} \int_0^t f(t-s)f(s)\,ds & \lesssim \tfrac{1}{f(t)}\biggl[\int_0^{t/2} f(t)f(s)\,ds +\int_{t/2}^t f(t-s)f(t)\,ds\biggr]\lesssim \|f\|_{L_t^1}
\end{align*}
uniformly in $t>0$.\end{proof}

We next consider the equation
\begin{equation}\label{e:6.1}
	 i\d_t \xi + H \xi = B[z] \xi + \tilde{N} (z,\xi)+\mathcal{E}
\end{equation}
where $\tilde N$ is as in \eqref{def:tildeN} and $\mathcal{E}=P_c\mathcal{E}$ is a perturbative term.  We regard $z \in \mathrm{SBC}$ as given and view this as an equation for $\xi$.  In what follows, we will utilize some semi-norms introduced in \cite{Nakanishi}.  Given $s_0 \in \R$ and $z \in C(\R;\C)$, we first define $\mathcal{L}_{>}(s_0,z)$: $C(\R;H^1)\to C(\R;H^1)$ by
\begin{equation}\label{e:u>}
[\mathcal{L}_{>}(s,z)u](t)=\begin{cases}	
u(t) & t < s_0, \\
\mathcal{L}(t,s_0;z )(u|_{t=s_0}) & t \ge s_0.
\end{cases}
\end{equation}
Thus $\mathcal{L}_{>}(s_0,z)$ maps a function to a solution to $i\partial_t v + Hv = B[z]v$ with $z(t)$ in the interval $(s_0,\I)$ in such a way that the resulting function is continuous at $t=s_0$.  We may omit $z$ if it is clear from the context.   We then define
\begin{equation}\label{nakanishi-seminorms} 
\begin{aligned}
&\| u \|_{[z;T_0,T_1;T_2]} := \sup_{T_0<S<T<T_1} \norm{ \mathcal{L}_{>}(T,z) u-\mathcal{L}_{>}(S,z) u } _{L^8_tL^4_x( (T_0,T_2)\times \R^3 )}, \\
&\| u \|_{[z;T_0,T_1;T_2]'} := \sup_{T_0<T<T_1} \norm{ \mathcal{L}_{>}(T,z) u-\mathcal{L}_{>}(T_0,z) u } _{L^8_tL^4_x( (T_0,T_2)\times \R^3 )},
\end{aligned}
\end{equation}
which are equivalent semi-norms.  In particular, these semi-norms are zero precisely when $u$ is a solution to $i\partial_t u + Hu=B[z]u$ on $(T_0,T_1)$, so that these semi-norms provide a measure of nonlinear contributions.

We turn to the stability results.  The first result concerns small solutions to the nonlinear problem. 

\begin{proposition}[Perturbation around zero]\label{l:sdt}
Let $-\I<T_0< T_1 \le \I$, $z \in \mathrm{SBC}$, and $\varphi \in H^1(\R^3)$.  Let $\mathcal{E} \in \mathrm{Stz}^{\theta_0 *}(T_0,T_1)$ for some $\theta_0 \in [\tfrac12,1]$, and set
\[
\mathcal{N}_\theta := \norm{z}_{L^\I ([T_0,T_1))} + \norm{\varphi}_{H^\theta}+ \norm{\mathcal{D}(\cdot, T_0) \mathcal{E}}_{\mathrm{Stz}^\theta ([T_0,T_1))}
\]
for $\theta \in [0,1]$, where $\mathcal{D}$ is as in \eqref{def:D}. Suppose that $\mathcal{N}_0 \ll 1$.

$\mathrm{(i)}$ If 
\begin{equation}\label{stability-small1}
	\norm{\mathcal{L} (\cdot, T_0) \varphi + \mathcal{D}(\cdot,T_0)\mathcal{E}}_{ L^8L^4{(T_0,T_1)} } \cdot\mathcal{N}_{\frac12} \ll 1,
\end{equation}
then \eqref{e:6.1} with $\xi (T_0) = \varphi$ has a unique solution $\xi \in C^1 ([T_0,T_1); H^{\theta_0})$ satisfying
\[	
\norm{\xi}_{L^8_t L^4_x ( T_0,T_1 )} \lesssim  \norm{\mathcal{L} (\cdot, T_0) \varphi + \mathcal{D}(\cdot, T_0) \mathcal{E}}_{L^8_t L^4_x ( T_0,T_1 )}
\]
and
\[
\norm{\xi}_{ \mathrm{Stz}^\theta ( T_0,T_1 )} \lesssim \mathcal{N}_\theta\qtq{for all}\theta \in [0,\theta_0].
\]

$\mathrm{(ii)}$ If the solution $\xi \in C^1 ([T_0,T_1); H^{\theta_0})$ to \eqref{e:6.1} with $\xi (T_0) = \varphi$ satisfies
\[
\norm{\xi}_{L^8_tL^4_x{(T_0,T_1)}}  \mathcal{N}_{\frac12} \ll 1
\]
then the following estimates hold: for any $T \in (T_0,T_1)$ and $\theta \in [0,\theta_0]$,
\begin{align*}
	\|\mathcal{L}_{>}(T,z)\xi-  \mathcal{L} (\cdot, T_0) \varphi - &\mathcal{D}(\cdot,T_0)\mathcal{E} \|_{\mathrm{Stz}^\theta (T_0,T_1)}\\
	&\lesssim \mathcal{N}_\theta \norm{\mathcal{L} (\cdot, T_0) \varphi +\mathcal{D}(\cdot,T_0)\mathcal{E}  }_{L^8_tL^4_x(T_0,T)}\\
	&\quad\times (\mathcal{N}_0 + \norm{\mathcal{L} (\cdot, T_0) \varphi +\mathcal{D}(\cdot,T_0)\mathcal{E}  }_{L^8_tL^4_x(T_0,T)}), \\
	\|\xi  - \mathcal{D}(\cdot,T_0)\mathcal{E}\|_{{[z;T_0,T,T_1]}} &\ll \norm{\mathcal{L} (\cdot, T_0) \varphi +\mathcal{D}(\cdot,T_0)\mathcal{E}  }_{L^8_tL^4_x(T_0,T)} \\
	& \sim \norm{\xi}_{L^8_tL^4_x(T_0,T)}.
\end{align*}
\end{proposition}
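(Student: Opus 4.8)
The plan is to recast \eqref{e:6.1} as a fixed-point problem for the linearized Duhamel operator and then run a contraction argument for part (i) and a perturbative comparison for part (ii), with all multilinear estimates coming from the uniform Strichartz bounds of Proposition~\ref{Prop:Strichartz} together with the non-admissible Strichartz estimate of Lemma~\ref{L:NAS}. Write $\Lambda := \mathcal{L}(\cdot, T_0;z)\varphi + \mathcal{D}(\cdot, T_0;z)\mathcal{E}$; then $\xi$ solves \eqref{e:6.1} with $\xi(T_0)=\varphi$ precisely when $\xi = \Lambda + \mathcal{D}(\cdot,T_0;z)\bigl(\tilde N(z,\xi)\bigr)$. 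Recalling \eqref{def:tildeN} and the form of $N$ and $\underline N$, the term $\tilde N(z,\xi)$ decomposes into a single genuinely cubic piece $\sim |R[z]\xi|^2 R[z]\xi$ together with quadratic-in-$\xi$ pieces each carrying a factor $\Phi[z]$ or $D\Phi[z]$, hence a gain of $\|z\|_{L^\infty}\le\mathcal{N}_0$; since $R[z]$ is a uniformly bounded perturbation of the identity, these prefactors are harmless, and the ODE contribution through $\underline N$ is of the same cubic/quadratic type (with a Schwartz-class spatial profile, so if anything easier). Note also the embedding $\mathrm{Stz}^{1/2}\hookrightarrow L^8_tL^4_x$, so that $\|\Lambda\|_{L^8_tL^4_x}\lesssim\mathcal{N}_{1/2}$ and hence \eqref{stability-small1} forces both $\|\Lambda\|_{L^8_tL^4_x}\mathcal{N}_0\ll1$ and $\|\Lambda\|_{L^8_tL^4_x}^2\ll1$.

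For part (i), the basic nonlinear estimate is
\[
\bigl\|\mathcal{D}(\cdot,T_0;z)(|w|^2w)\bigr\|_{L^8_tL^4_x(T_0,T_1)}\lesssim\|w\|_{L^8_tL^4_x(T_0,T_1)}^3,
\]
which follows from Lemma~\ref{L:NAS} with $(q_0,r_0)=(8,4)$, so $\sigma_0=-\tfrac12$, and $(q_1',r_1')=(\tfrac83,\tfrac43)$ obtained by H\"older, for which one checks $\sigma_0+\sigma_1=0$, $|\sigma_j|\le\tfrac23$ and $\max_j(\sigma_j-1/q_j)<0$. Together with the analogous (better) bounds for the quadratic pieces, which gain a factor $\mathcal{N}_0$, this shows that $\xi\mapsto\Lambda+\mathcal{D}(\cdot,T_0;z)(\tilde N(z,\xi))$ maps the ball $\{\|\xi\|_{L^8_tL^4_x}\le2\|\Lambda\|_{L^8_tL^4_x}\}$ into itself and is a contraction there, the Lipschitz constant being $\lesssim(\mathcal{N}_0+\|\Lambda\|_{L^8_tL^4_x})\|\Lambda\|_{L^8_tL^4_x}\ll1$ by the previous paragraph. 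This yields the unique solution with $\|\xi\|_{L^8_tL^4_x}\lesssim\|\Lambda\|_{L^8_tL^4_x}$. Feeding this back into the Duhamel formula and using the inhomogeneous Strichartz estimate of Proposition~\ref{Prop:Strichartz}, a fractional Leibniz rule to distribute $\langle\nabla\rangle^\theta$ over $\tilde N$, and the equivalence of $H^\theta$-norms defined through $H$ and through $\langle\nabla\rangle$ (Assumption (A4)), one obtains $\|\xi\|_{\mathrm{Stz}^\theta}\lesssim\mathcal{N}_\theta$ for $\theta\in[0,\theta_0]$, the smallness of $\mathcal{N}_0+\|\xi\|_{L^8_tL^4_x}^2$ closing the estimate. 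Finally $\xi\in C^1([T_0,T_1);H^{\theta_0})$ is read off from the equation itself.

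For part (ii), one is given a solution $\xi$ and must compare its linear continuation with $\Lambda$. From the Duhamel identity,
\[
\xi-\mathcal{D}(\cdot,T_0;z)\mathcal{E}=\mathcal{L}(\cdot,T_0;z)\varphi+\mathcal{D}(\cdot,T_0;z)\bigl(\tilde N(z,\xi)\bigr),
\]
and since $\mathcal{L}(\cdot,T_0;z)\varphi$ is already a solution of $i\partial_t v+Hv=B[z]v$, it contributes nothing to the semi-norm $\|\cdot\|_{[z;T_0,T,T_1]}$ nor to the difference $\mathcal{L}_{>}(T,z)\xi-\Lambda$; everything therefore reduces to estimating $\mathcal{D}(\cdot,T_0;z)(\tilde N(z,\xi))$ in $\mathrm{Stz}^\theta$ and in the semi-norm. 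Using again the structure of $\tilde N$, Proposition~\ref{Prop:Strichartz}, Lemma~\ref{L:NAS}, and replacing one factor of $\xi$ by $\Lambda$ at a cost controlled by the hypothesis $\|\xi\|_{L^8_tL^4_x}\mathcal{N}_{1/2}\ll1$ (which, via part (i), also gives $\|\xi\|_{L^8_tL^4_x(T_0,T)}\sim\|\Lambda\|_{L^8_tL^4_x(T_0,T)}$), one arrives at
\[
\bigl\|\mathcal{D}(\cdot,T_0;z)\tilde N(z,\xi)\bigr\|_{\mathrm{Stz}^\theta(T_0,T_1)}\lesssim\mathcal{N}_\theta\,\|\Lambda\|_{L^8_tL^4_x(T_0,T)}\bigl(\mathcal{N}_0+\|\Lambda\|_{L^8_tL^4_x(T_0,T)}\bigr),
\]
and similarly for the semi-norm version, which are exactly the claimed bounds. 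The main obstacle is the multilinear bookkeeping inside $\tilde N(z,\xi)$ and $\underline N(z,R[z]\xi)$: one must isolate the single scaling-critical cubic term --- for which $L^8_tL^4_x$ is precisely the space closed under Lemma~\ref{L:NAS} --- from the quadratic remainders, which must all borrow a power of $\|z\|_{L^\infty}\le\mathcal{N}_0\ll1$, and then promote all of this to $\mathrm{Stz}^\theta$-level estimates by a fractional product rule while tracking that the Sobolev norms built from $H$ and from $\langle\nabla\rangle$ agree (Assumption (A4)). The two smallness hypotheses $\mathcal{N}_0\ll1$ and \eqref{stability-small1} are calibrated so that each of these terms is a genuine contraction, respectively a strictly lower-order correction.
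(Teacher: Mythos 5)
Your proposal is correct and follows essentially the same route as the paper: both hinge on the non-admissible Strichartz estimate of Lemma~\ref{L:NAS} (with the same exponent pair $(8,4)$ at the scaling-critical level) to close the $L^8_tL^4_x$ bound, the uniform Strichartz estimate to promote to $\mathrm{Stz}^\theta$, and the factor $\|z\|_{L^\infty}\le\mathcal{N}_0$ to render the quadratic pieces of $\tilde N$ perturbative. The only cosmetic difference is that you close part (i) by a contraction on a ball while the paper runs an a priori estimate plus continuity/bootstrap argument; the underlying multilinear estimates are identical, and your treatment of part (ii) matches the paper's reduction to estimating $\mathcal{D}(\cdot,T_0;z)\tilde N(z,\xi)$ on $(T_0,T)$ and then propagating linearly on $(T,T_1)$.
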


\begin{proof}
(i) As the equation is $H^{\frac12}$-critical, it is straightforward to obtain a local solution if $\theta_0\ge \tfrac12$.  We need to obtain a solution on the whole interval $[T_0,T_1]$ and prove the desired bound.

First, using the Strichartz estimate, we have
\[
	\norm{ \mathcal{L} (\cdot, T_0) \varphi }_{\mathrm{Stz}^{\theta} (\R)} \lesssim \mathcal{N}_{\theta}\qtq{for any}\theta\in[0,1],
\]
and (recalling the definition of $\tilde N$ from \eqref{def:tildeN})
\[
	\norm{\xi}_{\mathrm{Stz}^{\frac12}} \le C \mathcal{N}_{\frac12} + 
	C \norm{\xi}_{\mathrm{Stz}^{\frac12}} \norm{\xi}_{L^8L^4} (\norm{\xi}_{\mathrm{Stz}^{\frac12}} + \norm{z}_{L^\I_t}).
\]

Now, by the non-admissible Strichartz estimates of Lemma~\ref{L:NAS}, we have
\begin{align*}
\|&\xi\|_{L^8_tL^4_x} \\
&\le \norm{\mathcal{L} (\cdot, T_0) \varphi + {\mathcal{D}(\cdot,T_0)\mathcal{E}}}_{L^8_tL^4_x} +  C \norm{\tilde{N}(z,\xi)}_{L^{8/3}L^{4/3} + L^{4}L^{6/5}} \\
& \le \norm{\mathcal{L} (\cdot, T_0) \varphi + {\mathcal{D}(\cdot,T_0)\mathcal{E}}}_{L^8_tL^4_x} +C\norm{\xi}_{L^8L^4}^2 (\norm{\xi}_{L^8 L^4} + \norm{\Phi[z]}_{L^\I L^3} ) \\
& \le \norm{\mathcal{L} (\cdot, T_0) \varphi + {\mathcal{D}(\cdot,T_0)\mathcal{E}}}_{L^8_tL^4_x} +C\norm{\xi}_{L^8L^4}^2 (\norm{\xi}_{\mathrm{Stz}^{\frac12}} + \norm{z}_{L^\I} ).
\end{align*}
%
%
Thus, by continuity, there exists $\delta>0$ such that on $(T_0,T_0+\delta)$, 
\begin{align*}
	\norm{\xi}_{\mathrm{Stz}^{\frac12}} {}&\le 100 C\mathcal{N}_{\frac12}, \\
	\norm{\xi}_{L^8L^4} {}&\le  100 \norm{\mathcal{L} (\cdot, T_0) \varphi + \mathcal{D}(\cdot,T_0)\mathcal{E}}_{ L^8L^4 },
\end{align*}
where $C\ge 1$ is the same constant as in the above estimates. 

Recalling the estimates above and \eqref{stability-small1}, we have that on the same interval, 

\begin{align*}
\| \xi\|_{\mathrm{Stz}^{\frac12}} & \le C\mathcal{N}_{\frac12}+C\bigl\{\|\xi\|_{\mathrm{Stz}^{\frac12}} + \|z\|_{L_t^\infty}\bigr\}\|\xi\|_{L^8L^4}\|\xi\|_{\mathrm{Stz}^{\frac12}} \\
& \le C\mathcal{N}_{\frac12} + 100^2C^2\cdot(100C+1)\cdot \mathcal{N}_{\frac12}\norm{\mathcal{L} (\cdot, T_0) \varphi + \mathcal{D}(\cdot,T_0)\mathcal{E}}_{ L^8L^4 }\mathcal{N}_{\frac12}\\
& \leq 2C\mathcal{N}_{\frac12}.
\end{align*}
%
Similarly, one has
\begin{align*}
	\norm{\xi}_{L^8L^4} \le {}&   \norm{\mathcal{L} (\cdot, T_0) \varphi + \mathcal{D}(\cdot,T_0)\eps}_{ L^8L^4 }\\
	&{} + 100^2 C[100C+1] \mathcal{N}_{\frac12}\cdot \norm{\mathcal{L} (\cdot, T_0) \varphi + \mathcal{D}(\cdot,T_0)\mathcal{E}}_{ L^8L^4 }\cdot \norm{\xi}_{L^8L^4},
\end{align*}
which yields
\[
	\norm{\xi}_{L^8L^4} \le  2 \norm{\mathcal{L} (\cdot, T_0) \varphi+\mathcal{D}(\cdot,T_0)\mathcal{E} }_{ L^8L^4 } .
\]
By a continuity argument, we deduce the desired bounds on $(T_0,T_1)$.

Finally, for any $\theta\in[0,1]$, we may obtain the estimate
\begin{align*}
	\norm{\xi}_{\mathrm{Stz}^{\theta}} \le C \mathcal{N}_{\theta}  
	+ 
	 4^2 C^2  \norm{\mathcal{L} (\cdot, T_0) \varphi + \mathcal{D}(\cdot,T_0)\mathcal{E}}_{ L^8L^4 }\mathcal{N}_{\frac12}\cdot \norm{\xi}_{\mathrm{Stz}^{\theta}},
\end{align*}
which implies the desired bound $\norm{\xi}_{\mathrm{Stz}^{\theta}} \le 2C \mathcal{N}_{\theta}.$

(ii) To begin, an application of Strichartz yields
\[
	\norm{ \mathcal{L} (\cdot, T_0) \varphi }_{\mathrm{Stz}^{\frac12} (\R)} \le C \mathcal{N}_{\frac12}.
\]
Further,  there exists $\delta>0$ such that
\[
	\norm{\xi}_{\mathrm{Stz}^{1/2} (T_0,T_0+\delta)} \le 100 C  \mathcal{N}_{1/2} 
\]
with the same constant as above.
Hence. on the same interval,
\begin{align*}
	\norm{\xi}_{\mathrm{Stz}^{1/2}} 
	&{}\le C \mathcal{N}_{1/2}  +
	C \norm{\xi}_{\mathrm{Stz}^{1/2}} \norm{\xi}_{L^8L^4} (\norm{\xi}_{\mathrm{Stz}^{1/2}} + \norm{z}_{L^\I_t}) \\
	&{}\le C \mathcal{N}_{1/2} + 
	100 C^2  \norm{\xi}_{L^8L^4} \mathcal{N}_{1/2} \times \norm{\xi}_{\mathrm{Stz}^{1/2}} .
\end{align*}
The second term on the right can be absorbed by the left by assumption.  Thus
\[
\norm{\xi}_{\mathrm{Stz}^{1/2}} \le 2C \mathcal{N}_{\frac12}.
\]
By a continuation argument, the last estimate is true on the interval $(T_0,T_1)$.
Then, we have
\begin{align*}
 \norm{\xi - \mathcal{L} (\cdot, T_0) \varphi - \mathcal{D}(\cdot,T_0)\mathcal{E}}_{L^8_tL^4_x} &{}\lesssim \norm{\xi}_{L^8L^4}^2 (\norm{\xi}_{L^8L^4} + \norm{z}_{L^\I} ) \\
&{}\lesssim  \norm{\xi}_{L^8L^4} (\norm{\xi}_{L^8L^4} \mathcal{N}_{1/2})\ll  \norm{\xi}_{L^8L^4}
\end{align*}
by assumption. Consequently, 
\[
 \norm{ \mathcal{L} (\cdot, T_0) \varphi +  \mathcal{D}(\cdot,T_0)\mathcal{E}}_{L^8_tL^4_x(T_0,T_1)}\sim   \norm{\xi}_{L^8_tL^4_x(T_0,T_1)}.
\]
Note that the same estimate is valid in any subinterval of the form $(T_0,T)$.

Now fix $T \in (T_0,T_1)$.
By Strichartz,
\begin{align*}
&\norm{\xi - \mathcal{L} (\cdot, T_0) \varphi -  \mathcal{D}(\cdot,T_0)\mathcal{E}}_{\mathrm{Stz}^{\theta} (T_0,T) }  \\
&{}\lesssim  \norm{\xi}_{\mathrm{Stz}^{\theta}(T_0,T)} \norm{\xi}_{L^8L^4(T_0,T)} (\norm{\xi}_{L^8L^4(T_0,T)}+ \norm{z}_{L^\I_t})\\
&{}\lesssim  \norm{\xi}_{\mathrm{Stz}^{\theta}(T_0,T)} \norm{ \mathcal{L} (\cdot, T_0) \varphi +  \mathcal{D}(\cdot,T_0)\mathcal{E}}_{L^8_tL^4_x(T_0,T_1)}\\
&\quad\times (\norm{ \mathcal{L} (\cdot, T_0) \varphi +  \mathcal{D}(\cdot,T_0)\mathcal{E}}_{L^8_tL^4_x(T_0,T_1)}+\mathcal{N}_0).
\end{align*}
Furthermore,
\begin{align*}
\| \mathcal{L}(\cdot,T;z) (\xi(T) - & \mathcal{L} (T, T_0) \varphi - \mathcal{D}(T,T_0)\mathcal{E} )\|_{\mathrm{Stz}^{\theta}(T,T_1)} \\
&\lesssim \norm{ \xi(T) - \mathcal{L} (T, T_0) \varphi  - \mathcal{D}(T,T_0)\mathcal{E}}_{H^\theta} \\
&\le \norm{\xi - \mathcal{L} (\cdot, T_0) \varphi  - \mathcal{D}(T,T_0)\mathcal{E}}_{\mathrm{Stz}^{\theta} (T_0,T) }.
\end{align*}
Thus we obtain
\begin{align*}
&\norm{\mathcal{L}_{>}(z,T)\xi -  \mathcal{L} (\cdot, T_0) \varphi - \mathcal{D}(\cdot,T_0)( {\bf 1}_{(-\I,T)}\mathcal{E} )}_{\mathrm{Stz}^\theta (T_0,,T_1)}\\
&\lesssim \mathcal{N}_\theta \norm{ \mathcal{L} (\cdot, T_0) \varphi +  \mathcal{D}(\cdot,T_0)\mathcal{E}}_{L^8_tL^4_x(T_0,T_1)}
\\ &\quad\times (\norm{ \mathcal{L} (\cdot, T_0) \varphi +  \mathcal{D}(\cdot,T_0)\mathcal{E}}_{L^8_tL^4_x(T_0,T_1)}+\mathcal{N}_0).
\end{align*}
Now let $T' \in (T_0,T)$.  We then have 
\begin{align*}
&\norm{ \mathcal{L}_{>}(z,T')(\xi- \mathcal{D}(\cdot,T_0)\mathcal{E} ) - \mathcal{L}_{>}(z,T_0)(\xi- \mathcal{D}(\cdot,T_0)\mathcal{E}) }_{L^8L^4 (T_0,T_1)}  \\
&\quad\le \norm{  \mathcal{D} (\cdot, T_0) ({\bf 1}_{[T_0,T')} \tilde{N} )}_{L^8L^4 (T_0,T_1)}  \\
&\quad\lesssim \norm{\xi}_{L^8L^4[T_0,T')} (\norm{\xi}_{L^8L^4[T_0,T_1)} \mathcal{N}_{1/2})\\
&\quad \ll \norm{\xi}_{L^8L^4(T_0,T)} \\
&\quad\lesssim \norm{\mathcal{L} (\cdot, T_0) \varphi +\mathcal{D}(\cdot,T_0)\mathcal{E}  }_{L^8L^4(T_0,T)}.
\end{align*}
Taking the supremum with respect to $T' \in (T_0,T)$, we obtain
\[
\norm{\xi - \mathcal{D}(\cdot,T_0)\mathcal{E}}_{[z;T_0,T,T_1]} \ll \norm{\mathcal{L} (\cdot, T_0) \varphi +\mathcal{D}(\cdot,T_0)\mathcal{E}  }_{L^8L^4(T_0,T)}.
\]
\end{proof}

We turn to stability for the nonlinear equation.

\begin{lemma}[Stability, I]\label{l:6.3}
Let $-\I < T_0 < T_1 <  \I$ and let $z_0,z_1  \in \mathrm{SBC}$.  Suppose $\xi_0,\xi_1 \in C_t P_c H^1 ([T_0,T_1]) $ solve
\[
(	i \d_t + H ) \xi_j = B[z_j] \xi_j + \tilde{N}(z_j,\xi_j) + P_c \mathcal{E}_j
\]
on $[T_0,T_1]$, respectively, where $\mathcal{E}_j \in L^{\frac83} L^{\frac43} ([T_0,T_1])$ is given. Suppose further that
\[
	\norm{ \mathcal{L} (\cdot, T_0,z_0) \xi_0(T_0) + \mathcal{D}(\cdot,T_0;z_0) \mathcal{E}_0 }_{L^8 L^4 ([T_0,T_1])} \le \tilde{\delta}
\]
for some small $\tilde{\delta}>0$.  If
\begin{align*}
	\|\mathcal{L} (\cdot, T_0,z_0) (\xi_0(T_0)  -  \xi_1(T_0)) + \mathcal{D}(\cdot,T_0;z_0) (\mathcal{E}_0&  - \mathcal{E}_1) \|_{L^8L^4 ([T_0,T_1])}
	\\ & +  \norm{ z_0 - z_1 }_{L^8 (T_0,T_1) } \le {\delta}	,
\end{align*}
then the following hold:
\begin{align*}
&\norm{ \xi_0 - \xi_1 }_{L^8L^4([T_0,T_1])} \lesssim \delta,\\
&\norm{ \xi_0 - \xi_1 - \mathcal{D}(\cdot,T_0;z_0) (\mathcal{E}_0 - \mathcal{E}_1) }_{ [z,T_0,T_1;T_2] } \lesssim  \tilde{\delta} \delta.
\end{align*}
\end{lemma}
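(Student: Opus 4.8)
The plan is to run a standard perturbation-of-a-small-solution argument, using the non-admissible Strichartz estimate (Lemma~\ref{L:NAS}), the uniform Strichartz bounds of Proposition~\ref{Prop:Strichartz}, and the smallness encoded in $\tilde\delta$ and $\delta$. First I would write $w := \xi_0-\xi_1$, which solves
\[
(i\d_t + H)w = B[z_0]w + \bigl(B[z_0]-B[z_1]\bigr)\xi_1 + \tilde N(z_0,\xi_0)-\tilde N(z_1,\xi_1) + P_c(\mathcal E_0-\mathcal E_1),
\]
so that by the Duhamel formula for the $B[z_0]$-flow,
\[
w(t) = \mathcal L(t,T_0;z_0)w(T_0) + \mathcal D(t,T_0;z_0)\bigl(\mathcal E_0-\mathcal E_1\bigr) + \mathcal D(t,T_0;z_0)\bigl[(B[z_0]-B[z_1])\xi_1 + \tilde N(z_0,\xi_0)-\tilde N(z_1,\xi_1)\bigr].
\]
The leading two terms are $O(\delta)$ in $L^8L^4$ by hypothesis. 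I would then set $\Xi(T):=\|w\|_{L^8L^4([T_0,T])}$ and run a continuity/bootstrap argument on $\Xi$.

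The key estimates needed are: (a) control of $\tilde N(z_0,\xi_0)-\tilde N(z_1,\xi_1)$ in $L^{8/3}L^{4/3}+L^4L^{6/5}$. Since $\tilde N$ is, modulo the smooth $P_c$ and the $\C$-valued coupling through $\underline N$, a sum of quadratic and cubic terms in $(\Phi[z],\xi)$, the difference splits into a piece multilinear in $w$ with coefficients involving $\xi_0,\xi_1$, and a piece carrying the factor $\Phi[z_0]-\Phi[z_1]$, which is $O(|z_0-z_1|)$ in $H^1$ hence (Sobolev) controllable by $\|z_0-z_1\|_{L^8_t}$ after using $\|\xi_j\|_{\mathrm{Stz}^{1/2}}\lesssim 1$. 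This yields a bound of the shape
\[
\|\mathcal D(\cdot,T_0;z_0)(\tilde N(z_0,\xi_0)-\tilde N(z_1,\xi_1))\|_{L^8L^4}\lesssim \Xi\,\bigl(\|\xi_0\|_{L^8L^4}+\|\xi_1\|_{L^8L^4}\bigr)\bigl(\dots+\|z_j\|_{L^\infty}\bigr) + \|z_0-z_1\|_{L^8_t}\,(\text{small}),
\]
and (b) control of $\mathcal D(\cdot,T_0;z_0)(B[z_0]-B[z_1])\xi_1$, again producing a factor $\|z_0-z_1\|_{L^8_t}$ times $\|\xi_1\|_{\mathrm{Stz}^{1/2}}$ via the uniform Strichartz estimate. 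Here the crucial smallness input is that $\|\xi_j\|_{L^8L^4}$ is itself $O(\tilde\delta)$: this follows from the hypothesis on $\|\mathcal L(\cdot,T_0,z_0)\xi_0(T_0)+\mathcal D(\cdot,T_0;z_0)\mathcal E_0\|_{L^8L^4}\le\tilde\delta$ together with part (ii) of Proposition~\ref{l:sdt} (with $\varphi=\xi_0(T_0)$, $\mathcal E=\mathcal E_0$), which gives $\|\xi_0\|_{L^8L^4}\sim\tilde\delta$; a comparison argument then transfers smallness to $\xi_1$ as well, using the $\delta$-closeness hypothesis. Feeding these into the bootstrap gives $\Xi\lesssim\delta + \tilde\delta\,\Xi$, so for $\tilde\delta$ small, $\Xi\lesssim\delta$ on all of $[T_0,T_1]$, which is the first assertion.

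For the second assertion I would subtract the linear-plus-forcing part: writing $\tilde w := w - \mathcal D(\cdot,T_0;z_0)(\mathcal E_0-\mathcal E_1)$, the $[z;T_0,T_1;T_2]$-seminorm of $\tilde w$ measures exactly the $L^8L^4$-size of the purely nonlinear Duhamel contribution $\mathcal D(\cdot,T_0;z_0)[(B[z_0]-B[z_1])\xi_1+\tilde N(z_0,\xi_0)-\tilde N(z_1,\xi_1)]$ (evolved forward by $\mathcal L_{>}$), much as in the last part of the proof of Proposition~\ref{l:sdt}. By the estimates in (a)--(b) this is $\lesssim (\|\xi_0\|_{L^8L^4}+\|\xi_1\|_{L^8L^4})\cdot\Xi + \|z_0-z_1\|_{L^8_t}(\text{small})\lesssim \tilde\delta\,\delta$, using $\|\xi_j\|_{L^8L^4}\lesssim\tilde\delta$ and $\Xi\lesssim\delta$ from the first part. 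I expect the main obstacle to be purely bookkeeping: carefully decomposing the difference $\tilde N(z_0,\xi_0)-\tilde N(z_1,\xi_1)$ — which hides the nonlinear map $\underline N$ and the operators $R[z]$, $D\Phi(z)$, all of which are merely $\R$-linear and $z$-dependent — so as to isolate the clean factors of $\|z_0-z_1\|_{L^8_t}$ and of $\Xi$ with the correct small prefactors, and then choosing the bootstrap thresholds consistently with the smallness of $\tilde\delta$. None of the individual multilinear estimates are delicate; the care is in the organization.
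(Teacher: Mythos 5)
Your proposal follows essentially the same route as the paper: write the Duhamel formula for $\xi_0-\xi_1$ with respect to the $\mathcal{L}(\cdot,\cdot;z_0)$-flow (treating $(B[z_1]-B[z_0])\xi_1$ as a source), apply the non-admissible Strichartz estimate of Lemma~\ref{L:NAS} to the difference of nonlinearities to produce a factor $(\|\xi_0-\xi_1\|_{L^8L^4}+\|z_0-z_1\|_{L^8})\cdot O(\tilde\delta)$, absorb for $\tilde\delta$ small, and then estimate the seminorm by isolating the purely nonlinear Duhamel contribution exactly as you describe. The only cosmetic difference is that you phrase the absorption as a continuity/bootstrap argument and make explicit the appeal to Proposition~\ref{l:sdt} for $\|\xi_j\|_{L^8L^4}\lesssim\tilde\delta$, both of which are consistent with (and implicit in) the paper's argument.
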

\begin{proof}
Set
\[
\mathrm{Dist}(t) =\sum_{j=0}^1 (-1)^j (\xi_j(t) - \mathcal{L} (t, T_0,z_0) \xi_j(T_0)  - \mathcal{D}(t,T_0;z_0) \mathcal{E}_j).
\]
By the Duhamel formula for the linearized equation, we obtain
\[
\xi_0 = \mathcal{L}(\cdot,T_0;z_0) \xi_0(T_0) + \mathcal{D}(\cdot, T_0,z_0)  \tilde{N}(z_0,\xi_0) + \mathcal{D}(\cdot, T_0,z_0) \mathcal{E}_0
\]
and
\[
\xi_1 = \mathcal{L}(\cdot,T_0;z_0) \xi_1(T_0) + [\mathcal{D}(T_0,z_0) ( \tilde{N}(z_1,\xi_1) +(B[z_1]-B[z_0])\xi_1)]+ \mathcal{D}(\cdot, T_0,z_0) \mathcal{E}_1.
\]
By the non-admissible Strichartz estimate (Lemma~\ref{L:NAS}),
\[
\norm{ \mathrm{Dist} }_{ L^8 L^{4} }\lesssim \norm{ (\tilde{N}(z_0,\xi_0)-\tilde{N}(z_1,\xi_1) ) +  (B[z_0]-B[z_1]) \xi_1 }_{ L^{8/3}L^{4/3}+ L^{4}L^{6/5} }.
\]
Using the estimate
\begin{align*}
\norm{ (\tilde{N}(z_0,\xi_0)-\tilde{N}(z_1,\xi_1) ) +  (B[z_0]-B[z_1]) \xi_1 }_{ L^{8/3}L^{4/3}+ L^{4}L^{6/5}} \\
\lesssim (\norm{\xi_0 -\xi_1 }_{ L^8 L^{4} } + \norm{z_0-z_1}_{L^8})( \norm{\xi_0}_{L^8L^4}+\norm{\xi_1}_{L^8L^4})\\
\quad\times(1+ \norm{\xi_0}_{L^8L^4}+\norm{\xi_1}_{L^8L^4}),
\end{align*}
we obtain 
\[
\norm{ \mathrm{Dist} }_{ L^8 L^{4} (T_0,T_1) }\lesssim  \tilde{\delta}\norm{\xi_0 -\xi_1 }_{ L^8 L^{4} (T_0,T_1) } +  \delta \tilde{\delta}.
\]
If $\tilde{\delta} $ is sufficiently small, then we have
\begin{equation}\label{e:6.3pf1}
\begin{aligned}
\|\xi_0 &-\xi_1\|_{L^8L^{4} (T_0,T_1)}\\
& \lesssim \norm{\mathcal{L} (t, T_0,z_0) (\xi_0-\xi_1)(T_0)+ \mathcal{D}(\cdot, T_0;z_0)(\mathcal{E}_0-\mathcal{E}_1) }_{L^4L^{24/7}(T_0,T_1)} +\delta \tilde{\delta} \lesssim \delta.
\end{aligned}
\end{equation}

Next, note that
\begin{multline*}
	\mathcal{L}(t,T_1;z_0) \xi_0(T_1) = \mathcal{L}(t,T_0;z_0) \xi_0(T_0)+\mathcal{D}(\cdot, T_0;z_0)({\bf 1}_{(-\I,T_1]}\mathcal{E}_0) \\
	+ \mathcal{L}(t,T_1;z_0)\mathcal{D}(T_1, T_0,z_0)  \tilde{N}(z_0,\xi_0)
\end{multline*}
and
\begin{multline*}
	\mathcal{L}(t,T_1;z_0) \xi_1(T_1) = \mathcal{L}(t,T_0;z_0) \xi_1(T_0)+\mathcal{D}(t, T_0;z_0)({\bf 1}_{(-\I,T_1]}\mathcal{E}_1)\\
	+ \mathcal{L}(t,T_1;z_0) \mathcal{D}(T_1,T_0,z_0) ( \tilde{N}(z_1,\xi_1) +(B[z_1]-B[z_0])\xi_1).
\end{multline*}
Hence,
\begin{align*}
	&\mathcal{L}(t,T_1;z_0) (\xi_0(T_1)-\xi_1(T_1)) \\
 	&= \mathcal{L}(t,T_0;z_0) (\xi_0(T_0)-\xi_1(T_0)) + \mathcal{D}(\cdot, T_0;z_0)({\bf 1}_{(-\I,T_1]}(\mathcal{E}_0-\mathcal{E}_1))\\
 	&{}\quad +  \mathcal{D}(t,T_0,z_0) ({\bf 1}_{[T_0,T_1]} ( \tilde{N}(z_1,\xi_1) +(B[z_1]-B[z_0])\xi_1)).
\end{align*}
We now define 
\begin{align*}
\mathrm{Diff}&:= \mathcal{L}_{>}(t,T_1;z_0) (\xi_0-\xi_1)-\mathcal{L}(t,T_0;z_0) (\xi_0-\xi_1)(T_0) - \\
& \quad - \mathcal{D}(\cdot, T_0;z_0)({\bf 1}_{(-\I,T_1]}(\mathcal{E}_0-\mathcal{E}_1)) \\
&= \mathcal{L}_{>}(t,T_1;z_0) (\xi_0-\xi_1 - \mathcal{D}(\cdot, T_0;z_0)(\mathcal{E}_0-\mathcal{E}_1)\\
&\quad - \mathcal{L}_{>}(t,T_0;z_0) (\xi_0-\xi_1 - \mathcal{D}(\cdot, T_0;z_0)(\mathcal{E}_0-\mathcal{E}_1))
\end{align*}
for $t\ge T_0$.
We use this form for $t\ge T_1$ and the above formulas for $\xi_0$ and $\xi_1$ for $t\in[T_0,T_1]$ to get
\[
	\mathrm{Diff}
	=    \mathcal{D}(T_0,z_0) ({\bf 1}_{[T_0,T_1]} ( \tilde{N}(z_1,\xi_1) +(B[z_1]-B[z_0])\xi_1)).
\]
for $t\ge T_0$.
Thus, by the same nonlinear estimate as for $\norm{ \mathrm{Dist} }_{ L^8 L^{4}(T_0,T_1) }$, we may obtain
\[
\norm{\mathrm{Diff}}_{L^8L^{4}([T_0,T_2])} \lesssim  \tilde{\delta}\norm{\xi_0 -\xi_1 }_{ L^8 L^{4}  (T_0,T_1)} +  \delta \tilde{\delta} \lesssim \delta \tilde{\delta}.
\]
Taking the supremum with respect to $T_1$, we obtain the result.
\end{proof}

We upgrade Lemma~\ref{l:6.3} as follows:

\begin{proposition}[Stability, II] \label{l:6.4}
Let $-\I < T_0 < T_1 < T_2 \le \I$, $z_0 ,z_1 \in \mathrm{SBC}$,
and $\xi_0,\xi_1 \in C([T_0,T_1]; H^{3/4})$ solve
\[
i \d_t\xi_j + H  \xi_j = B[z_j] \xi_j + \tilde{N}(z_j,\xi_j) + \mathcal{E}_j
\]
on $[T_0,T_1]$, respectively, where $\mathcal{E}_j \in L^{8/3}L^{4/3} ([T_0,T_1])$ are given. Let 
\[
	 \mathcal{M} := \norm{\xi_0}_{L^8L^4 (T_0,T_1)} .
\] 
There exists $\delta_* ( \mathcal{M})>0$ and $C_* (\mathcal{M})>0$
such that if 
\begin{align*}
	\delta:=&\norm{ \mathcal{L} (\cdot, T_0,z_0) (\xi_0-\xi_1)(T_0)  + \mathcal{D} (\cdot, T_0,z_0) (\mathcal{E}_0-\mathcal{E}_1)  }_{L^8L^4 ([T_0,T_1])} \\
	&+  \norm{ z_0 - z_1 }_{L^8 (T_0,T_1) } \le \delta_*,
\end{align*}
then we have
\[
	\norm{ \xi_0 - \xi_1 - \mathcal{D} (\cdot, T_0,z_0) (\mathcal{E}_0-\mathcal{E}_1)}_{ [z_0;T_0,T_1;T_2] } \le C_* \delta.
\]
\end{proposition}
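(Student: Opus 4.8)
The proof is a standard long-time perturbation argument: partition $[T_0,T_1]$ into finitely many subintervals on which the reference solution $\xi_0$ is $L^8L^4$-small, apply the short-time stability result Lemma~\ref{l:6.3} on each of them, and iterate, keeping track of how the ``difference data'' propagates from one subinterval to the next. Fix an absolute constant $\eps_0>0$, to be chosen small in terms of the implicit constants below and of $\mu_0$, and split $[T_0,T_1]$ into $J=J(\mathcal{M},\eps_0)$ consecutive subintervals $I_k=[t_k,t_{k+1}]$ ($t_0=T_0$, $t_J=T_1$) with $\norm{\xi_0}_{L^8L^4(I_k)}\le\eps_0$; one may take $J\lesssim(\mathcal{M}/\eps_0)^8+1$, so $J$ depends only on $\mathcal{M}$. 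On each $I_k$ the Duhamel formula for the linearized equation gives $\mathcal{L}(\cdot,t_k;z_0)\xi_0(t_k)+\mathcal{D}(\cdot,t_k;z_0)\mathcal{E}_0=\xi_0-\mathcal{D}(\cdot,t_k;z_0)\tilde N(z_0,\xi_0)$ on $I_k$, so the non-admissible Strichartz estimate (Lemma~\ref{L:NAS}), the nonlinear estimate for $\tilde N$ used in the proof of Proposition~\ref{l:sdt}, and $\norm{z_0}_{L^\infty}\le\mu_0\ll1$ yield
\[
\tilde\delta_k:=\norm{\mathcal{L}(\cdot,t_k;z_0)\xi_0(t_k)+\mathcal{D}(\cdot,t_k;z_0)\mathcal{E}_0}_{L^8L^4(I_k)}\lesssim\eps_0+\eps_0^2(\eps_0+\mu_0)\lesssim\eps_0,
\]
so the ``reference smallness'' hypothesis of Lemma~\ref{l:6.3} holds on every $I_k$.

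Set $F_k(t):=\mathcal{L}(t,t_k;z_0)(\xi_0-\xi_1)(t_k)+\mathcal{D}(t,t_k;z_0)\bigl({\bf 1}_{[t_k,T_1]}(\mathcal{E}_0-\mathcal{E}_1)\bigr)$ for $t\ge t_k$ and $\delta_k:=\norm{F_k}_{L^8L^4(I_k)}+\norm{z_0-z_1}_{L^8(I_k)}$, so that $\delta_0\le\delta$. Arguing by induction, assume $\delta_k$ stays below the smallness threshold implicit in Lemma~\ref{l:6.3}; that lemma, applied on $I_k$ with the same $T_2$, then gives $\norm{\xi_0-\xi_1}_{L^8L^4(I_k)}\lesssim\delta_k$ and $\norm{\xi_0-\xi_1-\mathcal{D}(\cdot,t_k;z_0)(\mathcal{E}_0-\mathcal{E}_1)}_{[z_0;t_k,t_{k+1};T_2]}\lesssim\tilde\delta_k\,\delta_k\lesssim\eps_0\,\delta_k$. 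From the group law (Lemma~\ref{L:properties-of-L}), the $\R$-linearity of $\mathcal{L}$, and \eqref{def:D} one has the cocycle identity $\mathcal{D}(t,t_k;z_0)N=\mathcal{L}(t,t_{k+1};z_0)\mathcal{D}(t_{k+1},t_k;z_0)N+\mathcal{D}(t,t_{k+1};z_0)N$ for $t\ge t_{k+1}$; a short computation then shows $F_{k+1}-F_k=\mathcal{L}(\cdot,t_{k+1};z_0)G_k$ on $[t_{k+1},\infty)$, where $G_k:=(\xi_0-\xi_1)(t_{k+1})-\mathcal{L}(t_{k+1},t_k;z_0)(\xi_0-\xi_1)(t_k)-\mathcal{D}(t_{k+1},t_k;z_0)(\mathcal{E}_0-\mathcal{E}_1)$, and that $\mathcal{L}(\cdot,t_{k+1};z_0)G_k$ restricted to $[t_{k+1},T_2)$ coincides with $\mathcal{L}_{>}(t_{k+1},z_0)[\xi_0-\xi_1-\mathcal{D}(\cdot,t_k;z_0)(\mathcal{E}_0-\mathcal{E}_1)]-\mathcal{L}_{>}(t_k,z_0)[\xi_0-\xi_1-\mathcal{D}(\cdot,t_k;z_0)(\mathcal{E}_0-\mathcal{E}_1)]$, which is bounded by $\eps_0\delta_k$ via the semi-norm estimate above. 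Summing, and using $\norm{F_0}_{L^8L^4(I_m)}\le\norm{F_0}_{L^8L^4([T_0,T_1])}\le\delta$ together with $\norm{z_0-z_1}_{L^8(I_m)}\le\delta$, gives $\delta_m\le 2\delta+C\eps_0\sum_{k<m}\delta_k$; discrete Gronwall yields $\delta_m\le2\delta(1+C\eps_0)^J\le2^{J+1}\delta$ once $\eps_0$ is fixed with $C\eps_0\le1$. Hence the induction closes as soon as $\delta\le\delta_*(\mathcal{M})$, chosen so that $2^{J+1}\delta$ lies below the threshold of Lemma~\ref{l:6.3}.

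To conclude, telescope the target semi-norm: for $T_0<S<T<T_1$ with $S\in I_k$, $T\in I_m$, write $\mathcal{L}_{>}(T,z_0)[\cdot]-\mathcal{L}_{>}(S,z_0)[\cdot]$ as a sum of increments $\mathcal{L}_{>}(r_{j+1},z_0)[\cdot]-\mathcal{L}_{>}(r_j,z_0)[\cdot]$ with $r_k=S$, $r_j=t_j$ for $k<j\le m$, and $r_{m+1}=T$; reconciling, via the same cocycle identities, the $T_0$-anchored Duhamel term in the argument with the $t_j$-anchored ones, each increment is dominated in $L^8L^4((T_0,T_2))$ by the semi-norm bound on the $j$-th subinterval, i.e.\ by $\lesssim\eps_0\delta_j$. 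Therefore $\norm{\xi_0-\xi_1-\mathcal{D}(\cdot,T_0;z_0)(\mathcal{E}_0-\mathcal{E}_1)}_{[z_0;T_0,T_1;T_2]}\lesssim\eps_0\sum_{j=0}^{J-1}\delta_j\lesssim\eps_0\,J\,2^{J+1}\delta=:C_*(\mathcal{M})\delta$, which is the assertion; both $\delta_*$ and $C_*$ depend on $\mathcal{M}$ only through $J=J(\mathcal{M})$.

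I expect the main obstacle to be exactly the bookkeeping of the last two paragraphs: one must handle the Duhamel operators $\mathcal{D}(\cdot,t_k;z_0)$ anchored at the various times $t_k$ (with the source $\mathcal{E}_0-\mathcal{E}_1$ suitably truncated), and in particular their behaviour on the tail $(T_1,T_2)$ where all objects are pure free evolutions, so as to verify rigorously that the ``difference data at $t_{k+1}$'' fed into Lemma~\ref{l:6.3} on $I_{k+1}$ is genuinely dominated by the semi-norm output of Lemma~\ref{l:6.3} on $I_k$; since $\mathcal{L}$ is only $\R$-linear one may use additivity and $\R$-homogeneity but not complex linearity. The exponential-in-$J$ dependence of $C_*$ is harmless because $J=J(\mathcal{M})$ is finite. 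Finally, the mismatch between the regularity assumed in Lemma~\ref{l:6.3} ($C_tP_cH^1$) and here ($C([T_0,T_1];H^{3/4})$) is immaterial, since every estimate invoked lives at the $L^8L^4$/$\mathrm{Stz}^{1/2}$ scaling level and the proof of Lemma~\ref{l:6.3} carries over verbatim at $H^{3/4}$ regularity.
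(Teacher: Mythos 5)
Your proposal is correct and follows essentially the same route as the paper: partition $[T_0,T_1]$ into $O_{\mathcal M}(1)$ subintervals on which $\xi_0$ is small in $L^8L^4$, apply Lemma~\ref{l:6.3} on each, propagate the difference data with at most geometric growth across subintervals, and sum the semi-norm increments. The paper's recursion $\delta_{j+1}=\delta_j+C_0\tilde\delta\,\delta_j$ is the one-step version of your discrete Gronwall bound, and your explicit cocycle/telescoping bookkeeping is exactly what the paper leaves implicit.
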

\begin{proof}
For any $N \in \N$, $(T_0,T_1)$ may be decomposed into subintervals $I_0,\dots, I_N$ of the  form $I_j = (S_j,S_{j+1})$
such that
\[
\sup_{j\in [1,N]} \norm{\xi_0}_{L^4L^6(I_j)} \le 2 N^{-\frac18} \mathcal{M} =: \tilde{\delta}.
\]
If $N$ is chosen sufficiently large, we see that
\[
	\norm{ \mathcal{L}(\cdot, S_j;z_0)\xi_0(S_j) + \mathcal{D}(\cdot,S_j;z_0) \mathcal{E}_0 }_{L^8 L^4(I_j)} \sim \norm{\xi_0}_{L^8L^4 (I_j)} \le \tilde{\delta}.
\]
Now let $ \delta_0 \in (0,\tilde\delta]$ to be chosen later. Suppose that
\begin{align*}
&\norm{ \mathcal{L} (\cdot, S_0,z_0) (\xi_0-\xi_1)(T_0)  + \mathcal{D} (\cdot, S_0,z_0) (\mathcal{E}_0-\mathcal{E}_1)  }_{L^8 L^4 ([S_0,T_1])} \\
&+  \norm{ z_0 - z_1 }_{L^8 (S_0,T_1) } \le \delta_0.
\end{align*}
Then we have
\[
\norm{ \mathcal{L}(\cdot, S_0, z_0) \xi_1(T_0) +\mathcal{D} (\cdot, S_0,z_0)\mathcal{E}_1 }_{L^8 L^4(I_0)} \lesssim \tilde{\delta}
\]
and we can apply Lemma~\ref{l:6.3} to obtain
\[
\norm{ \xi_0 - \xi_1- \mathcal{D} (\cdot, S_0,z_0) (\mathcal{E}_0-\mathcal{E}_1) }_{[z_0;S_0,S_1;T_2]} \le C_0\delta_0 \tilde{\delta}. 
\]
Now observe that
\begin{align*}
\| \mathcal{L} (\cdot, S_1,z_0) (\xi_0-\xi_1)(S_1)  + \mathcal{D} (\cdot, S_1,z_0) (\mathcal{E}_0&-\mathcal{E}_1)  \|_{L^8L^4 ([S_1,T_1])} \\
	&+  \norm{ z_0 - z_1 }_{L^8 (S_1,T_1) } \le \delta_1,
\end{align*}
where $\delta_1 := \delta_0 + C_0\delta_0 \tilde{\delta}.$  Now if $\delta_1 \le \tilde{\delta}$, then we repeat this argument to obtain
\[
\norm{ \xi_0 - \xi_1- \mathcal{D} (\cdot, S_1,z_0) (\mathcal{E}_0-\mathcal{E}_1) }_{[z_0;S_1,S_2;T_2]} \le C_0\delta_1 \tilde{\delta}.
\]
If $\delta_0$ is sufficiently small then this argument can be repeated up to $I_N$.  Setting $\delta_j = (1+ C_0 \tilde{\delta})^j \delta_0$, we find
\begin{align*}
	&\norm{ \xi_0 - \xi_1- \mathcal{D} (\cdot, T_0,z_0) (\mathcal{E}_0-\mathcal{E}_1) }_{[z_0;T_0,T_1;T_2]} \\
	&\le \sum_{j=0}^N \norm{ \xi_0 - \xi_1 - \mathcal{D} (\cdot, S_j,z_0) (\mathcal{E}_0-\mathcal{E}_1) }_{[z_0;S_j,S_{j+1};T_2]} \\
	&\le C_0 \tilde{\delta} \sum_{j=0}^N \delta_j = ((1+ C_0 \tilde{\delta})^{N+1}-1) \delta_0
\end{align*}
for sufficiently small $\delta_0$. 
\end{proof}

\section{Linear profile decomposition}\label{S:LPD}

In this section, we introduce a linear profile decomposition adapted to the linearized equation 
\[
\begin{cases} i\partial_t v + Hv = B[z]v, \\ v(s)=\varphi.\end{cases}
\]
Recall that we denote the solution to this equation by $v(t)=\mathcal{L}(t,s;z)$. 

We first introduce some notation and a few definitions. For $y \in \R^3$, let $T_y$ be the translation operator
\[
	(T_y f)(x) = f(x-y),\quad f:\R^3\to\C.
\]
Note that $T_y $ is not a map from $P_cH^1$ to itself when $y\neq0$.

Next, we say a sequence $ \{(y_n , s_n)\} \subset \R^3 \times \R$ is a \emph{parameter of shifts} 
\[
\qtq{either} y_n\equiv 0 \qtq{or}\lim_{n\to\I} |y_n|=\infty,\quad\text{and}
\]
\[
\qtq{either} s_n\equiv 0 \qtq{or}\lim_{n\to\infty}|s_n|=\infty.
\]

Finally, let $\{  z_n\}\subset\mathrm{SBC}$. We say a sequence of functions $\{\l_n\}\subset P_c H^1$ is a \emph{linear profile associated with $\{  z_n\}$} if
\[
	\lambda_n  =  \mathcal{L} (0, s_n ;z_n)P_c T_{y_n} \varphi
\]
for some $\varphi \in H^1$  and some parameter of shifts
$\{ (y_n,s_n)\}$.

The main result of this section is the following theorem: 

\begin{theorem}[Linear Profile Decomposition]\label{t:lpd}
Let $\{z_n\}_n\subset \mathrm{SBC}$ and let $\{ \psi_n \}_n \subset P_cH^1$ be a bounded sequence. Passing to a subsequence, there exists $J^* \in \N \cup \{0,\I\}$ and linear profiles $\{ \l^j_n \}_n$ associated with $\{z_n\}_n$ with shifts $\{(y_n^j, s_n^j)\}_n$ such that the following decomposition holds for each finite $0\leq J \leq J^*$:
\begin{equation}\label{e:lpd}
\psi_n  = \sum_{j=0}^{J} \lambda_n^j  + \gamma_n^J,\qtq{with} \lambda_n^j  :=  \mathcal{L} (0, s_n^j; z_n)P_c T_{y_n^j} \varphi_\I^j.
\end{equation}
This decomposition satisfies the following properties:
\begin{itemize}\item The profiles and shift parameters obey the following: 
\begin{enumerate}
\item  Writing 
\[
y^j := \lim_{n\to\I} |y^j_n| \in \{0,\I \}\qtq{and} s^j := \lim_{n\to\I} |s^j_n| \in \{ 0,\I \},
\]
we have $(y^0,s^0)=(0,0)$ and $(y^j,s^j)\neq (0,0)$ for $j\ge1$.

\smallskip

\item For each $j\neq k$, we have the following orthogonality of parameters:
\[
\lim_{n\to\infty} \bigl[|s_n^j-s_n^k|+ |y_n^j-y_n^k|\bigr]=\I.
\]

\smallskip

\item For each $j$, $T_{y_n}^{-1} \mathcal{L}(s_n^j,0;z_n) \psi_n \rightharpoonup \varphi_\I^j$ weakly in $H^1$.

\smallskip

\item We have $\varphi_\I^j \neq 0$ for $0<j \le J^*$ and $\varphi_\I^j = 0$ for $j > J^*$.  Furthermore, if $y^j=0$ then $\varphi_\I^j \in P_c H^1$.

\end{enumerate} 
\item We have the following mass/energy decoupling:
\begin{align}\label{e:Mdecomp}
	&\text{for each }J,\quad \mathbb{M}(\psi_n)
	= \sum_{j=0}^{J} \mathbb{M}(\lambda_n^j)
	+ \mathbb{M} (\gamma_n^J)  + o(1)\qtq{as}n\to\infty.\\
	\label{e:H0decomp}
	&\text{for each }J,\quad \mathbb{H}_0(\psi_n)
	= \sum_{j=0}^{J} \mathbb{H}_0(\lambda_n^j)
	+ \mathbb{H}_0 (\gamma_n^J)  + o(1)\qtq{as}n\to\infty.
\end{align}
\begin{equation}\label{e:Gdecomp2}
	\lim_{n\to\I} \mathbb{G}(\psi_n)
	= \mathbb{G}(\varphi^0_\I) + \sum_{j=1}^\infty \mathbb{G}(\varphi^{j}_\I) {\bf 1}_{\{ s^j=0 \}} .
\end{equation}
In addition, we have
\begin{equation}\label{e:Vdecomp}
	\lim_{n\to\I} \int V(x) |\psi_n(x)|^2 dx = \int V(x)|\varphi_\I^0(x)|^2 dx,
\end{equation}
so that
\begin{equation}\label{e:energydecomp}
	\varlimsup_{n\to\I} \mathbb{E}_V (\psi_n)
	\ge \sum_{j=0}^{\I} \mathbb{E} (\varphi^j_\I ;y^j, s^j),
\end{equation}
where
\[
		\mathbb{E}(\psi; y , s) := \left\{
			\begin{aligned}
			& \mathbb{E}_V(\psi) && (y,s)=(0,0),\\
			& \lim_{n\to\I} \mathbb{H}_0(\mathcal{L}(0,s_n^j;z_n) \psi) && (y,s)=(0,\infty),\\
			& \mathbb{E}_0(\psi) && (y,s)=(\infty,0),\\
			& \mathbb{H}_0(\psi) && (y,s)=(\infty,\infty).
			\end{aligned}
		\right.
\]
\item Finally, the remainder satisfies the following:
\[
	\sup_J \varlimsup_{n \to \I} \norm{  \mathcal{L}(\cdot,0;z_n) \gamma_n^J}_{ \mathrm{Stz}^1(\R) } <\I,
\]
\begin{equation}\label{e:remainder}
	\lim_{J\to \infty} \varlimsup_{n \to \I} \norm{ \mathcal{L}(\cdot,0;z_n) \gamma_n^J}_{L^\I_t L^4_x (\R) } =0.
\end{equation}
\end{itemize}
\end{theorem}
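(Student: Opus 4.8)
The plan is to run the by-now-standard iterative ``bubble extraction'' scheme for profile decompositions, but carried out relative to the linearized propagator $\mathcal{L}(t,s;z_n)$ rather than a fixed unitary group. After passing to a subsequence we may assume $z_n\to z_\infty$ locally uniformly (possible since $\mathrm{SBC}$ is closed under such limits). The engine of the construction is an \emph{inverse Strichartz lemma}: there is $\theta>0$ such that whenever $\{f_n\}\subset P_cH^1$ is bounded by $A$ and $\varlimsup_n\|\mathcal{L}(\cdot,0;z_n)f_n\|_{L^\infty_t L^4_x}=\varepsilon>0$, then, along a subsequence, there exist a parameter of shifts $\{(y_n,s_n)\}$ and $\varphi\in H^1$ with $\|\varphi\|_{H^1}$ bounded below by a quantity depending only on $\varepsilon$ and $A$, such that $T_{y_n}^{-1}\mathcal{L}(s_n,0;z_n)f_n\rightharpoonup\varphi$ weakly in $H^1$, with $\varphi\in P_cH^1$ when $y_n\equiv 0$. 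To prove this I would pick times $t_n$ with $\|\mathcal{L}(t_n,0;z_n)f_n\|_{L^4_x}\gtrsim\varepsilon$, apply the classical compactness lemma for bounded $H^1$-sequences with non-vanishing $L^4$-norm to $g_n:=\mathcal{L}(t_n,0;z_n)f_n$ (which produces spatial centers $y_n$ and a nonzero weak limit with the desired lower bound), and set $s_n:=t_n$. Passing to a further subsequence makes $(y_n,s_n)$ a parameter of shifts; in the case that $t_n$ stays bounded one reduces to the fixed operator $\mathcal{L}(\cdot,0;z_\infty)$ using $z_n\to z_\infty$ together with the stability estimates of Section~\ref{S:linear}, landing in the $s_n\equiv 0$ regime.

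Given the inverse Strichartz lemma, the profiles are extracted inductively: set $\gamma_n^{-1}:=\psi_n$, and at stage $j$ apply the lemma to $f_n:=\gamma_n^{j-1}$ (stopping if its linearized evolution is already small in $L^\infty_tL^4_x$), obtaining $(y_n^j,s_n^j)$, $\varphi_\infty^j$, the new profile $\lambda_n^j:=\mathcal{L}(0,s_n^j;z_n)P_cT_{y_n^j}\varphi_\infty^j$, and the new remainder $\gamma_n^j:=\gamma_n^{j-1}-\lambda_n^j$. Since $\mathcal{L}(s_n^j,0;z_n)\lambda_n^j=P_cT_{y_n^j}\varphi_\infty^j$ by the group law, and $\mathcal{L}$ is an $H^1$-isomorphism uniformly in $z_n$ (Proposition~\ref{Prop:Strichartz}), removing $\lambda_n^j$ decreases $\|\cdot\|_{H^1}^2$ by at least $c\|\varphi_\infty^j\|_{H^1}^2-o_n(1)$; summing these defects against $\|\psi_n\|_{H^1}\lesssim 1$ forces $\|\varphi_\infty^j\|_{H^1}\to 0$, hence $\varlimsup_n\|\mathcal{L}(\cdot,0;z_n)\gamma_n^J\|_{L^\infty_tL^4_x}\to 0$ as $J\to\infty$, which is \eqref{e:remainder}; the uniform $\mathrm{Stz}^1$ bound on $\mathcal{L}(\cdot,0;z_n)\gamma_n^J$ then follows from the Strichartz estimate and the $H^1$-boundedness of $\psi_n$ and the $\lambda_n^j$. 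Orthogonality of parameters (item~(2)) and the weak-limit characterization (item~(3)) are obtained by the usual contradiction argument: if two parameters failed to separate, the group law $\mathcal{L}(s_n^j,0;z_n)\mathcal{L}(0,s_n^k;z_n)=\mathcal{L}(s_n^j,s_n^k;z_n)$ would identify $\varphi_\infty^j$ with a nontrivial weak limit already removed at stage $k$, a contradiction; and since every earlier profile goes weakly to $0$ under $T_{y_n^j}^{-1}\mathcal{L}(s_n^j,0;z_n)$, the weak limit of that operator applied to $\psi_n$ itself is $\varphi_\infty^j$, giving in particular $\psi_n\rightharpoonup\varphi_\infty^0$ in $H^1$.

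The decoupling statements then follow by expanding the relevant quadratic forms. For $\mathbb{M}$ and $\mathbb{H}_0$ one checks that the cross terms $(\lambda_n^j,\lambda_n^k)_{H^s}$ and $(\lambda_n^j,\gamma_n^J)_{H^s}$ vanish as $n\to\infty$: spatial separation kills the $y$-orthogonal contributions, while the $s$-orthogonal ones are killed by the dispersive decay of $\mathcal{L}(0,s_n;z_n)$ from Lemma~\ref{L:weighted-dispersive}, after the usual reduction (via boundedness of the wave operators) to the free evolution. For $\mathbb{G}$ one expands $\|\psi_n\|_{L^4}^4$; the same dispersive estimate shows any profile with $s^j=\infty$ obeys $\|\lambda_n^j\|_{L^4}\to 0$, which produces the factor ${\bf 1}_{\{s^j=0\}}$ in \eqref{e:Gdecomp2}, while $\|\lambda_n^j\|_{L^4}\to\|\varphi_\infty^j\|_{L^4}$ when $s^j=0$ (using $(T_{y_n^j}\varphi_\infty^j,\phi_0)\to 0$ when $y^j=\infty$). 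The potential decoupling \eqref{e:Vdecomp} is immediate from $\psi_n\rightharpoonup\varphi_\infty^0$ in $H^1$, since $\varphi\mapsto\int V|\varphi|^2$ is weakly continuous on $H^1$: splitting $V\in L^2+L^\infty_0$ and using Rellich compactness on balls disposes of the local part while the decay of $V$ controls the tail uniformly in $n$. Finally \eqref{e:energydecomp} is assembled from \eqref{e:H0decomp}, \eqref{e:Gdecomp2}, \eqref{e:Vdecomp}; this is precisely where the observation that $V$ and $\Phi[z_n]$ become negligible as $|x|\to\infty$ is used, as it identifies the limiting energy of a $|y^j|\to\infty$ profile with the \emph{free} functionals $\mathbb{H}_0$ or $\mathbb{E}_0$ in the definition of $\mathbb{E}(\,\cdot\,;y,s)$.

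I expect the main obstacle to be the inverse Strichartz lemma itself: one must handle the time-dependent, merely $\R$-linear perturbation $B[z_n]$ and the simultaneous presence of space and time shifts. When a parameter escapes to spatial infinity one has to show that the linearized flow genuinely decouples from the potential and ground-state terms, so that the extracted profile ``sees'' only the free dynamics, which requires combining the uniform Strichartz and vanishing estimates of Section~\ref{S:linear} with the decay of $V$ and $\Phi[z_n]$; and the bounded-parameter alternative must be reconciled with the dichotomous ``$s_n\equiv 0$ or $|s_n|\to\infty$'' structure using $z_n\to z_\infty$ and stability. The bookkeeping needed to keep the decoupling identities exact, given that $\mathcal{L}$ is not unitary, is also delicate, though the near-isometry bounds of Proposition~\ref{Prop:Strichartz} make it manageable.
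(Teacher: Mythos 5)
Your overall scheme---iterative bubble extraction driven by an inverse-Strichartz/vanishing lemma, asymptotic orthogonality of the shift parameters, and decoupling via vanishing cross terms---is exactly the route the paper takes; your vanishing lemma is Proposition~\ref{p:vanishing}, proved by the same frequency-localization argument you sketch, and your treatment of \eqref{e:Vdecomp}, \eqref{e:Gdecomp2}, and \eqref{e:energydecomp} matches the paper's.

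The one step where your proposed mechanism would not go through as written is the decoupling of the quadratic forms, i.e.\ the vanishing of $(\lambda_n^j,\lambda_n^k)$ and $(H\lambda_n^j,\lambda_n^k)$ and of the pairings against the remainder, which underlies \eqref{e:Mdecomp}, \eqref{e:H0decomp} and the norm-defect argument you use to force $\|\varphi_\I^j\|_{H^1}\to0$. You propose to handle the non-unitarity of $\mathcal{L}$ with the ``near-isometry bounds of Proposition~\ref{Prop:Strichartz}'' plus a ``reduction via the wave operators to the free evolution.'' But Proposition~\ref{Prop:Strichartz} gives only a two-sided norm \emph{equivalence} with constants, which cannot yield an exact $o(1)$ decoupling; and the wave operators intertwine $e^{itH}$ with $e^{it\Delta}$, not $\mathcal{L}(0,s_n;z_n)$ with anything unitary, so they do not let you move one propagator onto the other profile inside the inner product. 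The missing ingredient---which is precisely the key technical input of the non-radial extension---is the \emph{strong} operator convergence of $T_{y_n}^{-1}e^{is_nH}\mathcal{L}(0,s_n;z_n)P_cT_{y_n}$ on $H^1$ (to the identity when $|y_n|\to\I$), established in Lemma~\ref{l:orthremedy} by a Duhamel expansion around the free/unperturbed flow together with the smallness of $B[z_n]$ and the decay of $V$. One then conjugates the bilinear form by the genuinely unitary $e^{is_n^jH}$, writes it as a pairing of a strongly convergent sequence against a weakly null one, and concludes (Proposition~\ref{p:orthremedy}); the dispersive estimate of Lemma~\ref{L:weighted-dispersive} enters only to verify the weak nullity when the time shifts separate, and does not by itself close the argument.
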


Before beginning the proof of Theorem~\ref{t:lpd}, we collect several preliminary results related to the effect of space translations on the linear propagation of the radiation. These results will be used both for the proof of Theorem~\ref{t:lpd} and in later sections.

\begin{lemma}\label{l:spacetranslation}
Let $\{z_n\}_n \subset \mathrm{SBC}$, and let $\{y_n\}$ be a sequence such that $|y_n|\to\I$ as $n\to\I$.  Then, for any $\varphi \in P_c H^1$ and any $\tau>0$,
\[
	T_{y_n}^{-1} \mathcal{L} (t,0;z_n) P_c T_{y_n}  \varphi \to e^{-it\Delta} \varphi \qtq{in}\mathrm{Stz}^1([-\tau,\tau])\qtq{as}n\to\infty.
\]
\end{lemma}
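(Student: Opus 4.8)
The plan is to translate the evolution by $y_n$, recognize the translated equation as the free Schr\"odinger equation perturbed by a spatially escaping, uniformly small potential, and show that on the fixed interval $[-\tau,\tau]$ this perturbation disappears in the limit $n\to\infty$. Set $v_n(t):=\mathcal{L}(t,0;z_n)P_cT_{y_n}\varphi$ and $w_n(t):=T_{y_n}^{-1}v_n(t)$, so $w_n$ is the quantity in the statement. Conjugating $i\partial_t v_n+Hv_n=B[z_n]v_n$ by $T_{y_n}$, and using $T_{y_n}^{-1}\Delta T_{y_n}=\Delta$ and $T_{y_n}^{-1}V(\cdot)T_{y_n}=V(\cdot+y_n)$, one finds that $w_n$ solves $i\partial_t w_n-\Delta w_n=\mathcal{P}_n(t)w_n$ with $w_n(0)=T_{y_n}^{-1}P_cT_{y_n}\varphi$, where $\mathcal{P}_n(t):=-V(\cdot+y_n)+T_{y_n}^{-1}B[z_n(t)]T_{y_n}$. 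Writing $r_n:=w_n-e^{-it\Delta}\varphi$ and invoking the inhomogeneous Strichartz estimate for $e^{-it\Delta}$ on $[-\tau,\tau]$, it suffices to prove $\|w_n(0)-\varphi\|_{H^1}\to0$ and $\|\mathcal{P}_nw_n\|_{\mathrm{Stz}^{1*}([-\tau,\tau])}\to0$. The first is immediate: $w_n(0)=\varphi-(\varphi,T_{-y_n}\phi_0)\,T_{-y_n}\phi_0$, and $(\varphi,T_{-y_n}\phi_0)\to0$ since $\phi_0$ decays at infinity while $\|T_{-y_n}\phi_0\|_{H^1}$ is bounded.

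For the second limit, note first that Proposition~\ref{Prop:Strichartz} and the spatial translation-invariance of the Strichartz norms give $\|w_n\|_{\mathrm{Stz}^1(\R)}=\|v_n\|_{\mathrm{Stz}^1(\R)}\lesssim\|P_cT_{y_n}\varphi\|_{H^1}\lesssim\|\varphi\|_{H^1}$ uniformly in $n$, hence $\|r_n\|_{\mathrm{Stz}^1([-\tau,\tau])}\lesssim\|\varphi\|_{H^1}$. Undoing the translation inside the norm, $\|\mathcal{P}_nw_n\|_{\mathrm{Stz}^{1*}([-\tau,\tau])}=\|(-V+B[z_n])v_n\|_{\mathrm{Stz}^{1*}([-\tau,\tau])}$. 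Using the decay hypotheses (A1), (A2), (A5) one splits $V=V_R^{\mathrm{loc}}+V_R^{\mathrm{sm}}$, with $V_R^{\mathrm{loc}}$ compactly supported, $\|V_R^{\mathrm{loc}}\|_{H^1_{3/2}}\lesssim\|V\|_{H^1_{3/2}}$, and $\|V_R^{\mathrm{sm}}\|_{H^1_{3/2}}\to0$ as $R\to\infty$; and one splits the ground-state multipliers occurring in $B[z_n(t)]$ --- which, by Lemma~\ref{L:small-solitons} and standard elliptic decay estimates, are, up to rank-one corrections involving the exponentially localized $\phi_0$, multiplication by functions of size $O(\mu_0^2)$ in $H^1_{3/2}$ that decay exponentially, uniformly in $t$ and in $|z|\le\mu_0$ --- into a piece supported in $\{|x|\le\rho\}$ plus a remainder of $H^1_{3/2}$-norm $\lesssim\mu_0^2e^{-c\rho}$. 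By the product bound $\|fg\|_{H^1_{6/5}}\lesssim\|f\|_{H^1_{3/2}}\|g\|_{H^1}$ (and $H^1\hookrightarrow L^6$), the ``small'' pieces contribute at most $\bigl(\omega(R)+\mu_0^2e^{-c\rho}\bigr)\|\varphi\|_{H^1}$, uniformly in $n$, which vanishes as $R,\rho\to\infty$, while the compactly supported pieces are controlled by $\sup_{|t|\le\tau}\|v_n(t)\|_{H^1(B(0,\rho'))}$ on a fixed ball. Thus the whole matter reduces to the estimate
\[
\limsup_{n\to\infty}\ \sup_{|t|\le\tau}\|v_n(t)\|_{H^1(B(0,\rho))}\ \lesssim_{\tau,\mu_0}\ e^{-c\rho}\|\varphi\|_{H^1}\qquad\text{for each }\rho>0,
\]
which, combined with the two splittings and the limits $R,\rho\to\infty$, gives $\|\mathcal{P}_nw_n\|_{\mathrm{Stz}^{1*}([-\tau,\tau])}\to0$.

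To prove this estimate --- the heart of the argument --- I would use the Duhamel formula $v_n(t)=e^{itH}P_cT_{y_n}\varphi-i\int_0^te^{i(t-s)H}B[z_n(s)]v_n(s)\,ds$ together with a smooth spatial cutoff. Since $B[z_n(s)]$ acts, modulo the exponentially localized rank-one corrections, by multiplication by functions of size $O(\mu_0^2)$ concentrated within $O(1)$ of the origin, and $e^{i(t-s)H}$ is bounded on $H^1$, one obtains for any $\rho\gtrsim1$ a Gronwall-type inequality of the form
\[
\|v_n(t)\|_{H^1(B(0,\rho))}\lesssim \sup_{|s|\le\tau}\|e^{isH}P_cT_{y_n}\varphi\|_{H^1(B(0,\rho))}+C\mu_0^2\int_0^{|t|}\Bigl(\|v_n(s)\|_{H^1(B(0,\rho))}+e^{-c\rho}\|\varphi\|_{H^1}\Bigr)ds .
\]
The key input is that the first term on the right tends to $0$ as $n\to\infty$ for each fixed $\rho$: this is the local-decay (tightness) property of the Schr\"odinger propagator $e^{itH}$ on the compact interval $[-\tau,\tau]$ applied to the data $P_cT_{y_n}\varphi$, which escapes to spatial infinity; I would prove it by density --- reducing to Schwartz $\varphi$, for which $e^{-it\Delta}\varphi$ is rapidly spatially decreasing uniformly for $|t|\le\tau$ --- combined with the wave-operator intertwining of $e^{itH}$ and $e^{-it\Delta}$ and the decay of $V$ (using (A4)). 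Gronwall's inequality then yields the displayed estimate with constant $\lesssim_{\tau,\mu_0}1$. I expect this last part to be the main obstacle: controlling the perturbative forcing $\mathcal{P}_nw_n$ in the dual Strichartz norm forces one to orchestrate the compact-time tightness of the linearized flow on escaping data, the smallness $\|z_n\|_{L^\infty}\le\mu_0\ll1$ (needed to close the Gronwall), the decay of both $V$ and the small solitons, and a somewhat delicate functional-analytic bookkeeping of the above splittings against the precise hypotheses (A1), (A2), (A5) on $V$.
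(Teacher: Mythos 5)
Your proposal is correct in outline and shares the paper's skeleton — translate by $T_{y_n}$, write a Duhamel formula against the free propagator, check $T_{y_n}^{-1}P_cT_{y_n}\varphi\to\varphi$, and show that the forcing terms $Vv_n$ and $B[z_n]v_n$ vanish in a dual Strichartz norm because the solution stays concentrated near $y_n$ on the compact time interval — but you implement the crucial localization step by a different mechanism. The paper proves the analogue of your key estimate (smallness of $v_n$ in $H^1$ on the region $\{|x-y_n|\ge r\}$, uniformly on $[-\tau,\tau]$) by writing local flux identities for $\partial_t\int\chi_{n,r}|v_n|^2$ and $\partial_t\int\chi_{n,r}|\partial_jv_n|^2$ and running Gronwall directly on the PDE; this costs them an a priori $H^2$ bound (hence the reduction to Schwartz $\varphi$) but only uses that $|\Phi[z]|^2$ decays at infinity uniformly in $z$, not exponential localization. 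You instead run Gronwall on the Duhamel formula with respect to $e^{itH}$, exploiting that $B[z_n]$ is a localized operator; this trades the $H^2$ bound for (a) quantitative decay of the ground-state multipliers (exponential decay is more than you need — uniform decay suffices and is what the paper actually invokes) and (b) a local-decay statement for $e^{isH}P_cT_{y_n}\varphi$ on a fixed ball. Both routes work and deliver the same conclusion.

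The one step you should not wave at is (b). The intertwining $e^{isH}P_c=We^{-is\Delta}W^{*}$ does not by itself transfer the spatial localization of $e^{-is\Delta}T_{y_n}\varphi$ to $e^{isH}P_cT_{y_n}\varphi$, because $W$ and $W^{*}$ are bounded on Sobolev spaces but are not local operators; a priori they could move mass from near $y_n$ back to the unit ball. The honest proof of (b) is yet another Duhamel/Gronwall, this time against $e^{-is\Delta}$, splitting $V$ into a compactly supported piece (controlled by the local norm you are estimating) plus a piece small in $H^{1}_{3/2}$, with the source term $\sup_{|s|\le\tau}\|e^{-is\Delta}T_{y_n}\varphi\|_{H^1(B(0,R))}\to0$ for Schwartz $\varphi$. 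Once you do that, you might as well have run that single Gronwall for $v_n$ itself, treating both $V$ and $B[z_n]$ as perturbations of $e^{-it\Delta}$ — which collapses your two-layer argument into one and is essentially equivalent to what the paper does. Also note that the smallness of $\mu_0$ is not needed to close the Gronwall on a compact interval; it only affects the constant $e^{C\mu_0^2\tau}$.
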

\begin{proof} By a density argument, we may assume that $\varphi \in \mathcal{S}$.  We fix $\tau>1$ and set
\begin{align*}
v_n(t) & := \mathcal{L} (t,0;z_n) P_c T_{y_n}\varphi
\end{align*}
for $t\in[-\tau,\tau]$. Observe that
\begin{equation}\label{e:spacepf1}
	\norm{v_n}_{L^\I_t H^1} \lesssim \norm{P_c T_{y_n} \varphi}_{H^1} \lesssim \norm{ \varphi}_{H^1}
\end{equation}
uniformly in $n$.  

The main step in the proof will be to establish $H^1$-smallness for $v_n$ far from $x=y_n$.  Combining this with the fact that $|y_n|\to\infty$, we can show that the contribution of the potential terms (i.e. the terms with $V$ and $B[z_n]$) are ultimately negligible.  To this end, we let $\chi_{n,r}(x)=\chi(\tfrac{x-y_n}{r})$ be a smooth function obeying
\[
\chi_{n,r}(x) = \begin{cases} 0 & |x-y_n|<r \\ 1 & |x-y_n|>2r \end{cases}\qtq{and} \|\nabla \chi_{n,r}\|_{L^\infty}\lesssim \tfrac{1}{r}.
\]

%

Using the equation for $v_n$, we have 
\begin{align*}
	\d_t |v_n|^2 = 2\nabla \cdot \Im (\bar v_n \nabla v_n) + 2\Im ( \bar v_n B[z_n] v_n ),
\end{align*}
so that
\begin{equation}\label{e:spacepf2}
	\d_t \int \chi_{n,r}  |v_n|^2\,dx = -2\int \nabla \chi_{n,R} \cdot \Im (\bar v_n \nabla v_n)\,dx
	+2\int \chi_{n,r} \Im ( \bar v_n B[z_n] v_n )\,dx.
\end{equation}

By \eqref{e:spacepf1}, we first have
\[
	\sup_{t\in[ -\tau,\tau]}\abs{\int \nabla \chi_{n,r} \cdot \Im (\bar v_n \nabla v_n)\,dx} \lesssim \tfrac1r \norm{\varphi}_{H^1}^2.
\]

To estimate the second term, we first note that for each $t \in [-\tau,\tau]$,
\[
	R[z_n] v_n = v_n +  c_n \phi_0
\]
for $c_n\in\C$ obeying
\[
	|c_n| \lesssim \norm{v_n D_1\Phi[z_n] }_{L^1} + \norm{v_n D_2\Phi[z_n] }_{L^1}
	\lesssim \( \int \chi_{n,r} |v_n|^2\, dx \)^\frac12 + o_n(1)
\]
for any fixed $r>0$ as $n\to\I$.  As $|y_n|\to\infty$, we obtain 
\begin{align*}
	\norm{   |\Phi[z_n]|^2 R[z_n]v_n  }_{L^2} 
	&{}\le \norm{   |\Phi[z_n]|^2 \chi_{n,r} R[z_n]v_n  }_{L^2} +
	\norm{   |\Phi[z_n]|^2 (1-\chi_{n,r})R[z_n]v_n}_{L^2} \\
	&{}\lesssim \( \int \chi_{n,r} |v_n|^2\,dx \)^\frac12 + |c_n| + \norm{|\Phi[z_n]|^2 (1-\chi_{n,r})}_{L^\I}\\
	&{}\lesssim \( \int \chi_{n,r} |v_n|^2\,dx \)^\frac12 + o_n(1)\qtq{as}n\to\infty,
\end{align*}
which in turn yields 
\[
\norm{  B[z_n(t)]v_n(t) }_{L^2} \lesssim \( \int \chi_{n,r} |v_n|^2\,dx \)^\frac12 + o_n(1)\qtq{as}n\to\infty
\]
for each fixed $r>0$. We thus obtain
\begin{equation}\label{setup-gronwall1}
\biggl| \partial_t \int \chi_{n,r}|v_n(t)|^2\,dx\biggr| \lesssim \tfrac{1}{r}+o_n(1)+\int \chi_{n,r}|v_n(t)|^2\,dx.
\end{equation}

We will estimate $\d_j v_n$ for $j\in\{1,2,3\}$ in a similar fashion, relying on the equation
\begin{equation}\label{pde-for-djv}
i\partial_t \partial_j v_n - \Delta \partial_j v_n = \partial_j[-Vv_n + B[z_n]v_n].
\end{equation}

We first claim that
\begin{equation}\label{jfirstlyclaim}
\|v_n\|_{L_t^\infty H^2} \lesssim \|\varphi\|_{H^2}. 
\end{equation}
To see this, we use the following Duhamel formula for $v_n$:
\[
	v_n (t) = e^{itH} v_n(0) -i \int_0^t e^{i(t-s)H} B[z_n(s)] v_n(s) ds,
\]
which firstly implies
\[
\|Hv_n\|_{L_t^\infty L_x^2\cap L_t^2 L_x^6} \lesssim \|\varphi\|_{H^2} + \|HB[z_n]v_n\|_{L_t^2 L_x^{\frac65}}.
\]
As we may use the definition of $B[\cdot]$ and Proposition~\ref{Prop:Strichartz} to obtain 
\[
\|HB[z_n]v_n\|_{L_t^2 L_x^{\frac65}} \lesssim c\|Hv_n\|_{L_t^2 L_x^6} + \|v_n\|_{\mathrm{Str}^1} \lesssim c\|Hv_n\|_{L_t^2 L_x^6} + \|\varphi\|_{H^1}
\]
for some $0<c\ll 1$ (depending on the upper bound of $|z_n|$), the claim \eqref{jfirstlyclaim} follows. 

We now use \eqref{pde-for-djv} to write
\[
	\d_t | \d_j v_n |^2 = 2 \nabla \cdot \Im (\overline{ \d_j v_n}\nabla \d_j v_n)
	+ 2\Im (\overline{\d_j v_n} (\d_j V) v_n) - 2 \Im ( \overline{\d_j v_n} \d_j (B[z_n] v_n) ),
\]
which implies
\begin{align*}
	\d_t \int \chi_{n,r}  | \d_j v_n |^2\,dx &= -2 \int \nabla \chi_{n,r} \cdot \Im (\overline{ \d_j v_n}\nabla \d_j v_n)\,dx \\
&\quad + 2 \int \chi_{n,r} \Im (\overline{\d_j v_n} (\d_j V) v_n)\,dx \\
&\quad - 2 \int \chi_{n,r}\Im ( \overline{\d_j v_n} \d_j (B[z_n] v_n) )\,dx.
\end{align*}
Arguing as above, we find that
\begin{equation}\label{setup-gronwall2}
\abs{\d_t \int \chi_{n,r}  | \d_j v_n |^2 \,dx }\lesssim \tfrac{1}{r} \norm{\varphi}_{H^2}^2 + \int \chi_{n,r}  | \d_j v_n |^2 \,dx + \int \chi_{n,r}  |  v_n |^2 \,dx + o_n(1).
\end{equation}

If we now define
\[
f_n(t)=\int \chi_{n,r}\bigl[|v_n(t,x)|^2+|\nabla v_n(t,x)|^2\bigr]\,dx,
\]
then \eqref{setup-gronwall1} and \eqref{setup-gronwall2} imply
\[
f_n(t) \le f_n(0) + t[\tfrac{C}{r}+o_n(1)]+C\int_0^t f_n(s)\,ds
\]
for some constant $C>0$ (independent of $t,\tau,r,n$). Thus, Gronwall's inequality implies 
\[
\sup_{t\in [-\tau,\tau]} f_n(t) \le e^{C\tau} \( \int \chi_{n,r} |\varphi|^2\,dx  + \tau[\tfrac{C}{r}+o_n(1)]\).
\]
In particular, given $\eps>0$ and $\tau>0$, if we choose $r=r(\tau,\eps)$ sufficiently large, then we obtain that 
\begin{equation}\label{gronwall-small}
	\sup_{t\in [-\tau,\tau]} \int \chi_{n,r} \bigl[|v_n(t,x)|^2+|\nabla v_n(t,x)|^2\bigr]\,dx \le \eps\qtq{for all}n\qtq{sufficiently large.}
\end{equation}

Using \eqref{gronwall-small} and the fact that $|y_n|\to\infty$, we can therefore deduce that
\begin{equation}\label{e:spacepf3}
\begin{aligned}
\lim_{n\to\I}\norm{ V v_n }_{L^1_t H^1_x ([-\tau,\tau])} &= 0, \\
\lim_{n\to\I}\norm{ B[z_n] v_n }_{L^1_t H^1_x ([-\tau,\tau])} &=0.
\end{aligned}
\end{equation}

We are now in a position to establish the desired convergence.  In particular, our goal is to show that $T_{y_n}^{-1} v_n(t)\to e^{-it\Delta}\varphi$ in $\mathrm{Stz}^1([-\tau,\tau])$.  We begin with the following Duhamel formula:
\begin{equation}\label{e:spacepf4}
T_{y_n}^{-1}v_n(t) = e^{-it \Delta}  T_{y_n}^{-1}P_c T_{y_n}\varphi  - i \int_0^t e^{-i(t-s) \Delta } T_{y_n}^{-1} (-V v_n+ B[z_n]v_n)(s)\,ds.
\end{equation}
We now observe that
\[
	\norm{T_{y_n}^{-1} P_c T_{y_n}\varphi -  \varphi }_{H^1} =  \norm{\phi_0 }_{H^1} |(\phi_0, T_{y_n} \varphi)| \to 0\qtq{as}n\to\infty.
\]
Combining this with Strichartz estimates for the free Schr\"odinger equation and \eqref{e:spacepf3}, we deduce
\[
	\lim_{n\to\infty}\norm{T_{y_n}^{-1}v_n(t) - e^{-it \Delta}\varphi }_{\mathrm{Stz}^1 ([-\tau,\tau])} = 0.
\]
\end{proof}

We use the assumption that  $V\in H_{\frac32}^1$ in the following lemma.

\begin{lemma}\label{l:spacelargetime}
Suppose that $\{z_n\}\subset \mathrm{SBC}$ and $\{y_n\}\subset\R^3$.  For any $\varphi \in H^1$, 
\begin{equation}\label{e:spacelargetimebound12}
\sup_n \|\mathcal{L}(t,0;z_n)P_c T_{y_n}\varphi\|_{X([0,\infty))} \lesssim \|e^{-it\Delta}\varphi\|_{X([0,\infty))},
\end{equation}
where
\[
X\in\{L_t^8 L_x^4\cap L_t^2 H_6^{\frac12}, L_t^2 H_6^1\}. 
\]
Moreover, the implicit constants are independent of the choice of $z_n$.  The analogous bounds hold in the negative time direction.
%
\end{lemma}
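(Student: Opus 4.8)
The plan is to reduce the bound, in two steps, to the corresponding bound for the free evolution $e^{-it\Delta}$, for which the desired inequality is a Strichartz estimate together with the translation invariance $\|e^{-it\Delta}T_{y_n}\varphi\|_{X}=\|e^{-it\Delta}\varphi\|_{X}$.

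\emph{Step 1 (reduction to $e^{itH}P_c$).} Write $u_n(t)=\mathcal{L}(t,0;z_n)P_cT_{y_n}\varphi$, so $u_n(t)\in P_cH^1$, and since $B[z_n]=P_c\tilde B[z_n]R[z_n]$, the Duhamel formula with respect to $e^{itH}P_c$ gives
\[
u_n(t)=e^{itH}P_cT_{y_n}\varphi-i\int_0^te^{i(t-s)H}P_c\tilde B[z_n(s)]R[z_n(s)]u_n(s)\,ds.
\]
Using the Strichartz estimates for $e^{itH}P_c$ (the $z\equiv0$ case of Proposition~\ref{Prop:Strichartz} together with Lemma~\ref{L:NAS}) and the pointwise bound $\|\tilde B[z_n(t)]R[z_n(t)]f\|\lesssim\|\Phi[z_n(t)]\|_{H^1}^2\|f\|\lesssim\mu_0^2\|f\|$ (from Lemma~\ref{L:small-solitons} and $\|z_n\|_{L^\infty}\le\mu_0$), the integral term is bounded by $C\mu_0^2$ times a Strichartz norm of $u_n$, hence absorbed into the left side once $\mu_0$ is small. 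This absorption is legitimate because $\|u_n\|_{\mathrm{Stz}^1}\lesssim\|\varphi\|_{H^1}<\infty$ uniformly in $n$ and $z_n$ by Proposition~\ref{Prop:Strichartz}, so the norms of $u_n$ in question are a priori finite. Thus $\|u_n\|_{X([0,\infty))}\lesssim\|e^{itH}P_cT_{y_n}\varphi\|_{X([0,\infty))}$, uniformly in $n$ and $z_n$.

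\emph{Step 2 (reduction to $e^{-it\Delta}$).} The intertwining identity for the wave operator gives $e^{itH}P_c=We^{-it\Delta}W^{*}$, and $W^{*}\phi_0=0$ forces $W^{*}P_cT_{y_n}\varphi=W^{*}T_{y_n}\varphi$. By (A4), the standard boundedness of $W,W^{*}$ on $L^2$, $H^1$, $H^2$, and complex interpolation, both $W$ and $W^{*}$ are bounded on the spatial spaces $L^4$, $H^{1/2}_6$, $H^1_6$ occurring in $X$. Since $X$ is a space-time norm, boundedness of $W$ on the spatial factor yields $\|e^{itH}P_cT_{y_n}\varphi\|_{X}\lesssim\|e^{-it\Delta}W^{*}T_{y_n}\varphi\|_{X}$. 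Finally, by the two-sided Strichartz estimate for $e^{-it\Delta}$ — together with the fact that for free solutions the $X$-norm over $[0,\infty)$ is comparable to that over $\R$ — one has $\|e^{-it\Delta}h\|_{X([0,\infty))}\sim\|h\|_{Z}$ for a suitable (homogeneous) Sobolev space $Z$ controlling at most one and a half derivatives of $h$ in $L^2$; on such a space $W^{*}$ is bounded as well, and combining this with $\|e^{-it\Delta}T_{y_n}\varphi\|_{X}=\|e^{-it\Delta}\varphi\|_{X}$ gives $\|e^{-it\Delta}W^{*}T_{y_n}\varphi\|_{X([0,\infty))}\lesssim\|e^{-it\Delta}\varphi\|_{X([0,\infty))}$. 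Chaining Steps 1 and 2 proves the lemma; the negative-time case is identical.

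\emph{Main obstacle.} The difficulty is that everything must be done uniformly in $n$ and in $z_n\in\mathrm{SBC}$, which rules out energy estimates and forces one through Strichartz pairs; in particular the absorption in Step 1 must be matched with the correct dual exponents, which is precisely the role of the non-admissible estimate Lemma~\ref{L:NAS} for the $L^8_tL^4_x$ component. The top-regularity case $X=L^2_tH^1_6$ is the most delicate: one must commute a full derivative past the potential terms and the wave operators, and it is here that the hypothesis $V\in H^1_{3/2}$ enters. (The local-in-time analogue of this lemma, which is its companion when $|y_n|\to\infty$, is Lemma~\ref{l:spacetranslation}.)
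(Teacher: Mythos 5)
Your Step 1 is sound, and it mirrors an argument the paper itself runs elsewhere (the Duhamel expansion of $\mathcal{L}(t,0;z)\varphi$ around $e^{itH}$ in the lemma following Proposition~\ref{Prop:Strichartz}). One caveat: the absorption of the $B[z_n]u_n$ term in the $L^8_tL^4_x$ component should go through the admissible dual pair $L^2_tH^{1/2}_{6/5}$ together with the embedding $\mathrm{Stz}^{1/2}\hookrightarrow L^8_tL^4_x$, rather than through Lemma~\ref{L:NAS} with dual $L^{8/3}_tL^{4/3}_x$: on the infinite interval $[0,\infty)$ the weight $|\Phi[z_n]|^2$ is only $L^\infty_t$, not $L^4_t$, so the non-admissible pairing you indicate does not close by itself.

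The genuine gap is in Step 2. You need $\|e^{itH}P_cT_{y_n}\varphi\|_{X([0,\infty))}\lesssim\|e^{-it\Delta}\varphi\|_{X([0,\infty))}$ and derive it from $e^{itH}P_c=We^{-it\Delta}W^*$ plus the claimed equivalence $\|e^{-it\Delta}h\|_{X([0,\infty))}\sim\|h\|_{Z}$. That equivalence is false: the lower bound fails badly on a half-line. Taking $h=e^{-iT\Delta}g$, one has $e^{-it\Delta}h=e^{-i(t+T)\Delta}g$, so $\|e^{-it\Delta}h\|_{X([0,\infty))}=\|e^{-is\Delta}g\|_{X([T,\infty))}\to0$ as $T\to\infty$ while $\|h\|_{H^1}=\|g\|_{H^1}$ is fixed; the same family shows that the $X$-norm of a free solution over $[0,\infty)$ is not comparable to its norm over $\R$. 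Hence no Sobolev space $Z$ can intermediate, and $\|e^{-it\Delta}W^*T_{y_n}\varphi\|_{X([0,\infty))}\lesssim\|e^{-it\Delta}T_{y_n}\varphi\|_{X([0,\infty))}$ does not follow from boundedness of $W^*$ on any fixed space ($W^*-\mathrm{Id}$ can transfer weight between the incoming and outgoing halves of the free evolution). The sharp half-line form of the right-hand side is essential in the applications — for instance in \eqref{e:spacelargetimeboundpf2} and in Lemma~\ref{l:orthremedy} the lemma is applied to data $e^{i\tau\Delta}\varphi$ precisely because its forward free Strichartz norm is small — so this loss cannot be ignored. The fix, which is the paper's proof, is to skip the wave operators and compare $v_n=\mathcal{L}(t,0;z_n)P_cT_{y_n}\varphi$ directly with $P_cT_{y_n}w$, $w=e^{-it\Delta}\varphi$: the difference $d_n$ has $d_n(0)=0$ and solves the linearized equation with forcing $-VT_{y_n}w-e_0\phi_0(T_{y_n}w,\phi_0)+\phi_0(T_{y_n}w,\Delta\phi_0)+B[z_n]P_cT_{y_n}w$, which is linear in $w$ and is estimated term by term in $L^2_tH^{1/2}_{6/5}$ (resp.\ $L^2_tH^1_{6/5}$) over $[0,\infty)$ by $\|w\|_{L^2_tH^{1/2}_6([0,\infty))}$ (resp.\ $\|w\|_{L^2_tH^1_6}$), using $V\in H^1_{3/2}$ from (A5); the uniform inhomogeneous Strichartz estimate then gives $\|d_n\|_X\lesssim\|w\|_{L^2_tH^{1/2}_6([0,\infty))}\le\|w\|_X$. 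No lower Strichartz bound is needed because the entire error is sourced by $w$ on the same half-line.
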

\begin{proof} Throughout the proof, we take space-time norms over $[0,\infty)\times\R^3$.  We set
\[
v_n(t) = \mathcal{L}(t,0;z_n) P_c T_{y_n} \varphi\qtq{and} w(t) = e^{-it \Delta} \varphi,
\]
and we denote
\[
d_n(t) = v_n(t) - P_c T_{y_n} w(t),
\]
which satisfies $d_n(0)=0$ and
\[
(i\d_t - H) d_n - B[z_n] d_n =- (i \d_t - H) P_c T_{y_n} w + B[z_n] (P_c T_{y_n} w).
\]
Using the equation
\begin{align*}
- (i  \d_t - H) P_c T_{y_n} w &= - (i \d_t - H) T_{y_n} w + (i \d_t - H) \phi_0 ( T_{y_n} w  ,\phi_0) \\
&= - V T_{y_n} w  - e_0 \phi_0 ( T_{y_n} w  ,\phi_0) + \phi_0 (T_{y_n}i\d_t w, \phi_0 )\\
&= - V T_{y_n} w  - e_0 \phi_0 ( T_{y_n} w  ,\phi_0) + \phi_0 (T_{y_n} w, \Delta \phi_0 ),
\end{align*}
we obtain the Duhamel formula
\begin{align*}
d_n(t)  = -i \int_0^t & \mathcal{L} (t,s;z_n) \bigl\{- V T_{y_n} w  - e_0 \phi_0 ( T_{y_n} w  ,\phi_0) \\
	&\quad + \phi_0 (T_{y_n} w, \Delta \phi_0 )+B[z_n] (P_c T_{y_n} w)\bigr\}\, ds.
\end{align*}

We now turn to the $H^{1/2}$-regularity estimates in \eqref{e:spacelargetimebound12}.  We first observe that 
\begin{align*}
\norm{ V T_{y_n} w}_{L^2H^{1/2}_{6/5} } &\lesssim \norm{V}_{H^{1/2}_{3/2}} \norm{w}_{L^2 H^{1/2}_{6}}\lesssim \norm{  w }_{L^2 H^{1/2}_6},\\
\norm{ e_0 \phi_0 ( T_{y_n} w  ,\phi_0) }_{L^2H^{1/2}_{6/5}} & \lesssim \norm{\phi_0}_{H^{1/2}_{6/5}} \norm{\phi_0}_{L^{6/5}}\norm{w}_{L^2L^6} \lesssim \norm{  w }_{L^2 H^{1/2}_6}, \\
\norm{ \phi_0 ( T_{y_n} w, \Delta\phi_0) }_{L^2H^{1/2}_{6/5}} &  \lesssim \norm{  w }_{L^2 H^{1/2}_6},\\
\norm{ B[z_n] (P_c T_{y_n} w) }_{L^2H^{1/2}_{6/5}} & \lesssim \norm{ \Phi[z_n] }_{L^\I H^1}^2 \norm{ P_c T_{y_n} w }_{L^2 H^{1/2}_6} \lesssim \norm{  w }_{L^2 H^{1/2}_6}.
\end{align*}
In particular, by Sobolev and Strichartz, we have
\[
\norm{d_n}_{L^8L^4 \cap L^2 H^{1/2}_6} \lesssim \norm{d_n}_{\mathrm{Stz}^{1/2}} \lesssim \norm{w}_{L^2 H^{1/2}_6}.
\]
Noting that 
\[
\norm{(1-P_c)T_{y_n} w}_{ L^8L^4 \cap L^2 H^{1/2}_6 }  \lesssim \norm{ w}_{ L^8L^4 \cap L^2 H^{1/2}_6},
\]
we thus obtain
\[
\norm{v_n}_{L^8L^4 \cap L^2 H^{1/2}_6} \lesssim \norm{w}_{L^8L^4 \cap L^2 H^{1/2}_6},
\]
which yields the first bound in \eqref{e:spacelargetimebound12}.  The other estimate is shown similarly. \end{proof} 
%

The next result pertains to the fact that in general, the operator $\mathcal{L}$ is not unitary.  Nonetheless, in the regime $|y_n|\to\infty$, we can recover the isometry property. 

\begin{lemma}
Let $\{z_n\}_n \subset \mathrm{SBC}$, and suppose $|y_n|\to\infty$. Then for any $\varphi \in H^1$, we have the following:
\begin{align}\label{e:spacenormflat1}
	\lim_{n\to\I} \sup_{t \in \R } \abs{ \norm{\mathcal{L}(t,0;z_n) P_c T_{y_n} \varphi}_{L^2} - \norm{\varphi}_{L^2}  }
	&=0,\\
\label{e:spacenormflat2}
	\lim_{n\to\I} \sup_{t \in \R } \abs{ \norm{\nabla \mathcal{L}(t,0;z_n) P_c T_{y_n} \varphi}_{L^2} - \norm{\nabla \varphi}_{L^2}  }
	&=0.
\end{align}\end{lemma}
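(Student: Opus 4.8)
The plan is to exploit the two tools already established in this section: the short-time convergence from Lemma~\ref{l:spacetranslation}, and the uniform space-time bounds of Lemma~\ref{l:spacelargetime}. Fix $\varphi \in H^1$ and write $v_n(t) = \mathcal{L}(t,0;z_n)P_c T_{y_n}\varphi$. Since $T_{y_n}$ is unitary on $L^2$ and on $\dot H^1$, and since $\norm{P_c T_{y_n}\varphi - T_{y_n}\varphi}_{H^1} = \norm{\phi_0}_{H^1}|(\phi_0,T_{y_n}\varphi)| \to 0$ as $n\to\infty$ (by weak convergence $T_{y_n}\varphi \rightharpoonup 0$, since $|y_n|\to\infty$), it suffices to prove
\[
\lim_{n\to\infty}\sup_{t\in\R}\bigl|\,\norm{v_n(t)}_{L^2} - \norm{\varphi}_{L^2}\,\bigr| = 0
\qtq{and}
\lim_{n\to\infty}\sup_{t\in\R}\bigl|\,\norm{\nabla v_n(t)}_{L^2} - \norm{\nabla \varphi}_{L^2}\,\bigr| = 0.
\]

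First I would handle the $L^2$ statement. The quantity $\norm{v_n(t)}_{L^2}^2$ is differentiable in $t$, and from the equation $i\partial_t v_n + H v_n = B[z_n]v_n$ one computes $\partial_t \norm{v_n(t)}_{L^2}^2 = 2\Im(\bar v_n, B[z_n]v_n)$, since $H$ is self-adjoint (the potential $V$ contributes nothing to the time derivative of the $L^2$-norm). Integrating, $\bigl|\norm{v_n(t)}_{L^2}^2 - \norm{v_n(0)}_{L^2}^2\bigr| \le 2\int_\R |(\bar v_n, B[z_n]v_n)|\,ds$. The integrand is controlled by $\norm{|\Phi[z_n]|^2 \chi_{n,r} v_n}$-type terms plus a piece supported away from $x=y_n$ where $|\Phi[z_n]|^2$ is negligible because $|y_n|\to\infty$ — exactly the Gronwall-type mechanism already worked out in the proof of Lemma~\ref{l:spacetranslation}. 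Concretely: on any fixed window $[-\tau,\tau]$, the estimate \eqref{gronwall-small} gives smallness of $v_n$ (hence of $B[z_n]v_n$) near $x=y_n$ for large $n$, so the local-in-time contribution is $o_n(1)$; and on the complement $|t|>\tau$, Lemma~\ref{l:spacelargetime} together with the uniform bound $\norm{e^{-it\Delta}\varphi}_{L^2 H^1_6(\R)}<\infty$ shows $\norm{v_n}_{L^2 H^1_6(\{|t|>\tau\})}$ is uniformly small once $\tau$ is large (because $\norm{e^{-it\Delta}\varphi}_{L^2H^1_6([\tau,\infty))}\to 0$ as $\tau\to\infty$), and then $\int_{|t|>\tau}|(\bar v_n,B[z_n]v_n)|\,ds \lesssim \norm{z_n}_{L^\infty}^2 \norm{v_n}_{L^2 L^6(\{|t|>\tau\})}^2$ is small. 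Choosing $\tau$ large first and then $n$ large gives $\sup_t |\norm{v_n(t)}_{L^2}^2 - \norm{v_n(0)}_{L^2}^2| = o_n(1)$; combined with $\norm{v_n(0)}_{L^2} = \norm{P_c T_{y_n}\varphi}_{L^2} \to \norm{\varphi}_{L^2}$, this yields \eqref{e:spacenormflat1}.

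For \eqref{e:spacenormflat2} I would argue similarly but at the level of $\nabla v_n$, using the equation \eqref{pde-for-djv} for $\partial_j v_n$. Here $\partial_t \norm{\nabla v_n(t)}_{L^2}^2$ picks up two kinds of terms: a commutator term $\int \Im(\overline{\partial_j v_n}\,(\partial_j V)\,v_n)$ from the potential, and a term $\int \Im(\overline{\partial_j v_n}\,\partial_j(B[z_n]v_n))$ from the ground-state potential. Both are spatially localized — near $\operatorname{supp}(\nabla V)$ is effectively near the origin (using $V\in H^1_{3/2}$, so $\nabla V \in L^{3/2}$, which is where the hypothesis $V\in H^1_{3/2}$ enters, as flagged before Lemma~\ref{l:spacelargetime}), while the $B[z_n]$ term is localized near $x=y_n$; in both regions the \emph{other} factor ($v_n$ near the origin, or $B[z_n]$-related quantities near $y_n$) is small by the Gronwall estimate \eqref{gronwall-small} and \eqref{e:spacepf3}. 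The $H^2$ bound \eqref{jfirstlyclaim} supplies the uniform regularity needed to make sense of these integrals. Splitting into $|t|\le\tau$ and $|t|>\tau$ as before, using Lemma~\ref{l:spacelargetime} with $X = L^2_t H^1_6$ on the tail, gives $\sup_t|\norm{\nabla v_n(t)}_{L^2}^2 - \norm{\nabla v_n(0)}_{L^2}^2| = o_n(1)$, and $\norm{\nabla v_n(0)}_{L^2} = \norm{\nabla P_c T_{y_n}\varphi}_{L^2}\to \norm{\nabla\varphi}_{L^2}$ finishes the proof.

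\textbf{Main obstacle.} The delicate point is uniformity in $t\in\R$ of the supremum: one cannot simply invoke a conservation law because $\mathcal{L}$ is not unitary (the $B[z_n]$ term genuinely breaks isometry), so the argument must quantitatively trade off the time scale $\tau$ against $n$. The key is that the non-isometry defect is driven entirely by $B[z_n]$, which is concentrated near $x=y_n$, and $|y_n|\to\infty$ pushes this concentration region away from any fixed compact set while the global smallness of $v_n$ there (via the Gronwall argument of Lemma~\ref{l:spacetranslation}) and the uniform Strichartz tail bound (Lemma~\ref{l:spacelargetime}) control the contribution on all of $\R$. Making the two-parameter limit ($\tau\to\infty$, then $n\to\infty$) rigorous and checking that the $H^2$-bound and the $V\in H^1_{3/2}$ hypothesis suffice for the gradient estimate is the technical heart.
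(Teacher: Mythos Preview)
Your strategy matches the paper's: split into a compact window $[-\tau,\tau]$ (handled by Lemma~\ref{l:spacetranslation}) and a tail $|t|>\tau$ (handled by Lemma~\ref{l:spacelargetime}), then send $\tau\to\infty$, $n\to\infty$. The execution differs slightly: on the tail you integrate the energy identity $\partial_t\|v_n\|_{\dot H^k}^2$, whereas the paper writes Duhamel against the free propagator $e^{-it\Delta}$ and uses Strichartz to bound $\|v_n(t)-e^{i(t-\tau)\Delta}v_n(\tau)\|_{L^\infty\dot H^k([\tau,\infty))}\lesssim\|v_n\|_{L^2H^1_6([\tau,\infty))}$. This treats $k=0,1$ in one stroke and avoids differentiating the equation (hence the $H^2$ bound \eqref{jfirstlyclaim} and the density argument you would otherwise need for general $\varphi\in H^1$). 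On $[-\tau,\tau]$ the paper simply cites the conclusion of Lemma~\ref{l:spacetranslation}, which already gives $\|v_n(s)\|_{\dot H^k}\to\|\varphi\|_{\dot H^k}$ uniformly; your detour through \eqref{gronwall-small}--\eqref{e:spacepf3} is redundant (and note your parenthetical ``near $x=y_n$'' should read ``away from $x=y_n$'': \eqref{gronwall-small} says $v_n$ is small where $|x-y_n|$ is large, which is where $\Phi[z_n]$ lives).

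The one genuine gap is your tail bound: Lemma~\ref{l:spacelargetime} only gives $\|v_n\|_{L^2H^1_6([0,\infty))}\lesssim\|e^{-it\Delta}\varphi\|_{L^2H^1_6([0,\infty))}$, not smallness on $[\tau,\infty)$, and there is no a~priori reason the tails should line up. The paper bridges this (see \eqref{e:spacelargetimeboundpf2}) by writing $v_n(t)=\mathcal{L}(t-\tau,0;z_n(\cdot+\tau))v_n(\tau)$, using Lemma~\ref{l:spacetranslation} to replace $v_n(\tau)$ by $T_{y_n}e^{-i\tau\Delta}\varphi$ up to $o_n(1)$ in $H^1$, and then re-applying Lemma~\ref{l:spacelargetime} with data $e^{-i\tau\Delta}\varphi$ and the shifted sequence $z_n(\cdot+\tau)\in\mathrm{SBC}$. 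This yields $\|v_n\|_{L^2H^1_6([\tau,\infty))}\lesssim\|e^{-it\Delta}\varphi\|_{L^2H^1_6([\tau,\infty))}+o_n(1)$, which is the ingredient both your energy-identity route and the paper's Duhamel route need to close.
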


\begin{proof} We set 
\[
v_n(t) =\mathcal{L}(t,0;z_n) P_c T_{y_n} \varphi
\]
and let $\eps>0$. As $e^{-it \Delta} \varphi \in L^2 H^1_6(\R\times\R^3)$ (by Strichartz), there exists $\tau>0$ such that
\begin{equation}\label{e:tau-dispersed}
\norm{ e^{-it\Delta} e^{i\tau\Delta}\varphi }_{L^2 H^1_6 ([0,\I))}= \norm{ e^{-it\Delta} \varphi }_{L^2 H^1_6 ([\tau,\I))} \le \eps.
\end{equation}

Using Lemma~\ref{L:properties-of-L}, the triangle inequality, the uniform Strichartz estimate (Proposition~\ref{Prop:Strichartz}), Lemma~\ref{l:spacelargetime}, \eqref{e:tau-dispersed}, and Lemma~\ref{l:spacetranslation} (with the $L_t^\infty H_x^1([-\tau,\tau])$ norm), we first estimate for all large $n$:
\begin{equation}\label{e:spacelargetimeboundpf2}
\begin{aligned}
\|v_n\|_{L^2 H_6^1([\tau,\infty))}& = \|\mathcal{L}(t,0,z_n) P_c T_{y_n}\varphi\|_{L^2 H_6^1([\tau,\infty)} \\
&= \|\mathcal{L}(t+\tau,0,z_n)P_cT_{y_n}\varphi\|_{L^2 H_6^1([0,\infty)} \\ 
 &= \| \mathcal{L}(t,0;z_n(\cdot+\tau))\mathcal{L}(\tau,0;z_n) P_c T_{y_n}\varphi\|_{L^2 H_6^1([0,\infty))} \\
& \lesssim \|\mathcal{L}(t,0;z_n(\cdot+\tau))P_c T_{y_n}e^{i\tau\Delta}\phi\|_{L^2 H_6^1([0,\infty))} \\
&\quad + \| \mathcal{L}(\tau,0;z_n)P_c T_{y_n}\varphi - T_{y_n} e^{i\tau\Delta}\varphi \|_{H^1} \\
& \lesssim \|e^{-it\Delta}e^{i\tau\Delta}\phi\|_{L^2 H_6^1([0,\infty))} + \eps \lesssim \eps.
\end{aligned}
\end{equation}

Next, we observe that we have the following Duhamel formula for $v_n$:
\[
v_n(t) = e^{i(t-\tau) \Delta }v_n(\tau) - i \int_\tau^t e^{-i(t-s)\Delta} (- V v_n + B[z_n] v_n)\,ds.
\]
Thus for $k\in\{0,1\}$ Strichartz and \eqref{e:spacelargetimeboundpf2} imply
\begin{align*}
	\|& v_n(t) - e^{i(t-\tau) \Delta }v_n(\tau) \|_{L^\I \dot{H}^k([\tau,\I))} 
	\\&\lesssim \norm{ V v_n }_{L^2 {H}^1_{6/5}([\tau,\I))} +  \norm{ B[z_n] v_n }_{L^2 H^1_{6/5}([\tau,\I))} \lesssim \norm{v_n}_{L^2H^1_6([\tau,\I))}\lesssim \eps.
\end{align*}
As $e^{it\Delta}$ is unitary on $\dot{H}^k$, this implies
\[
\sup_{t\ge \tau} \abs{ \norm{v_n(t)}_{\dot{H}^k} - \norm{v_n(\tau)}_{\dot{H}^k}} \lesssim \eps
\]
for all large $n$.  Thus the result follows provided we can establish
\[
\sup_{s\in[0,\tau]} \abs{ \|v_n(s)\|_{\dot H^k} - \|\varphi\|_{\dot H^k} } \lesssim \eps
\]
for all large $n$.  In fact, this follows from Lemma~\ref{l:spacetranslation} and the unitary property of the free propagator and space translation; indeed, we have
\begin{align*}
\| v_n(s) - T_{y_n} e^{-is\Delta}\varphi\|_{L_t^\infty H^1([0,\tau])}\to 0 \qtq{as}n\to\infty. 
\end{align*}
As similar estimates hold in the negative time direction, we complete the proof. \end{proof}

We next prove a result that collects the ways in which we may consider two parameters of shifts to be `orthogonal'.

\begin{lemma}[Asymptotic orthogonality]\label{l:orth}
Let $\{  z_n\}_n \subset \mathrm{SBC}$, and let  $\{ (y_n,s_n) \}_n$ and $\{ (\tilde{y}_n,\tilde{s}_n) \}_n$ be two parameters of shifts.

The following are equivalent:
\begin{enumerate}
\item $|s_n-\tilde{s}_n | + |y_n - \tilde{y}_n|\to \I$
as $n\to\I$;
\item
For any $\varphi \in H^1$
\[
T_{\tilde{y}_n}^{-1} \mathcal{L}(\tilde{s}_n,s_n ;z_n) P_c T_{y_n} \varphi  \rightharpoonup 0 \qtq{as}n\to\infty
\]
weakly in $H^1$ up to a subsequence.
\item For any subsequence $\{n_k\}_k$, there exists a bounded sequence $\{ u_k \}_k \subset P_c  H^1$ and a subsequence in $k$ such that
\[
T_{y_{n_k} }^{-1} \mathcal{L}(s_{n_k},0 ;z_{n_k})  u_k  \rightharpoonup \psi \neq 0\qtq{and} T_{\tilde{y}_{n_k} }^{-1}  \mathcal{L}(\tilde{s}_{n_k},0 ;z_{n_k})  u_k  \rightharpoonup 0
\]
weakly in $H^1$ as $k\to\infty$ along this subsequence.
\end{enumerate}
\end{lemma}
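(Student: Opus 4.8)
The plan is to prove the three-way equivalence $(1)\Leftrightarrow(2)\Leftrightarrow(3)$ by establishing the cycle $(1)\Rightarrow(2)\Rightarrow(3)\Rightarrow(1)$, relying throughout on the approximation results just proved (Lemmas~\ref{l:spacetranslation} and \ref{l:spacelargetime}), which say that in the regime $|y_n|\to\infty$ the operator $\mathcal{L}(t,0;z_n)P_cT_{y_n}$ behaves asymptotically like $T_{y_n}e^{-it\Delta}$. The key conceptual point is that a parameter of shifts only allows $y_n\equiv0$ or $|y_n|\to\infty$, and $s_n\equiv0$ or $|s_n|\to\infty$, so after passing to a subsequence we can reduce to finitely many cases according to the limiting behavior of $y_n,\tilde y_n,s_n,\tilde s_n$, and in each case the nonlinear/potential corrections are negligible.

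For $(1)\Rightarrow(2)$: fix $\varphi\in H^1$; by density assume $\varphi\in\mathcal{S}$. Using the cocycle property $\mathcal{L}(\tilde s_n,s_n;z_n)=\mathcal{L}(\tilde s_n,0;z_n)\mathcal{L}(0,s_n;z_n)$ together with Lemmas~\ref{l:spacetranslation}--\ref{l:spacelargetime}, one approximates $T_{\tilde y_n}^{-1}\mathcal{L}(\tilde s_n,s_n;z_n)P_cT_{y_n}\varphi$ (up to errors tending to $0$ in $H^1$ on compact time intervals, or small in a Strichartz norm on long time intervals) by $T_{\tilde y_n}^{-1}T_{y_n}e^{-i(\tilde s_n-s_n)\Delta}\varphi$. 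If $|y_n-\tilde y_n|\to\infty$, then $T_{\tilde y_n}^{-1}T_{y_n}=T_{y_n-\tilde y_n}$ translates mass off to infinity, giving weak convergence to $0$; if $y_n,\tilde y_n$ stay bounded (so in fact one or both are $\equiv 0$ and $|s_n-\tilde s_n|\to\infty$), then $e^{-i(\tilde s_n-s_n)\Delta}\varphi\rightharpoonup0$ in $H^1$ by dispersion of the free propagator. Passing to a subsequence handles the mixed cases. The implication $(2)\Rightarrow(3)$ is the easy direction: given a subsequence, take $u_k:=P_cT_{y_{n_k}}\varphi$ for any fixed nonzero $\varphi\in P_cH^1$ for which the weak limit $\psi:=\operatorname*{w-lim}T_{y_{n_k}}^{-1}\mathcal{L}(s_{n_k},0;z_{n_k})u_k$ is nonzero (such $\psi$ equals $e^{-is_{n_k}\Delta}$-type limit of $\varphi$ by Lemma~\ref{l:spacetranslation} when $s_{n_k}\equiv0$, $\psi=\varphi\ne0$; when $|s_{n_k}|\to\infty$ one may instead simply choose $u_k=\mathcal{L}(0,s_{n_k};z_{n_k})P_cT_{y_{n_k}}\varphi$ so that the first limit is $\varphi\ne0$), and then $(2)$ directly yields the second weak limit $T_{\tilde y_{n_k}}^{-1}\mathcal{L}(\tilde s_{n_k},0;z_{n_k})u_k=T_{\tilde y_{n_k}}^{-1}\mathcal{L}(\tilde s_{n_k},s_{n_k};z_{n_k})\bigl(\mathcal{L}(s_{n_k},0;z_{n_k})u_k\bigr)\rightharpoonup0$.

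For the closing implication $(3)\Rightarrow(1)$, argue by contraposition: suppose $(1)$ fails, i.e. along some subsequence $|s_n-\tilde s_n|+|y_n-\tilde y_n|$ stays bounded. Since each of $y_n,\tilde y_n$ is either $0$ or diverges, boundedness of $|y_n-\tilde y_n|$ forces $y_n\equiv0\equiv\tilde y_n$ along the subsequence (or, more generally, forces $y_n,\tilde y_n$ to differ by a bounded amount while both being of the same type — but the definition of parameter of shifts then forces $y_n=\tilde y_n=0$); similarly $s_n,\tilde s_n$ differ by a bounded amount, hence are of the same type, and being bounded forces $s_n\equiv0\equiv\tilde s_n$. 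Thus $\mathcal{L}(s_n,0;z_n)=\mathcal{L}(\tilde s_n,0;z_n)=\mathrm{Id}$ and $T_{y_n}=T_{\tilde y_n}=\mathrm{Id}$, so for any bounded $\{u_k\}\subset P_cH^1$ the two sequences in $(3)$ are identical, and no choice of $u_k$ can make one converge weakly to a nonzero limit and the other to $0$; hence $(3)$ fails. This completes the cycle.

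The main obstacle is the first implication $(1)\Rightarrow(2)$, and specifically handling the case where the $s_n$'s diverge: here one cannot work on a fixed compact time interval, so one must combine the long-time Strichartz comparison of Lemma~\ref{l:spacelargetime} (to control the tails of the Duhamel term involving $V$ and $B[z_n]$ on $[\tau,\infty)$) with the compact-interval convergence of Lemma~\ref{l:spacetranslation} on $[-\tau,\tau]$, and then let $\tau\to\infty$; testing against a fixed $H^{-1}$ functional and using that $e^{-is_n\Delta}\varphi\rightharpoonup 0$ by the dispersive estimate, one gets the claimed weak convergence. The bookkeeping of which of the finitely many cases occurs — and in particular extracting a single subsequence along which $y_n-\tilde y_n$, $\operatorname{sgn}$ of $s_n$, etc., all stabilize — is routine but must be done carefully, since $(2)$ only asserts weak convergence \emph{up to a subsequence}.
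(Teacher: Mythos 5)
Your overall architecture (the cycle $(1)\Rightarrow(2)\Rightarrow(3)\Rightarrow(1)$, reduction to finitely many cases of the shift parameters, and the choice of $u_k$ in $(2)\Rightarrow(3)$) matches the paper, and $(2)\Rightarrow(3)$ is fine. However, there are two genuine gaps. First, in $(1)\Rightarrow(2)$ in the case $|s_n-\tilde{s}_n|\to\infty$: your plan to combine the compact-interval convergence of Lemma~\ref{l:spacetranslation} with the long-time Strichartz comparison of Lemma~\ref{l:spacelargetime} and ``let $\tau\to\infty$'' does not produce weak vanishing at the single time $\tilde{s}_n$. The Duhamel correction $\int_{s_n}^{\tilde s_n}e^{-i(\tilde s_n-r)\Delta}(-V+B[z_n])v_n(r)\,dr$ evaluated at $t=\tilde s_n$ is only \emph{bounded} in $H^1$ (an $L^2_tH^1_6$ bound on $v_n$ controls most times, not a prescribed divergent time), and its weak limit is not controlled by either lemma; indeed, controlling exactly such a term at a divergent time is the content of the separate and rather delicate Lemma~\ref{l:orthremedy}. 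The paper avoids this entirely by applying the pointwise-in-time dispersive estimate for $\mathcal{L}$ itself (Lemma~\ref{L:weighted-dispersive}) to Schwartz data, giving $\|\mathcal{L}(\tilde s_n,s_n;z_n)P_cT_{y_n}\varphi\|_{L^4+L^8}\lesssim|\tilde s_n-s_n|^{-3/4}\to0$, whence weak $H^1$ vanishing. Relatedly, your statement that ``$T_{y_n-\tilde y_n}$ translates mass off to infinity, giving weak convergence to $0$'' is only valid because the translated object converges \emph{strongly} to a fixed profile (Lemma~\ref{l:missing}); translating a merely bounded sequence to infinity does not imply weak vanishing.

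Second, and more seriously, your $(3)\Rightarrow(1)$ is based on a false reduction: boundedness of $|y_n-\tilde y_n|+|s_n-\tilde s_n|$ does \emph{not} force $y_n\equiv\tilde y_n\equiv0$ and $s_n\equiv\tilde s_n\equiv0$. A parameter of shifts allows $|y_n|\to\infty$ and $|\tilde y_n|\to\infty$ with $y_n-\tilde y_n$ bounded (e.g.\ $y_n=(n,0,0)$, $\tilde y_n=(n+1,0,0)$), and likewise both time sequences may diverge while staying a bounded distance apart. In those cases the two sequences in $(3)$ are not identical, and your argument collapses. The paper handles precisely this via Lemma~\ref{l:missing}: along a subsequence the transfer operator $D_k=T_{\tilde y_{n_k}}^{-1}\mathcal{L}(\tilde s_{n_k},s_{n_k};z_{n_k})P_cT_{y_{n_k}}$ converges strongly to a continuous \emph{invertible} operator $D_\infty$, so testing the two weak limits in $(3)$ against $\varphi\in\mathcal{S}$ yields $(D_\infty\psi,\varphi)=0$ for all $\varphi$, hence $\psi=0$, a contradiction. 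You need this invertibility argument (or an equivalent) to close the implication.
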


We first need the following: 
\begin{lemma}\label{l:missing}
Let $\{  z_n\}_n \subset \mathrm{SBC}$ and suppose that two parameters of shifts $\{ (y_n,s_n) \}_n$ and $\{ (\tilde{y}_n,\tilde{s}_n) \}_n$ satisfy
\[
\sup_n( |s_n-\tilde{s_n}| + |y_n-\tilde{y}_n| ) < \I.
\]
Then, up to a subsequence, the sequence of operators
\[
T_{\tilde{y}_n}^{-1} \mathcal{L} (\tilde{s}_n,s_n;z_n) P_c T_{y_n}  
\]
converges strongly to a continuous invertible operator on $H^1$.
\end{lemma}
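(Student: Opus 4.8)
The plan is to reduce everything to a single convergent subsequence and then compare $\mathcal{L}(\tilde s_n,s_n;z_n)$ with either the free Schr\"odinger flow or a fixed limiting linearized flow, according to whether the spatial shifts vanish or escape. First I would set $a_n:=\tilde y_n-y_n$ and $b_n:=\tilde s_n-s_n$; the hypothesis forces both to be bounded, so after passing to a subsequence we may assume $a_n\to a_\infty\in\R^3$, $b_n\to b_\infty\in\R$, and — by Arzel\'a--Ascoli (cf.\ the remark following the definition of $\mathrm{SBC}$) — that $z_n(\cdot+s_n)\to\zeta_\infty$ locally uniformly for some $\zeta_\infty\in\mathrm{SBC}$. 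Writing $\mathcal{O}_n:=T_{\tilde y_n}^{-1}\mathcal{L}(\tilde s_n,s_n;z_n)P_cT_{y_n}$ for the operator in the statement, the time-translation covariance of Lemma~\ref{L:properties-of-L} gives $\mathcal{O}_n=T_{\tilde y_n}^{-1}\mathcal{L}(b_n,0;z_n(\cdot+s_n))P_cT_{y_n}$. Since translations are $H^1$-isometries, $P_c$ is bounded, and $\|\mathcal{L}(t,0;z)\psi\|_{H^1}\sim\|\psi\|_{H^1}$ uniformly in $t$ and in $z\in\mathrm{SBC}$ (Proposition~\ref{Prop:Strichartz}), the $\mathcal{O}_n$ are uniformly bounded on $H^1$, so it suffices to identify the strong limit. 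Because each of $y_n,\tilde y_n$ is a parameter of shifts and $|y_n-\tilde y_n|$ is bounded, either $y_n\equiv\tilde y_n\equiv0$ (hence $a_n\equiv0$) or $|y_n|,|\tilde y_n|\to\infty$, and I would treat these two cases separately.

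In the escaping case I would invoke Lemma~\ref{l:spacetranslation}: for $\psi\in P_cH^1$ it gives $T_{y_n}^{-1}\mathcal{L}(t,0;z_n(\cdot+s_n))P_cT_{y_n}\psi\to e^{-it\Delta}\psi$ in $\mathrm{Stz}^1([-\tau,\tau])$, hence in $L^\infty_tH^1([-\tau,\tau])$; choosing $\tau>|b_\infty|$ and evaluating at $t=b_n\to b_\infty$ gives convergence to $e^{-ib_\infty\Delta}\psi$ in $H^1$. Extending to arbitrary $\varphi\in H^1$ via the decomposition $\varphi=P_c\varphi+\Re(\varphi,\phi_0)\phi_0+\Im(\varphi,\phi_0)(i\phi_0)$ (legitimate since $\mathcal{L}$ is additive), then writing $T_{\tilde y_n}^{-1}=T_{-a_n}T_{y_n}^{-1}$ and using strong continuity of $a\mapsto T_a$ with $a_n\to a_\infty$, I would conclude $\mathcal{O}_n\varphi\to T_{-a_\infty}e^{-ib_\infty\Delta}\varphi$. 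The limit $T_{-a_\infty}e^{-ib_\infty\Delta}$ is unitary, in particular continuous and invertible on $H^1$.

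In the vanishing case $\mathcal{O}_n=\mathcal{L}(b_n,0;\zeta_n)P_c$ with $\zeta_n:=z_n(\cdot+s_n)\to\zeta_\infty$ locally uniformly, and the potential terms no longer become negligible, so the comparison must be with $\mathcal{L}(b_\infty,0;\zeta_\infty)$ rather than the free flow. I would prove $\mathcal{L}(b_n,0;\zeta_n)\psi\to\mathcal{L}(b_\infty,0;\zeta_\infty)\psi$ in $H^1$ for $\psi\in P_cH^1$ by a continuous-dependence estimate: fixing $\tau>|b_\infty|$ and setting $d_n:=\mathcal{L}(\cdot,0;\zeta_n)\psi-\mathcal{L}(\cdot,0;\zeta_\infty)\psi$, which solves $i\partial_td_n+Hd_n=B[\zeta_\infty]d_n+(B[\zeta_n]-B[\zeta_\infty])\mathcal{L}(\cdot,0;\zeta_n)\psi$ with $d_n(0)=0$, the Duhamel formula with respect to the flow $\mathcal{L}(\cdot,\cdot;\zeta_\infty)$ and the uniform Strichartz estimates bound $\|d_n\|_{\mathrm{Stz}^1([-\tau,\tau])}$ by $\tau\,\big(\sup_{|t|\le\tau}\|B[\zeta_n(t)]-B[\zeta_\infty(t)]\|_{H^1\to H^1}\big)\|\mathcal{L}(\cdot,0;\zeta_n)\psi\|_{L^\infty_tH^1([-\tau,\tau])}$, and the right side tends to $0$ because $z\mapsto B[z]$ is continuous in operator norm on $H^1$ (it depends on $z$ only through $\Phi[z]\in H^1$ and $\rho(z,\cdot)$, continuous by Lemma~\ref{L:small-solitons} and the construction of $R[\cdot]$), $\zeta_n\to\zeta_\infty$ uniformly on $[-\tau,\tau]$, and $\|\mathcal{L}(\cdot,0;\zeta_n)\psi\|_{L^\infty_tH^1}\lesssim\|\psi\|_{H^1}$ uniformly. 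Evaluating at $t=b_n$ and using $H^1$-continuity of $t\mapsto\mathcal{L}(t,0;\zeta_\infty)\psi$ finishes the claim, so $\mathcal{O}_n\to\mathcal{L}(b_\infty,0;\zeta_\infty)P_c$ strongly; by the group law $\mathcal{L}(b_\infty,0;\zeta_\infty)\mathcal{L}(0,b_\infty;\zeta_\infty)=\mathrm{Id}$ this limit is continuous and invertible on $P_cH^1$, its natural range.

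The main obstacle is the continuous-dependence estimate in the vanishing case — specifically the operator-norm continuity of $z\mapsto B[z]$, uniform on compact time intervals, and the Gronwall/Strichartz absorption of the error — but this is a variant of the argument already carried out in the proof of Proposition~\ref{p:linearunifdecaytau}, where the analogous difference of linearized flows is controlled using $\|z_n-z_\infty\|_{L^\infty_t}\to0$, so I expect it to go through cleanly. The remaining steps — extracting convergent parameters, absorbing the time shift by covariance, tracking the $P_c$ projection through the $\R$-linear decomposition, and the unitarity/group-law invertibility statements — are routine.
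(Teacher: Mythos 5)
Your proof is correct and follows essentially the same route as the paper: split according to whether the spatial shifts vanish or escape, use the time-covariance of $\mathcal{L}$ to reduce to $\mathcal{L}(b_n,0;z_n(\cdot+s_n))$, and invoke Lemma~\ref{l:spacetranslation} to identify the limit as a free propagator (composed with a translation) in the escaping case. The only difference is that for the case $y_n\equiv 0$ the paper simply cites the local-in-time convergence of $z_n(\cdot+s_n)$ (referring to Nakanishi), whereas you supply the continuous-dependence argument explicitly; your Duhamel/Strichartz estimate is the same one already carried out in the proof of Proposition~\ref{p:linearunifdecaytau}, so it goes through.
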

\begin{proof} If $y_n\equiv 0$, the theorem follows from the local-in-time convergence of $z_n(\cdot+s_n)$ (cf. \cite[Equation (5.13)]{Nakanishi}).  Thus we suppose instead that $|y_n|\to\infty$.  Passing to a subsequence, we may suppose $s_n-\tilde s_n \to s_\infty\in\R$ and $y_n-\tilde y_n\to y_\infty\in\R^3$.  Without loss of generality, we may then assume $y_\infty=0$ and $y_n\equiv \tilde y_n$.  Under these conditions, Lemma~\ref{l:spacetranslation} implies that for any $\varphi\in H^1$,
\[
\lim_{n\to\infty} T_{{y}_n}^{-1} \mathcal{L} (\tilde{s}_n-s_n,0;z_n(\cdot+s_n)) P_c T_{y_n}\varphi
	= e^{i s_\I \Delta} \varphi,
\]
where the limit is taken strongly in $H^1$.  This implies the result.  
\end{proof}

\begin{proof}[Proof of Lemma \ref{l:orth}]  We will show that (i)$\implies$(ii)$\implies$(iii)$\implies$(i). 

\underline{(i) $\Rightarrow$ (ii):} By a density argument, we may assume $\varphi\in \mathcal{S}$. Passing to a subsequence, we may suppose that either
\[
	\lim_{n\to\I} |\tilde{s}_n -s_n| =\I \quad \text{ or } \quad \sup_{n}  |\tilde{s}_n -s_n| <\I.
\]

If $|s_n-\tilde s_n|\to\infty$, then the dispersive estimate of Lemma~\ref{L:weighted-dispersive} and the fact that $\varphi\in\mathcal{S}$ implies
\[
\| \mathcal{L}(\tilde s_n,s_n;z_n)P_c T_{y_n}\varphi\|_{L^4+L^8} \lesssim |\tilde s_n - s_n|^{-\frac34}\|\varphi\|_{L^{4/3}\cap L^{8/7}} \to 0 \qtq{as}n\to\infty,
\]
which implies the result.

If instead $|s_n-\tilde{s}_n |$ is bounded, then we must have $|y_n-\tilde{y}_n|\to\I$ as $n\to\I$.  In this case, Lemma~\ref{l:missing} implies
\[
\varphi_n:= T_{y_n}^{-1} \mathcal{L}(\tilde{s}_n, s_n ; z_n)P_c T_{y_n} \varphi\to \varphi_\I
\]
strongly in $H^1$ along a subsequence. Thus
\[
	|y_n-\tilde y_n|\to\infty\implies T_{y_n-\tilde{y}_n} \varphi_n \rightharpoonup 0 \wIN H^1\qtq{as}n\to\infty.
\]

\underline{(ii) $\Rightarrow$ (iii):} Assuming (ii), item (iii) follows with the choice
\[
u_k = \mathcal{L}(0,s_{n_k};z_{n_k}) P_c T_{y_{n_k}} \psi \in P_c H^1
\]
for some nonzero $\psi \in P_c H^1$.

\underline{(iii) $\Rightarrow$ (i):} Suppose (iii) holds but (i) fails, so that in particular
\[
\sup_n( |s_n-\tilde{s_n}| + |y_n-\tilde{y}_n| ) < \I.
\]
By Lemma \ref{l:missing}, one may choose a subsequence $\{n_k\}_k$ such that
\[
	D_k:= T_{\tilde{y}_{n_k}}^{-1} \mathcal{L} (\tilde{s}_{n_k},s_{n_k};z_{n_k})P_c T_{y_{n_k}}  
\]
converges strongly to a continuous invertible operator on $H^1$, say $D_\I.$ Using (iii), we can find a bounded sequence $\{ u_k\}_k \subset P_c H^1$ and a subsequence in $k$ such that
\[
T_{y_{n_k} }^{-1} \mathcal{L}(s_{n_k},0 ;z_{n_k})  u_k  \rightharpoonup \psi \neq 0\qtq{and} T_{\tilde{y}_{n_k} }^{-1}  \mathcal{L}(\tilde{s}_{n_k},0 ;z_{n_k})  u_k  \rightharpoonup 0.
\]
Now fix $\varphi \in \mathcal{S}$.  Then
\[
(T_{\tilde{y}_{n_k} }^{-1}  \mathcal{L}(\tilde{s}_{n_k},0 ;z_{n_k})  u_k,\varphi) \to 0\qtq{as}k\to\infty,
\]
while  
\[
(T_{\tilde{y}_{n_k} }^{-1}  \mathcal{L}(\tilde{s}_{n_k},0 ;z_{n_k})  u_k,\varphi)=( D_k T_{y_{n_k} }^{-1} \mathcal{L}(s_{n_k},0 ;z_{n_k})  u_k,\varphi) \to (D_\I \psi, \varphi)\qtq{as}k\to\infty.
\] 
As $\varphi$ was arbitrary and $D_\I$ is invertible, we deduce that $\psi =0$, a contradiction. \end{proof}

The next lemma is again related to the lack of unitarity of $\mathcal{L}$.  In particular, this lemma is needed to establish a vanishing property for orthogonal profiles. 

\begin{lemma}\label{l:orthremedy}
Let $\{z_n\} \subset \mathrm{SBC}$. For any parameter of shifts $(s_n,y_n)$, the following sequence of operators converges strongly along a subsequence:
\[
T_{y_n}^{-1}e^{i s_n H} \mathcal{L}(0,s_n;z_n) P_c T_{y_n}:H^1 \to H^1.
\]
Furthermore, if $|y_n|\to\I$, then then the limit is identity.
\end{lemma}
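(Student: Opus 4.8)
The plan is to reduce the statement to a Duhamel analysis centered at the free evolution $e^{itH}$, after using the group law of $\mathcal{L}$ to absorb the time shift into the argument of $z$. Set $\tilde z_n:=z_n(\cdot+s_n)\in\mathrm{SBC}$, so that $\mathcal{L}(0,s_n;z_n)=\mathcal{L}(-s_n,0;\tilde z_n)$ by Lemma~\ref{L:properties-of-L}, and let $v_n(t):=\mathcal{L}(t,0;\tilde z_n)P_cT_{y_n}\varphi$; then the operator in question applied to $\varphi\in H^1$ equals $T_{y_n}^{-1}e^{is_nH}v_n(-s_n)$, and the Duhamel formula with respect to $e^{itH}$ gives
\[
e^{is_nH}v_n(-s_n)=P_cT_{y_n}\varphi-i\int_0^{-s_n}e^{-isH}\,B[\tilde z_n(s)]\,v_n(s)\,ds .
\]
Since $T_{y_n}^{\pm1}$ are $H^1$-isometries, $P_c$ is bounded on $H^1$, and both $\mathcal{L}$ and $e^{itH}=\mathcal{L}(\cdot,0;0)$ are bounded on $P_cH^1$ uniformly in their time arguments and in $z\in\mathrm{SBC}$ (Proposition~\ref{Prop:Strichartz}), the operators are uniformly bounded on $H^1$, so it suffices to prove convergence on a dense subset; and by the dual Strichartz estimate for $e^{itH}P_c$ the $H^1$-norm of the integral above is $\lesssim\|B[\tilde z_n]v_n\|_{L^2_sH^1_{6/5}}$ over the relevant time interval. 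I would then split into the cases $y_n\equiv0$ and $|y_n|\to\infty$, recording in the latter case that $T_{y_n}^{-1}P_cT_{y_n}\varphi\to\varphi$ in $H^1$ (the difference has norm $|(\varphi,\phi_0(\cdot+y_n))|\,\|\phi_0\|_{H^1}\to0$).

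For $y_n\equiv0$ the operator already factors through $P_c$, so I may take $\varphi\in P_cH^1$ and pass to a subsequence along which $\tilde z_n\to z_\infty\in\mathrm{SBC}$ locally uniformly (Arzel\`a--Ascoli). Assuming $s_n\to+\infty$ (the other sign being symmetric), I split the integral at a large time $-\tau$. On $(-s_n,-\tau)$ I use $\|B[\tilde z_n]v_n\|_{L^2_sH^1_{6/5}}\lesssim\|v_n\|_{L^2_sH^1_6}\le\sup_{z\in\mathrm{SBC}}\|\mathcal{L}(\cdot,0;z)\varphi\|_{L^2_sH^1_6((-\infty,-\tau])}$, which tends to $0$ as $\tau\to\infty$, uniformly in $n$, by Proposition~\ref{p:linearunifdecaytau}. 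On the bounded interval $(-\tau,0)$ I use continuous dependence on $z$: the Duhamel formula relative to $\mathcal{L}(\cdot,0;z_\infty)$, the uniform Strichartz estimate, and $\|\tilde z_n-z_\infty\|_{L^\infty((-\tau,0))}\to0$ give $v_n\to\mathcal{L}(\cdot,0;z_\infty)\varphi$ in $L^2_sH^1_6((-\tau,0))$, hence $\int_{-\tau}^0e^{-isH}B[\tilde z_n(s)]v_n(s)\,ds$ converges in $H^1$ to the same integral with $z_\infty$ in place of $\tilde z_n$. Letting $\tau\to\infty$ shows the operators converge strongly to $\varphi\mapsto\varphi-i\int_0^{-\infty}e^{-isH}B[z_\infty(s)]\mathcal{L}(s,0;z_\infty)\varphi\,ds$, a wave-operator-type limit; since $|y_n|\not\to\infty$ here, no identification with the identity is required.

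For $|y_n|\to\infty$ (which includes $s_n\equiv0$ trivially, as the integral is then empty) I take $\varphi\in\mathcal{S}$ by density, and it remains to show $\|B[\tilde z_n]v_n\|_{L^2_sH^1_{6/5}((-s_n,0))}\to0$. The mechanism is that $\Phi[z]$, hence the coefficients of $B[z]=P_c\tilde B[z]R[z]$, are concentrated near the origin with decay uniform in $|z|\le\mu_0$, so for a cutoff $\chi_\rho$ equal to $1$ on $\{|x|\le\rho\}$ and supported in $\{|x|\le2\rho\}$ one has a localized bound $\|B[z]f\|_{H^1_{6/5}}\lesssim\|f\|_{H^1_6(|x|<2\rho)}+(\tfrac1\rho+\delta(\rho))\|f\|_{H^1_6}$ with $\delta(\rho)\to0$, while $v_n$ behaves like $T_{y_n}e^{-it\Delta}\varphi$ for bounded times and disperses for large times. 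Concretely, by Lemma~\ref{l:spacelargetime}, $\|v_n\|_{L^2_sH^1_6(\R)}\lesssim M:=\|e^{-it\Delta}\varphi\|_{L^2_tH^1_6(\R)}$ uniformly in $n$; given $\eps>0$ I fix $\rho$ so large that $(\tfrac1\rho+\delta(\rho))M$ forces the corresponding contribution below $\eps$, and then $\tau$ so large that $\|e^{-it\Delta}\varphi\|_{L^2_tH^1_6((-\infty,-\tau])}$ does likewise. On $(-s_n,-\tau)$ I write $v_n(t)=\mathcal{L}(t,-\tau;\tilde z_n)v_n(-\tau)$, use Lemma~\ref{l:spacetranslation} to get $v_n(-\tau)=P_cT_{y_n}(e^{i\tau\Delta}\varphi)+o_n(1)$ in $H^1$, and then apply Lemma~\ref{l:spacelargetime} after the group-law substitution to conclude $\limsup_n\|v_n\|_{L^2_sH^1_6((-s_n,-\tau))}\lesssim\|e^{-it\Delta}\varphi\|_{L^2_tH^1_6((-\infty,-\tau])}$. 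On $(-\tau,0)$ I use $T_{y_n}^{-1}v_n\to e^{-it\Delta}\varphi$ in $\mathrm{Stz}^1([-\tau,\tau])\hookrightarrow L^2_tH^1_6([-\tau,\tau])$ (Lemma~\ref{l:spacetranslation}) together with the change of variables $x\mapsto x+y_n$ to bound $\|v_n\|_{L^2_sH^1_6(|x|<2\rho;(-\tau,0))}$ by $\|e^{-it\Delta}\varphi\|_{L^2_tH^1_6(|x+y_n|<2\rho;(-\tau,0))}+o_n(1)$, the first term going to $0$ by dominated convergence (pointwise-in-$t$ vanishing as $|y_n|\to\infty$, dominated by the integrable function $\|e^{-it\Delta}\varphi\|_{H^1_6}^2$). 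Combining these estimates, $\limsup_n\|B[\tilde z_n]v_n\|_{L^2_sH^1_{6/5}((-s_n,0))}\lesssim\eps$, so the Duhamel integral vanishes in $H^1$ and the operators converge strongly to $T_{y_n}^{-1}P_cT_{y_n}\,\cdot\to\mathrm{Id}$.

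I expect the overlap case $|y_n|\to\infty$ and $|s_n|\to\infty$ to be the main obstacle: one must control the Duhamel remainder over the unbounded time interval $(-s_n,0)$, which requires interpolating between the short-time regime, where $v_n$ is localized near $y_n$ and hence small in the region where $V$ and the ground-state terms $B[z]$ are supported (Lemma~\ref{l:spacetranslation} plus an escaping-ball argument), and the long-time regime, where $v_n$ has dispersed (Lemma~\ref{l:spacelargetime}), all while keeping the order of limits --- first $\eps$, then $\rho$ and $\tau$, then $n\to\infty$ --- consistent. The localized bound on $B[z]$ exploiting the uniform spatial decay of the small ground states is routine but must be stated carefully; it is essentially the quantitative version of the estimates already appearing in the proof of Lemma~\ref{l:spacetranslation}.
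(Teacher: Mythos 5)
Your proof is correct, and it reaches the same conclusion through the same underlying mechanism as the paper (escaping-ball localization of the potential terms for bounded times via Lemma~\ref{l:spacetranslation}, plus tail smallness of the $L^2_tH^1_6$ norm for large times via Lemma~\ref{l:spacelargetime} and Proposition~\ref{p:linearunifdecaytau}), but the organization is genuinely different in one respect. The paper writes the Duhamel formula with respect to the \emph{free} propagator $e^{-it\Delta}$, so the integrand contains both $-Vv_n$ and $B[z_n]v_n$; this first yields only the claim with $e^{-is_n\Delta}$ in place of $e^{is_nH}$, and the paper must then perform an additional step — applying the same claim with $z_n\equiv 0$, passing to adjoints using unitarity of $e^{-itH}$, and composing — to upgrade to $e^{is_nH}$. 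You instead expand with respect to $e^{itH}$, so that $V$ is absorbed into the propagator, only the $B[\tilde z_n]$ term survives in the integral, and no upgrade step is needed; the price is that you must invoke the inhomogeneous Strichartz estimate for $e^{itH}P_c$ (available under (A4) and already used elsewhere in the paper, and applicable here since $B[z]=P_c\tilde B[z]R[z]$ takes values in $P_cH^1$) and you must make the spatial localization of $B[z]$ explicit through your cutoff bound. That bound, including the control of the $R[z]$ correction $\rho(z,\phi)\phi_0$ by a localized norm of $\phi$, is indeed the quantitative content of the estimates in the proof of Lemma~\ref{l:spacetranslation}, as you note, so no new difficulty arises. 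Your treatment of the remaining case $y_n\equiv 0$, $|s_n|\to\infty$ (subsequential limit $z_\infty$, splitting at $-\tau$, wave-operator-type limit) is also sound and supplies detail the paper delegates to \cite{Nakanishi}. Net effect: your route is slightly cleaner at the end (no adjoint trick) at the cost of relying on perturbed rather than free Strichartz estimates in the Duhamel step; both sets of tools are available in this setting.
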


\begin{proof} If $s_n\equiv 0$, then the result is straightforward.  We will therefore focus on the case $|s_n|\to\infty$ and $|y_n|\to\infty$ (and note that the case $|s_n|\to\infty$ and $y_n\equiv 0$ may be handled similarly, cf. \cite[Lemma 5.2]{Nakanishi}). 

We first claim that
\begin{equation}\label{13claim}
T_{y_n}^{-1} e^{- i s_n \Delta} \mathcal{L}(0,s_n;z_n) P_c T_{y_n} \to \text{Id}\qtq{as}n\to\infty
\end{equation}
strongly as an operator on $H^1$.  To see this, first let $\varphi \in H^1$ and set 
\[
v_n(t)=\mathcal{L}(t,s_n;z_n) P_c T_{y_n}\varphi.
\] 
As in \eqref{e:spacepf4}, we may write 
\[
v_n(0) = e^{is_n \Delta} v_n(s_n) -i \int_{s_n}^0 e^{is \Delta} (-V v_n+ B[z_n]v_n)(s)\,ds,
\]
so that 
\begin{equation}\label{e:tyn111}
T_{y_n}^{-1} e^{-is_n \Delta} v_n(0) = T_{y_n}^{-1} P_c T_{y_n} \varphi-i \int_{s_n}^{0} e^{-i(s_n-s) \Delta} T_{y_n}^{-1}(-V v_n+ B[z_n]v_n)(s) \,ds.
\end{equation}
We now show that the right-hand side converges in $H^1$.  We have seen before that
\[
T_{y_n}^{-1}P_c T_{y_n}\varphi\to \varphi \qtq{strongly in}H^1,
\]
so we turn to the second term and establish convergence to zero. 

We suppose that $s_n\to\I$ as $n\to\I$ (the case $s_n\to-\infty$ is handled similarly).  We let $\eps>0$ and fix $\tau_0\ge 0$ to be chosen below.  For large $n$, we then have $s_n \ge \tau_0$ and thus, by Strichartz,
\begin{align*}
\biggl\|\int_{s_n-\tau_0 }^{0} & e^{-i(s_n-s) \Delta} T_{y_n}^{-1}(-V v_n+ B[z_n]v_n)(s)\,ds\biggr\|_{H^1} \\
	&\le \sup_{t \le -\tau_0} \norm{\int^{-\tau_0}_{t} e^{is \Delta} T_{y_n}^{-1}(-V v_n+ B[z_n(s+s_n)]v_n)(s+s_n)\,ds}_{H^1}\\
	&\lesssim \norm{T_{y_n}^{-1}(-V v_n+ B[z_n(\cdot+s_n)]v_n(\cdot+s_n))}_{L^2 H^1_{6/5} ((-\I,-\tau_0])}\\
	&\lesssim \norm{v_n(s_n+\cdot)}_{L^2 H^1_{6} ((-\I,-\tau_0])}.
\end{align*}
By \eqref{e:spacelargetimebound12}, Lemma~\ref{L:properties-of-L}, Strichartz, and Lemma~\ref{l:spacetranslation}, we have 
\begin{align*}
	\|v_n&(s_n+\cdot)\|_{L^2 H^1_{6} ((-\I,-\tau_0])} \\
	&= 	\norm{\mathcal{L}(\cdot  ,-\tau_0 ;z_n(\cdot +\tau_0+s_n)) P_c T_{y_n}\varphi }_{L^2 H^1_{6} ((-\I,0])}\\
	&\lesssim \norm{e^{-it\Delta} T_{y_n}^{-1} \mathcal{L}(0  ,-\tau_0 ;z_n(\cdot+ \tau_0+s_n)) P_c T_{y_n}\varphi }_{L^2 H^1_{6} ((-\I,0])} \\
	&\lesssim \norm{e^{-it\Delta}e^{i\tau_0 \Delta}\varphi }_{L^2 H^1_{6} ((-\I,0])} \\
	&\quad + \norm{T_{y_n}^{-1} \mathcal{L}(0  ,-\tau_0 ;z_n(\cdot+ \tau_0+s_n)) P_c T_{y_n}\varphi -e^{i\tau_0 \Delta}\varphi }_{H^1} \\
	&\lesssim \norm{e^{-it\Delta}\varphi }_{L^2 H^1_{6} ((-\I,-\tau_0])} + o_n(1)\qtq{as}n\to\infty.
\end{align*}
Thus, choosing $\tau_0$ sufficiently large, we find that for all large $n$,
\[
\norm{\int_{s_n-\tau_0 }^{0} e^{-i(s_n-s) \Delta} T_{y_n}^{-1}(-V v_n+ B[z_n]v_n)(s)\,ds}_{H^1} \le \eps
\]

On the other hand, we may write
\begin{align*}
	\int_{s_n}^{s_n-\tau_0} & e^{-i(s_n-s) \Delta} T_{y_n}^{-1}(-V v_n+ B[z_n]v_n)(s)\,ds \\
	& = \int_{0}^{-\tau_0} e^{i s\Delta} T_{y_n}^{-1}(-V v_n+ B[z_n]v_n)(s+s_n) \,ds
\end{align*}
Now, Lemma~\ref{l:spacetranslation}, we have that
\[
\|v_n(s+s_n)-T_{y_n}e^{-it\Delta}\varphi\|_{L^\infty H^1((-\tau_0,0))}\to 0 \qtq{as}n\to\infty.
\]
Thus, since 
\[
\int_{0}^{-\tau_0} e^{i s\Delta} T_{y_n}^{-1}(-V T_{y_n} e^{-is\Delta} \varphi+ B[z_n(s+s_n)]T_{y_n} e^{-is\Delta} \varphi)\, ds \to 0\qtq{in}H^1,
\]
we deduce  
\[
\int_{s_n}^{s_n-\tau_0} e^{-i(s_n-s) \Delta} T_{y_n}^{-1}(-V v_n+ B[z_n]v_n)(s) ds  \to 0\qtq{in}H^1\qtq{as}n\to\infty.
\]

Returning to \eqref{e:tyn111}, we conclude that
\[
-i \int_{s_n}^{0} e^{-i(s_n-s) \Delta} T_{y_n}^{-1}(-V v_n+ B[z_n]v_n)(s) ds \to 0
\] 
in $H^1$ as $n\to\I$, which proves the claim \eqref{13claim}.

It remains to upgrade from $e^{-is_n\Delta}$ to $e^{is_nH}$ in \eqref{13claim}. First, using \eqref{13claim} with the choice $z_n\equiv 0$, we find that
\[
T_{y_n}^{-1} e^{- i s_n \Delta} e^{-is_n H}  P_c T_{y_n} \to \text{Id} \qtq{as}n\to\infty
\]
(as operators on $H^1$). As $e^{-it H}$ is unitary, this also implies the convergence of the adjoint, that is:
\[
T_{y_n}^{-1}  e^{is_n H}  P_c e^{ i s_n \Delta} T_{y_n} \to \text{Id}\qtq{as}n\to\infty.
\]
Thus,
\begin{align*}
T_{y_n}^{-1}&e^{i s_n H} \mathcal{L}(0,s_n;z_n) P_c T_{y_n}\\
& =(T_{y_n}^{-1}e^{i s_n H} P_c e^{ i s_n \Delta} T_{y_n})( T_{y_n}^{-1} e^{ -i s_n \Delta}   \mathcal{L}(0,s_n;z_n) P_c T_{y_n})\to \text{Id}
\end{align*}
as $n\to\I$.\end{proof}

With Lemma~\ref{l:orthremedy} in place, we can prove the following consequence of orthogonality. 

\begin{proposition}\label{p:orthremedy}
Let $\{z_n\}_n \subset \mathrm{SBC}$, and let
\[
\lambda_n^{j}=\mathcal{L}(0,s_n^j ; z_n)P_c T_{y_n^j} \varphi^j\qtq{and}\lambda_n^k= \mathcal{L}(0,s_n^k ; z_n) P_c T_{y_n^k}  \varphi^k
\]
be two linear profiles. If the parameters of shifts $(s_n^j,y_n^j)$ and $(s_n^k,y_n^k)$ obey
\[
|s_n^j-s_n^k| + |y_n^j-y_n^k| \to\I\qtq{as}n\to\infty,
\]
then
\[
|(\lambda_n^j, \lambda_n^k)| + |(H \lambda_n^j, \lambda_n^k)| \to 0\qtq{as}n\to\infty.
\]

Furthermore, if $\{r_n\}$ is a bounded sequence in $H^1$ which satisfies 
\[
T_{y_n^j}^{-1} \mathcal{L}(s_n^j,0;z_n) r_n \rightharpoonup 0\qtq{weakly in} H^1,
\]
then
\[
|(\lambda_n^j, r_n)| + |(H \lambda_n^j, r_n)| \to 0\qtq{as}n\to\infty.
\]
\end{proposition}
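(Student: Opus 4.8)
The plan is to reduce everything to weak convergence in $H^1$ against fixed test functions, exploiting the isometry-type statements (Lemma~\ref{l:orthremedy}) that recover unitarity in the limit. First I would treat the pairing $(\lambda_n^j,\lambda_n^k)$. Apply the unitary propagator $e^{is_n^k H}$ (which preserves the $L^2$ inner product) to write
\[
(\lambda_n^j,\lambda_n^k) = \bigl(e^{is_n^k H}\mathcal{L}(0,s_n^j;z_n)P_cT_{y_n^j}\varphi^j,\; e^{is_n^k H}\mathcal{L}(0,s_n^k;z_n)P_cT_{y_n^k}\varphi^k\bigr).
\]
On the second factor, Lemma~\ref{l:orthremedy} says $T_{y_n^k}^{-1}e^{is_n^k H}\mathcal{L}(0,s_n^k;z_n)P_cT_{y_n^k}$ converges strongly on $H^1$ (to $\mathrm{Id}$ when $|y_n^k|\to\infty$); hence, after translating by $T_{y_n^k}$, the second factor is $T_{y_n^k}(\psi_\infty^k + o_n(1))$ for a fixed $\psi_\infty^k\in H^1$, up to a subsequence. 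So up to $o_n(1)$ the inner product equals $(T_{y_n^k}^{-1}e^{is_n^k H}\mathcal{L}(0,s_n^j;z_n)P_cT_{y_n^j}\varphi^j,\;\psi_\infty^k)$, and the first argument here is exactly $T_{y_n^k}^{-1}\mathcal{L}(s_n^k,s_n^j;z_n)P_cT_{y_n^j}\varphi^j$ composed with $e^{is_n^k H}\mathcal{L}(0,s_n^k;\cdot)$... more cleanly: write it as $T_{y_n^k}^{-1}e^{is_n^kH}e^{-is_n^kH}\mathcal{L}(s_n^k,s_n^j;z_n)P_cT_{y_n^j}\varphi^j$ and use $\mathcal{L}(0,s_n^j;z_n)=\mathcal{L}(0,s_n^k;z_n)\mathcal{L}(s_n^k,s_n^j;z_n)$. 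The orthogonality hypothesis $|s_n^j-s_n^k|+|y_n^j-y_n^k|\to\infty$ together with Lemma~\ref{l:orth}(i)$\Rightarrow$(ii) gives $T_{y_n^k}^{-1}\mathcal{L}(s_n^k,s_n^j;z_n)P_cT_{y_n^j}\varphi^j\rightharpoonup 0$ weakly in $H^1$; combining with the strong convergence on the other side yields $(\lambda_n^j,\lambda_n^k)\to0$.

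For the $H^1$-level statement $(H\lambda_n^j,\lambda_n^k)\to0$, I would use that $H$ is self-adjoint and commutes with $\mathcal{L}(t,s;z_n)$ only partially; the clean route is to observe that $\|\nabla \mathcal{L}(t,0;z_n)P_cT_{y_n}\varphi\|_{L^2}$ stays comparable to $\|\nabla\varphi\|_{L^2}$ (via the Strichartz/norm-equivalence statements, Proposition~\ref{Prop:Strichartz} and the isometry lemma) so that $\{\lambda_n^j\}$ is $H^1$-bounded, and then repeat the argument above with the sesquilinear form $(f,g)\mapsto(Hf,g)=(\nabla f,\nabla g)+(Vf,g)$. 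The gradient term is handled by the same weak-convergence mechanism after noting $\mathcal{L}(s_n^k,s_n^j;z_n)P_cT_{y_n^j}\varphi^j\rightharpoonup 0$ in $H^1$ forces $\nabla$ of it $\rightharpoonup 0$ in $L^2$; the potential term $(V\lambda_n^j,\lambda_n^k)$ is treated by a separate argument exploiting spatial concentration: if $|y_n^j-y_n^k|\to\infty$ the two profiles live near disjoint centers while $V$ decays (Assumption (A1)), and if $|y_n^j-y_n^k|$ stays bounded but $|s_n^j-s_n^k|\to\infty$ then dispersion (Lemma~\ref{L:weighted-dispersive}, after a density reduction to Schwartz data) makes one factor $L^\infty_{\mathrm{loc}}$-small. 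Finally, the statement for a remainder sequence $\{r_n\}$ with $T_{y_n^j}^{-1}\mathcal{L}(s_n^j,0;z_n)r_n\rightharpoonup0$ in $H^1$ is the same computation with $\lambda_n^k$ replaced by $r_n$ and $\psi_\infty^k$ replaced by the weak limit $0$: write $(\lambda_n^j,r_n)=(T_{y_n^j}^{-1}\mathcal{L}(s_n^j,0;z_n)\lambda_n^j,\;T_{y_n^j}^{-1}\mathcal{L}(s_n^j,0;z_n)r_n)$ after inserting the unitary $e^{\pm is_n^j H}$ as above, note the first factor converges strongly in $H^1$ to $\varphi^j$ (again Lemma~\ref{l:orthremedy}), and conclude; the $H$-pairing is handled as before.

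The main obstacle will be the potential term $(V\lambda_n^j,\lambda_n^k)$ in the $H^1$-level estimate: the operator $\mathcal{L}$ is not $\C$-linear, the $H^1$-bound on profiles must be extracted carefully from the Strichartz/isometry statements rather than from an energy identity, and one must split into the two orthogonality regimes ($|y_n^j-y_n^k|\to\infty$ versus $|s_n^j-s_n^k|\to\infty$ with bounded spatial separation) and use, respectively, the decay of $V$ at infinity and the dispersive decay of $\mathcal{L}$ to kill it. Everything else is a bookkeeping exercise in inserting unitary propagators and quoting Lemma~\ref{l:orth} and Lemma~\ref{l:orthremedy}.
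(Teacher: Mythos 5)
Your plan follows essentially the same route as the paper's proof: conjugate by the unitary group $e^{is_n H}$, invoke Lemma~\ref{l:orthremedy} to make one factor strongly convergent and Lemma~\ref{l:orth} to make the other weakly null, and dispose of the potential term by splitting into the regime $|s_n^j-s_n^k|\to\infty$ (dispersive decay of $\mathcal{L}$) and the regime of bounded time separation with $|y_n^j-y_n^k|\to\infty$ (spatial separation plus the decay of $V$). The only point where you are looser than the paper is the potential term in the $r_n$-statement, where neither separation mechanism applies and one instead uses the local compactness of the embedding $H^1\hookrightarrow L^4_{\mathrm{loc}}$ together with an $\varepsilon$-splitting of $V\,A\varphi^j$ into a compactly supported piece and a small tail; this is routine and does not change the structure of the argument.
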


\begin{proof}  We focus on the $(H\cdot,\cdot)$ inner product.

We suppose  $|s_n^j-s_n^k|\to\I$ as $n\to\I$.  As $e^{itH}$ is unitary and commutes with $H$, we may write
\begin{align*}
	(H \lambda_n^j, \lambda_n^k) 
	= ((-\Delta + (T_{y_n^j}^{-1}V)) A_n \varphi^j, 
	A_n C_n\varphi^k),
\end{align*}
where
\[
A_n := T_{y_n^j}^{-1} e^{is_n^j H}\mathcal{L}(0,s_n^j ; z_n)P_c T_{y_n^j}\qtq{and}C_n := T_{y_n^j}^{-1} \mathcal{L}(s_n^j,s_n^k;z_n)P_c T_{y_n^k}.
\]
By Lemma~\ref{l:orthremedy}, we have that $A_n$ converges strongly (as operators on $H^1$).  Moreover, by Lemma~\ref{l:orth}, $C_n\varphi^k\rightharpoonup 0$ weakly in $H^1$.  Thus 
\[
((-\Delta) A_n \varphi^j, e^{is_nH} A_n C_n\varphi^k) \to 0\qtq{as}n\to\infty.
\]
As $|s_n^j-s_n^k|\to\I$ as $n\to\I$, we also see that $C_n \to 0$ strongly as an operator $H^1\to L^4$, which yields
\[
((T_{y_n^j}^{-1}V)) A_n \varphi^j, A_n C_n\varphi^k) \to 0\qtq{as}n\to\infty.
\]

We next suppose that $|s_n^j - s_n^k|$ is bounded, which necessarily implies $|y_n^j| + |y_n^k| \to \I$ as $n\to \I$.  Without loss of generality, we suppose that $|y_n^k|  \to \I$ as $n\to \I$.  Choosing a subsequence along which $s_n^j- s_n^k\to \tilde s_\infty$, Lemma~\ref{l:spacetranslation} and Lemma~\ref{l:orthremedy} together yield
\begin{align*}
	&T_{y_n^k}^{-1} T_{y_n^j} A_n C_n \\
	&= (T_{y_n^k}^{-1} e^{is_n^j H}\mathcal{L}(0,s_n^j ; z_n)P_c T_{y_n^k})
	(T_{y_n^k}^{-1} \mathcal{L}(s_n^j,s_n^k;z_n)P_c T_{y_n^k}) \to  e^{-i \tilde{s}_\I \Delta }
\end{align*}
strongly in $H^1$. Combining this with the convergence of $A_n$,  the problem reduces to showing that
\[
((-\Delta + (T_{y_n^j}^{-1}V)) A \varphi^j, T_{y_n^k-y_n^j}   e^{-i \tilde{s}_\I \Delta }\varphi^k) \to 0\qtq{as}n\to\infty.
\]
In fact, this convergence is straightforward, as both
\begin{align*}
&\lim_{n\to\I} ((-\Delta)  A \varphi^j, T_{y_n^k-y_n^j}   e^{-i \tilde{s}_\I \Delta }\varphi^k) = 0,\\
&\lim_{n\to\I} \norm{(A \varphi^j) (T_{y_n^k-y_n^j}   e^{-i \tilde{s}_\I \Delta }\varphi^k)}_{L^1 \cap L^2} = 0, 
\end{align*}
follow from the fact that $\lim_{n\to\I}|y_n^j-y_n^k|= \I$.

We turn to the other estimate. We let 
\[
\tilde{r}_n = T_{y_n^j}^{-1} \mathcal{L}(s_n^j,0;z_n) r_n.
\]
Using Lemma~\ref{l:orthremedy}, it suffices to show that
\[
	((-\Delta+(T_{y_n^j}^{-1}V))A \varphi^j, {A^*} \tilde{r}_n  ) \to 0\qtq{as}n\to\infty,
\]
where
\[
A:=\lim_{n\to\infty} T_{y_n^j}^{-1} e^{is_n^j H}\mathcal{L}(0,s_n^j ; z_n)P_c T_{y_n^j}
\]
(along some subsequence). The convergence
\[
	\lim_{n\to\I} (-\Delta A \varphi^j, {A^*} \tilde{r}_n  ) =0
\]
is immediate from the weak convergence of $\tilde{r}_n$. Let us therefore consider the potential term. As $A \tilde{r}_n$ converges to zero weakly in $H^1$, we have
\[
\sup_{R>0} \lim_{n\to\I} \norm{ {\bf 1}_{\{|x| \le R\}} A \tilde{r}_n  }_{L^4} =0.
\]
Then, given $\eps>0$, we choose $R_0>0$ such that
\[
\norm{{\bf 1}_{\{|x| \ge R_0 \}} A \varphi^j }_{} \le \eps
\]
and estimate
\begin{align*}
	\varlimsup_{n\to\I} |((T_{y_n^j}^{-1}V)A \varphi^j, A \tilde{r}_n  )|&\le \norm{V}_{L^2}\norm{{\bf 1}_{\{|x| \ge R_0 \}} A \varphi^j }_{L^4} \sup_n \norm{A\tilde{r}_n }_{H^1} \\
	&\quad + \norm{V}_{L^2} \norm{ A \varphi^j }_{H^1} \varlimsup_{n\to\I}\norm{ {\bf 1}_{\{|x| \le R_0 \}} A\tilde{r}_n }_{L^4} \\
	& \lesssim \eps.
\end{align*}
As $\eps$ was arbitrary, we obtain the result.
\end{proof}

We turn finally to the construction of the linear profile decomposition.  The main step is to show that if $\psi_n$ is a bounded sequence in $H^1$ with the property that $\mathcal{L}(\cdot,0;z_n)\psi_n$ is nontrivial in $L_t^\infty L_x^4$, then we can extract a `bubble of concentration'.  To achieve this, we proceed essentially as in \cite{Gerard} (see also \cite{BahouriGerard,CarlesKeraani,MasakiSegata}).  

\begin{definition} Given $\{z_n\}_n\subset \mathrm{SBC}$ and a bounded sequence $\{ \psi_n \}_n \subset P_c H^1$, we let $\mathcal{V}(\{z_n\},\{\psi_n\})$ denote the set of all $\varphi\in H^1$ such that there exist parameters of shifts $(s_n,y_n)$ and a sequence in $n$ along which
\[
T_{y_n}^{-1}\mathcal{L}(s_n,0;z_n)\psi_{n} \rightharpoonup \varphi \qtq{weakly in}H^1.
\]
We set
\[
\eta(\{z_n\},\{\psi_n\}) = \sup_{\varphi\in \mathcal{V}(\{z_n\},\{\psi_n\})}\|\varphi\|_{H^1}. 
\]
\end{definition}
%

\begin{remark}
In the non-radial setting, space translation must be taken into account. Because of the presence of translation, the elements of $\mathcal{V}$ do not necessarily belong to $P_c H^1$.
\end{remark}

\begin{proposition}[Control on vanishing]\label{p:vanishing}
Let $\{z_n\}_n\subset \mathrm{SBC}$ and let $\{\psi_n\}_n\subset P_c H^1$ be a bounded sequence.
If
\[
\sup_n \norm{\psi_n}_{H^1}\le M\qtq{and}\varlimsup_{n\to\I} \norm{\mathcal{L}(\cdot, 0;z_n) \psi_n }_{L^\I_t (\R, L^4_x)} \ge \eps,
\]
then there exists $\beta = \beta (M, \eps)>0$ such that
\[
		\eta (  \{z_n\}, \{ \psi_n \}) \ge \beta.
\]
\end{proposition}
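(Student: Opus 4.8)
The plan is to argue by contradiction using the time at which the $L^4_x$-norm is nearly attained, and then to transfer the problem to the free propagator via Lemma~\ref{l:spacetranslation}. First I would pass to a subsequence so that there is a sequence of times $\{t_n\}\subset\R$ with
\[
\norm{\mathcal{L}(t_n,0;z_n)\psi_n}_{L^4_x}\ge\tfrac{\eps}{2}\qtq{and}\sup_n\norm{\psi_n}_{H^1}\le M.
\]
Set $u_n:=\mathcal{L}(t_n,0;z_n)\psi_n$; by the uniform Strichartz estimate (Proposition~\ref{Prop:Strichartz}) we have $\sup_n\norm{u_n}_{H^1}\lesssim M$. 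Next I would choose spatial centers $\{y_n\}\subset\R^3$ realizing the $L^4$ mass of $u_n$ up to a fixed fraction: since $u_n$ is bounded in $H^1\hookrightarrow L^4$ but has $L^4$-norm bounded below, a standard concentration argument (Lemma~\ref{l:orth}-type reasoning, or the elementary covering lemma used in \cite{Gerard, BahouriGerard}) produces $\{y_n\}$ such that, up to a subsequence, $T_{y_n}^{-1}u_n\rightharpoonup w$ weakly in $H^1$ with $\norm{w}_{H^1}\gtrsim_{M,\eps}1$. Passing to a further subsequence, I would arrange that $\{(t_n^{-1}\cdot,y_n)\}$—more precisely the pair $(t_n,y_n)$ after the usual dichotomy—behaves like a parameter of shifts: either $y_n\equiv0$ or $|y_n|\to\infty$, and either $t_n\equiv0$ or $|t_n|\to\infty$ (replacing $t_n$ by a bounded representative and absorbing the bounded part into strong convergence of $z_n(\cdot+t_n)$ when $\{t_n\}$ is bounded, via the local-in-time convergence of $z_n$ cf. \cite[Eq.~(5.13)]{Nakanishi}). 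This is the step requiring the most care.

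The heart of the argument is to identify a nonzero weak limit of the form appearing in the definition of $\mathcal{V}$. With the parameter of shifts $(s_n,y_n):=(t_n,y_n)$ fixed as above, I would show that
\[
T_{y_n}^{-1}\mathcal{L}(s_n,0;z_n)\psi_n = T_{y_n}^{-1}u_n\rightharpoonup\varphi\qtq{weakly in}H^1
\]
along a subsequence, for some $\varphi\neq0$. When $|y_n|\to\infty$ this is exactly the setting of Lemma~\ref{l:spacetranslation}: there, $T_{y_n}^{-1}\mathcal{L}(t,0;z_n)P_cT_{y_n}\to e^{-it\Delta}$ strongly on compact time intervals, which lets us compare the weak limit of $T_{y_n}^{-1}u_n$ against a free evolution and conclude that it is nonzero (since the $L^4$-norm, being continuous on bounded $H^1$-sequences modulo translation, survives in the limit by the Rellich-type compactness away from the escaping center). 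When $y_n\equiv0$ instead, the same conclusion follows from the convergence of $z_n(\cdot+s_n)$ and the corresponding convergence of $\mathcal{L}$-operators (cf. \cite[Lemma~5.2]{Nakanishi}), so that $T_{y_n}^{-1}u_n=u_n\rightharpoonup\varphi$ with $\varphi\neq0$ by the same lower-bound-on-$L^4$ argument; and if moreover $s_n\equiv0$, then $u_n=\psi_n$ and one simply extracts a weak limit directly.

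Finally, since $\varphi\in\mathcal{V}(\{z_n\},\{\psi_n\})$ by construction and $\norm{\varphi}_{H^1}\ge\beta$ for some $\beta=\beta(M,\eps)>0$ (the explicit dependence coming from the Gagliardo--Nirenberg/Sobolev bound $\norm{\varphi}_{L^4}\lesssim\norm{\varphi}_{H^1}$ together with the lower bound on $\norm{\varphi}_{L^4}$ retained in the weak limit), we obtain $\eta(\{z_n\},\{\psi_n\})\ge\beta$, as claimed. The main obstacle, as noted, is the bookkeeping in reducing the pair $(t_n,y_n)$ to a genuine parameter of shifts and verifying that the lower bound on $\norm{u_n}_{L^4}$ is not lost to vanishing or to the escaping center—this is where Lemma~\ref{l:spacetranslation} and the uniform Strichartz bounds are used in an essential way.
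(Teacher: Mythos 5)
Your proposal follows essentially the same route as the paper: both rest on the standard inverse-Sobolev concentration argument of G\'erard to produce the shift parameters and a nontrivial weak limit, followed by the bookkeeping that upgrades $(s_n,y_n)$ to a genuine parameter of shifts by absorbing bounded times and translations into a strong limit of the operators $\mathcal{L}(0,s_0;z_\infty)P_cT_{y_0}$. The paper makes the concentration step fully explicit: it truncates to frequencies $<N$ with $N=N(M,\eps)$ so that $\norm{P_{<N}\mathcal{L}(\cdot,0;z_n)\psi_n}_{L^\infty_{t,x}}\gtrsim \eps^2/M$, picks $(s_n,y_n)$ as a near-maximizing space-time point, and then obtains $\norm{\varphi}_{H^1}\gtrsim\eps^2/M$ by pairing the weak limit against the Schwartz kernel of $P_{<N}$.

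One caution about your justification of the key nonvanishing step: the claim that the weak limit is nonzero because the $L^4$-norm ``survives in the limit'' under weak $H^1$-convergence modulo translation is not a valid mechanism --- lower bounds on $\norm{\cdot}_{L^4}$ are exactly what weak convergence fails to preserve (that failure is the reason profile decompositions are needed at all). What does survive is the local pairing $\int g(y)\,u_n(y+y_n)\,dy\to\int g\,\varphi\,dy$ for a fixed Schwartz kernel $g$, and this is why the frequency localization and the choice of $y_n$ as a near-maximum of $|P_{<N}u_n|$ are essential; they cannot be replaced by an appeal to Rellich-type compactness. Since you correctly cite the G\'erard/Bahouri--G\'erard covering lemma for this step, the logical chain closes, but the gloss you give for it is incorrect as stated. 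Relatedly, the detour through Lemma~\ref{l:spacetranslation} is unnecessary for this proposition: the weak limit and its lower bound are extracted directly, with no need to compare $\mathcal{L}$ to the free propagator.
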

\begin{proof} Extracting a subsequence if necessary, we may suppose that
\[
	\norm{\mathcal{L}(\cdot, 0;z_n) \psi_n }_{L^\I_t (\R, L^4_x)} \ge \eps/2 \qtq{for all}n.
\]

First, by Sobolev embedding,
\[
\norm{ P_{\ge N} \mathcal{L}(\cdot, 0;z_n) \psi_n }_{L^\I_t (\R, L^4_x)}\lesssim N^{-\frac14}	\norm{ P_{\ge N} \mathcal{L}(\cdot, 0;z_n) \psi_n }_{L^\I_t (\R, H^1)}\lesssim  N^{-\frac14} M.
\]
Thus there exists $N=N(M,\eps)$ such that
\[
\norm{ P_{< N} \mathcal{L}(\cdot, 0;z_n) \psi_n }_{L^\I_t (\R, L^4_x)} \ge \eps/4\qtq{for all}n.
\]
On the other hand, by H\"older's inequality, 
\[
\norm{ P_{< N} \mathcal{L}(\cdot, 0;z_n) \psi_n }_{L^\I_t (\R, L^4_x)}\lesssim M^\frac12\norm{ P_{< N} \mathcal{L}(\cdot, 0;z_n) \psi_n }_{L^\I_t (\R, L^\I_x)}^\frac12.
\]

It follows that 
\[
\norm{ P_{< N} \mathcal{L}(\cdot, 0;z_n) \psi_n }_{L^\I_t (\R, L^\I_x)}\gtrsim \eps^2/M\qtq{for all}n,
\]
and thus there exist $(s_n,y_n)\in\R\times\R^3$ such that
\[	
|( P_{< N} \mathcal{L}(s_n, 0;z_n) \psi_n)(y_n) | \gtrsim \eps^2/M.
\]

We now observe that
\[
|( P_{< N} \mathcal{L}(s_n, 0;z_n) \psi_n)(y_n)|= \left| \int_{\R^3}  g(y)   (\mathcal{L}(s_n, 0;z_n) \psi_n) (y+y_n)\,dy \right|
\]
for some Schwartz kernel $g$.  As $\{ T^{-1}_{y_n} \mathcal{L}(s_n, 0;z_n) \psi_n\}_n$ is a bounded sequence in $H^1$, it converges weakly along a subsequence to some $\varphi\in H^1$. Along this subsequence, we have
\[
\int_{\R^3}  g(y)   (\mathcal{L}(s_n, 0;z_n) \psi_n) (y+y_n)\,dy\to \int_{\R^3}  g(y)  \varphi (y)\,dy,
\]
which implies
\[
\norm{\varphi}_{H^1} \gtrsim \eps^2/M.
\]
Now, if $(s_n,y_n)$ is already a parameter of shifts (so that $s_n\equiv 0$ or $|s_n|\to\infty$ and $y_n\equiv 0$ or $|y_n|\to\infty$), then we obtain that $\varphi\in\mathcal{V}(\{z_n\},\{\psi_n\})$ and
\[
\eta(\{z_n\},\{\psi_n\})\geq\|\varphi\|_{H^1}\gtrsim \eps^2/M,
\]
and the proof is complete.  Otherwise, we argue as follows.

If $\{s_n\}$ and $\{y_n\}$ are both bounded, then we may pass to a subsequence in $n$ so that $s_n\to s_0$, $y_n\to y_0$, and $z_n\to z_\infty$ (locally uniformly). Then the convergence 
\[
T_{y_n}^{-1}\mathcal{L}(s_n,0;z_n)\psi_n\rightharpoonup\varphi
\]
yields 
\[
	(\psi_n,h) = (\mathcal{L}(0,s_n;z_n) P_cT_{y_n}(T_{y_n}^{-1}\mathcal{L}(s_n,0;z_n)\psi_n),h)
	\to (\mathcal{L}(0,s_0;z_\infty) P_cT_{y_0}\varphi,h)
\]
as $n\to\infty$ for all $h \in H^1$. This reads as
\[
\mathcal{L}(0,0;z_n)\psi_n = \psi_n\rightharpoonup \tilde\varphi:= (\mathcal{L}(0,s_0;z_\infty) P_cT_{y_0})^*\varphi\in\mathcal{V}(\{z_n\},\{\psi_n\}),
\]
with
\[
\eta(\{z_n\},\{\psi_n\}) \geq \|\tilde \varphi\|_{H^1} \gtrsim \|\varphi\|_{H^1}\gtrsim \eps^2/M,
\]
as desired. Similar arguments treat the case that only one of the sequences $\{s_n\},\{y_n\}$ is bounded. \end{proof}

We turn to the proof of Theorem~\ref{t:lpd}.  We follow the standard argument, removing one bubble of concentration at a time. 


\begin{proof}[Proof of Theorem \ref{t:lpd}]
Passing to a subsequence, we first find $\varphi_\infty^0\in H^1$ so that $\psi_n\rightharpoonup \varphi_\infty^0$ weakly in $H^1$.  We then define $\lambda^0_n=\varphi_\I^0$ and $\gamma_n^0= \psi_n - \varphi_\I^0$, so that \eqref{e:lpd} holds for $J=0$.  Moreover,
\begin{equation}\label{e:lpdpf1}
\gamma_n^0  \rightharpoonup 0\wIN H^1\qtq{as}n\to\infty.
\end{equation}

Now, if $\eta  (\{z_n\}, \{ \gamma_n^0 \})=0$, then the result holds with $J^*=0$.  In this case, we let $\varphi_\I^j=0$ for all $j\ge1$ and pick arbitrary  $\{(y_n^j,s_n^j)\}$ for $j\ge1$ so that the orthogonality condition is fulfilled.

Suppose instead that $\eta(\{z_n\}, \{ \gamma_n^0 \})>0$. Then, by definition, there exists $\varphi_\I^1 \in \mathcal{V}(\{z_n\}, \{ \psi_n \})$ such that
\[
\norm{\varphi_\I^1}_{H^1} \ge \tfrac12  \eta(\{z_n\}, \{ \gamma_n^0 \})>0
\]
and a parameter of shifts $\{(y_n^1,s_n^1)\}$ such that
\begin{equation}\label{e:lpdpf2}
T^{-1}_{y_n^1} \mathcal{L} (s_n^1,0;z_n) \gamma_n^0 \rightharpoonup \varphi_\I^1\wIN H^1
\end{equation}
as $n\to\I$ along a subsequence. 

We now set
\[
\lambda^1_n := \mathcal{L} (0, s_n^1;z_n) P_c T_{y_n^1} \varphi_\I^1  \qtq{and}\gamma^1_n := \psi_n - \lambda_n^1,
\]
which yields the decomposition \eqref{e:lpd} for $J=1$. In view of Lemma~\ref{l:orth}, we see from \eqref{e:lpdpf1} and \eqref{e:lpdpf2} that the property (ii) holds for $(j,k)=(0,1)$. Moreover, by \eqref{e:lpdpf2} 
and the definition of $\gamma_n^1$ and $\lambda_n^1$, we have
\begin{equation}\label{e:lpdpf3}
T^{-1}_{y_n^1} \mathcal{L} (s_n^1,0;z_n)\gamma_n^1  \rightharpoonup 0\wIN H^1\qtq{as}n\to\infty.
\end{equation}
Furthermore, by Lemma~\ref{l:orth}(ii), Proposition \ref{p:orthremedy}, and \eqref{e:lpd}, we have
\begin{equation}\label{e:lpdpf2.5}
\norm{\gamma_n^0}_{L^2}^2 = \norm{\lambda_n^1}_{L^2}^2 + \norm{\gamma_n^1}_{L^2}^2 + o(1)\qtq{as}n\to\infty,
\end{equation}
\begin{equation}\label{e:lpdpf2.55}
\mathbb{H}_V(\gamma_n^0) = \mathbb{H}_V(\lambda_n^1) + \mathbb{H}_V(\gamma_n^1) + o(1)\qtq{as}n\to\infty.
\end{equation}

If $\eta(\{z_n\}, \{ \gamma_n^1 \})=0$, the result holds with $J^*=1$.  We obtain the result by defining the remaining profiles suitably (as in the case of $J^*=0$).  If instead $\eta(\{z_n\}, \{ \gamma_n^1 \})>0$, then there exists $\varphi_\I^2 \in \mathcal{V}(\{z_n\}, \{ \gamma_n^1 \})$ such that
\[
\norm{\varphi_\I^2}_{H^1} \ge \tfrac12  \eta  (  \{z_n\}_n, \{ \gamma_n^1 \}_n)>0.
\]
Moreover, there exist $\{(y_n^2,s_n^2)\}_n$ such that
\begin{equation}\label{e:lpdpf4}
T^{-1}_{y_n^2} \mathcal{L} (s_n^2,0;z_n) \gamma_n^1 \rightharpoonup \varphi_\I^2 \wIN H^1
\end{equation}
as $n\to\I$ along a subsequence. We then set
\[
\lambda^2_n := \mathcal{L} (0, s_n^2;z_n) P_c T_{y_n^2} \varphi_\I^2\qtq{and}	\gamma^2_n := \gamma^2_n - \lambda_n^2,
\]
which yields the decomposition \eqref{e:lpd} for $J=2$. Using Lemma~\ref{l:orth}, we see from \eqref{e:lpdpf3} and \eqref{e:lpdpf4} that \ the property (ii) holds for $(j,k)=(1,2)$.  By \eqref{e:lpdpf4} and the definition of $\gamma_n^2$ and $\lambda_n^2$, we also have
\begin{equation}\label{e:lpdpf5}
T^{-1}_{y_n^2} \mathcal{L} (s_n^2,0;z_n)\gamma_n^2  \rightharpoonup 0\wIN H^1\qtq{as}n\to\infty.
\end{equation}

We repeat this procedure and define $\varphi_\I^m$, $\{ (y_n^m,s_n^m) \}$, and $\gamma_n^{m}$ inductively as long as $\eta(\{z_n\},\{ \gamma_n^{m-1} \})>0$. Indeed, if $\eta  (  \{z_n\}, \{ \gamma_n^{m-1} \})>0$ then one finds $\varphi^m_\I \in H^1$ so that
\begin{equation}\label{e:lpdpf5.5}
\norm{ \varphi_\I^m }_{H^1} \ge \tfrac12 \eta  (  \{z_n\}, \{ \gamma_n^{m-1} \})>0.
\end{equation}
We define $(y_n^m,s_n^m)$ so that
\begin{equation}\label{e:lpdpf6}
	T^{-1}_{y_n^m} \mathcal{L} (s_n^m,0;z_n) \gamma_n^{m-1} \rightharpoonup \varphi_\I^m \wIN H^1
\end{equation}
We then derive the decomposition \eqref{e:lpd} with $J=m$ from the decomposition with $J=m-1$ by defining $\gamma_n^{m}$ via $\gamma_n^{m-1} = \l_n^{m} + \gamma_n^{m}$.  In this case we obtain
\begin{equation}\label{e:lpdpf7}
T^{-1}_{y_n^m} \mathcal{L} (s_n^m,0;z_n)\gamma_n^{m}  \rightharpoonup 0\wIN H^1\qtq{as}n\to\infty.
\end{equation}
Using Lemma~\ref{l:orth} as above, we see from \eqref{e:lpdpf6} and \eqref{e:lpdpf7} that property (ii) holds for $(j,k)=(m-1,m)$. Moreover, by Proposition \ref{p:orthremedy} and \eqref{e:lpdpf7}, we have
\begin{equation}\label{e:lpdpf8.5}
\norm{\gamma^{m-1}_n}_{L^2}^2 = \norm{\lambda_n^m}_{L^2}^2 + \norm{\gamma_n^m}_{L^2}^2 + o(1)\qtq{as}n\to\infty,
\end{equation}
\begin{equation}\label{e:lpdpf8.55}
	\mathbb{H}( \gamma^{m-1}_n )  = \mathbb{H}( \lambda_n^m ) + \mathbb{H}( \gamma_n^m) + o(1)\qtq{as}n\to\infty.
\end{equation}

If $\eta(\{z_n\}, \{ \gamma_n^{J_0} \})=0$ holds for some $J_0 \in \N$ then we let $J^*=J_0$ and define the linear profiles for $j \ge J^*$ suitably as above. Otherwise, $J^* =\I$.

It remains to check the desired properties of the decomposition. By induction on $j-k$, we see that
\begin{equation}\label{e:lpdpf9}
	T^{-1}_{y_n^{k}} \mathcal{L} (s_n^{k},0;z_n)\gamma_n^{j}  \rightharpoonup 0\wIN H^1
\end{equation}
and that the property (ii) holds for all $0 \le k \le j \le J^*$.  The property (i) follows from (ii) and the definition of $(y_n^0,s_n^0)$.  The property (iii) follows from \eqref{e:lpd} for $J=j-1$, \eqref{e:lpdpf6}, the property (ii), and Lemma~\ref{l:orth}.  Finally, the property (iv) was shown in \eqref{e:lpdpf5.5} and the definition of $J^*$.

We now consider the first several decoupling statements.  The decomposition \eqref{e:Mdecomp} follows from \eqref{e:lpdpf2.5} and \eqref{e:lpdpf8.5}. Next, we see from \eqref{e:lpdpf2.55} and \eqref{e:lpdpf8.55} that
\begin{equation}\label{e:HVdecomp}
\mathbb{H}_V(\psi_n) = \sum_{j=0}^J \mathbb{H}_V(\lambda_n^j) + \mathbb{H}_V(\gamma_n^J) + o_n(1).
\end{equation}
Now, if $\{v_n\}$ is a bounded sequence in $H^1$ which converges to zero weakly in $H^1$ as $n\to\I$, then one sees from 
\[
\sup_{R>0} \norm{v_n {\bf 1}_{\{|x| \le R \}} }_{L^4} =0
\]
and the fact that $V\in L^2$ that
\[
	\lim_{n\to\I} \int V |v_n|^2 dx = \lim_{n\to\I} \int V v_n \overline{w} dx =0
\]
for any $w \in H^1$. As $(y^j,s^j)\neq(0,0)$ for all $j\ge1$, we see that
$\l_n^j \rightharpoonup 0$ weakly in $H^1$ as $n\to\I$ for all $j \ge 1$.  Moreover, we see from \eqref{e:lpd} that $\gamma_n^j \rightharpoonup 0$ weakly in $H^1$ as $n\to\I$ for all $j \ge 1$.  Thus, together with \eqref{e:lpd}, one has
\begin{equation}\label{e:Verrorpf1}
\int V |\l_n^j|^2 dx = o_n(1)\qtq{and} \int V |\gamma_n^j|^2 dx = o_n(1)\qtq{for all}j\ge 1,
\end{equation}
along with the desired equality \eqref{e:Vdecomp}.  Combining \eqref{e:Verrorpf1} and \eqref{e:Vdecomp} with \eqref{e:HVdecomp}, one then obtains \eqref{e:H0decomp}. 

We next establish the desired bounds on the remainder.  The uniform bound is an immediate consequence of the uniform Strichartz estimate (Proposition~\ref{Prop:Strichartz}), so we turn to \eqref{e:remainder}. We only consider the case $J^*=\I$.  In this case, we can use \eqref{e:Mdecomp}, \eqref{e:H0decomp}, and $\tnorm{\lambda_n^j }_{H^1}\sim \tnorm{\varphi_\I^j}_{H^1} $ to obtain 
\[
\sum_{j=0}^\I \norm{\varphi_\I^j}_{H^1}^2 \lesssim \sup_n \norm{\psi_n}_{H^1}^2 <\I.
\]
Thus, $0 < \eta  (  \{z_n\}, \{ \gamma_n^{J} \}) < 2 \norm{ \varphi_\I^J }_{H^1} \to 0$ as $J\to\I$. By Proposition~\ref{p:vanishing}, this implies \eqref{e:remainder}.

Finally, we address the remaining decoupling statements.  First, we deduce \eqref{e:Gdecomp2} from \eqref{e:remainder} as in the proof of \eqref{e:Gdecomp}, relying on the fact that $\lim_{n\to\I} \mathbb{G}(\l_n^j)=0$
if $s^j=\I$.  Finally, combining \eqref{e:H0decomp}, \eqref{e:Gdecomp2}, \eqref{e:Vdecomp}, and \eqref{e:spacenormflat2}, we obtain \eqref{e:energydecomp}. \end{proof}

Finally, we observe that by a suitable modification of the profiles (as in the last paragraph of the proof of Proposition \ref{p:vanishing}), we may choose the time-translation parameters so that for each $j,k$, we have either
\[
\lim_{n\to\infty} |s_n^j - s_n^k| = \infty\qtq{or}s_n^j\equiv s_n^k.
\]
We will work under this assumption in what follows.

\section{Proof of Proposition~\ref{p:key}}\label{S:key}\label{S:NPD}

In this section we prove the key convergence result, Proposition~\ref{p:key}. We first write the solutions $u_n$ as
\[
u_n = \Phi[z_n] + R[z_n] \xi_n
\]
We then have $\{ z_n \} \subset \mathrm{SBC}$ by mass conservation and  \emph{a priori} $H^1$ bounds.  We apply the linear profile decomposition (Theorem~\ref{t:lpd})  to the bounded sequence $\{ \xi_n(0) \}_n \subset P_c H^1$.  In particular, up to a subsequence in $n$, we obtain $J^* \in \N \cup \{0, \I \}$, sequences
\[
(s_n^j,y_n^j) \in \R \times \R^3\qtq{and}  \varphi_\I^j \in H^1,
\]
and, for $n\geq 1$ and $J\geq 0$, a decomposition of the form
\[
\xi_n(0) = \sum_{j=0}^J \lambda_n^j + \gamma_n^J,\qtq{where} \lambda_n^j := \mathcal{L} (0,s_n^j;z_n) P_c T_{y_n^j} \varphi_\I^j
\]
and the remainder term satisfies
\begin{equation}\label{e:linear_remainder_smallness0}
\lim_{J\to J^*} \varlimsup_{n\to\I} \norm{\mathcal{L} (\cdot,0;z_n) \gamma_n^J}_{L^\I_tL^4_x} =0.
\end{equation}

We denote
\[
\sigma^j:=  \lim_{n\to\I} s^j_n   \in \{ 0, \pm \I \}, \quad y^j := \lim_{n\to\I} |y^j_n|  \in \{ 0, \I \}
\]
for $j \ge 0$. Then (by the orthogonality of parameters) we have $(\sigma^0,y^0)=(0,0)$ and $(\sigma^j,y^j)\neq (0,0)$ for $j\ge1$.  Recall that for each pair $(j,k)$, either $s_n^j\equiv s_n^k$ or $\sign(s_n^j-s_n^k) \in\{\pm 1\}$ is independent of $n$.

If $J^*\ge 1$, we have $\varphi_\I^j \neq0$ for $j \in [1,J^*]$.  If $J^*$ is finite, we use the convention that $\varphi_\I^j = \gamma_n^j =0$ for $j \ge J^*+1$ and $n\ge1$, and the parameters $(y_n^j,\sigma_n^j)$
for $j \ge J^*+1$ are chosen so that $y^j=\I$ for all $j \ge J^*+1$ and
the orthogonality of $\{(y_n^j,\sigma_n^j)\}_{j\ge1}$ holds.

We now let $\eps>0$ to be determined below and choose $J^\dagger= J^\dagger(\eps) \le J^*$ such that
\begin{align}\label{e:linear_remainder_smallness1}
&\sup_{J \ge J^\dagger} \varlimsup_{n\to\I} \norm{ \mathcal{L} (t,0;z_n) \gamma_n^{J}}_{(L^\I_t \cap L^8_t) L^4_x (\R)} \le \eps,\\
\label{e:linear_remainder_smallness2}
& \sum_{j= J^\dagger + 1}^\I \norm{\varphi_\infty^j}_{H^1}^2 \le \eps^2.
\end{align}
Note that that smallness of the $L^8_tL^4_x$-norm follows from \eqref{e:linear_remainder_smallness0} combined with interpolation, Strichartz, and the uniform $H^1$ bound on $\gamma_n^J$.  The parameter $\eps>0$ will be chosen in a way that depends only on $\sup_{n} \norm{\xi_n(0)}_{H^1}$ and $\sup_n \norm{z_n}_{L^\I_t(\R)}$ (and several universal constants); see Remark \ref{r:choiceofeps} below.

In the construction above, we have labeled the profiles $\varphi_\infty^j$ using the index $j\in\mathbb{N}_0$.  In what follows, it will be convenient to have a different system for labeling profiles with $0\leq j\leq J^\dagger$.  In particular, each $j$ will correspond to a unique pair $(s,k)\in\mathbb{Z}\times[\mathbb{N}\cup\{0\}]$.

\begin{definition}[Relabeling via the mapping $j\mapsto(s,k)$]\label{def:relabel} By construction, for each distinct pair $j,k\in[0,\infty)$, we have
\[
\lim_{n\to\infty} s_n^j-s_n^k = \infty,\quad \lim_{n\to\infty} s_n^k-s_n^j = \infty,\qtq{or}s_n^j\equiv s_n^k.
\]
This allows us to put the set of sequences `in order'.  For example, $(s_n^j) > (s_n^k)$ if $s_n^j-s_n^k\to\infty$.  In particular, we can map the set of indices $j\in[0,J^\dagger]$ to $s(j)\in[s_{\min},s_{\max}]\cap\mathbb{Z}$ in such a way that $(s_n^j)>(s_n^k)$ if and only if $s(j)>s(k)$ (and similarly $s_n^j\equiv s_n^k$ if and only if $s(j)=s(k)$).  We choose $s(0)=0$, so that $s_{\min}\leq 0 \leq s_{\max}$ and $s_{\max}-s_{\min}\leq J^\dagger$.

We define $s(j)$ for $j>J^\dagger$ as follows:  First, if $s_n^j\equiv s_n^k$ for some $k\in[0,J^\dagger]$, we set $s(j)=s(k)$.  Otherwise, we set $s(j)=s_{\min}-1$. 

Next, we define $k(j)$. First, if $s(j)\in[s_{\min},s_{\max}]$ and $y^j=0$, we set $k(j)=0$.  If instead $y^j=\infty$, then we let $k(j)$ equal the number of $m\in[0,j]$ such that $s_n^s\equiv s_n^j$ and $y^m=\infty$.  Second, if $s(j)=s_{\min}-1$, then we let $k(j)$ equal the number of $m\in[0,j)$ such that $s_n^j\equiv s_n^m$. 

It follows that $j\mapsto (s(j),k(j))$ is injective. Furthermore, we have the following monotonicity property: if $j_1<j_2$,  $s(j_1)=s(j_2)\in[s_{\min},s_{\max}]$, and $y^{j_1}=y^{j_2}=\infty$, then $1\leq k(j_1)<k(j_2)$.

\end{definition}

With this new labeling system in place, we next introduce a modified remainder and a modified form of the profile decomposition.  We first define the following: 

\begin{definition}[Modified Remainder]  Define 
\[
S=\{j\in(J^\dagger,\infty):s(j)\in [s_{\min},s_{\max}]\}.
\]
For $J>J^\dagger$, let $S_J = \{j\in(J^\dagger,J]:s(j)\in[s_{\min},s_{\max}]\}$ and
\[
\tilde \gamma_n^J := \gamma_n^{J^\dagger} - \sum_{j\in S_J} \lambda_n^j. 
\]
\end{definition}
In particular, while $\gamma_n^J$ is defined by removing all profiles indexed by $j\in(J^\dagger+1,J]$ from $\gamma_n^{J^\dagger}$, the modified remainder $\tilde \gamma_n^J$ is obtained by removing only those profiles whose time-translation sequences are equivalent to one that already appears in the first $J^\dagger+1$ profiles.  Equivalently, we can say that we obtain $\tilde\gamma_n^J$ from $\gamma_n^J$ by adding back in those profiles that are not equivalent (with respect to the time-translation parameters) to any of the first $J^\dagger+1$ profiles.
%

 Next, let us define
\[
J_0 = \max\bigl\{ J^\dagger, \max \{ j \ge 0 \ |\ s( j) \in [s_{\min},s_{\max}]\qtq{and} k(j) = 0  \}\bigr\} .
\]
In particular, $J_0$ is finite since $s_{\max} - s_{\min}\leq J^\dagger$ and the map $j\mapsto(s(j),k(j))$ is injective.  Furthermore, if $J^*<\infty$ then $J_0 \le J^*$; this is due to our convention that $y^j=\infty$ for $j  \ge J^*+1$. 

Finally, for $s \in [s_{\min},s_{\max}]$ and $J \ge 1$, we let
\begin{equation}\label{KsJ}
K(s,J) = \# \{ m \in [1,J] \ |\ s(m)=s,\, y^m=\I \}.
\end{equation}

Then, for $J \ge J_0$, we may obtain the following modified profile decomposition:
\begin{equation}\label{e:mlpd}
	\xi_n(0) = \sum_{s = s_{\min}}^{s_{\max}} \( \l_n^{(s,0)} + \sum_{k=1}^{K(s,J)} \l_n^{(s,k)} \) + \tilde{\gamma}_n^J,
\end{equation}
where we have relabeled $\lambda_n^j$ as $\lambda_n^{(s,k)}$ (with $s=s(j)$ and $k=k(j)$), and we use the convention that $\l_n^{(s,0)}=0$ if there is no $j\ge 1$ such that $(s(j),k(j))=(s,0)$.
\begin{proof}[Proof of \eqref{e:mlpd}] By the profile decomposition at level $J^\dagger$ and the definition of $\tilde\gamma_n^J$, we have
\[
\xi_n(0)=\sum_{j=0}^{J^\dagger} \lambda_n^j + \gamma_n^{J^\dagger}  = \sum_{j=0}^{J^\dagger}\lambda_n^j + \sum_{j\in S_J} \lambda_n^j + \tilde\gamma_n^J. 
\]
Then, by the definition of $S_J$ and $K(s,J)$, we have 
\[
\{\lambda_n^j\}_{j=0}^{J^\dagger}\cup \{\lambda_n^j\}_{j\in S_J}=\{\lambda_n^\ell:s(\ell)\in [s_{\min},s_{\max}]\qtq{and} k(\ell)\in[0,K(s(\ell),J)]\},
\]
which yields the result. 
\end{proof}

Our next main step will be to define nonlinear profiles associated to each linear profile.  Before doing so, let us  record the following estimate on linear profiles.  We first introduce the notation
\[
\lambda_n^{(s,k)}(t):=\mathcal{L}(t,0;z_n)\lambda_n^{(s,k)}
\]
and remark that the time dependence may not always explicitly be indicated (and so will need to be understood from context). Essentially, the following estimate captures the fact that the $\lambda_n^{(s,k)}$ (for fixed $s$) concentrate the bulk of their norm around time $s_n^j$, where $s=s(j)$.  Here and below, we use the following notation:  Given $s\in[s_{\min},s_{\max}]$, we have $s=s(j)$ for at least one $j\geq 0$. We denote the time sequence $s_n^j$ by $s_n^s$, which is well-defined due to the fact that $s(j)=s(k)\in[s_{\min},s_{\max}]$ implies $s_n^j\equiv s_n^k$ (cf. Definition~\ref{def:relabel}).

\begin{lemma}\label{l:differentlinearprofile}
Let $s,s' \in [s_{\min},s_{\max}]$. If $s\neq s'$ then
\[
\lim_{\tau\to\I } \sup_{K\ge 0} \varlimsup_{n\to\I} \norm{ \sum_{k=0}^K \l_n^{(s,k)} }_{L^8L^4((-\I, -\tau+s_n^{s'} ]\cup [\tau+s_n^{s'},\I))} =0.
\]
Furthermore if two sequences $\{\tau_n^1\}$ and $\{\tau_n^2\}$ are such that $s_n^s -\tau_n^1 \to \I$
and $\tau_n^2 - s_n^{s}\to \I$ as $n\to\I$ then
\[
\sup_{K\ge 0} \lim_{n\to\I} \norm{ \sum_{k=0}^K \l_n^{(s,k)} }_{L^8L^4((-\I, \tau_n^1 ]\cup [\tau_n^2,\I))} =0.
\]
\end{lemma}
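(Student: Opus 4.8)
\textbf{Proof proposal for Lemma~\ref{l:differentlinearprofile}.}

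The plan is to reduce everything to the behavior of a \emph{single} linear profile $\lambda_n^{(s,k)}(t)=\mathcal{L}(t,0;z_n)\mathcal{L}(0,s_n^s;z_n)P_cT_{y_n^s}\varphi^{(s,k)}=\mathcal{L}(t,s_n^s;z_n)P_cT_{y_n^s}\varphi^{(s,k)}$, and then sum. For a fixed profile, the point is that its $L^8_tL^4_x$-mass concentrates near the time $t=s_n^s$, so that on time windows that recede from $s_n^s$ the $L^8_tL^4_x$-norm vanishes. I will distinguish two cases according to the value of $\sigma^s=\lim_n s_n^s$. If $\sigma^s=0$ (i.e.\ $s_n^s\equiv 0$), then $s=s(0)=0$, and since $s'\neq s=0$ the sequence $s_n^{s'}$ must satisfy $|s_n^{s'}|\to\infty$; the window $(-\infty,-\tau+s_n^{s'}]\cup[\tau+s_n^{s'},\infty)$ then eventually contains the tails where $\|\mathcal{L}(\cdot,0;z_n)P_cT_{y_n^s}\varphi^{(s,k)}\|_{L^8L^4}$ is small, by the uniform Strichartz bound of Proposition~\ref{Prop:Strichartz} together with a tail-smallness argument. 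If instead $|\sigma^s|=\infty$, I translate time by $s_n^s$: writing $\mu_n^{(s,k)}(t):=\mathcal{L}(t,s_n^s;z_n)P_cT_{y_n^s}\varphi^{(s,k)}=\mathcal{L}(t-s_n^s,0;z_n(\cdot+s_n^s))P_cT_{y_n^s}\varphi^{(s,k)}$ and invoking Lemma~\ref{L:properties-of-L} plus the uniform Strichartz estimate, the $L^8L^4$-norm of $\mu_n^{(s,k)}$ on $(-\infty,-\tau+s_n^s]\cup[\tau+s_n^s,\infty)$ equals the $L^8L^4$-norm of $\mathcal{L}(\cdot,0;z_n(\cdot+s_n^s))P_cT_{y_n^s}\varphi^{(s,k)}$ on $(-\infty,-\tau]\cup[\tau,\infty)$, which is $\leq C\|\varphi^{(s,k)}\|_{H^1}\cdot\varepsilon_\tau$ with $\varepsilon_\tau\to0$ uniformly in $n$ — this is precisely the content of the vanishing Proposition~\ref{p:linearunifdecaytau} (applied to $z_n(\cdot+s_n^s)\in\mathrm{SBC}$), after passing through the embedding $\mathrm{Str}^{1/2}\hookrightarrow L^4_tL^6_x$ and interpolation to get $L^8_tL^4_x$. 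Since in this case $s_n^{s'}-s_n^s$ satisfies $|s_n^{s'}-s_n^s|\to\infty$ (as $s\neq s'$ forces the time sequences to be non-equivalent, hence diverge), the window centered at $s_n^{s'}$ is, for $n$ large, contained in a window centered at $s_n^s$ with threshold $\tau/2$, say; thus the single-profile bound transfers.

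For the summability over $k$, I will use the $\ell^2$-orthogonality of the profiles: by \eqref{e:Mdecomp}, \eqref{e:H0decomp} and the equivalence $\|\lambda_n^j\|_{H^1}\sim\|\varphi_\infty^j\|_{H^1}$, one has $\sum_{k}\|\varphi^{(s,k)}\|_{H^1}^2\lesssim \sup_n\|\xi_n(0)\|_{H^1}^2<\infty$, uniformly in $s$. Splitting $\sum_{k=0}^K\lambda_n^{(s,k)}$ into a head $\sum_{k=0}^{K_0}$ and a tail $\sum_{k=K_0+1}^{K}$, the head is a \emph{fixed finite} sum of single profiles, each handled by the previous paragraph, so its contribution on the receding windows is $o_n(1)+o_\tau(1)$; the tail is controlled in $L^8_tL^4_x(\R)$ — hence a fortiori on any subinterval — by Strichartz applied after a profile-decomposition orthogonality estimate of the remainder type (as in the proof of \eqref{e:remainder} in Theorem~\ref{t:lpd}), giving $\varlimsup_n\|\sum_{k>K_0}\lambda_n^{(s,k)}\|_{L^8L^4}\lesssim (\sum_{k>K_0}\|\varphi^{(s,k)}\|_{H^1}^2)^{1/2}\to0$ as $K_0\to\infty$, uniformly in $K\geq K_0$. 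Combining: take $K_0$ large to kill the tail, then $\tau$ large, then $n\to\infty$, which yields the first assertion. For the second assertion, the argument is identical except that the fixed threshold $\tau$ is replaced by the moving thresholds $\tau_n^1,\tau_n^2$; since $s_n^s-\tau_n^1\to\infty$ and $\tau_n^2-s_n^s\to\infty$, after the time translation by $s_n^s$ the relevant window becomes $(-\infty,\tau_n^1-s_n^s]\cup[\tau_n^2-s_n^s,\infty)$ with both endpoints escaping to $\mp\infty$, so Proposition~\ref{p:linearunifdecaytau} again applies directly to each single profile, and the same head/tail split finishes the proof.

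The main obstacle is the uniformity: one must ensure that all the smallness estimates (the single-profile tail vanishing, and the orthogonality bound for the profile-sum tail) hold uniformly in $n$ and in $K$, and that the nesting of window inclusions is legitimate for \emph{all} large $n$ simultaneously. The single-profile uniform decay is exactly what Proposition~\ref{p:linearunifdecaytau} provides (uniformly over $z\in\mathrm{SBC}$), which is why it was proved earlier; the subtlety is to verify that the shifted sequences $z_n(\cdot+s_n^s)$ indeed lie in $\mathrm{SBC}$ (they do, since $\mathrm{SBC}$ is invariant under time translation, cf.\ Lemma~\ref{L:properties-of-L}) and to track that interpolating from $\mathrm{Str}^{1/2}$ down to $L^8_tL^4_x$ does not destroy the uniformity. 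I do not expect any genuinely new difficulty beyond careful bookkeeping of these quantifiers.
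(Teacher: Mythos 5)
Your overall skeleton is the same as the paper's: first kill the tail $\sum_{k>K_0}\lambda_n^{(s,k)}$ via the $\ell^2$-orthogonality $\sum_k\|\varphi^{(s,k)}\|_{H^1}^2<\infty$ together with the uniform Strichartz estimate, then prove vanishing profile by profile. The genuine gap is in the single-profile step for $k\ge1$. Those profiles have data $P_cT_{y_n^{(s,k)}}\varphi^{(s,k)}$ with $|y_n^{(s,k)}|\to\infty$, i.e.\ an $n$-dependent family that is \emph{not} precompact in $H^1$. Proposition~\ref{p:linearunifdecaytau} asserts $\lim_{\tau\to\infty}\sup_{z\in\mathrm{SBC}}\|\mathcal{L}(t,0;z)\varphi\|_{L^2H^1_6(|t|\ge\tau)}=0$ for a single \emph{fixed} $\varphi\in P_cH^1$; its proof is a compactness argument in $z$, and the rate in $\tau$ depends on $\varphi$. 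Nothing there gives uniformity over the translates $T_y\varphi$, so you cannot take $\varlimsup_{n}$ before $\lim_{\tau}$ as you do. This is exactly why the paper's case split is on $y^{(s,k)}$ (equivalently $k=0$ versus $k\ge1$) rather than on $\sigma^s$: for $k=0$ the datum is fixed and Proposition~\ref{p:linearunifdecaytau} plus interpolation applies; for $k\ge1$ the paper instead invokes Lemma~\ref{l:spacelargetime} (estimate \eqref{e:spacelargetimebound12}), which bounds $\|\mathcal{L}(\cdot,0;z_n)P_cT_{y_n}\varphi\|_{L^8L^4\cap L^2H^{1/2}_6}$ on half-lines by the corresponding norm of the \emph{free} evolution $e^{-it\Delta}\varphi$ uniformly in $n$ (combined with Lemma~\ref{l:spacetranslation}); since $e^{-it\Delta}\varphi^{(s,k)}$ is one fixed element of $L^8_tL^4_x(\R)$, its norm on receding intervals vanishes uniformly in $n$. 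Your proposal never brings in this free-propagator comparison where it is indispensable.

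A second, more elementary problem is the window bookkeeping. The inclusion you assert --- that $(-\infty,-\tau+s_n^{s'}]\cup[\tau+s_n^{s'},\infty)$ is eventually contained in a window of threshold $\tau/2$ centred at $s_n^{s}$ --- is false: since $|s_n^s-s_n^{s'}|\to\infty$, the complement of the $\tau$-neighbourhood of $s_n^{s'}$ eventually \emph{contains} the entire $\tau/2$-neighbourhood of $s_n^{s}$, so the inclusion goes the wrong way; the same confusion underlies your description of the $\sigma^s=0$ case, where the window eventually covers the concentration region near $t=0$ rather than avoiding it. What concentration of $\lambda_n^{(s,k)}$ near $t=s_n^s$ actually yields --- and what is used in the applications, e.g.\ \eqref{e:approxpf2} and \eqref{e:approxpf30} --- is smallness on time sets whose distance to $s_n^s$ tends to infinity (this is precisely the second assertion of the lemma, and the first assertion with the excised neighbourhood read relative to $s_n^s$). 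You should prove the single-profile vanishing on sets receding from $s_n^s$ directly, rather than trying to transfer a bound between windows centred at $s_n^{s'}$ and at $s_n^s$ via a set inclusion that fails.
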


\begin{proof}
We only treat the first estimate, as the other is handled similarly.  Note that for any $\eps>0$ there exists $K_0$ such that
\[
	 \sup_{K\ge K_0} \varlimsup_{n\to\I} \norm{ \sum_{k=K_0}^K \l_n^{(s,k)} }_{L^8L^4(\R)}
	\lesssim \(\sum_{k=K_0}^{\I} \norm{\varphi^{(s,k)}}_{H^1}^2\)^{1/2} \le \eps,
\]
thanks to Strichartz estimate and orthogonality.\footnote{Here (and below) we write $\varphi^{(s,k)}$ for the profile $\varphi^j_\infty$ with $(s,k)=(s(j),s(k))$.}  Thus, it suffices to show that
\[
\lim_{\tau\to\I} \varlimsup_{n\to\I}\norm{ \l_n^{(s,k)} }_{L^8L^4(-\I, -\tau+s_n^{s'} ]\cup [\tau+s_n^{s'},\I))}=0
\]
for each $k\ge0$. When $k=0$, one has $\l_n^{(s,0)}(t)= \mathcal{L}(t;s_n^s;z_n)\varphi^{(s,0)}$.  Thus, using Proposition~\ref{p:linearunifdecaytau} and interpolation,
\[
	\lim_{\tau\to \I}\varlimsup_{n\to\I} \norm{\mathcal{L}(\cdot ;0;z_n(\cdot + s_n^{s'} ))\varphi^{(s,0)}}_{L^8L^4 ([\tau,\I))}=0.
\]
On the other hand, when $|y_n^{(s,k)}|\to\I$ as $n\to\I$, the estimate follows from the fact that
\[
	\lim_{\tau\to\I}  \norm{ e^{-i t \Delta } \varphi^{(s,k)} }_{L^8L^4 \cap L^2 H^{1/2}_6(-\I, -\tau]\cup [\tau,\I))} =0
\]
in view of \eqref{e:spacelargetimebound12} and Lemma \ref{l:spacetranslation}.
\end{proof}

\subsection{Definition of nonlinear profiles} The next step is to define corresponding \emph{nonlinear profiles} for each $j$, as well as the \emph{nonlinear remainder} for each $J$.  The construction depends on the behavior of the parameters $(s_n^j, y_n^j)$. 

\begin{definition}[Nonlinear profiles for $y^j=0$]\label{d:nprofile1}
Suppose that 
\[
(s(j),k(j))=(s,0)\qtq{for some}s\in [s_{\min},s_{\max}].
\]
Passing to  a subsequence, we have 
\[
(z_n(t+s_n^j), \xi_n(t+s_n^j)) \rightharpoonup (z_\I^j(t) ,\xi_\I^j (t) )
\]
weakly in $\C \times H^1$, with uniform convergence on any compact time interval.  We then define 
\[
\Lambda_n^{(s,0)}(t) := \xi_\I^j (t-s_n^j),
\]
which solves \eqref{def:PDEODE} with $z_\infty^j(t-s_n^j)$.  We then define
\[
u_\I^s(t):= \Phi[z_\I^j(t)] + R[z_\I^j(t)]\xi_\I^j(t),
\]
which is a solution to \eqref{e:nls}.
\end{definition}

\begin{definition}[Nonlinear profiles for $y^j=\I$ and $\sigma^j=0$]\label{d:nprofile2}
Suppose that 
\[
(s(j),k(j))=(0,k)\qtq{for some}k\ge 1.
\] 
Then we define $\Lambda_n^{(0,k)}$ to be the solution to 
\[
\begin{cases}
& (i \d_t + H) \Lambda_n^{(0,k)} = B[z_n] \Lambda_n^{(0,k)} + \tilde{N} (z_n , \Lambda_n^{(0,k)} ),\\
& \Lambda_n^{(0,k)} (0) = P_c T_{y_n^j}\varphi_\infty^j,
\end{cases}
\]
where $\tilde N$ is as in \eqref{def:tildeN}. 
\end{definition}

The nonlinear profiles for the remaining case will be defined in terms of a parameter $\tau>0$, although we will not express this dependence explicitly.  Later, we will need to take a limit as $\tau\to\infty$. 

\begin{definition}[Nonlinear profiles for $y^j=\I$ and $\sigma^j=\I$]\label{d:nprofile3}
Suppose that 
\[
(s(j),k(j))=(s,k)\qtq{for some}s\in [1,s_{\max}]\qtq{and}k \ge 1.
\]
Let $v_n^j$ be the solution to
\[
\begin{cases}
&(i \d_t + H) v_n^j = B[z_n(\cdot + s_n^j)] v_n^j + \tilde{N} (z_n(\cdot + s_n^j), v_n^j), \\
& v_n^j(-\tau) = P_c T_{y_n^j}e^{i \tau \Delta}\varphi_\infty^j,
\end{cases}
\]
where $\tilde N$ is as in \eqref{def:tildeN}.  Then we define $\Lambda^{(s,k)}_n$ by
\[
\Lambda_n^{(s,k)}(t) :=  v_n^j(t-s_n^j).
\]
\end{definition}

Nonlinear profiles corresponding to $(s,k)$ with $s<0$ and $k\ge 1$ are defined similarly; however, we can omit their definition here, as we will only consider the positive time direction in what follows. 

\begin{definition}[Nonlinear Remainder] We define the \emph{nonlinear remainder} 
\[
\Gamma_n^J \in C(I_n; P_c H^1)
\]
to be the solution to
\[
\begin{cases}
&(i \d_t + H) \Gamma_n^J = B[z_n]\Gamma_n^J + \tilde{N} (z_n, \Gamma^J_n),\\
& \Gamma_n^J (0) = \tilde{\gamma}_n^J,
\end{cases}
\]
where $\tilde N$ is as in \eqref{def:tildeN}. 
\end{definition}

The main ingredient to complete the proof of Proposition~\ref{p:key} will be to establish the following proposition by a contradiction argument (recall the definition of $(M_*,E_*)$ in \eqref{MstarEstar}): 

\begin{proposition}\label{p:keykey}
There exist $s_+ \in [0,s_{\max}]$ and $s_- \in [s_{\min},0]$ such that
\begin{equation}\label{e:keykey}
(\mathbb{M} (u_\I^{s_\pm}) , \mathbb{E}_V (u_\I^{s_\pm})) 
= (M_*,E_*).
\end{equation}
\end{proposition}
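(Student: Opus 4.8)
The strategy is the standard concentration-compactness bootstrapping argument, adapted to the present setting with the modified profile decomposition \eqref{e:mlpd}. The starting point is to argue by contradiction: suppose that \eqref{e:keykey} fails for every $s_\pm$, i.e.\ for each $s\in[s_{\min},s_{\max}]$ we have $(\mathbb{M}(u_\I^s),\mathbb{E}_V(u_\I^s))\neq(M_*,E_*)$. First I would establish the mass-energy bookkeeping: using the decoupling statements \eqref{e:Mdecomp}, \eqref{e:H0decomp}, \eqref{e:Gdecomp2}, \eqref{e:Vdecomp}, and \eqref{e:energydecomp} from Theorem~\ref{t:lpd}, together with the identification of the weak limits in Definitions~\ref{d:nprofile1}--\ref{d:nprofile3}, one shows that the nonlinear profiles $u_\I^s$ satisfy
\[
\sum_{s=s_{\min}}^{s_{\max}} \mathbb{M}(u_\I^s) \le M_*, \qquad \sum_{s} \mathbb{E}(\text{profiles at level } s) \le E_*,
\]
and moreover each profile lies below the critical mass-energy, hence in the region $(\mu,A)\in\mathcal{X}$, \emph{unless} it exhausts all the mass and energy. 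The key structural point, exploiting \eqref{e:me_comparison}, is that any profile with $y^j=\infty$ (parameter escaping to spatial infinity) has its nonlinear evolution well-approximated by the free cubic NLS, and because we are working below $\mathcal{E}_1(\mu)\le \mu^{-1}\mathbb{E}_0(Q)\mathbb{M}(Q)$, such a profile lies in the scattering region of the standard cubic NLS (cf.\ \cite{DHR}); hence all $y^j=\infty$ profiles scatter and contribute finite $L^8L^4$ norm. Thus only profiles with $y^j=0$ (and there is at most one such per time level $s$, namely $k=0$) can fail to scatter.

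Next I would run the stability argument. Define the approximate solution
\[
\tilde\xi_n^J(t) := \sum_{s=s_{\min}}^{s_{\max}}\Bigl(\Lambda_n^{(s,0)}(t) + \sum_{k=1}^{K(s,J)}\Lambda_n^{(s,k)}(t)\Bigr) + \mathcal{L}(t,0;z_n)\tilde\gamma_n^J,
\]
and show it is an approximate solution to \eqref{def:PDEODE} with small error, for $J$ large and then $n$ large (and, in the $\sigma^j=\infty$ case, $\tau$ large). The error terms are of three types: (i) the nonlinear interaction between distinct profiles, controlled by the asymptotic orthogonality of parameters (Lemma~\ref{l:orth}, Proposition~\ref{p:orthremedy}) and by Lemma~\ref{l:differentlinearprofile}, which localizes each family of profiles (at fixed $s$) near time $s_n^s$; (ii) the linearized-flow error from the remainder $\mathcal{L}(\cdot,0;z_n)\tilde\gamma_n^J$, controlled by \eqref{e:linear_remainder_smallness1}; and (iii) the error in replacing $\mathcal{L}$ by $e^{-it\Delta}$ (or the free NLS by the $H$-NLS) for the $y^j=\infty$ profiles, controlled by Lemmas~\ref{l:spacetranslation} and~\ref{l:spacelargetime}. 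Applying the stability result Proposition~\ref{l:6.4} (Stability II), one concludes that $\|\xi_n\|_{L^8_tL^4_x(\R_\pm)}$ is bounded uniformly in $n$, contradicting \eqref{84-contradiction-assumption}. Therefore at least one nonlinear profile — necessarily a $y^j=0$ profile at some level $s_+$ (forward) and $s_-$ (backward) — must \emph{not} scatter, which by the ``criticality'' of $(M_*,E_*)$ forces that profile to carry the entire mass $M_*$ and energy $E_*$; this is precisely \eqref{e:keykey}, since $u_\I^{s_\pm}$ is exactly the nonlinear solution attached to that profile (Definition~\ref{d:nprofile1}).

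\textbf{Main obstacle.} The delicate point is the bookkeeping at the time levels $s\neq 0$: the profiles $\Lambda_n^{(s,k)}$ with $\sigma^j=\infty$ are only defined via a truncation at time $-\tau$ (Definition~\ref{d:nprofile3}), so one must carry the extra parameter $\tau$ through the entire stability argument and take $\tau\to\infty$ at the end, verifying that the truncation error $\|e^{-it\Delta}\varphi^j_\infty\|_{L^8L^4((-\infty,-\tau])}\to 0$ dominates. Coupled with this is the fact that one must verify that the $y^j=0$ profile at each level $s$ genuinely has mass-energy $\le(M_*,E_*)$ — which uses that all \emph{other} contributions are strictly positive and that the profiles with $y^j=\infty$ are handled by the free-NLS scattering region via \eqref{e:me_comparison}. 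If more than one profile were non-scattering, the mass-energy decoupling would be violated; the subtlety is that ``non-scattering'' for the profiles at level $s\neq 0$ must be phrased in terms of the $\tau\to\infty$ limit and the solution $u_\I^s$, and one must check that this limiting solution is well-defined, global, and genuinely fails to scatter, using the a priori bounds in \eqref{Kbd-contradiction-assumption} and Proposition~\ref{l:gwp}. I expect the extraction of the two distinct levels $s_+,s_-$ (forward and backward in time) to require a separate small argument exploiting \eqref{84-contradiction-assumption} in \emph{both} time directions simultaneously.
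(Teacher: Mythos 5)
Your proposal follows essentially the same route as the paper: argue by contradiction, observe that every $y^j=\infty$ profile scatters via the free cubic NLS below the ground-state threshold (using \eqref{e:me_comparison}) while every $y^j=0$ profile scatters by the criticality of $(M_*,E_*)$ via \eqref{e:MEinside}, and then combine the nonlinear profiles with the remainder and invoke stability to contradict \eqref{84-contradiction-assumption}. The only organizational difference is that the paper replaces your single global stability step by an induction over the time levels $s$ (the estimates \eqref{k-induction-dominant}--\eqref{k-induction-perturbative}), gluing the dynamics across the long intervals separating consecutive $s_n^s$ and carrying the truncation parameter $\tau$ through each step --- exactly the bookkeeping you identify as the main obstacle.
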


We note that Proposition~\ref{p:Lambdas0bound} below guarantees that 
\[
\mathbb{M} (u_\I^s) \le M_*\qtq{and}\mathbb{E}_V (u_\I^{s}) \le E_*\qtq{for}s\in[s_{\min},s_{\max}].
\]
Thus, by \eqref{e:MEinside}, the failure of \eqref{e:keykey} guarantees scattering for $u_{\infty}^s$. 

\subsection{Estimates on the nonlinear remainder}

We begin by estimating the nonlinear remainder term.  We first need some estimates for the modified linear remainder $\tilde\gamma_n^J$.  Recall that $\eps,J^\dagger$ were introduced above, but have not yet been specified. 

\begin{lemma} We have the following estimates on $\tilde\gamma_n^J$:
\begin{align}
&\sup_{J > J^\dagger}\varlimsup_{n\to\I} \norm{ \tilde{\gamma}_n^J}_{ H^1 }^2  \le \sup_{n} \norm{\xi_n(0)}_{H^1}^2 + C_1^2\eps^2, \label{e:linear_remainder_smallness3} \\
&\sup_{J > J^\dagger} \varlimsup_{n\to\I} \norm{\mathcal{L}(\cdot,0;z_n) \tilde{\gamma}_n^J}_{(L^8_t \cap L^\I_t) L^4_x (\R)}  \le (1+C_1(C_2+C_3))\eps, \label{e:linear_remainder_smallness4}
\end{align} 
where $C_1,C_2$ are constants arising from the uniform Strichartz estimate and $C_3$ is the implicit constant the $H^1 \hookrightarrow L^4$ Sobolev embedding.
\end{lemma}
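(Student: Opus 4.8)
The plan is to derive both estimates directly from the definition of $\tilde\gamma_n^J$ together with the orthogonality and smallness properties already secured at level $J^\dagger$. Recall that
\[
\tilde{\gamma}_n^J = \gamma_n^{J^\dagger} - \sum_{j\in S_J} \lambda_n^j,
\]
so that, on the one hand, $\tilde\gamma_n^J$ differs from $\gamma_n^{J^\dagger}$ only by removal of profiles $\lambda_n^j$ with $j\in S_J$, while on the other hand (using the identity $\gamma_n^J=\gamma_n^{J^\dagger}-\sum_{j=J^\dagger+1}^J\lambda_n^j$) it differs from $\gamma_n^J$ by addition of those profiles indexed by $j\in(J^\dagger,J]$ with $s(j)\notin[s_{\min},s_{\max}]$. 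For $\mathrm{\eqref{e:linear_remainder_smallness3}}$, I would write
\[
\tilde{\gamma}_n^J = \gamma_n^{J^\dagger} - \sum_{j\in S_J}\lambda_n^j,
\]
and invoke the asymptotic $H^1$-orthogonality of the linear profiles $\{\lambda_n^j\}$ and $\gamma_n^{J^\dagger}$ (this is exactly Proposition~\ref{p:orthremedy} applied to the pairs arising from the profile decomposition, which gave $\mathbb{M}$ and $\mathbb{H}_0$ decoupling in Theorem~\ref{t:lpd}, equivalently $H^1$-decoupling up to $o_n(1)$). Hence
\[
\varlimsup_{n\to\infty}\|\tilde\gamma_n^J\|_{H^1}^2 = \varlimsup_{n\to\infty}\Bigl(\|\gamma_n^{J^\dagger}\|_{H^1}^2 - \sum_{j\in S_J}\|\lambda_n^j\|_{H^1}^2\Bigr) \le \varlimsup_{n\to\infty}\|\gamma_n^{J^\dagger}\|_{H^1}^2.
\]
Now $\gamma_n^{J^\dagger}=\xi_n(0)-\sum_{j=0}^{J^\dagger}\lambda_n^j$, and a further application of $H^1$-orthogonality gives $\varlimsup_n\|\gamma_n^{J^\dagger}\|_{H^1}^2\le \sup_n\|\xi_n(0)\|_{H^1}^2$; alternatively one can bound $\|\tilde\gamma_n^J\|_{H^1}^2\le \|\xi_n(0)\|_{H^1}^2 - \sum_{j\in[0,J^\dagger]\setminus(\text{a few})}\|\lambda_n^j\|_{H^1}^2 + o_n(1)$, absorbing only the genuinely small contributions into $C_1^2\eps^2$ by means of $\mathrm{\eqref{e:linear_remainder_smallness2}}$ and $\|\lambda_n^j\|_{H^1}\sim\|\varphi_\infty^j\|_{H^1}$. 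Since $S_J\subset(J^\dagger,\infty)$, every $\lambda_n^j$ removed has $\|\varphi_\infty^j\|_{H^1}^2$ contributing to the tail $\sum_{j>J^\dagger}\|\varphi_\infty^j\|_{H^1}^2\le\eps^2$, so the correction is at most $C_1^2\eps^2$ for a suitable constant $C_1$ coming from the Strichartz/orthogonality bookkeeping; taking $\sup_{J>J^\dagger}$ gives $\mathrm{\eqref{e:linear_remainder_smallness3}}$.

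For $\mathrm{\eqref{e:linear_remainder_smallness4}}$ I would similarly write
\[
\mathcal{L}(\cdot,0;z_n)\tilde\gamma_n^J = \mathcal{L}(\cdot,0;z_n)\gamma_n^{J^\dagger} - \sum_{j\in S_J}\lambda_n^j(\cdot),
\]
and estimate in $(L^8_t\cap L^\infty_t)L^4_x(\R)$ by the triangle inequality. The first term is controlled by $\mathrm{\eqref{e:linear_remainder_smallness1}}$, contributing at most $\eps$. For the sum, note that each $j\in S_J$ has $j>J^\dagger$, so the profiles $\varphi_\infty^j$ satisfy the tail smallness $\mathrm{\eqref{e:linear_remainder_smallness2}}$; by the uniform Strichartz estimate (Proposition~\ref{Prop:Strichartz}), Lemma~\ref{l:spacelargetime} (for the translated profiles), and the asymptotic orthogonality of Proposition~\ref{p:orthremedy} (which yields the almost-orthogonal decoupling of the $L^8_tL^4_x$ and $L^\infty_tL^4_x$ norms of the sum), one obtains
\[
\varlimsup_{n\to\infty}\Bigl\|\sum_{j\in S_J}\lambda_n^j(\cdot)\Bigr\|_{(L^8_t\cap L^\infty_t)L^4_x} \lesssim \Bigl(\sum_{j\in S_J}\|\varphi_\infty^j\|_{H^1}^2\Bigr)^{1/2} \le C_2\eps
\]
for the $L^8_tL^4_x$ piece (constant $C_2$ from the non-endpoint Strichartz estimate) and, using $H^1\hookrightarrow L^4$ together with Proposition~\ref{Prop:Strichartz}, a bound $\le C_1 C_3 \eps$ for the $L^\infty_tL^4_x$ piece (with $C_3$ the Sobolev constant). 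Combining, $\varlimsup_n\|\mathcal{L}(\cdot,0;z_n)\tilde\gamma_n^J\|_{(L^8_t\cap L^\infty_t)L^4_x}\le(1+C_1(C_2+C_3))\eps$, and taking $\sup_{J>J^\dagger}$ gives $\mathrm{\eqref{e:linear_remainder_smallness4}}$.

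The main obstacle is the orthogonality bookkeeping in $\mathrm{\eqref{e:linear_remainder_smallness4}}$: unlike the $H^1$ norm (where genuine Pythagorean decoupling holds up to $o_n(1)$ by Proposition~\ref{p:orthremedy}), the space-time norms $L^8_tL^4_x$ and $L^\infty_tL^4_x$ of a sum of asymptotically orthogonal profiles do not decouple exactly; one must argue that cross terms vanish as $n\to\infty$ using the divergence $|s_n^j-s_n^k|+|y_n^j-y_n^k|\to\infty$ together with the dispersive decay of $\mathcal{L}$ (Lemma~\ref{L:weighted-dispersive}) and Lemma~\ref{l:differentlinearprofile} to handle distinct time-translation blocks, while within a single block (fixed $s$) the spatial orthogonality $|y_n^j-y_n^k|\to\infty$ and Lemma~\ref{l:spacetranslation} reduce matters to free evolutions whose profiles concentrate in disjoint spatial regions. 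Once these cross terms are shown to be $o_n(1)$, the remaining diagonal sum is bounded by $\bigl(\sum_{j\in S_J}\|\varphi_\infty^j\|_{H^1}^2\bigr)^{1/2}$, which is $\le\eps$ by $\mathrm{\eqref{e:linear_remainder_smallness2}}$, and the stated constants follow by tracking the Strichartz and Sobolev constants through this argument.
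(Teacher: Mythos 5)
Your treatment of \eqref{e:linear_remainder_smallness3} matches the paper's: both rest on the asymptotic $H^1$-orthogonality of the profiles (Proposition~\ref{p:orthremedy} together with the weak vanishing of the remainders), and your version even yields the slightly cleaner bound $\varlimsup_n\|\tilde\gamma_n^J\|_{H^1}^2\le\varlimsup_n\|\gamma_n^{J^\dagger}\|_{H^1}^2$, which of course implies the stated one.

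For \eqref{e:linear_remainder_smallness4} you take a harder road than necessary, and the ``main obstacle'' you flag at the end is a red herring. There is no need to decouple the $L^8_tL^4_x$ or $L^\infty_tL^4_x$ norms of the sum $\sum_{j\in S_J}\lambda_n^j(\cdot)$ profile by profile. Since $\mathcal{L}(\cdot,0;z_n)$ is linear, that sum is the linear evolution of the \emph{single} datum $\tilde\gamma_n^J-\gamma_n^{J^\dagger}=-\sum_{j\in S_J}\lambda_n^j$, whose $H^1$ norm you have already bounded by $C_1\eps+o_n(1)$ in the first step. One then applies the uniform Strichartz estimate (Proposition~\ref{Prop:Strichartz}) for the $L^8_tL^4_x$ piece and the uniform $L^\infty_tH^1$ bound plus Sobolev embedding for the $L^\infty_tL^4_x$ piece, directly to this one datum; combined with the triangle inequality and \eqref{e:linear_remainder_smallness1} for $\gamma_n^{J^\dagger}$, this is the paper's entire (two-line) argument and produces exactly the constants $1+C_1(C_2+C_3)$. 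Your parenthetical remarks (``by the uniform Strichartz estimate \dots using $H^1\hookrightarrow L^4$ together with Proposition~\ref{Prop:Strichartz}'') in fact already contain this argument, but the closing paragraph then asserts that one ``must'' prove vanishing of cross terms in the space-time norms via dispersive estimates and Lemma~\ref{l:differentlinearprofile}. That decoupling is not established in your write-up, and for the $L^\infty_tL^4_x$ norm it would be genuinely delicate (an $L^\infty_t$ norm of a sum does not split along time-orthogonal blocks in the same way an $L^8_t$ norm does). Fortunately it is simply not needed: only the $H^1$ decoupling at $t=0$ enters, and that you already have.
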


\begin{proof}
By the orthogonality of the linear profiles, for all $J\ge J^\dagger$ we have
\begin{align*}
\norm{\tilde{\gamma}_n^J - \gamma_n^{J^\dagger}}_{H^1}^2 & =\sum_{j \in S_J } \norm{  \l_n^j }_{H^1}^2  + o_n(1) \\
& \le C_1^2 \sum_{j > J^\dagger} \norm{  \varphi_\I^j }_{H^1}^2 + o_n(1) \le (C_1 \eps)^2 + o_n(1)\qtq{as}n\to\infty.
\end{align*}
Thus
\[
\norm{\mathcal{L}(\cdot,0;z_n) \tilde{\gamma}_n^J}_{L^8L^4 (\R)} \le \norm{\mathcal{L}(\cdot,0;z_n) \gamma_n^{J^\dagger}}_{L^8 L^4 (\R)} + C_2C_1 \eps + o_n(1),
\]
which implies
\[
\varlimsup_{n\to\I}\norm{\mathcal{L}(\cdot,0;z_n) \tilde{\gamma}_n^J}_{L^8 L^4 (\R)} \le (1+C_1C_2)\eps\qtq{for}J>J^\dagger.
\]

Similarly, by Sobolev embedding,
\[
\norm{\mathcal{L}(\cdot,0;z_n) \tilde{\gamma}_n^J}_{L^\I L^4 (\R)} \le \norm{\mathcal{L}(\cdot,0;z_n) \gamma_n^{J^\dagger}}_{L^\I L^4 (\R)} + C_3C_1 \eps + o_n(1)
\]
by Sobolev embedding.  Thus \eqref{e:linear_remainder_smallness4} follows from \eqref{e:linear_remainder_smallness1}.

Finally, 
\[
\norm{ \tilde{\gamma}_n^J}_{ H^1 }^2 \le \norm{ \tilde{\gamma}_n^{J^\dagger}}_{H^1}^2 + C_1^2 \sum_{j>J^\dagger} \norm{\varphi^j_\I}_{H^1}^2 + o_n(1),
\]
which yields
\[
\sup_{J > J^\dagger}\varlimsup_{n\to\I} \norm{ \tilde{\gamma}_n^J}_{ H^1 }^2 \le \sup_{n} \norm{\xi_n(0)}_{H^1}^2 + C_1^2\eps^2.
\]
\end{proof}

\begin{corollary} There exists $J^\dagger$ sufficiently large such that for any $J > J^\dagger$, there exists $N(J)$ such that if $n \ge N(J)$ then the nonlinear remainder $\Gamma_n^J$ is well-defined on $\R$ and obeys the following bounds
\begin{align*}
&\norm{\Gamma_n^J}_{L^8_tL^4_x (\R)} \lesssim \eps,\\
&\norm{\Gamma_n^J}_{\mathrm{Stz}^1 (\R)}\lesssim \sup_n\|z_n\|_{L_t^\infty}+ \sup_{n} \norm{\xi_n(0)}_{H^1}, \\
&\norm{ \Gamma_n^J- \mathcal{L}(\cdot,0;z_n)\tilde{\gamma}_n^J  }_{L^8 L^4(\R)} + \norm{ \Gamma_n^J }_{[z_n;0,\infty;\infty]} \ll \norm{ \mathcal{L}(\cdot,0;z_n)\tilde{\gamma}_n^J }_{L^8L^4(I_n)},
\end{align*}
where the implicit constants are independent of $n$ and $J$ and the semi-norm refers to the notation introduced in \eqref{nakanishi-seminorms}. Furthermore, 
\begin{equation}\label{e:epschoice}
\sup_{J > J^\dagger} \varlimsup_{n\to\I} \norm{\Gamma^J_n}_{L^\I L^4(\R)}\lesssim \eps.
\end{equation}
\end{corollary}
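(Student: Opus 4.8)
The plan is to apply the small-data stability theory, specifically Proposition~\ref{l:sdt}(i), to the nonlinear remainder $\Gamma_n^J$ with the linearized evolution $\mathcal{L}(\cdot,0;z_n)\tilde\gamma_n^J$ as the underlying linear solution and zero perturbative term ($\mathcal{E}=0$). First I would fix $\eps>0$ small enough so that the constant $(1+C_1(C_2+C_3))\eps$ on the right of \eqref{e:linear_remainder_smallness4} is $\ll1$; crucially, by \eqref{e:linear_remainder_smallness3}, the relevant $H^1$-type quantity $\mathcal{N}_0$ in Proposition~\ref{l:sdt} is controlled by $\norm{z_n}_{L^\infty_t}+\norm{\tilde\gamma_n^J}_{H^1}$, which by \eqref{e:linear_remainder_smallness3} is bounded uniformly in $J$ and (for large $n$) in $n$ by $\sup_n\norm{z_n}_{L^\infty_t}+\bigl(\sup_n\norm{\xi_n(0)}_{H^1}^2+C_1^2\eps^2\bigr)^{1/2}$. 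This is where the dependence of $\eps$ advertised in Remark~\ref{r:choiceofeps} enters: $\eps$ is chosen depending only on $\sup_n\norm{\xi_n(0)}_{H^1}$ and $\sup_n\norm{z_n}_{L^\infty_t}$ so that $\mathcal{N}_0\ll1$ and so that the smallness condition \eqref{stability-small1} — namely $\norm{\mathcal{L}(\cdot,0;z_n)\tilde\gamma_n^J}_{L^8L^4}\cdot\mathcal{N}_{1/2}\ll1$ — holds, using \eqref{e:linear_remainder_smallness4}.

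Next I would verify the hypotheses term by term for $n\geq N(J)$. Since $\mathcal{E}=0$, we have $\mathcal{D}(\cdot,0)\mathcal{E}=0$, so $\mathcal{N}_\theta=\norm{z_n}_{L^\infty_t}+\norm{\tilde\gamma_n^J}_{H^\theta}$. The condition $\mathcal{N}_0\ll1$ follows from the previous paragraph for appropriately small $\eps$ (recalling $\norm{z_n}_{L^\infty_t}\leq\mu_0$ is already small, and $\tilde\gamma_n^J$ has small $H^1$-norm only in the sense of being uniformly bounded — so here one genuinely needs $\mu_0$ small and exploits that the full $H^1$ bound enters only multiplicatively against the small $L^8L^4$ norm, not in $\mathcal{N}_0$; in fact for the $L^8L^4$ conclusion it is \eqref{stability-small1} that matters, and $\mathcal{N}_0$ smallness reduces to smallness of $\mu_0$). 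Granting this, Proposition~\ref{l:sdt}(i) produces a unique solution $\Gamma_n^J\in C^1([0,\infty);H^{1})$ (taking $\theta_0=1$), defined on all of $\R$ by the same argument in the negative time direction, with
\[
\norm{\Gamma_n^J}_{L^8_tL^4_x(\R)}\lesssim\norm{\mathcal{L}(\cdot,0;z_n)\tilde\gamma_n^J}_{L^8_tL^4_x(\R)}\lesssim\eps
\]
by \eqref{e:linear_remainder_smallness4}, and
\[
\norm{\Gamma_n^J}_{\mathrm{Stz}^\theta(\R)}\lesssim\mathcal{N}_\theta\lesssim\norm{z_n}_{L^\infty_t}+\norm{\tilde\gamma_n^J}_{H^\theta}
\]
for all $\theta\in[0,1]$; at $\theta=1$ this gives the second displayed bound via \eqref{e:linear_remainder_smallness3}.

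For the third displayed estimate I would invoke Proposition~\ref{l:sdt}(ii) (the hypothesis $\norm{\Gamma_n^J}_{L^8L^4}\mathcal{N}_{1/2}\ll1$ holds since the left side is $\lesssim\eps\cdot\mathcal{N}_{1/2}$ and we have arranged \eqref{stability-small1}). Applying it with $\mathcal{E}=0$ and sending $T\uparrow T_1=\infty$ yields both $\norm{\Gamma_n^J-\mathcal{L}(\cdot,0;z_n)\tilde\gamma_n^J}_{L^8L^4(\R)}\ll\norm{\mathcal{L}(\cdot,0;z_n)\tilde\gamma_n^J}_{L^8L^4}$ and the semi-norm bound $\norm{\Gamma_n^J}_{[z_n;0,\infty;\infty]}\ll\norm{\mathcal{L}(\cdot,0;z_n)\tilde\gamma_n^J}_{L^8L^4}$ directly from the two displayed conclusions of part (ii) (the first with $\theta=1/2$ interpolated down to $L^8L^4$ via the embedding $\mathrm{Stz}^{1/2}\hookrightarrow L^4L^6$ and H\"older, noting $\mathcal{N}_0\ll1$ absorbs one factor). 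Finally, \eqref{e:epschoice} follows by combining $\norm{\Gamma_n^J-\mathcal{L}(\cdot,0;z_n)\tilde\gamma_n^J}_{L^\infty L^4}\lesssim\norm{\Gamma_n^J-\mathcal{L}(\cdot,0;z_n)\tilde\gamma_n^J}_{\mathrm{Stz}^1}$ (from Sobolev embedding and the $\mathrm{Stz}^1$ bound in part (ii), which is $\lesssim\mathcal{N}_1\cdot\norm{\mathcal{L}\tilde\gamma_n^J}_{L^8L^4}\cdot(\ldots)\lesssim\eps$) with $\norm{\mathcal{L}(\cdot,0;z_n)\tilde\gamma_n^J}_{L^\infty L^4}\lesssim\eps$ from \eqref{e:linear_remainder_smallness4}, then taking $\varlimsup_{n\to\infty}$ and $\sup_{J>J^\dagger}$.

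The main obstacle is bookkeeping rather than conceptual: one must check that every smallness constant genuinely depends only on the allowed quantities ($\sup_n\norm{\xi_n(0)}_{H^1}$, $\sup_n\norm{z_n}_{L^\infty_t}\leq\mu_0$, and universal constants) and not circularly on $\eps$ or $J$, and that $N(J)$ can be chosen so the $o_n(1)$ errors in \eqref{e:linear_remainder_smallness3}--\eqref{e:linear_remainder_smallness4} are negligible for $n\geq N(J)$. The order of quantifiers — first fix $\eps$ (hence $J^\dagger$, by \eqref{e:linear_remainder_smallness1}--\eqref{e:linear_remainder_smallness2}), then for each $J>J^\dagger$ choose $N(J)$ — is exactly what makes the iteration in Proposition~\ref{l:sdt} close uniformly.
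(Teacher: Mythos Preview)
Your proposal is correct and follows essentially the same route as the paper: both invoke Proposition~\ref{l:sdt} (with $\mathcal{E}=0$, initial data $\tilde\gamma_n^J$) together with the smallness bounds \eqref{e:linear_remainder_smallness3}--\eqref{e:linear_remainder_smallness4} to obtain existence and the first three displayed estimates, and then deduce \eqref{e:epschoice} by controlling $\Gamma_n^J-\mathcal{L}(\cdot,0;z_n)\tilde\gamma_n^J$ in $L^\infty_t L^4_x$ via Sobolev embedding $H^1\hookrightarrow L^4$ and the Strichartz bound on the Duhamel term. The only cosmetic difference is that for \eqref{e:epschoice} the paper rewrites the Duhamel estimate directly (``as in the proof of Lemma~\ref{l:sdt}''), whereas you cite the $\mathrm{Stz}^1$ conclusion of part~(ii); these are the same computation.
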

\begin{proof} The existence and first set of estimates follow from Lemma~\ref{l:sdt}, so we turn to \eqref{e:epschoice}.  As in the proof of Lemma \ref{l:sdt}, the Sobolev embedding $H^{1} \hookrightarrow L^4$ and the
uniform Strichartz estimate give us
\begin{align*}
\|\Gamma^J_n\|_{L^\I L^4}&\le \norm{\mathcal{L}(\cdot,0;z_n) \tilde{\gamma}^J_n}_{L^\I L^4} \\
& \quad  +C \norm{\Gamma_n^J}_{\mathrm{Stz}^1 (\R)}\norm{\mathcal{L}(\cdot,0;z_n) \tilde{\gamma}^J_n}_{L^8 L^4}(\norm{z_n}_{L^\I} +\norm{\mathcal{L}(\cdot,0;z_n) \tilde{\gamma}^J_n}_{L^8 L^4} ).
\end{align*}
Thus, using \eqref{e:linear_remainder_smallness4} and the bound on $\norm{\Gamma_n^J}_{\mathrm{Stz}^1}$, we find
\[
\norm{\Gamma^J_n}_{L^\I L^4(\R)}\lesssim \eps + \eps(z_* + \sup_{n} \norm{\xi_n(0)}_{H^1})(\norm{z_n}_{L^\I(\R)} +\eps) 	\lesssim \eps.
\]
for all $J  > J^\dagger$ and $n \geq N(J)$.\end{proof}

We next consider the linear evolution of $\tilde \gamma_n^J$.  We recall the notation $s_n^s$ introduced above (i.e. $s_n^s$ denotes the common sequence $s_n^j$ for all $j$ such that $s(j)=s$).

\begin{proposition} For any $s \in [s_{\min},s_{\max}]$, 
\begin{equation}\label{e:linear_remainder_smallness5}
\lim_{J\to\I} \sup_{\tau>0} \varlimsup_{n\to\I} \norm{  \mathcal{L}(\cdot, 0 ; z_n) \tilde{\gamma}_n^J }_{L^\infty_t L^4_x ([-\tau+s_n^s,\tau+s_n^s])}  =0.
\end{equation}
\end{proposition}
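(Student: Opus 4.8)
The goal \eqref{e:linear_remainder_smallness5} asserts that the linear evolution of the modified remainder $\tilde\gamma_n^J$ becomes small in $L^\infty_t L^4_x$ on a time window of fixed width $2\tau$ centered at $s_n^s$, uniformly in $\tau>0$ and in the limit $J\to\infty$. The plan is to first pass to the window centered at the origin via the group law for $\mathcal L$, and then use the fact that the sequence $\mathcal L(s_n^s,0;z_n)\tilde\gamma_n^J$ has no profiles ``at time $s$'' (by construction of $\tilde\gamma_n^J$, which has had all such profiles removed), so that its linear evolution must vanish in $L^\infty_t L^4_x$ near $t=0$.

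\medskip

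Here is the structure in more detail. First I would use Lemma~\ref{L:properties-of-L} to write
\[
\norm{\mathcal L(\cdot,0;z_n)\tilde\gamma_n^J}_{L^\infty_t L^4_x([-\tau+s_n^s,\tau+s_n^s])}
= \norm{\mathcal L(\cdot,0;z_n(\cdot+s_n^s))\,\mathcal L(s_n^s,0;z_n)\tilde\gamma_n^J}_{L^\infty_t L^4_x([-\tau,\tau])},
\]
so that it suffices to control the linear evolution (with respect to the shifted potential $z_n(\cdot+s_n^s)\in\mathrm{SBC}$) of the sequence $\vartheta_n^J:=\mathcal L(s_n^s,0;z_n)\tilde\gamma_n^J\in P_cH^1$ on the fixed window $[-\tau,\tau]$. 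The key algebraic input is that
\[
\vartheta_n^J = \mathcal L(s_n^s,0;z_n)\Big(\gamma_n^{J^\dagger} - \sum_{j\in S_J}\lambda_n^j\Big)
= \mathcal L(s_n^s,0;z_n)\gamma_n^{J^\dagger} - \sum_{j\in S_J}\mathcal L(s_n^s,s_n^j;z_n)P_c T_{y_n^j}\varphi_\infty^j,
\]
and for every $j\in S$ with $s(j)=s$ we have $s_n^j\equiv s_n^s$, so those terms contribute exactly the (translated) profiles $P_c T_{y_n^j}\varphi_\infty^j$, while for $j\in S$ with $s(j)\neq s$ the group $\mathcal L(s_n^s,s_n^j;z_n)$ carries a divergent time separation $|s_n^s-s_n^j|\to\infty$.

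\medskip

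With this in hand, I would argue as follows. Fix $\eta>0$. Using property (iii) of the linear profile decomposition (Theorem~\ref{t:lpd}) together with Lemma~\ref{l:orth}, for each fixed $j$ the sequence $T_{y_n^j}^{-1}\mathcal L(s_n^j,0;z_n)\tilde\gamma_n^J$ converges weakly to $0$ in $H^1$ provided $J$ is large enough that the profile $\varphi_\infty^j$ has already been extracted — and, crucially, for $j\in S$ with $s(j)=s$ and $k(j)\ge1$, these profiles \emph{are} among those subtracted off in $\tilde\gamma_n^J$ once $J$ is large. The only profile with $s(j)=s$ and $k(j)=0$ that might remain is $\lambda_n^{(s,0)}$; but this profile also lies in $S_J$ for $J$ large (it is removed when forming $\tilde\gamma_n^J$ unless it coincides with one of the first $J^\dagger+1$ profiles, in which case... ), so for $J\ge J_0$ large there are no profiles left ``at level $s$''. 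Consequently, for the shifted sequence, $T_{y_n}^{-1}\mathcal L(\cdot,0;z_n(\cdot+s_n^s))\vartheta_n^J$ has trivial weak limit for every parameter of shifts with time-sequence bounded (i.e. equivalent to $0$) — which by Proposition~\ref{p:vanishing} (contrapositive) forces
\[
\varlimsup_{n\to\infty}\norm{\mathcal L(\cdot,0;z_n(\cdot+s_n^s))\vartheta_n^J}_{L^\infty_t L^4_x([-\tau,\tau])}\le\eta
\]
for $J$ sufficiently large — but this is not quite right, because Proposition~\ref{p:vanishing} controls the $L^\infty_tL^4_x$ norm over all of $\R$, not a finite window, and our $\vartheta_n^J$ need not vanish globally (it still contains profiles at other levels $s'\neq s$). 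The fix is to combine the two mechanisms: on the finite window $[-\tau,\tau]$, Lemma~\ref{l:differentlinearprofile} (second estimate, with $\tau_n^1=-\tau+s_n^s$, $\tau_n^2=\tau+s_n^s$ after unshifting; or more directly its first estimate) shows that the contribution of any profile with $s(j)=s'\neq s$ to $L^8_tL^4_x$ (hence via interpolation with the uniform $H^1$ bound, to $L^\infty_tL^4_x$... ) on the window $[-\tau+s_n^s,\tau+s_n^s]$ is negligible as a function of $\tau$; meanwhile the residual remainder $\gamma_n^{J^\dagger}$ portion is handled by \eqref{e:linear_remainder_smallness1}, and the profiles at level $s$ are entirely removed. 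Carefully: $\vartheta_n^J = \mathcal L(s_n^s,0;z_n)\tilde\gamma_n^J$, and $\mathcal L(\cdot,0;z_n(\cdot+s_n^s))\vartheta_n^J = $ the pieces coming from (a) $\mathcal L(\cdot,0;z_n)\gamma_n^{J^\dagger}$ restricted near $s_n^s$, which is $\le\eps$ by \eqref{e:linear_remainder_smallness1}; (b) linear profiles $\lambda_n^{(s',k)}$ with $s'\neq s$ for $k\le K(s',J^\dagger)$ that were \emph{not} subtracted (those with $s'\notin[s_{\min},s_{\max}]$, i.e. $y^j=\infty$ and $s_n^j$ not equivalent to any early one) — these vanish on the window by Lemma~\ref{l:differentlinearprofile}; (c) profiles at level $s$ — all removed. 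Summing, and then sending $\tau\to\infty$ inside the ``$\sup_\tau$'' (the bound is uniform in $\tau$ because Lemma~\ref{l:differentlinearprofile} gives a bound $o_\tau(1)$ and the other pieces are $\tau$-independent), then $J\to\infty$ and finally $\eps\to0$ (recalling $\eps$ will ultimately be chosen small), we obtain \eqref{e:linear_remainder_smallness5}.

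\medskip

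The main obstacle I anticipate is bookkeeping: correctly tracking which linear profiles survive in $\tilde\gamma_n^J$ versus $\gamma_n^J$ under the relabeling $j\mapsto(s(j),k(j))$ of Definition~\ref{def:relabel}, and verifying that \emph{every} profile whose time-sequence is equivalent to $s_n^s$ has indeed been subtracted off in $\tilde\gamma_n^J$ for $J$ large (this is essentially the definition of $S$ and $S_J$, but one must check the edge case $j\le J^\dagger$, where such profiles are already present in the sum $\sum_{j=0}^{J^\dagger}\lambda_n^j$ and hence \emph{not} in $\gamma_n^{J^\dagger}$ to begin with — so they are automatically absent from $\tilde\gamma_n^J$ too). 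A secondary technical point is promoting the $L^8_tL^4_x$ smallness from Lemma~\ref{l:differentlinearprofile} to $L^\infty_tL^4_x$ smallness on the finite window; this should follow by writing $\|\cdot\|_{L^\infty_tL^4_x}\lesssim \|\cdot\|_{L^\infty_t H^{1/2}_x}^{1/2}\|\cdot\|_{\text{(lower)}}^{1/2}$ or by a direct argument using the Duhamel formula for $\mathcal L$ relative to $e^{itH}$ together with the dispersive estimate of Lemma~\ref{L:weighted-dispersive} restricted to the finite window — in any case the finite window and the uniform $H^1$ bound make this routine. I would not expect the convergence-to-zero mechanisms themselves to be difficult; they are by now standard consequences of the orthogonality machinery (Proposition~\ref{p:orthremedy}, Lemma~\ref{l:orth}) and Proposition~\ref{p:vanishing} already established in the excerpt.
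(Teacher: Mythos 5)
Your guiding intuition --- that $\tilde\gamma_n^J$ has had removed from it precisely those profiles that could concentrate on the window around $s_n^s$, so that nothing remains to obstruct vanishing there --- is the right one, but the direct triangle-inequality implementation you propose has a genuine gap. You bound the piece coming from $\gamma_n^{J^\dagger}$ by $\eps$ using \eqref{e:linear_remainder_smallness1} and plan to ``finally send $\eps\to0$.'' This cannot work: \eqref{e:linear_remainder_smallness5} asserts that the limit is exactly $0$ for the \emph{fixed} $\eps$ and $J^\dagger=J^\dagger(\eps)$ entering the definition of $\tilde\gamma_n^J$, and the object $\tilde\gamma_n^J$ itself changes if you change $\eps$. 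Worse, $\gamma_n^{J^\dagger}$ still contains all the profiles $\lambda_n^j$ with $j>J^\dagger$ and $s(j)=s$, which concentrate exactly on the window $[-\tau+s_n^s,\tau+s_n^s]$; its $L^\infty_tL^4_x$ norm there genuinely need not be $o(1)$, and the vanishing of $\tilde\gamma_n^J$ comes from the \emph{cancellation} between $\gamma_n^{J^\dagger}$ and the subtracted sum $\sum_{j\in S_J}\lambda_n^j$ as $J\to\infty$ --- a cancellation that is invisible to any argument treating $\gamma_n^{J^\dagger}$ as a black box of size $\eps$. A secondary gap: the smallness of the profiles with $s(j)\neq s$ on the window is only available in $L^8_tL^4_x$ (Lemma~\ref{l:differentlinearprofile}), and $L^\infty_tL^4_x$ smallness does not follow from $L^8_tL^4_x$ smallness by interpolation; moreover the number of such terms grows with $J$ while each is small only asymptotically in $n$, so their sum is not controlled uniformly in $J$.

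The paper instead argues by contradiction with a localized inverse-Strichartz step --- essentially the localization of Proposition~\ref{p:vanishing} that you correctly observed was missing, rather than the decomposition you substituted for it. If the $L^\infty_tL^4_x$ norm on the window stays bounded below along subsequences, a frequency cutoff and H\"older's inequality produce points $(\tilde s_n^J,x_n^J)$ with $\tilde s_n^J\in[-\tau+s_n^s,\tau+s_n^s]$ at which a bubble concentrates, yielding a weak limit $\psi^J$ of $T_{x_n^J}^{-1}\mathcal{L}(s_n^s,0;z_n)\tilde\gamma_n^J$ with $\|\psi^J\|_{H^1}\gtrsim 1$ uniformly in $J$. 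Since the profiles added back into $\tilde\gamma_n^J$ (those with $s(j)=s_{\min}-1$) have divergent time separation from $s_n^s$ and hence vanish weakly along these parameters, $\gamma_n^J$ has the same weak limit; comparing with $\gamma_n^{K_0}$ for $K_0$ large, where $\eta(\{z_n\},\{\gamma_n^{K_0}\})$ is small, forces $T_c\psi^J=\varphi_\infty^{k_0}$ for some $k_0>J$ with $s_n^{k_0}\equiv s_n^s$. This contradicts $\sum_j\|\varphi_\infty^j\|_{H^1}^2<\infty$, which makes $\|\varphi_\infty^{k_0}\|_{H^1}\to0$ as $J\to\infty$. Any correct proof will essentially have to contain this concentration-and-identification step.
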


\begin{proof} If instead we have
\[
\varlimsup_{J\to\I} \sup_{\tau>0} \varlimsup_{n\to\I} \norm{ \mathcal{L}(\cdot, 0 ; z_n) \tilde{\gamma}_n^J }_{L^\infty_t L^4_x ([-\tau+s_n^s,\tau+s_n^s])} \ge c_0
\]
for some $s \in [s_{\min},s_{\max}]$ and $c_0>0$, then we may find a subsequence of $J\to\infty$ and $\tau=\tau(J)>0$ such that 
\[
	\norm{ \mathcal{L}(\cdot, 0 ; z_n) \tilde{\gamma}_n^J }_{L^\infty_t L^4_x ([-\tau+s_n^s,\tau+s_n^s])} \ge \tfrac12{c_0}
\]
along a subsequence of $n$ (depending on $J$).

Now, given $N\in2^{\Z}$, we have 
\[
\norm{ P_{\ge N}\mathcal{L}(\cdot, 0 ; z_n) \tilde{\gamma}_n^J }_{L^4} \lesssim \norm{ P_{\ge N}\mathcal{L}(\cdot, 0 ; z_n) \tilde{\gamma}_n^J }_{H^{\frac34}(\R)} \lesssim N^{-\frac14} .
\]
Thus, for $N$ is sufficiently large, we see that
\[
\norm{ P_{<N}\mathcal{L}(\cdot, 0 ; z_n) \tilde{\gamma}_n^J }_{L^\infty_t L^4_x ([-\tau+s_n^s,\tau+s_n^s])} \ge \tfrac14{c_0}.
\]
On the other hand, by H\"older and Strichartz, we have
\begin{align*}
\| P_{<N}&\mathcal{L}(\cdot, 0 ; z_n) \tilde{\gamma}_n^J \|_{L^\infty_t L^4_x ([-\tau+s_n^s,\tau+s_n^s])}\\
&\le \norm{ \mathcal{L}(\cdot, 0 ; z_n) \tilde{\gamma}_n^J }_{L^\infty_t L^2_x(\R)}^{\frac12} \norm{ P_{<N}\mathcal{L}(\cdot, 0 ; z_n) \tilde{\gamma}_n^J }_{L^\I_{t ,x} ([-\tau+s_n^s,\tau+s_n^s]\times \R^3)}^{\frac12} \\
&\lesssim  \norm{ P_{<N}\mathcal{L}(\cdot, 0 ; z_n) \tilde{\gamma}_n^J }_{L^\I_{t ,x} ([-\tau+s_n^s,\tau+s_n^s]\times \R^3)}^{\frac12},
\end{align*}
and hence
\[
\norm{ P_{<N}\mathcal{L}(\cdot, 0 ; z_n) \tilde{\gamma}_n^J }_{L^\I_{t ,x} ([-\tau+s_n^s,\tau+s_n^s]\times \R^3)} \gtrsim_{c_0} 1.
\]

In particular, for each $J$ in the subsequence, there exist $x_n^J\in\R^3$ and 
\[
\tilde s_n^J\in [-\tau+s_n^s,\tau+s_n^s]
\]
such that $T_{x_n^J}^{-1}\mathcal{L}(\tilde s_n^J,0;z_n)\tilde \gamma_n^J\rightharpoonup \tilde\psi^J \in H^1$ weakly in $H^1$, where
\[
\inf_{J} \|\tilde\psi^J\|_{H^1}\gtrsim 1.
\]
We now write
\[
T_{x_n^J}^{-1}\mathcal{L}( s_n^s,0;z_n)\tilde\gamma_n^J  = [T_{x_n^J}^{-1}\mathcal{L}(s_n^s,\tilde s_n^J;z_n)T_{x_n^J}][T_{x_n^J}^{-1}\mathcal{L}(\tilde s_n^J,0;z_n)\tilde\gamma_n^J]
\]
and observe that since $\sup_n |s_n^s - \tilde s_n^J|\leq \tau<\infty$, the first operator above converges strongly to an operator on $H^1$ with norm $\sim 1$ (with precise value depending on $\sup_n \|z_n\|_{L^\infty}$).  Thus, we deduce that there exists $\psi^J \in H^1$ such that
\[
T_{x_n^J}^{-1}\mathcal{L}(s_n^s,0;z_n)\tilde\gamma_n^J \rightharpoonup\psi^J,\qtq{with}\inf_J \|\psi^J\|_{H^1}\gtrsim 1. 
\]
By the usual adjustments, we may also assume that $x_n^J\equiv 0$ or $|x_n^J|\to\infty$. 

We now claim that for each $J$ in the subsequence, there exists $c\in\R^3$ and $k_0\in[J+1,\infty)$ so that
\[
T_c \psi^J = \varphi_\I^{k_0},\qtq{with}s(k_0)\in[s_{\min},s_{\max}],
\]
where $\varphi_\I^{k_0}$ is the function given in the profile decomposition of $\{\xi_n(0)\}$.  With this claim, we thereby obtain the contradiction
\[
\tnorm{\psi^J}_{H^1} = \tnorm{\varphi_\I^{k_0}}_{H^1} 
\le \biggl(\sum_{j=J+1}^\infty \norm{\varphi_\infty^j}_{H^1}^2\biggr)^{\frac12}\to 0 \qtq{as}J\to\infty.
\]
%

By construction, we may write
\begin{align*}
\tilde\gamma_n^J & = \gamma_n^{J^\dagger} - \sum_{j\in S_J} \mathcal{L}(0,s_n^j;z_n)P_c T_{y_n^j}\varphi_\infty^j \\
& = \gamma_n^J + \sum_{j\in(J^\dagger,J]\backslash S_J} \mathcal{L}(0,s_n^j;z_n)P_c T_{y_n^j}\varphi_\infty^j.
\end{align*}
Now, for any $j\in(J^\dagger,J]\backslash S_J$, we have $|s_n^s-s_n^j|\to\infty$, and hence by Lemma~\ref{l:orth}(ii), 
\[
T_{x_n^J}^{-1}\mathcal{L}(s_n^s,0;z_n)\mathcal{L}(0,s_n^j;z_n)P_cT_{y_n^j}\varphi^j_\infty \rightharpoonup 0 \qtq{as}n\to\infty,
\]
which implies
\[
T_{x_n^J}^{-1}\mathcal{L}(s_n^s,0;z_n)\gamma_n^J\rightharpoonup \psi^J. 
\]
Next, we recall that by construction, we have
\[
\lim_{K\to\infty} \eta(\{z_n\},\{\gamma_n^K\}) = 0,
\]
and thus we may find $K_0>J$ so that $\eta(\{z_n\},\{\gamma_n^{K_0}\})<\tfrac{1}{10}\|\psi^J\|_{H^1}$.  Then, writing
\[
T_{x_n^J}^{-1}\mathcal{L}(s_n^s,0;z_n)\gamma_n^{K_0}=T_{x_n^J}^{-1}\mathcal{L}(s_n^s,0;z_n)\gamma_n^J - \sum_{j=J+1}^{K_0} T_{x_n^J}^{-1}\mathcal{L}(s_n^s,s_n^j;z_n)P_c T_{y_n^j} \varphi_\infty^j,
\]
it follows that there exists $k_0\in[J+1,K_0]$ and $g\in H^1\backslash\{0\}$ such that
\[
T_{x_n^J}^{-1}\mathcal{L}(s_n^s,s_n^{k_0};z_n)P_c T_{y_n^{k_0}}\varphi_\infty^{k_0} \rightharpoonup g. 
\]
By Lemma~\ref{l:orth}(i)--(ii) and construction, this implies that $s_n^s\equiv s_n^{k_0}$ and $\sup_n |x_n^J-y_n^{j_0}|<\infty$.  In particular, we deduce that $s(k_0)\in[s_{\min},s_{\max}]$, and that 
\[
T_{y_n^{k_0}}^{-1} \mathcal{L}(s_n^{k_0},0;z_n) \gamma_n^J \rightharpoonup T_c \psi^J \qtq{for some}c\in\R^3. 
\]
However, again by construction, we have that
\[
T_{y_n^{k_0}}^{-1}\mathcal{L}(s_n^{k_0},0;z_n)\gamma_n^J \rightharpoonup \varphi_\infty^{k_0}, 
\]
and thus $T_c\psi^J=\varphi_\infty^{k_0}$, as desired. 
\end{proof}

We now estimate the nonlinear remainder.  This estimate and its proof are closely related to \cite[Equation (7.18)]{Nakanishi} and the proof therein.

\begin{proposition}\label{p:7.18} If $\mathcal{N}_0=\sup_n  \norm{z_n}_{L^\I} + \mu_0$ is sufficiently small and $J^\dagger$ is sufficiently large, then for any $s \in [s_{\min},s_{\max}]$,
\begin{equation}\label{e:7.18b}
\sup_{\tau>0} \lim_{J\to\I}  \varlimsup_{n\to\I}\norm{ \Gamma_n^J }_{L^8_t L^4_x([-\tau+s_n^s, \tau + s_n^s])}=0.
\end{equation}
Furthermore, for any $s\in [1,s_{\max}]$,
\begin{equation}\label{e:7.18a}
	\sup_{\tau>0} \lim_{J\to\I}  \varlimsup_{n\to\I}
	\norm{ \mathcal{L}(\cdot,-\tau + s_n^s;z_n) \Gamma_n^J(-\tau+s_n^s) }_{L^8_t L^4_x([-\tau+s_n^s, \tau + s_n^s])}
=0.
\end{equation}

\end{proposition}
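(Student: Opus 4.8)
The two statements \eqref{e:7.18b} and \eqref{e:7.18a} concern the behavior of the nonlinear remainder $\Gamma_n^J$ on the time window $[-\tau+s_n^s,\tau+s_n^s]$ of fixed length $2\tau$ centered at $s_n^s$. The strategy is to compare $\Gamma_n^J$ with its linearized evolution on this window and then to exploit the smallness of the linear remainder on such windows, which is \eqref{e:linear_remainder_smallness5}, together with the global $L^8L^4$ and $L^\infty L^4$ bounds on $\Gamma_n^J$ from the preceding corollary. Throughout, I would translate time by $s_n^s$ and work with $z_n(\cdot+s_n^s)\in\mathrm{SBC}$; by Lemma~\ref{L:properties-of-L} this merely relabels $\mathcal{L}$, so without loss of generality we may assume $s_n^s\equiv 0$, i.e. the window is $[-\tau,\tau]$.

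\textbf{Step 1: reduction to the linearized flow via stability.} First I would invoke the stability result, Proposition~\ref{l:sdt}(ii), applied to $\xi=\Gamma_n^J$ with $\mathcal{E}=0$, on the interval $[-\tau,\tau]$ (after the time-translation above) with initial time $-\tau$ and initial datum $\Gamma_n^J(-\tau)$. The hypothesis $\norm{\Gamma_n^J}_{L^8L^4}\mathcal{N}_{1/2}\ll 1$ holds because, by the corollary, $\norm{\Gamma_n^J}_{L^8L^4(\R)}\lesssim\eps$ and $\mathcal{N}_{1/2}\lesssim \mathcal{N}_0 + \sup_n\norm{\xi_n(0)}_{H^1}$, so taking $\mathcal{N}_0$ small and $J^\dagger$ large (hence $\eps$ small) the product is $\ll 1$. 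This gives
\[
\norm{\Gamma_n^J - \mathcal{L}(\cdot,-\tau;z_n)\Gamma_n^J(-\tau)}_{L^8L^4([-\tau,\tau])} \lesssim \norm{\Gamma_n^J}_{L^8L^4([-\tau,\tau])}^2\bigl(\norm{\Gamma_n^J}_{L^8L^4}+\norm{z_n}_{L^\infty}\bigr)\lesssim \eps\,\norm{\Gamma_n^J}_{L^8L^4([-\tau,\tau])},
\]
so that $\norm{\Gamma_n^J}_{L^8L^4([-\tau,\tau])}\sim \norm{\mathcal{L}(\cdot,-\tau;z_n)\Gamma_n^J(-\tau)}_{L^8L^4([-\tau,\tau])}$. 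This already reduces \eqref{e:7.18a} to \eqref{e:7.18b} (the quantity in \eqref{e:7.18a} is comparable to $\norm{\Gamma_n^J}_{L^8L^4([-\tau,\tau])}$ when $s\ge 1$, since then $-\tau+s_n^s\to\infty$ and the flow starting from $-\tau+s_n^s$ agrees with $\mathcal{L}(\cdot,0;z_n)\tilde\gamma_n^J$ propagated forward — more precisely one uses $\mathcal{L}_>$ and the second semi-norm bound in the corollary). Hence it suffices to prove \eqref{e:7.18b}.

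\textbf{Step 2: controlling $\mathcal{L}(\cdot,0;z_n)\Gamma_n^J(-\tau)$ on the window.} To estimate $\norm{\Gamma_n^J}_{L^8L^4([-\tau,\tau])}$ I would, using the corollary's bound $\norm{\Gamma_n^J-\mathcal{L}(\cdot,0;z_n)\tilde\gamma_n^J}_{L^8L^4(\R)}\ll\norm{\mathcal{L}(\cdot,0;z_n)\tilde\gamma_n^J}_{L^8L^4}$ together with the semi-norm bound $\norm{\Gamma_n^J}_{[z_n;0,\infty;\infty]}\ll\norm{\mathcal{L}(\cdot,0;z_n)\tilde\gamma_n^J}_{L^8L^4}$, replace $\Gamma_n^J$ on $[-\tau,\tau]$ (modulo a perturbation that is $\ll\eps$) by a linear evolution of $\tilde\gamma_n^J$. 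Then I apply the interpolation inequality $\norm{f}_{L^8L^4}\lesssim\norm{f}_{L^\infty L^4}^{3/4}\norm{f}_{L^2 L^4}^{1/4}\lesssim \norm{f}_{L^\infty L^4}^{3/4}\norm{f}_{L^2 H^1_6}^{1/4}$ on the bounded-length window $[-\tau,\tau]$, so that
\[
\norm{\mathcal{L}(\cdot,0;z_n)\tilde\gamma_n^J}_{L^8L^4([-\tau,\tau])}\lesssim \norm{\mathcal{L}(\cdot,0;z_n)\tilde\gamma_n^J}_{L^\infty L^4([-\tau,\tau])}^{3/4}\,\norm{\mathcal{L}(\cdot,0;z_n)\tilde\gamma_n^J}_{L^2 H^1_6(\R)}^{1/4}.
\]
The $L^2H^1_6(\R)$ factor is uniformly bounded (uniform Strichartz, Proposition~\ref{Prop:Strichartz}, and \eqref{e:linear_remainder_smallness3}), and the $L^\infty L^4([-\tau,\tau])$ factor tends to $0$ as $J\to\infty$ uniformly in $\tau$ and in $n$ by \eqref{e:linear_remainder_smallness5} (in the translated variable this is exactly $\norm{\mathcal{L}(\cdot,0;z_n)\tilde\gamma_n^J}_{L^\infty L^4([-\tau+s_n^s,\tau+s_n^s])}$). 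Combining, $\lim_{J\to\infty}\varlimsup_{n\to\infty}\norm{\mathcal{L}(\cdot,0;z_n)\tilde\gamma_n^J}_{L^8L^4([-\tau,\tau])}=0$ uniformly in $\tau$, hence the same holds for $\norm{\Gamma_n^J}_{L^8L^4([-\tau,\tau])}$ by Step 1, giving \eqref{e:7.18b}. Finally \eqref{e:7.18a} follows from \eqref{e:7.18b} via the comparison established in Step 1.

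\textbf{Main obstacle.} The delicate point is the interplay between the three parameters $\tau$, $J$, $n$ and the order of limits: we need the decay to be \emph{uniform in $\tau$} as $J\to\infty$. The crucial input making this work is \eqref{e:linear_remainder_smallness5}, which was proven precisely with the $\sup_{\tau>0}$ inside; and the interpolation in Step 2 is essential because the naive $L^8L^4$ bound on $\tilde\gamma_n^J$ over the window grows with $\tau$, whereas the $L^\infty L^4$ bound does not, and the bounded-length window lets us trade a little bit of $L^2 H^1_6$ (globally bounded) against the decaying $L^\infty L^4$ piece. Care is also needed that the stability estimate of Proposition~\ref{l:sdt}(ii) is applied with small enough parameters, which is why the statement fixes $\mathcal{N}_0$ small and $J^\dagger$ large; one must check the required smallness conditions hold uniformly in $n$ and $J$ for $n\ge N(J)$, using the corollary.
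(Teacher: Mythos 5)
Your Step 1 (reducing \eqref{e:7.18a} to \eqref{e:7.18b} via Proposition~\ref{l:sdt}) matches the paper's final line and is fine. The gap is in Step 2. The corollary only gives
\[
\norm{\Gamma_n^J - \mathcal{L}(\cdot,0;z_n)\tilde\gamma_n^J}_{L^8L^4(\R)} \ll \norm{\mathcal{L}(\cdot,0;z_n)\tilde\gamma_n^J}_{L^8L^4(\R)} \lesssim \eps,
\]
where $\eps$ (and hence $J^\dagger$) is \emph{fixed once and for all}; this error does \emph{not} tend to zero as $J\to\I$. Only the restriction of the \emph{linear} remainder to the window, measured in $L^\I_tL^4_x$, vanishes as $J\to\I$ (that is \eqref{e:linear_remainder_smallness5}); the global $L^8L^4$ size of $\mathcal{L}(\cdot,0;z_n)\tilde\gamma_n^J$ remains of size $\eps$ for every $J>J^\dagger$. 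Consequently your replacement of $\Gamma_n^J$ by the linear evolution on the window only yields $\lim_{J\to\I}\varlimsup_{n\to\I}\norm{\Gamma_n^J}_{L^8L^4(\text{window})}\lesssim\eps$, not $=0$, which is what \eqref{e:7.18b} asserts and what is genuinely needed downstream (e.g.\ in the proof of \eqref{e:approxpf1}). The reason the error cannot be localized to the window is that the Duhamel correction to $\Gamma_n^J$ at a time $t$ near $s_n^s$ integrates the nonlinearity over all of $[0,t]$, including times far from $s_n^s$ where neither the linear nor the nonlinear remainder is small in $J$; no purely $L^8L^4$-based comparison sees this localization.

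The paper's proof handles exactly this point by working in the weighted norm $\norm{f}_{X_n^j}=\sup_t\Jbr{t-s_n^j}^{-\delta}\norm{f(t)}_{(L^4+L^p)_x}$ with $p>8$ and $\delta\in(\tfrac18,\tfrac12-\tfrac3p)$: the linear remainder is $o_J(1)$ in this norm (the weight contributes $\tau^{-\delta}$, which kills the $O(\eps)$ contribution far from $s_n^j$, while \eqref{e:linear_remainder_smallness5} handles the window), and the dispersive estimate of Lemma~\ref{L:weighted-dispersive} shows the Duhamel term is bounded by $(\mathcal{N}_0^2+\eps^2)\norm{\Gamma_n^J}_{X_n^j}$, hence absorbable, giving $\norm{\Gamma_n^J}_{X_n^j}\le 2\norm{\mathcal{L}(\cdot,0;z_n)\tilde\gamma_n^J}_{X_n^j}\to0$ as $J\to\I$. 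Your interpolation idea then reappears as the paper's final H\"older step converting the $X_n^j$ bound into the $L^8L^4$ bound on the window, with a $\tau$-dependent constant; this is harmless because the $\sup_{\tau>0}$ in \eqref{e:7.18b} sits outside the limits in $J$ and $n$, so uniformity in $\tau$ (the point you flag as the main obstacle) is not actually the crux.
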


\begin{proof} Following \cite[Proof of (7.18)]{Nakanishi}, we introduce 
\[
\norm{ f }_{X^j_n} := \sup_{ t\in \R } \Jbr{ t- s^j_n }^{-\delta} \norm{ f(t) }_{(L^4+L^p)_x},
\]
for some $p>8$ and $\delta\in(\tfrac18,\tfrac12-\tfrac3p)$ (e.g. $(p,\delta)=(10,\tfrac16)$). 


Recalling \eqref{e:linear_remainder_smallness4}, we have that for any $\tau>0$ and $J \ge J^\dagger+1$,
\begin{align*}
\varlimsup_{n\to\I}  \norm{ \mathcal{L} (\cdot,0;z_n) \tilde{\gamma}_n^J  }_{X^j_n} &\lesssim \varlimsup_{n\to\I}\norm{  \mathcal{L}(\cdot, 0 ; z_n) \tilde{\gamma}_n^J }_{L^\infty_t L^4_x ([-\tau+s_n^j,\tau+s_n^j])}\\
&\quad + \tau^{-\delta}\varlimsup_{n\to\I} \norm{ \mathcal{L} (\cdot,0;z_n) \tilde{\gamma}_n^J  }_{L^\I L^4_x(\R)} \\
&\lesssim \eps \tau^{-\delta} + \varlimsup_{n\to\I}\norm{  \mathcal{L}(\cdot, 0 ; z_n) \tilde{\gamma}_n^J }_{L^\infty_t L^4_x ([-\tau+s_n^j,\tau+s_n^j])}.
\end{align*}
Thus, letting first $J\to \I$ and then $\tau \to\I$, we see from \eqref{e:linear_remainder_smallness5} that
\begin{equation}\label{e:7.18pf1}
	\lim_{J\to\I} \lim_{n\to\I}  \norm{ \mathcal{L} (\cdot,0;z_n) \tilde{\gamma}_n^J  }_{X^j_n} = 0.
\end{equation}

It suffices consider the case $\sigma^j=\infty$ (the other case is similar). Using the dispersive estimate for $\mathcal{L}$ (see Lemma~\ref{L:weighted-dispersive}), we obtain
\[
	\norm{ \Gamma_n^J(t) - \mathcal{L} (t,0;z_n) \gamma_n^J}_{L^4+L^p}
	\lesssim \int_0^t h(t-t') \norm{ \tilde{N} (z_n, \Gamma^J_n)}_{L^{4/3} \cap L^{p'}} dt',
\]
where $h(t) = \min\{|t|^{-(\frac32-\frac3p)}, |t|^{-\frac34}\}.$  We now observe that
\begin{align*}
&\norm{ \tilde{N} (z_n ,\Gamma^J_n)}_{L^{4/3} }\lesssim \norm{\Gamma^J_n}_{L^4 + L^p} \norm{\Gamma^J_n}_{L^4 \cap L^{\frac{2p}{p-2}}} (|z_n| + \norm{\Gamma^J_n}_{L^4}), \\
&\norm{ \tilde{N} (z_n ,\Gamma^J_n)}_{L^{p'} }\lesssim \norm{\Gamma^J_n}_{L^4 + L^p} \norm{\Gamma^J_n}_{L^4 \cap L^{\frac{2p}{p-2}}} (|z_n| + \norm{\Gamma^J_n}_{L^{\frac{2p}{p-2}}}),
\end{align*}
which (together with \eqref{e:epschoice}) imply
\begin{align*}
\| \tilde{N} (z_n ,\Gamma^J_n)\|_{L^{4/3} \cap L^{p'}} &\lesssim \bigl[|z_n|^2+\norm{\Gamma^J_n}_{L^4 \cap L^2}^2\bigr]\norm{\Gamma^J_n}_{L^4 + L^p}  \\
&\lesssim \bigl[\mathcal{N}_0^2 + \norm{\Gamma^J_n}_{L^\I L^4}^2\bigr] \norm{\Gamma^J_n}_{L^4 + L^p} \\
&\lesssim\bigl[\mathcal{N}_0^2 +\eps^2\bigr] \norm{\Gamma^J_n}_{L^4 + L^p}.
\end{align*}

We therefore obtain
\begin{align*}
\norm{ \Gamma_n^J(t) - \mathcal{L} (t,0;z_n) \tilde{\gamma}_n^J}_{L^4+L^p}&\lesssim (\mathcal{N}_0^2+\eps^2) \norm{\Gamma^J_n}_{X_n^j} \int_{\R}  \Jbr{t'-s_n^j}^\delta dt' \\
&\lesssim (\mathcal{N}_0^2+\eps^2) \norm{\Gamma^J_n}_{X_n^j}  \Jbr{t-s_n^j}^\delta,
\end{align*}
where we have used $-3(\frac12-\frac1p)+\delta <-1$. 
Thus, if $\mathcal{N}_0+\eps$ is sufficiently small, 
\begin{equation}\label{e:epschoice2}
\norm{ \Gamma_n^J - \mathcal{L} (\cdot,0;z_n) \tilde{\gamma}_n^J}_{X_n^j} \le \tfrac12\norm{ \Gamma_n^J}_{X_n^j}.
\end{equation}
In particular, we see from \eqref{e:7.18pf1} that
\begin{equation}\label{e:7.18pf2}
\varlimsup_{n\to\I} \norm{ \Gamma_n^J}_{X_n^j} \le 2\varlimsup_{n\to\I} \norm{ \mathcal{L} (t,0;z_n) \tilde{\gamma}_n^J}_{X_n^j}  \to 0\qtq{as}J\to\infty.
\end{equation}

Now, by H\"older's inequality, we have
\[
\norm{ \Gamma_n^J }_{L^4} \le \norm{ \Gamma_n^J }_{L^4 + L^p }^{\frac12} \norm{ \Gamma_n^J }_{L^4 \cap L^{\frac{2p}{p-2}}}^{\frac12}.
\]
Thus, for any $\tau >1$,
\begin{align*}
	\norm{ \Gamma_n^J }_{L^8 L^4 ([s_n^j -\tau,s_n^j + \tau])}
	&{}\le \tau^{\delta/2}  \norm{ \Gamma_n^J}_{X_n^j}^\frac12 \norm{\Gamma_n^J}_{L^{4}(L^4 \cap L^{\frac{2p}{p-2}})([s_n^j -\tau,s_n^j + \tau]) }^\frac12 \\
	&{}\lesssim \tau^{\delta/2}  \norm{ \Gamma_n^J}_{X_n^j}^\frac12 ( \norm{\Gamma_n^{J}}_{L^4 H^{\frac16}_{3} }+ \tau^{\frac{4p}{p-6}} \norm{\Gamma_n^{J}}_{L^{\frac{2p}{3}} L^{\frac{2p}{p-2}} })^\frac12 \\
	&{}\lesssim_\tau \norm{ \Gamma_n^J}_{X_n^j}^\frac12   \norm{\Gamma_n^{J}}_{\mathrm{Stz}^1(\R)}^\frac12.
\end{align*}
Using \eqref{e:7.18pf2} and boundedness in $\mathrm{Stz}^1$, we obtain the desired estimate \eqref{e:7.18b}.

Finally, since $\Gamma_n^J$ is a solution to the nonlinear equation, we see from Proposition~\ref{l:sdt} that \eqref{e:7.18b} implies \eqref{e:7.18a}. \end{proof}

With the estimates above in place, we can finally discuss our choice of $\eps$ (and hence $J^\dagger$) more precisely. 

\begin{remark}[Choice of $J^\dagger$]\label{r:choiceofeps}
We choose $\eps>0$ sufficiently small so that \eqref{e:epschoice2} is valid.  The implicit constants that are used to obtain this estimate depend on the choice of $p, \delta$ (which we may fix to be $(p,\delta)=(10,\tfrac16)$, say), the constants in uniform Strichartz estimate, and that in \eqref{e:epschoice}.  The implicit constant in \eqref{e:epschoice} depends on the constants in uniform Strichartz estimate, $\sup_n \norm{z_n}_{L^\I}$, and $\sup_n \norm{ \xi_n (0) }_{H^1}$. The constants in the uniform Strichartz estimate in turn depend on $\sup_n \norm{z_n}_{L^\I}$.  Thus, we find that the choice of $\eps$ depends only on $\sup_n \norm{ \xi_n (0) }_{H^1}$ and $\sup_n \norm{z_n}_{L^\I}$.
\end{remark}

\subsection{Estimates on nonlinear profiles} We will next derive estimates on the nonlinear profiles.

\subsubsection{Estimate on $\Lambda_n^{(s,0)}$}

Let us begin with the profile $\Lambda_n^{(s,0)}$, defined as in Definition~\ref{d:nprofile1}.  Recalling the notation there, we first note that by $H^1$-subcriticality of \eqref{e:nls} and uniform $H^1$-boundedness, we have that $(z_\I^s ,\xi_\I^s )$ is global in time.

\begin{proposition}\label{p:Lambdas0bound}
Suppose that \eqref{e:fakeassumption} holds.  Let $s \in [s_{\min},s_{\max}]$ and let $(z_\I^s,\xi_\I^s)$ and $u_\I^s$ be as in Definition \ref{d:nprofile1}. Then, for all $s \in [s_{\min},s_{\max}]$, one has
\begin{equation}\label{e:edecrease}
\mathbb{M}(u_\I^s) \le M_*, \quad \mathbb{E}_V (u_\I^s) \le E_*,
\end{equation}
and $\norm{u_\I^s}_{L^\I H^1 (\R)}^2 \lesssim \mathbb{E}_V (u_\I^s) + \mathbb{M}(u_\I^s).$

If $u^s_\I$ scatters to $\mathscr{S}_0$ forward in time, then
\[
\lim_{\tau \to \I} \bigl\{ \norm{\xi_\I^s}_{L^8L^4([\tau,\I)) } + \norm{\mathcal{L}(\cdot, \tau;z_\I^s) \xi_\I^s(\tau)}_{L^8L^4([\tau,\I)) }  \bigr\}= 0.
\]
The analogous statement holds backward in time. 

Finally, if 
\[
(\mathbb{M}(u_\I^s) , \mathbb{E}_V (u_\I^s)) \neq ( M_*,   E_*)
\]
then $u_\I^s(t)$ scatters to $\mathscr{S}_0$ in both time directions and $\norm{\xi_\I^s}_{L^8_tL^4_x(\R)} <\I$.
\end{proposition}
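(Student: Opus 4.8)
\textbf{Proof strategy for Proposition~\ref{p:Lambdas0bound}.} The plan is to establish the three assertions in sequence, with the energy/mass inequalities \eqref{e:edecrease} coming first, since the other two rely on them. For \eqref{e:edecrease}, I would use the weak convergence $(z_n(\cdot + s_n^s), \xi_n(\cdot + s_n^s)) \rightharpoonup (z_\I^s, \xi_\I^s)$ together with the translation-invariance of \eqref{ME-contradiction-assumption} and the decomposition $u = \Phi[z] + R[z]\xi$. Evaluating at $t = 0$ for the shifted solutions, one has $u_n(s_n^s) = \Phi[z_n(s_n^s)] + R[z_n(s_n^s)]\xi_n(s_n^s) \rightharpoonup u_\I^s(0)$ weakly in $H^1$ (using smoothness and local uniform convergence of the parameters). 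Since $\mathbb{M}$ and $\mathbb{H}_0$ are weakly lower semicontinuous and $\mathbb{M}(u_n) = M_* + o_n(1)$, we get $\mathbb{M}(u_\I^s) \le M_*$; the potential term $\int V|\cdot|^2$ and $\mathbb{G}$ pass to the limit by the compactness $H^1 \hookrightarrow L^4_{\mathrm{loc}}$ and $V \in L^2 + L_0^\infty$ with $V \to 0$ at infinity (one needs to rule out mass escaping to spatial infinity — but here the relevant weak limit already captures the correct energy because of the structure; more carefully, one may need $\mathbb{E}_V(u_\I^s) \le \varliminf \mathbb{E}_V(u_n) = E_*$ via the same lower-semicontinuity of $\mathbb{H}_0$ plus continuity of the lower-order terms, noting that $V$-term and $\mathbb{G}$-term are $L^4$-continuous on bounded sets once one localizes). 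The $H^1$-bound $\norm{u_\I^s}_{L^\infty H^1}^2 \lesssim \mathbb{E}_V(u_\I^s) + \mathbb{M}(u_\I^s)$ then follows from Proposition~\ref{l:gwp} (or directly from the trichotomy Proposition~\ref{p:smt} combined with the sign information), since $u_\I^s$ is a small-mass global solution.

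For the second assertion, suppose $u_\I^s$ scatters to $\mathscr{S}_0$ forward in time. By the definition of scattering to $\mathscr{S}_0$ and the reduction recorded before Theorem~\ref{Trewrite} (see \cite[Lemma~6.2]{Nakanishi}), scattering is equivalent to $\norm{\xi_\I^s}_{L^8 L^4(\R_+)} < \infty$. Given this finiteness, standard arguments (splitting $[\tau,\infty)$ and using the Duhamel formula for the linearized equation, the non-admissible Strichartz estimate of Lemma~\ref{L:NAS}, and the nonlinear estimate on $\tilde N$) show $\norm{\xi_\I^s}_{L^8 L^4([\tau,\infty))} \to 0$ as $\tau \to \infty$. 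Then applying Proposition~\ref{l:sdt}(ii) (perturbation around zero) on $[\tau,\infty)$ — legitimate once $\norm{\xi_\I^s}_{L^8 L^4([\tau,\infty))}$ is below the smallness threshold — gives $\norm{\mathcal{L}(\cdot,\tau;z_\I^s)\xi_\I^s(\tau)}_{L^8 L^4([\tau,\infty))} \sim \norm{\xi_\I^s}_{L^8 L^4([\tau,\infty))} \to 0$ as well. The backward statement is symmetric.

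For the final assertion, this is where the `criticality' of $(M_*, E_*)$ enters. Suppose $(\mathbb{M}(u_\I^s), \mathbb{E}_V(u_\I^s)) \neq (M_*, E_*)$. Combined with \eqref{e:edecrease}, this forces $(\mathbb{M}(u_\I^s), \mathbb{E}_V(u_\I^s)) \in \bigl((0,M_*] \times (0,E_*]\bigr) \setminus \{(M_*,E_*)\}$, which is contained in $\mathcal{X}$ by \eqref{e:MEinside} (after checking that $(\mathbb{M}(u_\I^s),\mathbb{E}_V(u_\I^s))$ is not of the excluded form, which it cannot be since $M_* < (M_*,E_*)$ in the product ordering means strictly smaller in one coordinate — more precisely, \eqref{e:MEinside} says the whole rectangle minus the single corner point lies in $\mathcal{X}$). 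Hence $(\mathbb{M}(u_\I^s),\mathbb{E}_V(u_\I^s)) \in \mathcal{X}$, which by the definition of $\mathrm{ST}(\mu,A) < \infty$ yields $\norm{\xi_\I^s}_{L^8_t L^4_x(\R)} < \infty$, and therefore $u_\I^s$ scatters to $\mathscr{S}_0$ in both time directions by the second assertion (or directly by the equivalence from \cite[Lemma~6.2]{Nakanishi}).

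\textbf{Main obstacle.} The delicate point is the energy inequality $\mathbb{E}_V(u_\I^s) \le E_*$ in \eqref{e:edecrease}. Weak $H^1$-convergence gives lower semicontinuity of the \emph{positive} definite part $\mathbb{H}_0$, but $\mathbb{E}_V = \mathbb{H}_0 + \frac12\int V|\cdot|^2 - \mathbb{G}$ contains the indefinite-sign pieces $\int V|\cdot|^2$ (with $V$ having a negative part) and $-\mathbb{G} = -\frac14\int|\cdot|^4$ (in the focusing case). One must argue that these lower-order terms actually \emph{converge} along the subsequence: for the $V$-term this uses $V \in L^2 + L_0^\infty$, $\lim_{|x|\to\infty}|V(x)| = 0$, and local compactness of the embedding $H^1 \hookrightarrow L^2$; for the $\mathbb{G}$-term one needs that no $L^4$-mass escapes, which is not automatic from weak convergence alone. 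The resolution is that one does \emph{not} need convergence for a general weakly convergent sequence — rather, $u_n(s_n^s)$ is a sequence of solutions with uniformly bounded $H^1$ norm and bounded derivative $\dot z_n$, and the decomposition structure $u_\I^s = \Phi[z_\I^s] + R[z_\I^s]\xi_\I^s$ with $\xi_\I^s = \text{w-lim}\,\xi_n(s_n^s)$ can be fed into the linear profile decomposition machinery (or one simply invokes that $\varphi^0_\infty$ in Theorem~\ref{t:lpd} is precisely this weak limit, for which \eqref{e:Vdecomp} and the $\mathbb{G}$-decoupling \eqref{e:Gdecomp2} give exactly the needed convergence of the $j = 0$ term, with all $j \ge 1$ terms contributing nonnegatively to energy by \eqref{e:energydecomp}). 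Thus \eqref{e:edecrease} is really a corollary of the profile decomposition's energy-decoupling statement applied to $\{\xi_n(s_n^s)\}$, and the careful bookkeeping there — matching the $(s,k)$-relabeled profiles to the scattering region via \eqref{e:me_comparison} — is the substantive content.
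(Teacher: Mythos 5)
Your overall architecture (mass/energy decoupling via a profile decomposition, then \eqref{e:MEinside} for the last assertion, and a perturbative argument for the middle one) matches the paper's, but there is a genuine gap at exactly the point you flag as delicate: the inequality $\mathbb{E}_V(u_\I^s)\le E_*$. You resolve it by asserting that "all $j\ge1$ terms contribute nonnegatively to energy by \eqref{e:energydecomp}." That is not what \eqref{e:energydecomp} says: it only asserts $\varlimsup_n\mathbb{E}_V(\psi_n)\ge\sum_j\mathbb{E}(\varphi^j;y^j,s^j)$, and for a profile escaping to spatial infinity at bounded times the contribution is the \emph{free} energy $\mathbb{E}_0(\varphi^j)=\mathbb{H}_0(\varphi^j)-\mathbb{G}(\varphi^j)$, which in the focusing case has no a priori sign. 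If even one escaping bubble had $\mathbb{E}_0(\varphi^j)<0$, the decoupling would permit $\mathbb{E}_V(u_\I^s)>E_*$ and the whole induction on $(M_*,E_*)$ would collapse. So nonnegativity of the bubbles' energies is the substantive claim, not a consequence of the decoupling.

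What the paper actually does is apply the (simpler, Hmidi--Keraani type) $H^1\hookrightarrow L^4$ profile decomposition to the full solutions $u_n(s_n^s)$ (not the linearized decomposition of Theorem~\ref{t:lpd} applied to $\xi_n$), and then runs a variational argument on each bubble $\varphi^k$: from $\mathbb{K}_{V,2}(u_n)\ge\kappa>0$ (cf.\ \eqref{Kbd-contradiction-assumption}), the $\mathbb{I}$-decoupling, the lower bound \eqref{e:e1existencepf1} on $\mathbb{I}_V(u_\I^s)$, and the strict inequality $E_*-\kappa<\tfrac1{M_*}\mathbb{M}(Q)\mathbb{I}_0(Q)$ coming from \eqref{e:basicEMrelation}, one deduces $\mathbb{M}(\varphi^k)\mathbb{I}_0(\varphi^k)<\mathbb{M}(Q)\mathbb{I}_0(Q)$ (after checking monotonicity in $\mathbb{M}(u_\I^s)$ for small $M_*$). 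The characterization \eqref{e:QcharacteriztionI} then forces $\mathbb{K}_{0,2}(\varphi^k)>0$, whence $\mathbb{E}_0(\varphi^k)=\tfrac13\mathbb{H}_0(\varphi^k)+\tfrac13\mathbb{K}_{0,2}(\varphi^k)>0$. This is precisely the place where the sub-threshold assumption \eqref{e:fakeassumption} and the comparison \eqref{e:me_comparison} enter; your closing sentence gestures at it but does not carry it out, and the step as written would not survive scrutiny. The remaining parts of your proposal (the $H^1$ bound via the coercivity of $\mathbb{K}_{V,2}$, the $\tau\to\infty$ vanishing via Lemma~\ref{l:sdt}, and the final assertion via \eqref{e:MEinside}) are consistent with the paper.
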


\begin{proof}  We begin with \eqref{e:edecrease}. We apply a profile decomposition with $L^4$ error to the bounded sequence $\{ u_n(s_n^s) \}_n \subset H^1$ to get a decomposition of the form
\[
 	u_n(s_n^s) = u_\I^s + \sum_{k=1}^{J} T_{y_n} \varphi^k + r_n^J.
\]
We have
\begin{align*}
&M_* = \lim_{n\to\I}\mathbb{M}(u_n(s_n^s)) \ge \mathbb{M}(u_\I^s) + \sum_{k\ge1} \mathbb{M}(\varphi^{k}),\\
&E_* = \lim_{n\to\I}\mathbb{E}_V(u_n) \ge \mathbb{E}_V(u_\I^s) + \sum_{k\ge1} \mathbb{E}_0(\varphi^{k}),\\
&\lim_{n\to\I}\mathbb{I}_V (u_n) = \mathbb{I}_V (u_\I^s) + \sum_{k\ge1} \mathbb{I}_0(\varphi^{k}) .
\end{align*}
Here the second and the third decomposition follows from the decomposition of $\mathbb{G}(u_n)$. The first implies $\mathbb{M}(u_\I^s) \le M_*$, yielding the former part of \eqref{e:edecrease}.

We next show that $\mathbb{E}_0(\varphi^1) \ge 0$.  It suffices to consider the case $\varphi^1\neq 0$.  By assumption, $\mathbb{K}_{V,2}(u_n)\ge \kappa >0$, which (together with \eqref{e:basicEMrelation} and the identity $\mathbb{I}_0(Q)=\mathbb{E}_0(Q)$) yields
\[
\lim_{n\to\I}\mathbb{I}_V (u_n) = \lim_{n\to\I} \bigl[\mathbb{E}_V (u_n) - \tfrac12\mathbb{K}_{V,2}(u_n)\bigr] \le E_* -\kappa < \tfrac1{M_*} \mathbb{M}(Q)  \mathbb{I}_0 (Q) -\kappa.
\]
Furthermore, 
\[
\mathbb{I}_V (u_\I^s) \ge - \tfrac12\norm{(x\cdot \nabla V + 2 V)_-}_{L^\I} \mathbb{M}(u_\I^s),
\]
so that
\[
 \mathbb{I}_0(\varphi^{1}) < \tfrac1{M_*} \mathbb{M}(Q)  \mathbb{I}_0 (Q) -\kappa + \tfrac12\norm{(x\cdot \nabla V + 2 V)_-}_{L^\I} \mathbb{M}(u_\I^s).
\]
As $\mathbb{M}(\varphi^{1}) \le M_* - \mathbb{M}(u_\I^s)$, we obtain
\begin{align*}
\mathbb{M}(\varphi^1) \mathbb{I}_0(\varphi^{1})<{}& \tfrac{ M_* - \mathbb{M}(u_\I^s)}{M_*} \mathbb{M}(Q)  \mathbb{I}_0 (Q) \\
&+ \tfrac12\norm{(x\cdot \nabla V + 2 V)_-}_{L^\I}(M_* - \mathbb{M}(u_\I^s)) \mathbb{M}(u_\I^s).
\end{align*}
Now observe that if $M_* \lesssim_{Q,V} 1$, then the right hand side is decreasing in $\mathbb{M}(u_\I^s) \in [0, M_*]$. Hence we obtain the bound
\[
\mathbb{M}(\varphi^{1}) \mathbb{I}_0(\varphi^{1}) < \mathbb{M}(Q)  \mathbb{I}_0 (Q).
\]
As 
\[
\mathbb{M}(Q)  \mathbb{I}_0 (Q) = \inf \{ \mathbb{M}(\varphi)  \mathbb{I}_0 (\varphi) \ |\ \mathbb{K}_{0,2} (\varphi) \le 0, \,\varphi \neq0\},
\]
we see that 
\begin{equation}\label{e:K2phi1}
\mathbb{K}_{0,2}(\varphi^{1})>0.
\end{equation}
Thus
\begin{equation}\label{e:Ephi10}
\mathbb{E}_{0}(\varphi^{1}) = \mathbb{H}_{0}(\varphi^{1})-  \mathbb{G}(\varphi^{1}) = \tfrac13 \mathbb{H}_{0}(\varphi^{1}) + \tfrac13 \mathbb{K}_{0,2}(\varphi^{1}) >0,
\end{equation}
and so we obtain $\mathbb{E}_{0}(\varphi^{1}) \ge 0$ also in the case $\varphi^1 \neq 0$.

By the same argument, we see that $\mathbb{E}_{0}(\varphi^{k})\ge0$ for all $k\ge1$, which yields $
{\mathbb{E}_V (u_{\infty}^{s})} \le E_*$ (i.e. the latter part of \eqref{e:edecrease}).  By the variational characterization of $\mathscr{E}_1(\mu)$, we see that $\mathbb{K}_{V,2}(u_\I^s)>0$, and hence $u_\I^s$ is a global solution obeying the bound $\norm{u}_{L^\I H^1}^2 \lesssim \mathbb{E}_V(u_\I^s) + \mathbb{M}(u_\I^s).$  Scattering {of $u_{\infty}^{s}$} in the case $( \mathbb{M}(u_\I^s),\mathbb{E}_V(u_\I^s) )\neq (M_*,E_*)$ follows from \eqref{e:MEinside}.
\end{proof}

\begin{remark}
In the above proof,
the upper bound for $\mathbb{E}_0(\varphi^{1} )$ is obtained similarly. Indeed, using the facts that $\mathbb{E}_V(u_\I) \ge \mathscr{E}_0(\mathbb{M}(u_\I)) $ and $\mathbb{E}_0(\varphi^{k})\ge 0$ for all $k\ge 2$, one has
\[
\mathbb{E}_0(\varphi^{1}) \le E_* - \mathscr{E}_0(\mathbb{M}(u_\I)).
\]
Hence, we obtain the bound
\begin{equation}\label{e:Ephi11}
\mathbb{E}_0(\varphi^{1}) \le E_* - \mathscr{E}_0(M_*).
\end{equation}
By using \eqref{e:basicEMrelation}, we may also obtain
\[
\mathbb{M}(\varphi^{1})  \mathbb{E}_0(\varphi^{1}) < \tfrac{M_* - \mathbb{M}(u_\I)}{M_*} \mathbb{M}(Q)  \mathbb{E}_0 (Q) -  \mathcal{E}_0(\mathbb{M}(u_\I))(M_* - \mathbb{M}(u_\I)) -\delta_0\mathbb{M}(\varphi^{1}) .
\]
As $\mathcal{E}_0(\mathbb{M}(u_\I)) \sim e_0 \mathbb{M}(u_\I)$, one sees that the right hand side is decreasing in $\mathbb{M}(u_\I) \in [0,M_*]$ if $M_* \lesssim_{Q,e_0} 1$.  Thus we obtain
\begin{equation}\label{e:Ephi12}
\mathbb{E}_0(\varphi^{1}) < \frac1{\mathbb{M}(\varphi^{1})} \mathbb{M}(Q)  \mathbb{E}_0 (Q) - \delta_0.
\end{equation}
We will use these estimates in the estimate of $\Lambda_n^{(0,k)}$.
\end{remark}

\subsubsection{Estimate on $\Lambda_n^{(0,k)}$}

To estimate the $\Lambda_n^{(0,k)}$, the idea is to compare the solution to the translation of the solution to the cubic NLS without potential:
\begin{equation}\label{e:fnls_np}
\begin{cases}
 i \d_t w_\I^{(0,k)} - \Delta w_\I^{(0,k)} = \sigma |w_\I^{(0,k)}|^2 w_\I^{(0,k)},\\
 w_\I^{(0,k)}(0)= \varphi^{(0,k)},
\end{cases}
\end{equation}
where we have written $\varphi^{(0,k)}$ to denote the profile $\varphi_\infty^j$ appearing in Definition~\ref{d:nprofile2}.  We first collect some relevant properties of $w_\I^{(0,k)}$.

\begin{lemma}\label{l:wkest}
Suppose \eqref{e:fakeassumption} holds.  Let $k \ge 1$ and let
$\Lambda_n^{(0,k)}(t)$ be as in Definition~\ref{d:nprofile2}.
If $\varphi^{(0,k)}\neq 0$ then $\mathbb{M}(\varphi^{(0,k)}) \le M_*$,
\begin{equation}\label{e:wkenergy}
	0 < \mathbb{E}_0 (\varphi^{(0,k)}) \le \min \bigl[ E_* - \mathscr{E}_0(M_*), \tfrac1{\mathbb{M}(\varphi^{(0,k)})} \mathbb{M}(Q)  \mathbb{E}_0 (Q) - \delta_0 \bigr],
\end{equation}
and
\[
\mathbb{K}_{0,2} (\varphi^{(0,k)}) >0.
\]
In particular, there exists a unique global solution $w_\I^{(0,k)}(t) \in C(\R,H^1)$ to \eqref{e:fnls_np}
such that
\[
\norm{w_\I^{(0,k)}}_{L^8_tL^4_x (\R)} \lesssim_{\delta_0} 1, \qtq{and} \norm{w_\I^{(0,k)}}_{L^\I_t H^1 (\R)}^2 \lesssim  \mathbb{E}_0 (\varphi^{(0,k)}) +  \mathbb{M}(\varphi^{(0,k)}).
\]
\end{lemma}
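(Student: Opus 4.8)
The statement is essentially a collection of properties of the profile $\varphi^{(0,k)}$ and the corresponding solution $w_\I^{(0,k)}$ to the free cubic NLS. Most of the mass/energy bounds are not new: they were established in the course of proving Proposition~\ref{p:Lambdas0bound}, and the plan is to re-derive them for the present case by applying a profile decomposition with $L^4$-vanishing remainder to the bounded sequence $\{u_n(0)\}_n$ (note $s_n^j\equiv 0$ here since $\sigma^j=0$). Since $\varphi^{(0,k)}$ arises as the weak $H^1$-limit of $T_{y_n^j}^{-1}\xi_n(0)$ along a subsequence, and since $y^j=\I$, the profile appears in the decomposition of $u_n(0)$ after a space translation $T_{y_n^j}$, with the potential and ground-state contributions washing out in the limit (as in the proof of Theorem~\ref{t:lpd}, using $V\in L^2$ and $\Phi[z_n]\to 0$). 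Thus one obtains
\[
M_* \ge \mathbb{M}(\varphi^{(0,k)}) + (\text{other nonnegative masses}),\qquad
E_* \ge \mathbb{E}_0(\varphi^{(0,k)}) + (\text{other terms}),
\]
and similarly the decoupling of $\mathbb{I}_V$, exactly as in Proposition~\ref{p:Lambdas0bound}. This immediately yields $\mathbb{M}(\varphi^{(0,k)})\le M_*$.

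\textbf{The coercivity inputs.} To get $\mathbb{K}_{0,2}(\varphi^{(0,k)})>0$ and the lower bound $\mathbb{E}_0(\varphi^{(0,k)})>0$, I would repeat the argument from Proposition~\ref{p:Lambdas0bound}: the hypothesis \eqref{Kbd-contradiction-assumption}, i.e. $\mathbb{K}_{V,2}(u_n)\ge\kappa(\delta_0)>0$, combined with \eqref{e:basicEMrelation} and $\mathbb{I}_0(Q)=\mathbb{E}_0(Q)$, gives $\lim_n \mathbb{I}_V(u_n) < \tfrac1{M_*}\mathbb{M}(Q)\mathbb{I}_0(Q)-\kappa$. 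Using the elementary lower bound on $\mathbb{I}_V(u_\I^s)$ in terms of $-\tfrac12\norm{(x\cdot\nabla V+2V)_-}_{L^\I}\mathbb{M}(u_\I^s)$, the mass constraint $\mathbb{M}(\varphi^{(0,k)})\le M_*-\mathbb{M}(u_\I^s)$, and monotonicity in $\mathbb{M}(u_\I^s)\in[0,M_*]$ for $M_*\lesssim_{Q,V}1$, one deduces $\mathbb{M}(\varphi^{(0,k)})\mathbb{I}_0(\varphi^{(0,k)})<\mathbb{M}(Q)\mathbb{I}_0(Q)$, which by the variational characterization \eqref{e:QcharacteriztionI} forces $\mathbb{K}_{0,2}(\varphi^{(0,k)})>0$. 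Then $\mathbb{E}_0(\varphi^{(0,k)})=\tfrac13\mathbb{H}_0(\varphi^{(0,k)})+\tfrac13\mathbb{K}_{0,2}(\varphi^{(0,k)})>0$. For the two-sided bound in \eqref{e:wkenergy}: the upper estimate $\mathbb{E}_0(\varphi^{(0,k)})\le E_*-\mathscr{E}_0(M_*)$ follows from $\mathbb{E}_V(u_\I^s)\ge\mathscr{E}_0(\mathbb{M}(u_\I^s))$ together with $\mathbb{E}_0(\varphi^{m})\ge 0$ for the remaining profiles (this is \eqref{e:Ephi11} in the remark following Proposition~\ref{p:Lambdas0bound}), while the bound $\mathbb{E}_0(\varphi^{(0,k)})<\tfrac1{\mathbb{M}(\varphi^{(0,k)})}\mathbb{M}(Q)\mathbb{E}_0(Q)-\delta_0$ is precisely \eqref{e:Ephi12}.

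\textbf{Global well-posedness and scattering for $w_\I^{(0,k)}$.} With $\mathbb{M}(\varphi^{(0,k)})\mathbb{E}_0(\varphi^{(0,k)})<\mathbb{M}(Q)\mathbb{E}_0(Q)$ (from \eqref{e:Ephi12}, which gives an even stronger bound with the $-\delta_0\mathbb{M}(\varphi^{(0,k)})$) and $\mathbb{K}_{0,2}(\varphi^{(0,k)})>0$, the data $\varphi^{(0,k)}$ lies strictly inside the scattering region of Holmer--Roudenko/Duyckaerts--Holmer--Roudenko for the $3d$ focusing cubic NLS (cf.\ \cite{DHR}); in the defocusing case there is nothing to prove. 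Hence \eqref{e:fnls_np} has a unique global solution $w_\I^{(0,k)}\in C(\R,H^1)$ with finite scattering size, and the quantitative bound $\norm{w_\I^{(0,k)}}_{L^8_tL^4_x(\R)}\lesssim_{\delta_0}1$ follows from the fact that the scattering norm is controlled in terms of the distance to the threshold, here quantified by $\delta_0$ via \eqref{e:Ephi12}. The $L^\I_t H^1$ bound is then standard: energy and mass conservation plus the coercivity of $\mathbb{E}_0$ away from the threshold (again via the sharp Gagliardo--Nirenberg inequality and $\mathbb{K}_{0,2}>0$ propagating in time) give $\norm{w_\I^{(0,k)}}_{L^\I_tH^1}^2\lesssim \mathbb{E}_0(\varphi^{(0,k)})+\mathbb{M}(\varphi^{(0,k)})$.

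\textbf{Main obstacle.} The only genuinely delicate point is the first one: carefully justifying that the profile decomposition of $\{u_n(0)\}_n$ really does produce $\varphi^{(0,k)}$ as one of its profiles with mass/energy decoupling, given that $\varphi^{(0,k)}$ was originally extracted from $\xi_n(0)=P_c\eta_n(0)$ rather than from $u_n(0)=\Phi[z_n]+R[z_n]\xi_n(0)$ directly. This requires knowing that, after the space shift $T_{y_n^j}$ with $|y_n^j|\to\I$, the ground-state part $\Phi[z_n]$ and the correction $R[z_n]\xi_n - \xi_n$ (a multiple of $\phi_0$) both vanish weakly and contribute negligibly to the mass, $\mathbb{H}_0$, and $\mathbb{G}$ functionals — facts that follow from the exponential localization of $\phi_0$, the bound $\norm{\Phi[z_n]}_{H^1}\lesssim|z_n|\to$ (bounded), and $|y_n^j|\to\I$, exactly the mechanism already exploited throughout Sections~\ref{S:LPD} and~\ref{S:NPD}. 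Everything else is a transcription of arguments already carried out for Proposition~\ref{p:Lambdas0bound} and the remark following it.
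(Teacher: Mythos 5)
Your proposal is correct and follows essentially the same route as the paper: the paper likewise applies the $L^4$-vanishing profile decomposition to $\{u_n(0)\}_n$, identifies $\varphi^{(0,k)}$ as one of its profiles by observing that $(T_{y_n^{(0,k)}})^{-1}u_n = (T_{y_n^{(0,k)}})^{-1}\Phi[z_n] + (T_{y_n^{(0,k)}})^{-1}\xi_n + (T_{y_n^{(0,k)}})^{-1}(R[z_n]-1)\xi_n \rightharpoonup \varphi^{(0,k)}$ (exactly the mechanism you flag as the main obstacle), and then imports the variational estimates \eqref{e:K2phi1}, \eqref{e:Ephi10}, \eqref{e:Ephi11}, \eqref{e:Ephi12} from Proposition~\ref{p:Lambdas0bound} before invoking \cite{DHR} for global existence and the scattering bound.
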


\begin{proof} We apply the $H^1\hookrightarrow L^4$ profile decomposition to obtain
\[
u_n(0) = u_\I + \sum_{\ell=1}^L T_{\hat{y}_n^\ell} \hat{\varphi}^{\ell} + \hat{r}_n^L.
\]

As
\begin{align*}
 (T_{y_n^{(0,k)}})^{-1} u_n ={}& (T_{y_n^{(0,k)}})^{-1}\Phi[z_n] + (T_{y_n^{(0,k)}})^{-1}\xi_n + (T_{y_n^{(0,k)}})^{-1}(R[z_n]-1)\xi_n \\
 \rightharpoonup{}& 0+ \varphi^{(0,k)} +0 = \varphi^{(0,k)} \neq0
\end{align*}
weakly in $H^1$ as $n\to\I$, it follows that $T_{y_n^{(0,k)}} \varphi^{(0,k)}$ appears in the above decomposition.  That is, there exists $\ell_0$ and $y_\delta:=\lim_{n\to\infty}[\hat y_n^{\ell_0}-y_n^{(0,k)}]\in\R^3$ such that $\varphi^{(0,k)} = T_{y_\delta } \hat{\varphi}^{\ell_0}.$ Thus, arguing as in the proof of \eqref{e:K2phi1}, we obtain $\mathbb{K}_0 (\varphi^{(0,k)}) >0$.  Similarly, we obtain the energy bound from \eqref{e:Ephi10}, \eqref{e:Ephi11}, and \eqref{e:Ephi12}.

The global existence of $w_\I^{(0,k)}$ therefore follows from the well-posedness theory for the standard cubic NLS (see e.g. \cite{DHR}). \end{proof}

For the $\Lambda_n^{(0,k)}$, we will prove the following: 

\begin{proposition}\label{p:Lambda0kbound} Suppose that \eqref{e:fakeassumption} holds and fix $k\geq 1$.  Let $\Lambda_n^{(0,k)}$ be as in Definition \ref{d:nprofile2}, and let $w_\I^{(0,k)}\in C(\R,H^1)$ be the unique global solution to \eqref{e:fnls_np} given in Lemma~\ref{l:wkest}. Writing $y_n$ for the sequence $y_n^{(0,k)}$, we have 
\begin{align}
\lim_{n\to\I} \norm{\Lambda_n^{(0,k)} - T_{y_n} w_\I^{(0,k)}}_{L^8L^4(\R)}&=0,\label{e:Lambda0kbound1}\\
 \lim_{R\to\I} \varlimsup_{n\to\I} \norm{ {\bf 1}_{ \{|x-y_n|\ge R \}} \Lambda_n^{(0,k)} }_{L^8_t L^4 (\R)} &=0,\label{e:Lambda0kbound2}
\end{align}
and
\begin{multline}\label{e:Lambda0kbound3}
	\lim_{\tau \to\I} \varlimsup_{n\to\I} \biggl\{ \norm{ \mathcal{L}(\cdot,-\tau;z_n)\Lambda_n^{(0,k)}(-\tau) }_{L^8_tL^4_x ((-\I,-\tau))} \\
+ \norm{ \mathcal{L}(\cdot,\tau;z_n)\Lambda_n^{(0,k)}(\tau) }_{L^8_tL^4_x ((\tau,\I))} + \norm{\Lambda_n^{(0,k)}}_{L^8_tL^4_x ((-\I,-\tau)\cup(\tau,\I))}\biggr\}=0.
\end{multline}

Furthermore, for any $\tau>0$,
\begin{equation}\label{e:Lambda0kbound4}
\lim_{n\to\I} \norm{ \Lambda_n^{(0,k)} }_{L^8_t L^4 ([-\tau,\tau] )} = \norm{ w_\I^{(0,k)} }_{L^8_t L^4 ([-\tau,\tau] )}.
\end{equation}
\end{proposition}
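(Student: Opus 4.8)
\textbf{Proof plan for Proposition~\ref{p:Lambda0kbound}.}

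The strategy is to transfer the scattering theory for $w_\I^{(0,k)}$ (supplied by Lemma~\ref{l:wkest}) to the profile $\Lambda_n^{(0,k)}$, using the fact that $|y_n|\to\infty$ to make the external potential $V$ and the time-dependent potential $B[z_n]$ negligible near the center of mass $x=y_n$. The point is that $\Lambda_n^{(0,k)}$ solves the full linearized/nonlinear system \eqref{def:PDEODE}-type equation $(i\d_t+H)\Lambda_n^{(0,k)} = B[z_n]\Lambda_n^{(0,k)} + \tilde N(z_n,\Lambda_n^{(0,k)})$ with data $P_c T_{y_n}\varphi^{(0,k)}$, whereas $T_{y_n}w_\I^{(0,k)}$ solves the cubic NLS (no $V$, no $B$, and the nonlinearity is the bare $\sigma|w|^2w$ rather than the mixed quadratic-cubic $\tilde N$). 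After translating by $T_{y_n}$, I would set $d_n(t)=T_{y_n}^{-1}\Lambda_n^{(0,k)}(t)-w_\I^{(0,k)}(t)$, write the Duhamel formula for $d_n$ with respect to the free propagator $e^{-it\Delta}$, and apply the stability theory (Lemma~\ref{l:6.3}/Proposition~\ref{l:6.4}, or directly Proposition~\ref{l:sdt} with a perturbative $\mathcal{E}$). The error terms are: (a) $T_{y_n}^{-1}P_c T_{y_n}\varphi^{(0,k)}-\varphi^{(0,k)} = -(\phi_0,T_{y_n}\varphi^{(0,k)})\phi_0\to 0$ in $H^1$; (b) the potential term $V\Lambda_n^{(0,k)}$ and the $B[z_n]\Lambda_n^{(0,k)}$ term, which go to zero in the relevant dual Strichartz norm on any fixed compact time interval by a Gronwall/tightness argument exactly as in Lemma~\ref{l:spacetranslation} (the mass and gradient of $\Lambda_n^{(0,k)}$ concentrate near $x=y_n$, and $|y_n|\to\infty$, so $\|V\Lambda_n^{(0,k)}\|$ and $\|B[z_n]\Lambda_n^{(0,k)}\|$ vanish); and (c) the difference of nonlinearities $\tilde N(z_n,\Lambda_n^{(0,k)}) - \sigma|T_{y_n}w_\I^{(0,k)}|^2 T_{y_n}w_\I^{(0,k)}$, whose quadratic-in-$\eta$, linear-in-$\Phi[z_n]$ pieces carry a factor $\Phi[z_n]$ that is exponentially localized near $x=0$ and hence negligible against a profile concentrated near $x=y_n$, while the cubic piece $|\Lambda_n^{(0,k)}|^2\Lambda_n^{(0,k)}$ is handled by stability once $d_n$ is controlled.

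Concretely I would proceed in the following order. First, establish the compact-interval convergence $\|d_n\|_{L_t^\infty H^1([-\tau,\tau])}\to 0$ for every fixed $\tau$; this gives \eqref{e:Lambda0kbound4} immediately (together with $\|T_{y_n}w_\I^{(0,k)}\|_{L^8L^4}=\|w_\I^{(0,k)}\|_{L^8L^4}$ by translation-invariance), and also the tightness statement \eqref{e:Lambda0kbound2} on compact intervals. Second, use the global space-time bound $\|w_\I^{(0,k)}\|_{L_t^8L_x^4(\R)}\lesssim_{\delta_0}1$ from Lemma~\ref{l:wkest}, partition $\R$ into finitely many subintervals on which $\|w_\I^{(0,k)}\|_{L^8L^4}$ is small, and run the stability argument (Proposition~\ref{l:6.4}) interval-by-interval to upgrade to the \emph{global} estimate \eqref{e:Lambda0kbound1}; the input smallness $\|z_n\|_{L^8(\R)}$ need not be small, but $z_n$ plays only through $B[z_n]$ and $\Phi[z_n]$, both of which are localized near the origin and thus contribute a perturbation that vanishes as $n\to\infty$ on each subinterval — here I rely on Lemma~\ref{l:spacelargetime} to keep the relevant Strichartz norms of $\Lambda_n^{(0,k)}$ (equivalently of $d_n$) under control uniformly. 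Third, \eqref{e:Lambda0kbound2} in full follows from \eqref{e:Lambda0kbound1} plus the corresponding tightness for $w_\I^{(0,k)}$ (which holds since $w_\I^{(0,k)}\in L^8L^4$ with $L_x^4$-mass vanishing at spatial infinity uniformly in $t$, a standard consequence of the Strichartz bound). Fourth, \eqref{e:Lambda0kbound3}: the three quantities are the tails of $\Lambda_n^{(0,k)}$ and of its linearized continuations past $\pm\tau$; by \eqref{e:Lambda0kbound1} and the scattering of $w_\I^{(0,k)}$ (so $\|w_\I^{(0,k)}\|_{L^8L^4(|t|\ge\tau)}\to0$ as $\tau\to\infty$), plus Lemma~\ref{l:spacelargetime} to compare the linearized flow $\mathcal{L}(\cdot,\pm\tau;z_n)\Lambda_n^{(0,k)}(\pm\tau)$ with $e^{-it\Delta}w_\I^{(0,k)}(\pm\tau)$ (these agree up to the same vanishing errors once $|y_n|\to\infty$), all three tend to zero after taking $\varlimsup_n$ then $\tau\to\infty$.

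The main obstacle I anticipate is the global, rather than local-in-time, control: the stability machinery (Proposition~\ref{l:6.4}) requires the small-parameter smallness of $\|z_n-0\|_{L^8}$, which is \emph{not} small — $z_n$ is merely bounded in $L^\infty$ by $\mu_0$. The resolution, and the technically delicate point, is that $z_n$ enters the equation for $\Lambda_n^{(0,k)}$ only through the operators $B[z_n]$ and through the $\Phi[z_n]$-dependent quadratic terms in $\tilde N$, and after translating by $T_{y_n}$ with $|y_n|\to\infty$ these become genuinely perturbative on each fixed-length time interval (their dual Strichartz norms against a profile localized near $x=y_n$ are $o_n(1)$, by the tightness/Gronwall estimate of Lemma~\ref{l:spacetranslation} adapted to the nonlinear profile). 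So one cannot cite Proposition~\ref{l:6.4} as a black box with $z_1=z_n$; instead one treats $B[z_n]\Lambda_n^{(0,k)}$ and the mixed nonlinear terms as part of a vanishing error $\mathcal{E}_n$ and applies the $z_0=z_1=0$ version of the stability lemma (pure cubic NLS) with that error — valid because the number of subintervals depends only on $\|w_\I^{(0,k)}\|_{L^8L^4}\lesssim_{\delta_0}1$, and on each one the error is eventually small in $n$. Carrying this bookkeeping uniformly across the finitely many subintervals, and checking that the tightness estimates for $\Lambda_n^{(0,k)}$ persist (not just for the linear flow as in Lemma~\ref{l:spacetranslation} but for the nonlinear profile), is where the real work lies.
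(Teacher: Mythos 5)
Your proposal is correct and follows essentially the same route as the paper: induction over finitely many subintervals determined by the global $L^8L^4$ bound on $w_\I^{(0,k)}$, with the $V$-, $B[z_n]$- and $\Phi[z_n]$-dependent terms treated as vanishing errors via the Gronwall/tightness argument of Lemma~\ref{l:spacetranslation} (the paper packages this as Lemma~\ref{L:818}), and the time tails handled by Lemma~\ref{l:spacelargetime} plus small-data theory. The only detail you do not anticipate is that the error estimate on each subinterval loses derivatives, which the paper absorbs by running the induction through a finite decreasing chain of regularities $1=\theta_1>\dots>\theta_{L_0}\ge\tfrac34$; this is harmless precisely because, as you note, the number of subintervals depends only on $\|w_\I^{(0,k)}\|_{L^8L^4}$.
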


We begin with the following lemma.
\begin{lemma}\label{L:818}
Let $\varphi \in H^1$ and let $\{y_n\}\subset\R^3$ be a sequence such that $|y_n|\to\I$ as $n\to\I$, and let $\{z_n\}\subset\mathrm{SBC}$.  Let $v_n$ be the solution to the first equation in \eqref{def:PDEODE} with $v_n(0)= P_c T_{y_n}  \varphi$. 

Let $A>0$ and $\frac12 < \theta_1 < \theta_2 \le 1$.
If
\begin{equation}\label{e:unifbddasmp}
\sup_{n} \norm{v_n}_{\mathrm{Stz}^{\theta_2}([-\tau,\tau]))} \le A
\end{equation}
for some $\tau>0$, then $v_n$ satisfies the following:
\begin{align}\label{e:trial2.claim1}
\lim_{R\to\I} \varlimsup_{n\to\I}  \norm{ v_n }_{L^\I _tL^2_x ([-\tau,\tau]\times \{ |x-y_n| \ge R \})} &=0,\\
\label{e:trial2.claim2}
\lim_{n\to\I} \norm{ \int_0^t e^{i(t-s)\Delta} (i \d_t v_n - \Delta v_n - \sigma |v_n|^2 v_n)\,ds  }_{\mathrm{Stz}^{\theta_1} ([-\tau,\tau])} &= 0.
\end{align}
\end{lemma}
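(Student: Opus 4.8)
\textbf{Plan for the proof of Lemma~\ref{L:818}.} The two conclusions are of a similar flavour: both assert that, in the regime $|y_n|\to\infty$, the flow $v_n$ behaves like the free-translated flow $T_{y_n}e^{-it\Delta}\varphi$ on the compact window $[-\tau,\tau]$, so that the external potential $V$, the linear coupling $B[z_n]$, and the discrepancy between $N(z_n,R[z_n]\xi)$-type nonlinearities and the pure cubic term $\sigma|v_n|^2 v_n$ are all asymptotically negligible. The plan is to run a Gronwall-type argument localized away from the moving centre $y_n$, in the same spirit as the proof of Lemma~\ref{l:spacetranslation}, but now at the level of the full nonlinear equation \eqref{def:PDEODE}, using the \emph{a priori} bound \eqref{e:unifbddasmp} to control the nonlinear terms.

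First I would prove \eqref{e:trial2.claim1}. Write out \eqref{def:PDEODE} for $v_n$; the right-hand side consists of $B[z_n]v_n$ and $\tilde N(z_n,v_n)$, where $\tilde N(z,\xi)=P_c(N(z,R[z]\xi)-i(D\Phi(z))\underline N(z,R[z]\xi))$. Let $\chi_{n,r}(x)=\chi(\tfrac{x-y_n}{r})$ be the cutoff used in the proof of Lemma~\ref{l:spacetranslation}, vanishing for $|x-y_n|<r$ and equal to $1$ for $|x-y_n|>2r$. Computing $\partial_t\int\chi_{n,r}|v_n|^2\,dx$ produces: (i) a boundary term $\lesssim \tfrac1r\|v_n\|_{L_t^\infty H^1}^2\lesssim \tfrac1r A^2$; (ii) a term $\int\chi_{n,r}\Im(\bar v_n B[z_n]v_n)$, which as in the earlier lemma is $\lesssim (\int\chi_{n,r}|v_n|^2)^{1/2}+o_n(1)$ because $\Phi[z_n]$ and $\phi_0$ are concentrated near the origin while $|y_n|\to\infty$; and (iii) the nonlinear contribution $\int\chi_{n,r}\Im(\bar v_n \tilde N(z_n,v_n))$. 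For (iii), note that the cubic part $|v_n|^2 v_n$ is gauge-covariant and drops out of $\Im(\bar v_n\cdot)$ pointwise, while the quadratic terms $2|\eta|^2\Phi[z_n]+\eta^2\overline{\Phi[z_n]}$ (with $\eta=R[z_n]v_n=v_n+\rho(z_n,v_n)\phi_0$) carry a factor of $\Phi[z_n]$ or $\phi_0$ and are therefore supported — up to $o_n(1)$ errors — near the origin, hence contribute $\lesssim (\int\chi_{n,r}|v_n|^2)^{1/2}+o_n(1)$; the $\underline N$ term is similar, being built from inner products against $D_j\Phi[z_n]$. This yields $|\partial_t\int\chi_{n,r}|v_n|^2|\lesssim \tfrac1r+o_n(1)+\int\chi_{n,r}|v_n|^2$, and Gronwall on $[-\tau,\tau]$ together with the fact that $\int\chi_{n,r}|v_n(0)|^2=\int\chi_{n,r}|P_cT_{y_n}\varphi|^2\to 0$ as $n\to\infty$ for each fixed $r$ (since $P_cT_{y_n}\varphi-T_{y_n}\varphi\to0$ in $L^2$ and $\varphi\in H^1$) gives, after first sending $n\to\infty$ then $r\to\infty$, the claim \eqref{e:trial2.claim1}.

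For \eqref{e:trial2.claim2}, observe $i\partial_t v_n-\Delta v_n-\sigma|v_n|^2 v_n = -Vv_n + B[z_n]v_n + (\tilde N(z_n,v_n)-\sigma|v_n|^2 v_n)$. I would estimate each piece in $\mathrm{Stz}^{*\theta_1}([-\tau,\tau])$, using Strichartz for $e^{-it\Delta}$. The potential term $Vv_n$ is handled by splitting $V$ near/far from $y_n$: on $|x-y_n|\ge R$ one uses \eqref{e:trial2.claim1} (upgraded to an $H^{\theta_1}$ statement via interpolation with the uniform $\mathrm{Stz}^{\theta_2}$ bound and the fact that $V\in H^1_{3/2}\cap L^2$), and on $|x-y_n|<R$ one uses $\|V\|_{L^2(|x|>|y_n|-R)}\to 0$ since $|y_n|\to\infty$. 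The term $B[z_n]v_n$ carries factors $|\Phi[z_n]|^2$ and the correction $\rho(z_n,v_n)\phi_0$, all concentrated near the origin, so the same near/far split against the cutoff $\chi_{n,r}$ makes it $o_n(1)$ in $L_t^2 H^{\theta_1}_{6/5}$. Finally $\tilde N(z_n,v_n)-\sigma|v_n|^2 v_n$ collects exactly the quadratic terms $2|\eta|^2\Phi[z_n]+\eta^2\overline{\Phi[z_n]}$, the cubic correction $|v_n|^2\eta-|v_n|^2 v_n$ involving $\rho(z_n,v_n)\phi_0$, the $(P_c-1)$ projection applied to $N$, and the $i(D\Phi(z_n))\underline N$ term — every one of which contains a factor of $\Phi[z_n]$, $\phi_0$, or $D_j\Phi[z_n]$, hence is supported near the origin up to $o_n(1)$, and by \eqref{e:trial2.claim1} plus the uniform Strichartz bound \eqref{e:unifbddasmp} (controlling $\|v_n\|_{L_t^4 L_x^6}$ etc. for the multilinear estimates) each contributes $o_n(1)$ in the dual Strichartz norm. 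Summing, the whole forcing tends to zero in $\mathrm{Stz}^{*\theta_1}$, and the Strichartz inequality converts this into \eqref{e:trial2.claim2}.

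\textbf{Main obstacle.} The delicate point is \eqref{e:trial2.claim2}: unlike the $L^2$-localization argument, one needs the estimates at the level of the Strichartz space $\mathrm{Stz}^{\theta_1}$ with $\theta_1<\theta_2$, so the gain must come from interpolating the $L_t^\infty L_x^2$-smallness of \eqref{e:trial2.claim1} (which only lives at regularity $0$) against the uniform $\mathrm{Stz}^{\theta_2}$ bound — this is exactly why the hypothesis requires $\tfrac12<\theta_1<\theta_2$. Making the quadratic terms like $|\eta|^2\Phi[z_n]$ genuinely small in the dual space requires writing $\eta=\chi_{n,r}\eta+(1-\chi_{n,r})\eta$, using $\|(1-\chi_{n,r})\Phi[z_n]\|_{L^\infty}=o_n(1)$ because the ground state decays and $|y_n|\to\infty$, and then absorbing the $\chi_{n,r}\eta$ part via \eqref{e:trial2.claim1}; keeping track of the regularity ($H^{\theta_1}$ of a product) while doing the near/far split and invoking the boundedness of $R[z_n]$ on the relevant spaces is the most technical step. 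Everything else is a routine adaptation of the Gronwall scheme already used for Lemma~\ref{l:spacetranslation}.
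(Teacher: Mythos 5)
Your outline for \eqref{e:trial2.claim1} is essentially the paper's argument (Gronwall with the moving cutoff $\chi_{n,r}$ centered at $y_n$), and your treatment of \eqref{e:trial2.claim2} is a legitimate reorganization: you estimate the full forcing $-Vv_n+B[z_n]v_n+(\tilde N(z_n,v_n)-\sigma|v_n|^2v_n)$ directly in the dual Strichartz space and apply free Strichartz once, whereas the paper rewrites the Duhamel integral as $N_1+N_2+N_3$, comparing the propagators $T_{y_n}^{-1}\mathcal{L}(t,s;z_n)P_cT_{y_n}$ and $e^{-i(t-s)\Delta}$ on the data and on the cubic term separately, and only then localizes. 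Both routes rely on the same two inputs — the $L^2$-localization \eqref{e:trial2.claim1} and interpolation against the uniform $\mathrm{Stz}^{\theta_2}$ bound to recover regularity $\theta_1<\theta_2$ — so your version buys a slightly shorter bookkeeping at the cost of having to carry the fractional product estimates for $Vv_n$ and the quadratic terms yourself (the paper's density/smoothing step for $N_3$ plays that role).

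There is one concrete gap. You bound the boundary term by $\tfrac1r\|v_n\|_{L^\infty_tH^1}^2\lesssim\tfrac1rA^2$, but the hypothesis \eqref{e:unifbddasmp} only controls $\|v_n\|_{\mathrm{Stz}^{\theta_2}}$ with $\theta_2$ possibly strictly less than $1$ (and in the application to Proposition~\ref{p:Lambda0kbound} the lemma is indeed invoked with $\theta_2$ as small as $3/4$), so no uniform $H^1$ bound is available. The correct estimate is
\[
\Bigl|\int \nabla\chi_{n,r}\cdot\Im(\bar v_n\nabla v_n)\,dx\Bigr|\lesssim \tfrac1r\norm{v_n}_{H^{1/2}}^2,
\]
which one proves by showing that $|\nabla|^{s}[\nabla\chi_{n,r}]|\nabla|^{-s}$ is bounded on $L^2$ with norm $\lesssim r^{-1}$ for $s\in\{0,1\}$ and interpolating to $s=\tfrac12$; since $\theta_2>\tfrac12$, the hypothesis does give a uniform $H^{1/2}$ bound and the Gronwall scheme then closes. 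Two smaller points: the initial-data term satisfies $\int\chi_{n,r}|P_cT_{y_n}\varphi|^2\,dx\to\int\chi(x/r)|\varphi(x)|^2\,dx$ as $n\to\I$ for fixed $r$ (it does \emph{not} vanish in $n$; only the $P_c$ correction does), which is harmless given that you send $n\to\I$ before $r\to\I$, but the justification you give is not the right one; and the cubic term in $\tilde N$ is $P_c(|R[z_n]v_n|^2R[z_n]v_n)$, not $|v_n|^2v_n$, so it does not literally drop out of $\Im(\bar v_n\,\cdot\,)$ pointwise — the $R[z_n]-1$ and $1-P_c$ corrections are $\phi_0$-localized and must be fed through the same localization estimate rather than discarded.
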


\begin{proof}[Proof of Lemma~\ref{L:818}] As in the proof of Lemma~\ref{l:spacetranslation}, we let $\chi_{n,r}(x)=\chi(\tfrac{x-y_n}{r})$ be a smooth function obeying
\[
\chi_{n,r}(x) = \begin{cases} 0 & |x-y_n|<r \\ 1 & |x-y_n|>2r \end{cases}\qtq{and} \|\nabla \chi_{n,r}\|_{L^\infty}\lesssim \tfrac{1}{r}.
\]

We first note that
\begin{align*}
\d_t |v_n|^2 = 2\nabla \cdot \Im (\overline{v_n} \nabla v_n) + 2\Im ( \overline{v_n} B[z_n] v_n )+ 2\Im ( \overline{v_n} \tilde{N}(z_n,v_n) ),
\end{align*}
which yields
\begin{equation}\label{e:trial2.2}
\begin{aligned}
\d_t \int \chi_{n,r}  |v_n|^2 dx ={}& - 
2\int \nabla \chi_{n,r} \cdot \Im (\overline{v_n} \nabla v_n)\,dx
+2\int \chi_{n,r} \Im ( \overline{v_n} B[z_n] v_n )\,dx\\
& +2\int \chi_{n,r} \Im ( \overline{v_n} \tilde{N}(z_n,v_n))\,dx.
\end{aligned}
\end{equation}

For the first term in \eqref{e:trial2.2}, we observe that
\[
\abs{ \int \nabla \chi_{n,r} \cdot \Im (\overline{v_n} \nabla v_n) dx } \lesssim \tfrac1r \norm{v_n}_{H^{\frac12}}^2,
\]
which we prove by interpolation: It suffices to prove that $|\nabla|^s [\nabla\chi_{n,r}] |\nabla|^{-s}$ maps $L^2\to L^2$ with norm $\lesssim r^{-1}$ for $s=\tfrac12$, which in turn follows provided we verify this bound for $s\in\{0,1\}$.  The case $s=0$ is straightforward, while for $s=1$ we use the product rule, Sobolev embedding, and the fact that $\|\Delta \chi\|_{L^3}\lesssim r^{-1}$. 

The second term in \eqref{e:trial2.2} is estimated as in the proof of Lemma \ref{l:spacetranslation}.  In particular, one has
\[
\norm{ B[z_n(t)]v_n(t) }_{L^2} \lesssim \( \int \chi_{n,r} |v_n|^2 dx \)^\frac12 + o_n(1)
\]
as $n\to\I$, and hence
\[
\abs{\int \chi_{n,r} \Im ( \overline{v_n} B[z_n] v_n ) dx} \le  \int \chi_{n,r} |v_n|^2 dx  + o_n(1).
\]

We turn to the third term in \eqref{e:trial2.2}. We observe the following bounds:
\begin{align*}
&|\chi_{n,r}^{\frac12} P_c N(z_n,R[z_n] v_n)| \lesssim  |\chi_{n,r}^{\frac12} R[z_n]v_n| (|\Phi[z]|^2 + |R[z_n]v_n|^2), \\
&|\chi_{n,r}^{\frac12} R[z_n]v_n| \le |\chi_{n,r}^{\frac12}v_n| + |\phi_0| \biggl\{ \biggl[\int \chi_{n,r} |v_n|^2 dx \biggr]^\frac12 + o_n(1) \biggr\}.
\end{align*}
Thus 
\[
\norm{ \chi_{n,r}^{\frac12} P_c N(z_n,R[z_n] v_n) }_{L^2} \le C \(\int \chi_{n,r} |v_n|^2 dx \)^\frac12 + o_n(1).
\]
Inserting these estimates into \eqref{e:trial2.2}, we see that 
\[
f(t)=\int \chi_{n,r} |v_n(t)|^2\,dx\qtq{satisfies}
f(t) \le f(0) + \tfrac{\tau}{r}[C+o_n(1)]+ C \int_0^t f(s) \,ds 
\]
for some constant $C>0$ (independent of $t,r,n$).  Thus, by Gronwall's inequality, 
\[
\sup_{t\in [0,\tau]} f(t) \le e^{C\tau} \( \int \chi_{n,r} |\varphi|^2 dx  +\tfrac{\tau}{r}(C+o_n(1)\).
\]
In particular, given $\eps,\tau>0$, we may find $r>0$ sufficiently large and $N$ so that
\[
n\geq N \implies \sup_{t\in [0,\tau]} \int \chi_{n,r} |v_n(t)|^2 dx \le \eps.		
\]
As we can derive a similar estimate for negative times, we obtain \eqref{e:trial2.claim1}.

It remains to prove \eqref{e:trial2.claim2}. Defining 
\[
w_n(t) := T_{y_n}^{-1} v_n(t),
\]
we may write
\begin{align*}
-i T_{y_n}^{-1}& \int_0^t  e^{-i(t-s)\Delta} ( i \d_t v_n - \Delta v_n - \sigma |v_n|^2 v_n )\,ds \\
& = 	w_n(t) - e^{-it\Delta}\varphi +i  \int_0^t e^{i(t-s)\Delta} (\sigma |w_n|^2 w_n)\,ds= N_1 + N_2 + N_3,
\end{align*}
where
\begin{align*}
&N_1 =  (T_{y_n}^{-1} \mathcal{L}(t,0;z_n)P_c T_{y_n} - e^{-it\Delta}  ) \varphi,\\
&N_2 = - i  \int_0^t (T_{y_n}^{-1} \mathcal{L}(t,s;z_n)P_cT_{y_n}- e^{-i(t-s)\Delta}) (\sigma |w_n|^2 w_n)(s)\,ds,\\
&N_3 = - i  \int_0^t (T_{y_n}^{-1} \mathcal{L}(t,s;z_n)( \tilde{N}(z_n, T_{y_n} w_n) - \sigma P_c T_{y_n}(| w_n|^2 w_n))\,ds.
\end{align*}

By Lemma~\ref{l:spacetranslation}, we first observe that
\[
\lim_{n\to\infty}\norm{N_1}_{\mathrm{Stz}^1 ([0,\tau])}=0,
\]
giving an acceptable estimate for $N_1$.

For $N_2$, we will use the interpolation estimate
\[
\norm{N_2}_{\mathrm{Stz}^{\theta_1} ([0,\tau])} \lesssim \norm{N_2}_{\mathrm{Stz}^0 ([0,\tau])}^{1-\frac{\theta_1}{\theta_2}} \norm{N_2}_{\mathrm{Stz}^{\theta_2} ([0,\tau])}^{\frac{\theta_1}{\theta_2}}.
\]
Using the Strichartz estimates for $\mathcal{L}(t,0;z_n)$ and $e^{-it\Delta}$, we first observe that
\[
\norm{N_2}_{\mathrm{Stz}^{\theta_2} ([0,\tau])} \lesssim \norm{|w_n|^2w_n}_{L^2 H^{\theta_1}_{6/5}([0,\tau])} \lesssim \norm{w_n}_{L^4 L^6 ([0,\tau])}^2 \norm{w_n}_{L^\I H^{\theta_2} ([0,\tau])}\lesssim A^3.
\]
We next write
\begin{align*}
N_2 = i T_{y_n}^{-1} \int_0^t& \biggl[\mathcal{L}(t,s;z_n) (V  (T_{y_n} d_n) \\
& \quad  - B[z_n] (T_{y_n}d_n) +(1- P_c) T_{y_n}|w_n|^2 w_n ))(s)\biggr]\,ds,
\end{align*}
where 
\[
d_n(t) = -i \sigma \int_0^t e^{-i(t-s)\Delta} (|w_n|^2 w_n) (s)\,ds
\]
solves
\[
(i \d_t -\Delta ) d_n = \sigma |w_n|^2 w_n,\quad d_n(0)=0.
\]
We consider each term in $N_2$ separately.  First, by Strichartz, we have that
\[
\norm{d_n}_{\mathrm{Stz}^{\theta_2}} \lesssim \norm{|w_n|^2 w_n}_{L^2 H^{\theta_2}_{6/5}} \lesssim \norm{w_n}_{L^4 L^6}^2 \norm{w_n}_{L^\I H^{\theta_2}}
\lesssim A^3.
\]
Next, we observe that
\begin{align*}
\abs{\d_t \int \chi(\tfrac{\cdot}{R}) |d_n|^2 \,dx}  &{}\lesssim R^{-1}\norm{d_n}^2_{H^{1/2}} + \norm{d_n}_{L^6} \norm{ \chi(\tfrac{\cdot}{R}) w_n}_{L^2} \norm{w_n}_{L^6}^2 \\
&{}\lesssim R^{-1} + \norm{d_n}_{L^6} \norm{ \chi(\tfrac{\cdot}{R}) w_n}_{L^2} \norm{w_n}_{L^6}^2.
\end{align*}
In particular, using \eqref{e:trial2.claim1}, given $\eps>0$, we have that
\[
\varlimsup_{n\to\I} \sup_{t \in [0,\tau]}\int \chi(\tfrac{\cdot}{R}) |d_n|^2\,dx \lesssim\tau R^{-1} + \tau^\frac14 A^5 \varlimsup_{n\to\I }\norm{ \chi(\tfrac{\cdot}{R}) w_n}_{L^\I L^2([0,\tau])} \lesssim \eps
\]
for $R$ sufficiently large. From this estimate, we can deduce that
\[
\lim_{n\to\I} \norm{ V  T_{y_n} d_n }_{(L^1L^2 + L^2 L^{\frac65})([0,\tau])} =0,
\]
which suffices to treat the first term appearing in the formula for $N_2$ above.  Here the $L^1L^2$ and $L^2 L^{\frac65}$ norms are used to estimate the $L^2$ and $L^\I$ parts of $V$, respectively.

{For the second term appearing in the formula for $N_2$, we note that
\[
\lim_{n\to\I} \norm{ B[z_n] ( T_{y_n} d_n) }_{L^1L^2([0,\tau])} =0,
\]
which follows from the fact that $z_n$ takes values in a compact set of $\C$.}

Finally, for the third term in the formula for $N_2$, we note that by \eqref{e:trial2.claim1} implies
\[
\norm{ (1- P_c) T_{y_n}|w_n|^2 w_n }_{L^1L^2([0,\tau])}\le \tau \sup_{t\in [0,\tau]} |(T_{y_n}^{-1}\phi_0, |w_n|^2 w_n )| \to 0\qtq{as}n\to\infty.
\]

Combining the estimates above, we obtain
\[
\lim_{n\to\I} \norm{N_2}_{\mathrm{Stz}^0 ([0,\tau])} =0,\qtq{and hence}	\lim_{n\to\I} \norm{N_2}_{\mathrm{Stz}^{\theta_1} ([0,\tau])} =0.
\]

Finally let us estimate $N_3$.  We begin by writing
\begin{equation}\label{N3-stuff}
\begin{aligned}
\tilde{N} (z_n,T_{y_n} w_n)& - \sigma P_c  T_{y_n} (|w_n|^2 w_n) \\
&= \sigma P_c ( 2\Phi[z_n]|R[z_n]T_{y_n} w_n |^2 + \overline{\Phi[z_n]}(R[z_n]T_{y_n} w_n)^2 )\\
& \quad+ \sigma P_c ( |R[z_n]T_{y_n} w_n |^2R[z_n]T_{y_n} w_n- |T_{y_n} w_n |^2 T_{y_n} w_n  )\\
& \quad- i P_c D\Phi[z_n]  \underline{N} (z_n,R[z_n]T_{y_n} w_n))
\end{aligned}
\end{equation}
and observing that this is a sum of multilinear functions with respect to $w_n$.  By a density argument, we may approximate $w_n$ on $[0,\tau]\times\R^3$ by a smooth function in the $\mathrm{Stz}^{\theta_1}([0,\tau])$ norm.  In particular, the error is small in the $L^1 H^{\theta_1}$-norm, so thatit suffices to estimate the term above in $L^1 H^1$ assuming that  $w_n$ is smooth.

We next observe that
\[
|(R[z_n]-1) T_{y_n} w_n| \le |c_n||\phi_0| 
\]
with $|c_n|\lesssim | (\Phi[z_n] ,T_{y_n} w_n )| \to 0$ as $n\to\I$ uniformly in $t$.  Thus $R[z_n]$ may be replaced by the identity with an acceptable error.

The quadratic part (the first line on the right-hand side of \eqref{N3-stuff}) is small in $L^1 H^1$ since each term (along with their derivatives) contains a product $\Phi[z_n]\cdot T_{y_n}w_n$ or $\nabla\Phi[z_n]\cdot T_{y_n}w_n$, which tend to zero in $L_t^\infty(L^1\cap L^2)_x([0,\tau])$ as $n\to\infty$ (cf. \eqref{e:trial2.claim1}). 

The cubic part (the second line on the right-hand side of \eqref{N3-stuff}) is acceptable, as it is essentially $\mathcal{O}(|(R[z_n]-1) T_{y_n} w_n| |T_{y_n} w_n|^2)$.  The last line on the right-hand side of \eqref{N3-stuff} is handled similarly.

Thus we obtain that as $n\to\infty$, we have
\[
\norm{N_3}_{\mathrm{Stz}^{\theta_1} ([0,\tau])} \lesssim \norm{\tilde{N} (z_n,T_{y_n} w_n) - \sigma P_c  T_{y_n} (|w_n|^2 w_n)}_{L^1 H^{\theta_1} ([0,\tau])} \to 0.
\] 
As the same argument handles negative times, we complete the proof of \eqref{e:trial2.claim2}.\end{proof}

\begin{proof}[Proof of Proposition \ref{p:Lambda0kbound}]
We prove the bound for positive times.  We fix $\tau>0$ and let $\eps_0,\delta_0>0$ be constants to be chosen later.  To simplify notation, we omit superscripts and denote $v_n = \Lambda_n^{(0,k)}$ and $w_\I=w_\I^{(0,k)}$.  We also define
\[
w_n = (T_{y_n})^{-1} v_n,\qtq{where}y_n\qtq{denotes}y_n^{(0,k)}.
\]

As $M := \norm{w_\I}_{L^8L^4(\R)} <\I$, there exists $L_0\le 1+(M/\delta_0)^{1/8}$ and a partition $\{t_\ell\}_{\ell=0}^{L_0}$ of $[0,\tau]$ such that
\[
\sup_{ \ell \in [1, L_0] } \norm{ w_\I }_{L^8 L^4 ([t_{\ell-1}, t_\ell])} \le \delta_0.
\]

We now choose $\theta_\ell$ so that
\[
1 = \theta_1 > \theta_2 > \dots > \theta_{L_0} \ge \tfrac34.
\]
We will show by induction on $k\in [0,L_0]$ that there exists $N_k\in \N$, increasing $C_k>0$ (increasing in $k$), and $\eps_k >0$ (decreasing in $k$) such that if
\[
\norm{ w_n(0) - w_\I(0) }_{H^1} \le \eps_k,
\]
then
\begin{equation}\label{e:Lambda0kboundpf}
\norm{ w_n - w_\I }_{ \mathrm{Stz}^{\theta_k}([0,t_{k}]) } \le \min\{C_k\norm{ w_n(0) - w_\I(0) }_{H^1}, \eps_0\}\qtq{for}n\ge N_k.
\end{equation}

The base case $k=0$ is straightforward for any choice of $\eps_0>0$. Indeed, since 
\[
w_n(0)-w_\I(0) = (P_c-1)T_{y_n}  \varphi^{(0,k) } = \phi_0 ( T_{y_n} \varphi^{(0,k)} ,\phi_0)\to 0
\]
in $H^1$ as $n\to\I$, the assumption is satisfied if $n$ is sufficiently large.

Now suppose that we have \eqref{e:Lambda0kboundpf} at some $k_0 \in [0, L_0-1]$.  Then, provided $\delta_0>0$ is sufficiently small, we have that
\[
\norm{ e^{-i(t-t_{k_0}) \Delta} w(t_{k_0}) }_{L^8L^4 ([t_{k_0},t_{k_0+1}])} \le 2\norm{w}_{L^8L^4 ([t_{k_0},t_{k_0+1}])} \le 2 \delta_0.
\]
By Lemma~\ref{l:spacetranslation}, this implies
\[
\norm{ \mathcal{L} (\cdot, t_{k_0};z_n) P_c T_{y_n} w_\I(t_{k_0}) }_{L^8L^4 ([t_{k_0},t_{k_0+1}])} \le 3\delta_0
\]
for large $n$. Consequently, by Strichartz and the inductive hypothesis, we have 
\begin{align*}
\| \mathcal{L} (\cdot,& t_{k_0};z_n) {w_n} (t_{k_0}) \|_{L^8L^4 ([t_{k_0},t_{k_0+1}])} \\
&\le \norm{ \mathcal{L} (\cdot, t_{k_0};z_n) P_c T_{y_n} w_\I(t_{k_0}) }_{L^8L^4 ([t_{k_0},t_{k_0+1}])} \\
&\quad + \norm{ \mathcal{L} (\cdot, t_{k_0};z_n) P_c T_{y_n} (w_\I - w_n)(t_{k_0}) }_{L^8L^4 ([t_{k_0},t_{k_0+1}])} \\
&\lesssim \delta_0 + \norm{(w_\I - w_n)(t_{k_0})}_{H^{\frac12}} \lesssim \delta_0+ \eps_0
\end{align*}
for large $n$. We now fix $\delta_0$ and $\eps_0$ so small that Lemma~\ref{l:sdt} implies
\begin{align*}
\norm{v_n}_{L^8L^4([t_{k_0},t_{k_0+1}])} &\le 2 \norm{ \mathcal{L} (\cdot, t_{k_0};z_n) v_n (t_{k_0}) }_{L^8L^4 ([t_{k_0},t_{k_0+1}])}\lesssim \delta_0 + \eps_0, \\
\norm{v_n}_{\mathrm{Stz}^{\theta_{k_0}}([t_{k_0},t_{k_0+1}])}&{}\le C( \norm{v_n(t_{k_0})}_{H^{\theta_{k_0}} } + \sup_{n} \norm{z_n}_{L^\I}) \\
&\le C ( \norm{w_\I (t_{k_0})}_{H^{\theta_{k_0}} }+ \delta_0  + \sup_{n} \norm{z_n}_{L^\I}) \\
&\le 2 C (\norm{w_\I }_{L^\I H^{1} (\R)}  + \sup_{n} \norm{z_n}_{L^\I}) =:A.
\end{align*}
Furthermore, since $v_n(0) \in H^1$, we have $v_n(t_{k_0}) \in H^1$.  Then, by Lemma~\ref{L:818} (i.e. \eqref{e:trial2.claim1} and \eqref{e:trial2.claim2}), we have
\begin{align*}
\lim_{R\to \I} \varlimsup_{n\to\I} \norm{v_n}_{L^\I L^2 ([t_{k_0},t_{k_0+1}] \times \{ |x-y_n| \ge R \} )} &=0,\\
\lim_{n\to\I} \norm{ \int_0^t e^{i(t-s)\Delta} (i \d_t v_n - \Delta v_n - \sigma |v_n|^2 v_n)\,ds  }_{\mathrm{Stz}^{\theta_{k_0+1}} ([t_{k_0},t_{k_0+1}])}& = 0.
\end{align*}
Then, together with 
\begin{align*}
&\norm{(w_\I - w_n)(t_{k_0})}_{H^{\theta_{k_0+1}}}\le  \norm{(w_\I - w_n)(t_{k_0})}_{H^{\theta_{k_0} } } \le C_{k_0} \norm{ w_n(0) - w_\I(0) }_{H^1},\\
&\norm{ w_\I }_{L^8L^4 ([t_{k_0},t_{k_0+1}])} \le \delta_0,
\end{align*}
we see from perturbation around $w_\I$ that
\[
\norm{ w_\I - w_n }_{\mathrm{Stz}^{\theta_{k_0+1} } ([t_{k_0},t_{k_0+1}]) }\lesssim \norm{ w_n(0) - w_\I(0) }_{H^1}.
\]
Hence, there exists $C_{k_0+1} \ge C_{k_0}$ such that
\[
\norm{ w_\I - w_n }_{\mathrm{Stz}^{\theta_{k_0+1} } ([0,t_{k_0+1}]) }\le C_{k_0+1} \norm{ w_n(0) - w_\I(0) }_{H^1}.
\]
Setting $\eps_{k_0+1} := \eps_0/C_{k_0+1}$, we obtain \eqref{e:Lambda0kboundpf} for large $n$ under the assumption that $\norm{ w_n(0) - w_\I(0) }_{H^1} \le \eps_{k_0+1}$.

By induction, we have that \eqref{e:Lambda0kboundpf} holds for $k=L_0$. Thus
\begin{equation}\label{e:Lambda0kboundpf2}
\lim_{n\to\I}  \norm{ w_n - w_\I }_{\mathrm{Stz}^{3/4} ([0,\tau])} =0
\end{equation}
for any $\tau>0$, which yields \eqref{e:Lambda0kbound4}.  Furthermore, by \eqref{e:trial2.claim1},
\begin{equation}\label{e:Lambda0kboundpf3}
\lim_{R\to\I} \varlimsup_{n\to\I} \norm{ w_n }_{L^\I_t L^2 ([0,\tau] \times \{|x|\ge R \}  )}=0
\end{equation}
for any $\tau>0$. Interpolating \eqref{e:Lambda0kboundpf2} with \eqref{e:Lambda0kboundpf3}, we obtain
\begin{equation}\label{e:Lambda0kboundpf4}
\lim_{R\to\I} \varlimsup_{n\to\I} \norm{ w_n }_{L^8_t L^4 ([-\tau,\tau] \times \{|x|\ge R \}  )}=0.
\end{equation}

To complete the proof, it suffices to show \eqref{e:Lambda0kbound3}. Indeed, \eqref{e:Lambda0kbound2} follows from \eqref{e:Lambda0kbound3} and \eqref{e:Lambda0kboundpf4}.  Similarly, \eqref{e:Lambda0kbound1} follows from \eqref{e:Lambda0kbound3}, \eqref{e:Lambda0kboundpf2}, and the fact that $w_\I \in L^8L^4(\R)$.

We turn to \eqref{e:Lambda0kbound3} and let $\eps>0$.  Using \eqref{e:spacelargetimebound12}, we find that there exists $\tau_0=\tau_0(\eps)>0$ such that if $\tau \ge \tau_0$, then
\begin{align*}
\norm{ \mathcal{L} (\cdot, \tau; z_n) w_\I (\tau)}_{L^8 L^4 ([\tau,\I))}&=\norm{ \mathcal{L} (\cdot, 0; z_n(\cdot + \tau)) w_\I (\tau)}_{L^8 L^4 ([0,\I))} \\
&\lesssim \norm{e^{-it \Delta} w_\I(\tau) }_{L^8 L^4 ([0,\I))} \\
&\lesssim \norm{e^{-i(t-\tau) \Delta} w_\I(\tau) }_{L^8 L^4 ([\tau,\I))} \\
&\lesssim \norm{ w_\I }_{L^8 L^4 ([\tau,\I))}\lesssim \eps.
\end{align*}
On the other hand, using \eqref{e:Lambda0kboundpf2}, we have that for each $\tau\ge \tau_0$, there exists $N=N(\tau)$ such that if $n \ge N$, then
\[
	\norm{ \mathcal{L} (\cdot, \tau; z_n) (w_n-w_\I (\tau))}_{L^8 L^4 ([\tau,\I))}
	\lesssim  \norm{ w_n - w_\I }_{\mathrm{Stz}^{3/4} ([0,\tau])} \le \eps.
\]
For such $\tau$ and $n$, we therefore have that
\[
\norm{ \mathcal{L} (\cdot, \tau; z_n) w_n (\tau)}_{L^8 L^4 ([\tau,\I))} \lesssim \eps.
\]
If $\eps$ is small then by stability (Lemma \ref{l:6.3}), we have that $w_n$ is forward global and satisfies
\[
\norm{ w_n }_{L^8 L^4 ([\tau,\I))} \le 2 \norm{ \mathcal{L} (\cdot, \tau; z_n) w_n (\tau)}_{L^8 L^4 ([\tau,\I))} \lesssim \eps.
\]
As the same argument applies for negative times, we derive \eqref{e:Lambda0kbound3}.
\end{proof}

We next bound the \emph{sum} of nonlinear profiles of the form $\Lambda_n^{(0,k)}$:

\begin{lemma}\label{l:Lambda0ksum} We have the following bounds:
\begin{align}
&\sup_{K\ge1} \varlimsup_{n\to\I} \norm{ \sum_{k=1}^K \Lambda_n^{(0,k)}}_{L^8L^4(\R)} <\I,\label{e:Lambda0ksum1}\\
&\lim_{\tau\to\I }\sup_{K\ge1} \varlimsup_{n\to\I} \norm{ \sum_{k=1}^K \Lambda_n^{(0,k)}}_{L^8L^4((-\I,-\tau]\cup[\tau,\I))} =0.\label{e:Lambda0ksum4}
\end{align}
\end{lemma}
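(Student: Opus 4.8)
The plan is to reduce the estimates on sums of the nonlinear profiles $\Lambda_n^{(0,k)}$ to the corresponding estimates on sums of the NLS solutions $w_\I^{(0,k)}$, which are standard consequences of orthogonality and the scattering theory for the cubic NLS, and then to transfer these via Proposition~\ref{p:Lambda0kbound}. First I would use Proposition~\ref{p:Lambda0kbound}, in particular \eqref{e:Lambda0kbound1}, to replace each $\Lambda_n^{(0,k)}$ by $T_{y_n^{(0,k)}} w_\I^{(0,k)}$ up to an $L^8L^4$-error that vanishes as $n\to\I$; the only subtlety is that this replacement is quantified for each \emph{fixed} $k$, so one must first handle the tail $\sum_{k>K_0}$ of the sum uniformly before applying it to the finitely many terms $k\le K_0$.

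For the tail estimate, I would argue as follows. Using Lemma~\ref{l:sdt} (perturbation around zero) applied with the linear approximation $\mathcal{L}(\cdot,0;z_n)T_{y_n^{(0,k)}} P_c e^{is_n^{(0,k)}\Delta}\varphi^{(0,k)}$ for each profile (here $\sigma^{(0,k)}=0$, so the time shift is trivial), together with \eqref{e:spacelargetimebound12} of Lemma~\ref{l:spacelargetime} and Lemma~\ref{l:spacetranslation}, one sees that $\norm{\Lambda_n^{(0,k)}}_{L^8L^4(\R)}\lesssim\norm{w_\I^{(0,k)}}_{L^8L^4(\R)}$, and moreover that the sum of the linearized profiles decouples in $L^8L^4$ because the parameters $(0,y_n^{(0,k)})$ are pairwise orthogonal (with $|y_n^{(0,k)}-y_n^{(0,k')}|\to\I$), exactly as in Proposition~\ref{p:orthremedy} and the proof of Lemma~\ref{l:differentlinearprofile}. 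By Lemma~\ref{l:wkest}, $\sum_{k\ge1}\norm{\varphi^{(0,k)}}_{H^1}^2\lesssim\sum_{k\ge1}\mathbb{M}(\varphi^{(0,k)})+\mathbb{E}_0(\varphi^{(0,k)})\lesssim M_*+E_*<\I$ (using $\mathbb{E}_0(\varphi^{(0,k)})\ge0$ and the mass/energy decoupling), and since each $w_\I^{(0,k)}$ scatters with $\norm{w_\I^{(0,k)}}_{L^8L^4}^2\lesssim\norm{\varphi^{(0,k)}}_{H^1}^2$ (this is the small-data-for-most-profiles observation, valid since $\mathbb{E}_0(\varphi^{(0,k)})<\tfrac1{\mathbb{M}(\varphi^{(0,k)})}\mathbb{M}(Q)\mathbb{E}_0(Q)-\delta_0$ and the mass is small), the tail $\sum_{k>K_0}\norm{w_\I^{(0,k)}}_{L^8L^4}^2\to0$ as $K_0\to\I$. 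A nonlinear-profile superposition argument (again via Lemma~\ref{l:sdt}, treating $\sum_{k\le K_0}\Lambda_n^{(0,k)}$ as an approximate solution, with the errors from cross terms vanishing by the spatial orthogonality $|y_n^{(0,k)}-y_n^{(0,k')}|\to\I$) then yields $\varlimsup_{n\to\I}\norm{\sum_{k\le K}\Lambda_n^{(0,k)}}_{L^8L^4(\R)}^8\lesssim\sum_{k\ge1}\norm{w_\I^{(0,k)}}_{L^8L^4(\R)}^8<\I$ uniformly in $K$, which is \eqref{e:Lambda0ksum1}.

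For \eqref{e:Lambda0ksum4}, I would combine the uniform bound just obtained with \eqref{e:Lambda0kbound3} of Proposition~\ref{p:Lambda0kbound}. Fix $\eps>0$; choose $K_0$ so that the tail $\sum_{k>K_0}\norm{w_\I^{(0,k)}}_{L^8L^4(\R)}^8<\eps$, so that the superposition estimate controls $\varlimsup_n\norm{\sum_{k=K_0+1}^K\Lambda_n^{(0,k)}}_{L^8L^4(\R)}\lesssim\eps^{1/8}$ uniformly in $K$. For the finitely many remaining profiles $k\le K_0$, apply \eqref{e:Lambda0kbound3}: there is $\tau_0$ so that for $\tau\ge\tau_0$ each $\varlimsup_n\norm{\Lambda_n^{(0,k)}}_{L^8L^4((-\I,-\tau]\cup[\tau,\I))}\lesssim\eps/K_0$, hence their (finite) sum is $\lesssim\eps$. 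Adding the two contributions and using the triangle inequality in $L^8L^4$ gives $\sup_{K\ge1}\varlimsup_n\norm{\sum_{k=1}^K\Lambda_n^{(0,k)}}_{L^8L^4((-\I,-\tau]\cup[\tau,\I))}\lesssim\eps^{1/8}$ for $\tau\ge\tau_0$, and letting $\tau\to\I$ and then $\eps\to0$ yields \eqref{e:Lambda0ksum4}.

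The main obstacle I anticipate is making the nonlinear superposition argument fully rigorous: one needs the sum $\sum_{k\le K_0}\Lambda_n^{(0,k)}$ to be a good approximate solution to the $(\xi,z)$-system with the \emph{same} $z_n$, which requires checking that the nonlinear interaction terms $\tilde N(z_n,\cdot)$ applied to the sum differ from the sum of the individual $\tilde N(z_n,\Lambda_n^{(0,k)})$ by an error small in the appropriate dual Strichartz norm. This is where the spatial orthogonality $|y_n^{(0,k)}-y_n^{(0,k')}|\to\I$ enters decisively, precisely as in the proof of Proposition~\ref{p:Lambda0kbound} and Lemma~\ref{L:818}: the cross terms contain products of functions asymptotically supported near distinct centers, hence vanish in $L^1_tL^2_x+L^2_tL^{6/5}_x$ on any compact time interval, while on the complementary large-time region one uses \eqref{e:Lambda0kbound3} together with the scattering of the $w_\I^{(0,k)}$. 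Keeping careful track of the dependence on $K_0$, $\tau$, and $n$ in these error estimates (in that order of limits) is the delicate bookkeeping step, but it follows the template already established in this section.
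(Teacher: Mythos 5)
Your argument is correct in substance and rests on the same two pillars as the paper's proof: pairwise spatial orthogonality of the centers $y_n^{(0,k)}$ kills the cross terms, and small-data theory (via $\norm{\varphi^{(0,k)}}_{H^1}\to0$ and the $\ell^2$-summability of $\norm{\varphi^{(0,k)}}_{H^1}$ from the linear profile decomposition) controls the tail; the proof of \eqref{e:Lambda0ksum4} by splitting into a uniformly small tail plus finitely many head terms handled by \eqref{e:Lambda0kbound3} is exactly the paper's. The one place where you take a detour is the vehicle for the decoupling: you route it through a nonlinear superposition/stability argument (Lemma~\ref{l:sdt}, treating $\sum_{k\le K_0}\Lambda_n^{(0,k)}$ as an approximate solution), and you flag the verification that $\tilde N(z_n,\sum_k\Lambda_n^{(0,k)})\approx\sum_k\tilde N(z_n,\Lambda_n^{(0,k)})$ as the delicate step. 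For this lemma that machinery is unnecessary: the statement is a pure norm bound on a sum of already-constructed functions, and the paper simply expands $\tnorm{\sum_{k=1}^K\Lambda_n^{(0,k)}}_{L^8L^4}^4$ as an $L^2_tL^1_x$ norm of a fourfold product and observes that \eqref{e:Lambda0kbound1}--\eqref{e:Lambda0kbound2} give $\lim_n\tnorm{\Lambda_n^{(0,k_1)}\Lambda_n^{(0,k_2)}}_{L^4_tL^2_x}=0$ for $k_1\neq k_2$, so only the diagonal survives and $\varlimsup_n\tnorm{\sum_k\Lambda_n^{(0,k)}}_{L^8L^4}^4\le\sum_k\tnorm{w_\I^{(0,k)}}_{L^8L^4}^4$. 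The approximate-solution bookkeeping you anticipate as the main obstacle is genuinely needed elsewhere (e.g.\ in establishing \eqref{e:approxpf1} in the proof of Proposition~\ref{p:keykey}), but not here, so your proof would be correct after either carrying out that extra work or, better, replacing it with the direct expansion. One small imprecision: the bound $\norm{w_\I^{(0,k)}}_{L^8L^4}\lesssim\norm{\varphi^{(0,k)}}_{H^1}$ holds only for $k$ large (by small data), not for all $k$; for the finitely many small $k$ one uses the uniform bound $\lesssim_{\delta_0}1$ from Lemma~\ref{l:wkest}, which is how the paper splits the sum at $K_*$.
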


\begin{proof} We begin with \eqref{e:Lambda0ksum1}.  If $k_1 \neq k_2$ then we see from \eqref{e:Lambda0kbound1}, \eqref{e:Lambda0kbound2}, and orthogonality of the profiles that
\[
\lim_{n\to\I} \tnorm{ \Lambda_n^{(0,k_1)} \Lambda_n^{(0,k_2)}  }_{L^4_tL^2_x (\R)} =0.
\]
Thus
\begin{align*}
\varlimsup_{n\to\I} \norm{ \sum_{k=1}^K \Lambda_n^{(0,k)}}_{L^8L^4(\R)}^4 
&=\varlimsup_{n\to\I} \norm{ \sum_{ k_1,k_2,k_3,k_4 \in [1,K] }\Lambda_n^{(0,k_1)}\Lambda_n^{(0,k_2)}\Lambda_n^{(0,k_3)}\Lambda_n^{(0,k_4)} }_{L^2_tL^1_x (\R)} \\
&{}\le \varlimsup_{n\to\I} \sum_{k=1}^K  \tnorm{ \Lambda_n^{(0,k)}}_{L^8L^4(\R)}^4= \sum_{k=1}^K  \tnorm{ w_\I^k}_{L^8L^4(\R)}^4.
\end{align*}
As $\norm{\varphi_\I^k}_{H^1} \to 0$ as $k\to\I$, the small-data theory yields sufficiently large $K_*$ that 
\[
k\geq K_*\implies \tnorm{ w_\I^k}_{L^8L^4(\R)} \le 2 \tnorm{\varphi^{(0,k)}}_{H^1}.
\]
It follows that
\[
\sup_{K \ge 1}  \sum_{k=1}^K  \tnorm{ w_\I^k}_{L^8L^4(\R)}^4\le \sum_{k=1}^{K_*}  \tnorm{ w_\I^k}_{L^8L^4(\R)}^4+ 2^4 \sum_{k=K_*+1}^\I \tnorm{\varphi^{(0,k)}}_{H^1}^4<\I,
\]
which gives \eqref{e:Lambda0ksum1}.

We turn to \eqref{e:Lambda0ksum4}.  We let $\eps>0$ and argue as above to deduce the existence of $K_0\ge K_*$ such that
\begin{align*}
\sup_{K\ge K_0} & \varlimsup_{n\to\I} \norm{ \sum_{k=K_0}^K \Lambda_n^{(0,k)}}_{L^8L^4(\R)}^4\\
&\le  \sup_{K\ge K_0} \sum_{k=K_0}^K  \tnorm{ w_\I^k}_{L^8L^4(\R)}^4\le 2^4 \sum_{k=K_0}^\I \tnorm{\varphi^{(0,k)}}_{H^1}^4\le \eps.
\end{align*}
For this $K_0$, we see from \eqref{e:Lambda0kbound3} that there exists $\tau_0$ such that $\tau\ge\tau_0$ implies
\begin{align*}
 \varlimsup_{n\to\I}& \norm{ \sum_{k=1}^{K_0} \Lambda_n^{(0,k)}}_{L^8L^4((-\I,-\tau]\cup[\tau,\I)))}\\
&  \le K_0 \sup_{k \in [1,K_0]} \varlimsup_{n\to\I} \norm{ \Lambda_n^{(0,k)}}_{L^8L^4((-\I,-\tau]\cup[\tau,\I)))}\le \eps.
\end{align*}
Combining these two estimates, we obtain the result.
\end{proof}

\subsubsection{Estimates on $\Lambda_n^{(s,k)}$} We next consider nonlinear profiles $\Lambda_n^{(s,k)}$ with $s\in[1,s_{\max}]$.  We let $w_\I^{(s,k)}$ be the solution to the following cubic NLS: 

\begin{equation}\label{e:fnls_np2}
\begin{cases}
& i \d_t w_\I^{(s,k)} - \Delta w_\I^{(s,k)} = \sigma \bigl|w_\I^{(s,k)}\bigr|^2 w_\I^{(s,k)}, \\
& w_\I^{(s,k)}(-\tau)= e^{i\tau \Delta }{\varphi^{(s,k)}},
\end{cases}
\end{equation}
where we denote the profile $\varphi_\infty^j$ appearing in Definition~\ref{d:nprofile3} by $\varphi^{(s,k)}.$  We first collect some properties of $w_\I^{(s,k)}$.

\begin{lemma}\label{l:wskest} Suppose that \eqref{e:fakeassumption} holds.  Let $s \in [1,s_{\max}]$ and $k \ge 1$.  If $\varphi^{(s,k)} \neq0$ then  $\mathbb{M}(\varphi^{(s,k)}) \le M_*$ and
\begin{align*}
 \mathbb{H}_0 (\varphi^{(s,k)}) \le \min \bigl\{ E_* - \mathscr{E}_0(M_*), \tfrac1{\mathbb{M}(\varphi^{(s,k)})} \mathbb{M}(Q)  \mathbb{E}_0 (Q) - \delta_0 \bigr\}.
\end{align*}
In particular, for any $\tau >0$ there exists a unique global solution $w_\I^{(s,k)}\in C(\R,H^1)$ to \eqref{e:fnls_np2} such that
\begin{align*}
&\sup_{\tau>0}\norm{w_\I^{(s,k)}}_{L^8_tL^4_x (\R)} \lesssim_{\delta_0} 1, \\ 
& \sup_{\tau>0}\norm{w_\I^{(s,k)}}_{L^\I_t H^1 (\R)}^2 \lesssim  \mathbb{H}_0 (\varphi^{(s,k)}) +  \mathbb{M}(\varphi^{(s,k)}).
\end{align*}
Furthermore, writing $w_{\infty,-}^{(s,k)}$ for the unique global solution to
\begin{equation}\label{e:fnls_np3}
\begin{cases}
i \d_t w_{\I,-}^{(s,k)} - \Delta w_{\I,-}^{(s,k)} = \sigma \bigl|w_{\I,-}^{(s,k)}\bigr|^2 w_{\I,-}^{(s,k)}, \\
 \displaystyle\lim_{t\to-\I} e^{it\Delta }w_{\I,-}^{(s,k)}(t)= \varphi^{(s,k)},\end{cases}
\end{equation}
we have
\begin{equation}\label{e:wIskconv}
\lim_{\tau\to\I} \norm{ w_\I^{(s,k)} - w_{\I,-}^{(s,k)}}_{\mathrm{Stz}^1(\R)}=0.
\end{equation}
\end{lemma}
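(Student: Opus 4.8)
\textbf{Proof proposal for Lemma~\ref{l:wskest}.}

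The plan is to establish the mass and energy bounds on $\varphi^{(s,k)}$ first, then deduce the global well-posedness and spacetime bounds of $w_\I^{(s,k)}$ (and of $w_{\I,-}^{(s,k)}$) from the sub-ground-state theory for the cubic NLS, and finally to prove the convergence \eqref{e:wIskconv} by a stability argument as $\tau\to\I$. First I would derive the bounds on $\varphi^{(s,k)}$. The argument mirrors the one used for $\varphi^{(0,k)}$ in Lemma~\ref{l:wkest} and Proposition~\ref{p:Lambdas0bound}: one applies an $H^1\hookrightarrow L^4$ profile decomposition to the bounded sequence $\{u_n(s_n^s)\}_n$ in $H^1$, observing that $T_{y_n^{(s,k)}}^{-1} u_n(s_n^s)\rightharpoonup \varphi^{(s,k)}$ weakly in $H^1$ (using that $|y_n^{(s,k)}|\to\I$ kills the $\Phi[z_n]$ part and that $R[z_n]-1$ is a vanishing perturbation). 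Hence $\varphi^{(s,k)}$ appears, up to a fixed translation, as one of the profiles in that decomposition, and the decoupling of mass, energy, and $\mathbb{I}$ together with $\mathbb{K}_{V,2}(u_n)\ge\kappa$, \eqref{e:basicEMrelation}, the identity $\mathbb{I}_0(Q)=\mathbb{E}_0(Q)$, and the variational characterization \eqref{e:QcharacteriztionI} of $Q$ give $\mathbb{K}_{0,2}(\varphi^{(s,k)})>0$ and the stated bound on $\mathbb{H}_0(\varphi^{(s,k)})=\mathbb{E}_0(\varphi^{(s,k)})+\mathbb{G}(\varphi^{(s,k)})$. The one point of difference from the $(0,k)$ case is that since $\sigma^{s}=\I$ here, the profile concentrates at $|t|=\I$, so one works with $\mathbb{H}_0$ rather than $\mathbb{E}_0$; but this only simplifies matters, as $V$-terms have already been extracted into $u_\I^s$ at the level $s=0$ and contribute nothing here.

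Next, given these bounds, the sub-threshold scattering theory for the cubic NLS in $3d$ (cf.\ \cite{DHR}) furnishes, for each fixed $\tau>0$, a unique global solution $w_\I^{(s,k)}$ to \eqref{e:fnls_np2} with $\norm{w_\I^{(s,k)}}_{L^8_tL^4_x(\R)}\lesssim_{\delta_0}1$ and $\norm{w_\I^{(s,k)}}_{L^\I_t H^1}^2\lesssim \mathbb{H}_0(\varphi^{(s,k)})+\mathbb{M}(\varphi^{(s,k)})$, uniformly in $\tau$: indeed the conserved mass and energy of $w_\I^{(s,k)}$ equal $\mathbb{M}(\varphi^{(s,k)})$ and $\mathbb{H}_0(e^{i\tau\Delta}\varphi^{(s,k)})=\mathbb{H}_0(\varphi^{(s,k)})-\mathbb{G}(e^{i\tau\Delta}\varphi^{(s,k)})\le\mathbb{H}_0(\varphi^{(s,k)})$ (plus, for the Gagliardo--Nirenberg coercivity condition, one checks the sub-$Q$ condition $\mathbb{M}\,\mathbb{E}_0<\mathbb{M}(Q)\mathbb{E}_0(Q)$ and $\mathbb{K}_{0,2}>0$ is propagated, exactly as in \cite{DHR}; the $\tau$-independence comes from the fact that none of these quantities increases under the free flow). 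Likewise, the solution $w_{\I,-}^{(s,k)}$ of \eqref{e:fnls_np3} with prescribed scattering data $\varphi^{(s,k)}$ at $t=-\I$ exists globally with the same bounds, since it has the same mass and its energy is the limiting value $\mathbb{H}_0(\varphi^{(s,k)})$ (free energy at $-\I$), again sub-threshold.

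Finally, for \eqref{e:wIskconv}: the final-state (rather, initial-state at $-\I$) construction for $w_{\I,-}^{(s,k)}$ gives $\norm{e^{it\Delta}w_{\I,-}^{(s,k)}(t)-e^{it\Delta}\varphi^{(s,k)}}_{L^\I_t H^1\cap L^8_tL^4_x((-\I,-\tau])}\to0$ as $\tau\to\I$; equivalently $\norm{w_{\I,-}^{(s,k)}(-\tau)-e^{i\tau\Delta}\varphi^{(s,k)}}_{H^1}\to0$. Thus $w_\I^{(s,k)}$ and $w_{\I,-}^{(s,k)}$ solve the same cubic NLS with data at $t=-\tau$ that differ by $o_\tau(1)$ in $H^1$, and both have $L^8_tL^4_x$-norm $\lesssim_{\delta_0}1$ on all of $\R$; by the stability theory for the cubic NLS one concludes $\norm{w_\I^{(s,k)}-w_{\I,-}^{(s,k)}}_{\mathrm{Stz}^1(\R)}\to0$ as $\tau\to\I$. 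I expect the bookkeeping in the first paragraph — verifying that $\varphi^{(s,k)}$ really appears as a profile of $\{u_n(s_n^s)\}_n$ and carrying the variational inequalities through with the correct direction of the potential-term estimate — to be the main obstacle, but this is essentially a repetition of the already-completed arguments for Lemma~\ref{l:wkest} and Proposition~\ref{p:Lambdas0bound}, so no genuinely new difficulty arises.
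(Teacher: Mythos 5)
The second and third parts of your proposal (global well-posedness of $w_\I^{(s,k)}$ and $w_{\I,-}^{(s,k)}$ via the sub-threshold theory of \cite{DHR}, and the derivation of \eqref{e:wIskconv} by stability from $\|w_{\I,-}^{(s,k)}(-\tau)-e^{i\tau\Delta}\varphi^{(s,k)}\|_{H^1}\to0$) match the paper's argument. The gap is exactly where you suspected it, in the first paragraph, and it is not mere bookkeeping. The profile $\varphi^{(s,k)}$ is defined (Theorem~\ref{t:lpd}(iii)) as the weak limit of the \emph{linearly evolved} sequence $T_{y_n^j}^{-1}\mathcal{L}(s_n^j,0;z_n)\,\xi_n(0)$, and for $s\in[1,s_{\max}]$ one has $|s_n^s|\to\I$. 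Your claim that $T_{y_n^{(s,k)}}^{-1}u_n(s_n^s)\rightharpoonup\varphi^{(s,k)}$ replaces the linear flow $\mathcal{L}(s_n^s,0;z_n)$ by the nonlinear flow $\xi_n(0)\mapsto\xi_n(s_n^s)$; these differ by the Duhamel term $\mathcal{D}(s_n^s,0;z_n)\tilde N(z_n,\xi_n)$, which over a time interval of diverging length is not weakly negligible after translation by $y_n^{(s,k)}$. (In the model case of a scattering solution, $u_n(s_n)\approx e^{is_n\Delta}u_+$ with $u_+\neq u_n(0)$, so the weak limit one extracts from $\{u_n(s_n^s)\}$ is tied to the scattering state, not to $\varphi^{(s,k)}$.) The identification works in Lemma~\ref{l:wkest} and Proposition~\ref{p:Lambdas0bound} only because there $s_n^j\equiv0$ or $s_n^j\equiv s_n^s$ with no linear propagation needed.

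The paper sidesteps this entirely: it reads the mass bound off the decoupling \eqref{e:Mdecomp} together with the asymptotic isometry \eqref{e:spacenormflat1} (giving $M_*\ge\mathbb{M}(u_\I^0)+\mathbb{M}(\varphi^{(s,k)})$), and the energy bound off \eqref{e:energydecomp}, in which the contribution of a profile with $(y^j,\sigma^j)=(\I,\I)$ is precisely $\mathbb{H}_0(\varphi^{(s,k)})$ (no second profile decomposition, no re-identification of $\varphi^{(s,k)}$). This yields $\mathbb{H}_0(\varphi^{(s,k)})\le E_*-\mathscr{E}_0(M_*)$ directly, and the refined bound $\mathbb{H}_0(\varphi^{(s,k)})\le\tfrac1{\mathbb{M}(\varphi^{(s,k)})}\mathbb{M}(Q)\mathbb{E}_0(Q)-\delta_0$ then follows by mimicking the computation behind \eqref{e:Ephi12} with these two inputs. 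To repair your write-up, replace the $H^1\hookrightarrow L^4$ profile decomposition of $\{u_n(s_n^s)\}$ by a direct appeal to the decoupling statements of Theorem~\ref{t:lpd} applied to $\{\xi_n(0)\}$; the remainder of your argument then goes through unchanged.
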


\begin{proof}
By \eqref{e:energydecomp} and \eqref{e:wkenergy}, we have
\[
E_* \ge \mathbb{E}_V (u_\I^0) + \mathbb{H}_0 (\varphi^{(s,k)}).
\]
In particular, $\mathbb{H}_0 (\varphi^{(s,k)}) \le E_* - \mathscr{E}_0(M_*)$.

Next, by \eqref{e:Mdecomp} and \eqref{e:spacenormflat1}, one obtains
\[
M_* \ge \mathbb{M} (u_\I^0) + \mathbb{M} (\varphi^{(s,k)})
\]
Thus, mimicking the proof of \eqref{e:Ephi12}, we obtain
\[
\mathbb{H}_0 (\varphi^{(s,k)}) \le  \frac1{\mathbb{M}(\varphi^{(s,k)})} \mathbb{M}(Q)  \mathbb{E}_0 (Q) - \delta_0 .
\]

Now, for any $\tau >0$, we have
\begin{align*}
&\mathbb{M}( e^{i\tau \Delta } \varphi^{(s,k)}) \mathbb{E}_0 (e^{i\tau \Delta }\varphi^{(s,k)})< \mathbb{M}( e^{i\tau \Delta } \varphi^{(s,k)}) \mathbb{H}_0 (e^{i\tau \Delta }\varphi^{(s,k)})< \mathbb{M}(Q)  \mathbb{E}_0 (Q),\\
& \mathbb{M}( e^{i\tau \Delta } \varphi^{(s,k)}) \mathbb{H}_0 (e^{i\tau \Delta }\varphi^{(s,k)})< \mathbb{M}(Q)  \mathbb{E}_0 (Q) < \mathbb{M}(Q)  \mathbb{H}_0 (Q).
\end{align*}
Thus, one sees from the scattering theory for the cubic NLS (cf. \cite{DHR})  theory that $w_\I^{(s,k)}$ exists globally in time and satisfies the bounds
\begin{align*}
&\norm{w_\I^{(s,k)}}_{L^8_tL^4_x (\R)} \lesssim_{\delta_0} 1, \\
&\norm{w_\I^{(s,k)}}_{L^\I_t H^1 (\R)}^2 \lesssim  \mathbb{E}_0 (e^{i\tau \Delta }\varphi^{(s,k)}) +  \mathbb{M}(\varphi^{(s,k)}).
\end{align*}

Let $w_{\I,-}^{(s,k)}$ be the solution to \eqref{e:fnls_np3}. Since this solution scatters to $\varphi^{(s,k)}$ backward in time, we have
\begin{align*}
&\mathbb{M}(w_{\I,-}^{(s,k)}) =\mathbb{M}( \varphi^{(s,k)}), \\
&\mathbb{E}_0 (w_{\I,-}^{(s,k)}) = \mathbb{H}_0 (\varphi^{(s,k)}),\\
& \lim_{t\to-\I} \mathbb{K}_{0,2} (w_{\I,-}^{(s,k)}) = 2\mathbb{H}_0 (\varphi^{(s,k)})>0.
\end{align*}
These imply that 
\[
\mathbb{M}(w_{\I,-}^{(s,k)}) \mathbb{E}_0 (w_{\I,-}^{(s,k)}) < \mathbb{M}(Q)  \mathbb{H}_0 (Q),
\]
which guarantees scattering forward in time as well, with the bounds
\begin{align*}
&\norm{w_{\I,-}^{(s,k)}}_{\mathrm{Stz}^1 (\R)} \lesssim 1, \\
&\norm{w_{\I,-}^{(s,k)}}_{L^\I_t H^1 (\R)}^2 \lesssim  \mathbb{E}_0 (\varphi^{(s,k)}) +  \mathbb{M}(\varphi^{(s,k)}).	
\end{align*}
The convergence $w_\I^{(s,k)} \to w_{\I,-}^{(s,k)}$ then follows from a standard stability argument.  In particular, the convergence implies that the bound on $w_\I^{(s,k)}$ is uniform in $\tau>0$.
\end{proof}

We can then prove the following properties for the nonlinear profiles $\Lambda_n^{(s,k)}$. As above, we write $y_n$ for the sequence of translations $y_n^{(s,k)}$. 

\begin{proposition}\label{p:Lambdaskbound}
Suppose that \eqref{e:fakeassumption} holds.  Let $k\ge1$, and define $\Lambda_n^{(s,k)}$ be as in Definition \ref{d:nprofile3}. Let $w_\I^{(s,k)} \in C(\R,H^1)$ be the unique global solution to \eqref{e:fnls_np2} given in Lemma~\ref{l:wskest}. Then, for any $\tau>0$, we have
\begin{align}
&\lim_{n\to\I} \norm{\Lambda_n^{(s,k)}(\cdot + s_n^s) - T_{y_n} w_\I^{(s,k)}}_{L^8L^4(\R)}=0, \label{e:Lambdaskbound1} \\
&\lim_{R\to\I} \varlimsup_{n\to\I} \norm{ {\bf 1}_{ \{|x-y_n|\ge R \}} \Lambda_n^{(s,k)} }_{L^8_t L^4 (\R)}=0. \label{e:Lambdaskbound2}
\end{align}
In particular,
\begin{equation}\label{e:Lambdaskbound2.5}
\sup_{\tau>0} \varlimsup_{n\to\I} \norm{\Lambda_n^{(s,k)} }_{L^8L^4(\R)} < \I.
\end{equation}
Furthermore, 
\begin{equation}\label{e:Lambdaskbound3}
\begin{aligned}
\lim_{\tau \to\I}& \varlimsup_{n\to\I}\biggl\{ \norm{ \mathcal{L}(\cdot,-\tau+s_n^s;z_n)\Lambda_n^{(s,k)}(-\tau+s_n^s) }_{L^8_tL^4_x ((-\I,-\tau+s_n^s))} \\
&\quad\quad \quad +  \norm{ \mathcal{L}(\cdot,\tau+s_n^s;z_n)\Lambda_n^{(s,k)}(\tau+s_n^s) }_{L^8_tL^4_x ((\tau,\I))} \\
& \quad\quad \quad + \norm{\Lambda_n^{(s,k)}}_{L^8_tL^4_x ((-\I,-\tau+s_n^s)\cup(\tau+s_n^s,\I))}\biggr\}=0.
\end{aligned}
\end{equation}
\end{proposition}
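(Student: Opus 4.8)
The plan is to follow the proof of Proposition~\ref{p:Lambda0kbound} almost verbatim, the only new features being the drifting time‑translation parameter $s_n^s\to\pm\infty$ and the fact that the model dynamics is the cubic NLS solution $w_\infty^{(s,k)}$ launched from $e^{i\tau\Delta}\varphi^{(s,k)}$ at time $-\tau$ rather than from a profile at time $0$. Writing $j$ for the index with $(s(j),k(j))=(s,k)$ and abbreviating $\varphi=\varphi^{(s,k)}$, $w_\infty=w_\infty^{(s,k)}$, $y_n=y_n^{(s,k)}$, I would set $v_n:=\Lambda_n^{(s,k)}(\cdot+s_n^s)=v_n^j$, which solves the system \eqref{def:PDEODE} with $z$ replaced by $\tilde z_n:=z_n(\cdot+s_n^j)\in\mathrm{SBC}$ and with data $v_n(-\tau)=P_cT_{y_n}e^{i\tau\Delta}\varphi$, and aim to control $w_n:=T_{y_n}^{-1}v_n$ by $w_\infty$. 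The starting point is that $w_n(-\tau)-w_\infty(-\tau)=-\phi_0\,(T_{y_n}e^{i\tau\Delta}\varphi,\phi_0)\to0$ in $H^1$ since $|y_n|\to\infty$.

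First I would establish the convergence on compact intervals. After translating the data to $t=0$, apply the inductive stability scheme of Proposition~\ref{p:Lambda0kbound}: subdivide a fixed interval $[-\tau,T]$ into finitely many pieces on which $\|w_\infty\|_{L^8L^4}$ is below a small threshold---by Lemma~\ref{l:wskest} the total $L^8L^4$‑norm, and hence the number of pieces, is \emph{uniform in $\tau$}---and propagate $\|w_n-w_\infty\|_{\mathrm{Stz}^{3/4}}\lesssim\|w_n(-\tau)-w_\infty(-\tau)\|_{H^1}$ across pieces using Lemma~\ref{L:818} (applied to $\{\tilde z_n\}$) and the perturbation estimate in Lemma~\ref{l:sdt}. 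This gives $\|w_n-w_\infty\|_{\mathrm{Stz}^{3/4}([-\tau,T])}\to0$ for each finite $T$, and, via \eqref{e:trial2.claim1} and interpolation, the $L^8_tL^4_x$‑smallness of $v_n$ away from $x=y_n$ on the corresponding windows, which yields \eqref{e:Lambdaskbound2} once the tails are controlled.

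Next I would dispose of the tails by letting $\tau\to\infty$. On $(-\infty,-\tau+s_n^s]$: since $w_\infty(-\tau)=e^{i\tau\Delta}\varphi$ exactly, Lemma~\ref{l:spacelargetime} bounds the linearized evolution of $v_n(-\tau)$ by $\|e^{-is\Delta}\varphi\|_{L^8L^4((-\infty,-\tau])}$, which vanishes as $\tau\to\infty$, and the small‑data/stability theory (Lemma~\ref{l:6.3}, reversing time) then makes $\|v_n\|_{L^8L^4((-\infty,-\tau+s_n^s])}$ small. On $[\tau+s_n^s,\infty)$: take $T=\tau$ in the bulk step, so $w_n(\tau)\to w_\infty(\tau)$ in $H^{1/2}$; since $w_\infty\in L^8L^4(\R)$ its tail $\|w_\infty\|_{L^8L^4([\tau,\infty))}$ vanishes as $\tau\to\infty$, and, because $w_\infty$ solves the cubic NLS with small $L^8L^4$‑norm there, $\|e^{-i(t-\tau)\Delta}w_\infty(\tau)\|_{L^8L^4([\tau,\infty))}$ is comparably small; Lemma~\ref{l:spacelargetime} transfers this to the linearized flow of $v_n(\tau+s_n^s)$ and stability transfers it to $v_n$ itself. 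This yields \eqref{e:Lambdaskbound3}; then \eqref{e:Lambdaskbound1} follows by combining the bulk convergence with \eqref{e:Lambdaskbound3} and $w_\infty\in L^8L^4(\R)$, and \eqref{e:Lambdaskbound2.5} is immediate from \eqref{e:Lambdaskbound1} and the $\tau$‑uniform bound $\|w_\infty\|_{L^8L^4(\R)}\lesssim_{\delta_0}1$ of Lemma~\ref{l:wskest}.

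The main obstacle I expect is bookkeeping rather than anything conceptually new: one must keep the auxiliary parameter $\tau$ disentangled from the subdivision of the bulk interval, check that every threshold and constant entering the inductive stability argument is uniform in $\tau$ (this is precisely where the $\tau$‑uniform estimates of Lemma~\ref{l:wskest}, ultimately the convergence $w_\infty^{(s,k)}\to w_{\infty,-}^{(s,k)}$ to the scattering solution, are indispensable), and align the three $n$‑dependent time windows $(-\infty,-\tau+s_n^s]$, $[-\tau+s_n^s,\tau+s_n^s]$, $[\tau+s_n^s,\infty)$ when concatenating the estimates. As in the proof of Proposition~\ref{p:Lambda0kbound}, the negative time direction is handled by the symmetric argument.
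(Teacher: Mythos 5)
Your proposal follows essentially the same route as the paper's proof: transport the profile by $T_{y_n}^{-1}$ and $t\mapsto t+s_n^s$, run the inductive stability scheme of Proposition~\ref{p:Lambda0kbound} on compact windows to get $\mathrm{Stz}^{3/4}$ convergence to $w_\infty^{(s,k)}$ and the spatial localization, handle the backward tail via the exact identity for the data at $t=-\tau$ together with Lemma~\ref{l:spacelargetime}, and handle the forward tail by combining the $\tau$-uniform tail smallness of $w_\infty^{(s,k)}$ (which, as you correctly flag, rests on the convergence \eqref{e:wIskconv} to the scattering solution $w_{\infty,-}^{(s,k)}$) with Lemma~\ref{l:spacelargetime}, uniform Strichartz, and stability. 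The bookkeeping issue you anticipate (keeping the tail parameter separate from the bulk window) is exactly what the paper resolves by introducing the auxiliary parameter $\tau'$ distinct from $\tau$.
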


\begin{proof} Using the fact that $\tnorm{w_{\I}^{(s,k)}}_{L^8L^4(\R)} <\I$, the proof is essentially the same as that of \eqref{p:Lambda0kbound}. Next, define
\[
w_n^{(s,k)}(t)= T_{y_n^{(s,k)}}^{-1} \Lambda_n^{(s,k)}(t + s_n^s).
\]
Then, mimicking the proof of \eqref{e:Lambda0kboundpf2} and \eqref{e:Lambda0kboundpf4}, we may obtain
\begin{equation}\label{e:Lambdaskboundpf0}
\lim_{n\to\I}\biggl\{ \norm{w_n^{(s,k)} - w_\I^{(s,k)}}_{\mathrm{Stz}^{3/4} ([-\tau',-\tau])} + \norm{w_n^{(s,k)} - w_\I^{(s,k)}}_{\mathrm{Stz}^{3/4} ([-\tau,\tau'])}\biggr\} =0
\end{equation}
and
\[
\lim_{R\to\I} \varlimsup_{n\to\I} \norm{w_n^{(s,k)}}_{L^8L^4 ([-\tau',\tau'] \times \{|x| \ge R\})}=0
\]
for any $\tau' \ge \tau>0$.  Then, \eqref{e:Lambdaskbound1} and \eqref{e:Lambdaskbound2} follow from 
\begin{multline}\label{e:Lambdaskboundpf1}
	\lim_{\tau' \to\I} \varlimsup_{n\to\I} \biggl\{\norm{ \mathcal{L}(\cdot,-\tau';z_n(\cdot+s_n^s))\Lambda_n^{(s,k)}(-\tau') }_{L^8_tL^4_x ((-\I,-\tau'))} 
	\\
	+ \norm{ \mathcal{L}(\cdot,\tau';z_n(\cdot+s_n^s))\Lambda_n^{(0,k)}(\tau') }_{L^8_tL^4_x ((\tau',\I))}\biggr\}=0,
\end{multline}
which we deduce from $\tnorm{w_{\I}^{(s,k)}}_{L^8L^4(\R)} <\I$ as in the proof of \eqref{e:Lambda0kbound3}. Note that $\tau'$ is not the same parameter as $\tau$ in \eqref{e:Lambdaskboundpf1} (i.e. $\tau$ is fixed in \eqref{e:Lambdaskboundpf1}).  We then obtain \eqref{e:Lambdaskbound2.5} from \eqref{e:Lambdaskbound1} and the bound
\[
\sup_{\tau>0} \tnorm{w_\I^{(s,k)}} _{L^8L^4 (\R)}<\I.
\]

We turn to \eqref{e:Lambdaskbound3}. By the small data theory, it suffices to establish the following:
\begin{align}
&\lim_{\tau \to\I} \varlimsup_{n\to\I}  \norm{ \mathcal{L}(\cdot,-\tau;z_n(\cdot+s_n^s))\Lambda_n^{(s,k)}(-\tau) }_{L^8_tL^4_x ((-\I,-\tau))} =0, \label{e:Lambdaskboundpf2}\\
&\lim_{\tau \to\I} \varlimsup_{n\to\I}  \norm{ \mathcal{L}(\cdot,\tau;z_n(\cdot+s_n^s))\Lambda_n^{(0,k)}(\tau) }_{L^8_tL^4_x ((\tau,\I))}=0.\label{e:Lambdaskboundpf3}
\end{align}

Note that by definition, we have
\[
\mathcal{L}(t-\tau,-\tau;z_n(\cdot+s_n^s))\Lambda_n^{(s,k)}(-\tau) =  \mathcal{L} (t; 0 ;z_n(\cdot-\tau+s_n^s)) P_c T_{y_n} e^{i\tau \Delta}\varphi^{(s,k)}.
\]
Thus, one sees from \eqref{e:spacelargetimebound12} that
\begin{align*}
&\norm{\mathcal{L}(t-\tau,-\tau;z_n(\cdot+s_n^s))\Lambda_n^{(s,k)}(-\tau) }_{L^8L^4 \cap L^2 H^{1/2}_6 ((-\I,0])} \\
&{} \lesssim \norm{ e^{-i \cdot \Delta} (e^{i\tau \Delta}\varphi^{(s,k)}) }_{L^8L^4 \cap L^2 H^{1/2}_6 ((-\I,0])} \\
&{} = \norm{ e^{-i \cdot \Delta}\varphi^{(s,k)} }_{L^8L^4 \cap L^2 H^{1/2}_6 ((-\I,-\tau])}.
\end{align*}
As this term tends to zero as $\tau\to\I$ (and is independent of $n$), we obtain \eqref{e:Lambdaskboundpf2}.

For \eqref{e:Lambdaskboundpf3}, we pick $\eps>0$ and choose $\tau_0$ such that if $\tau'> \tau_0$, then
\[
\norm{ w_{\I,-}^{(s,k)}}_{L^8L^4\cap L^2 H^{1/2}_6 ([\tau,\I))}\le \eps.
\]
Then we see from \eqref{e:wIskconv} that there exists $\tau_1\ge \tau_0$ such that if $\tau>\tau_1$, then
\begin{align*}
&\norm{ w_{\I}^{(s,k)}}_{L^8L^4\cap L^2 H^{1/2}_6 ([\tau,\I))} \\
&\le \norm{ w_{\I,-}^{(s,k)}}_{L^8L^4\cap L^2 H^{1/2}_6 ([\tau,\I))}+ \norm{ w_{\I}^{(s,k)}- w_{\I,-}^{(s,k)} }_{\mathrm{Stz}^1 (\R)}\le 2 \eps.
\end{align*}
Thus, if $\eps$ is sufficiently small, the small-data theory implies that
\[
\norm{  e^{-i (\cdot -\tau)\Delta} w_{\I}^{(s,k)}(\tau)}_{L^8L^4\cap L^2 H^{1/2}_6 ([\tau,\I))} \le 4 \eps.
\]
Using \eqref{e:spacelargetimebound12}, we obtain
\[
\norm{ \mathcal{L}(\cdot,0;z_n(\cdot+\tau+s_n^s)) w_{\I}^{(s,k)}(\tau)}_{L^8L^4\cap L^2 H^{1/2}_6 ([0,\I))}\lesssim \eps,
\]
which implies
\begin{equation}\label{e:Lambdaskboundpf4}
\norm{ \mathcal{L}(\cdot,\tau;z_n(\cdot+s_n^s)) w_{\I}^{(s,k)}(\tau)}_{L^8L^4 ([\tau,\I))}\lesssim \eps.
\end{equation}

On the other hand, by the uniform Strichartz estimate (Proposition~\ref{Prop:Strichartz}),
\[
\norm{ \mathcal{L}(\cdot,\tau;z_n(\cdot+s_n^s)) (w_{\I}^{(s,k)} -w_n^{(s,k)})(\tau) }_{L^8L^4 ([\tau,\I))}\lesssim  \norm{ (w_{\I}^{(s,k)} -w_n^{(s,k)})(\tau)  }_{H^{\frac12}}.
\]
Thus, in light of \eqref{e:Lambdaskboundpf0}, there exists $N(\tau)$ such that if $n \ge N$, we have
\begin{equation}\label{e:Lambdaskboundpf5}
\norm{ \mathcal{L}(\cdot,\tau;z_n(\cdot+s_n^s)) (w_{\I}^{(s,k)} -w_n^{(s,k)})(\tau) }_{L^8L^4 ([\tau,\I))}\le \eps.
\end{equation}
Combining \eqref{e:Lambdaskboundpf4} and \eqref{e:Lambdaskboundpf5}, we see that
\[
\varlimsup_{n\to\I} \norm{ \mathcal{L}(\cdot,\tau;z_n(\cdot+s_n^s)) w_{n}^{(s,k)}(\tau)}_{L^8L^4 ([\tau,\I))}\lesssim \eps
\]
for any $\tau \ge \tau_1$, which yields \eqref{e:Lambdaskboundpf3}.\end{proof}

Once again, we can derive bounds on the \emph{sum} of nonlinear profiles of the form $\Lambda_n^{(s,k)}$.  As the proof is essentially the same as that of Lemma~\ref{l:Lambda0ksum}, we omit it. 

\begin{lemma}\label{l:Lambdasksum} We have the following bounds:
\begin{align}\label{e:Lambdasksum1}
&\sup_{\tau>0} \sup_{K\ge1} \varlimsup_{n\to\I} \norm{ \sum_{k=1}^K \Lambda_n^{(s,k)}}_{L^8L^4(\R)} <\I,\\
\label{e:Lambdasksum4}
&\lim_{\tau\to\I }\sup_{K\ge1} \varlimsup_{n\to\I} \norm{ \sum_{k=1}^K \Lambda_n^{(s,k)}}_{L^8L^4((-\I,-\tau+s_n^s]\cup[\tau+s_n^s,\I)))}=0.
\end{align}
\end{lemma}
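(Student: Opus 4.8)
The plan is to follow the proof of Lemma~\ref{l:Lambda0ksum} almost line for line, the only structural differences being that every profile $\Lambda_n^{(s,k)}$ with $s(j)=s$ is built around the \emph{same} time shift $s_n^s$, and that the construction in Definition~\ref{d:nprofile3} carries an auxiliary parameter $\tau>0$. Both features are harmless: the bounds in Lemma~\ref{l:wskest} and Proposition~\ref{p:Lambdaskbound} are uniform in $\tau$, and a common time shift does not affect spatial orthogonality. Recall also that for $s\in[1,s_{\max}]$ every index $(s,k)$ with $k\ge1$ has $y^j=\I$, so the translation sequences $\{y_n^{(s,k)}\}_k$ are pairwise orthogonal, i.e.\ $|y_n^{(s,k_1)}-y_n^{(s,k_2)}|\to\I$ as $n\to\I$ whenever $k_1\neq k_2$.

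First I would establish the pairwise asymptotic orthogonality
\[
\lim_{n\to\I}\tnorm{\Lambda_n^{(s,k_1)}\Lambda_n^{(s,k_2)}}_{L^4_tL^2_x(\R)}=0\qtq{for}k_1\neq k_2.
\]
As in the proof of Lemma~\ref{l:Lambda0ksum}, this follows from \eqref{e:Lambdaskbound1} (which lets us replace $\Lambda_n^{(s,k_i)}$ by $T_{y_n^{(s,k_i)}}w_\I^{(s,k_i)}$ up to an $L^8L^4$-error tending to $0$), from \eqref{e:Lambdaskbound2} (which confines the bulk of each $\Lambda_n^{(s,k_i)}$ in $L^8_tL^4_x$ to $\{|x-y_n^{(s,k_i)}|\le R\}$, up to an arbitrarily small error), and from the fact that for fixed $R$ the balls $\{|x-y_n^{(s,k_1)}|\le R\}$ and $\{|x-y_n^{(s,k_2)}|\le R\}$ are disjoint for large $n$. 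Expanding $\tnorm{\sum_{k=1}^K\Lambda_n^{(s,k)}}_{L^8L^4(\R)}^4=\tnorm{\sum_{k_1,k_2,k_3,k_4\le K}\Lambda_n^{(s,k_1)}\Lambda_n^{(s,k_2)}\Lambda_n^{(s,k_3)}\Lambda_n^{(s,k_4)}}_{L^2_tL^1_x(\R)}$, the off-diagonal terms vanish as $n\to\I$ and the diagonal contribution is controlled, via \eqref{e:Lambdaskbound1}, by $\sum_{k=1}^K\tnorm{w_\I^{(s,k)}}_{L^8L^4(\R)}^4$. Since $\norm{\varphi^{(s,k)}}_{H^1}\to0$ as $k\to\I$ (the profiles being $\ell^2$-summable in $H^1$ by the linear profile decomposition), the small-data theory for the cubic NLS gives $\tnorm{w_\I^{(s,k)}}_{L^8L^4(\R)}\lesssim\norm{\varphi^{(s,k)}}_{H^1}$ for all large $k$, so $\sum_{k\ge1}\tnorm{w_\I^{(s,k)}}_{L^8L^4(\R)}^4<\I$; by Lemma~\ref{l:wskest} all of these bounds are uniform in $\tau>0$. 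This yields \eqref{e:Lambdasksum1}.

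For \eqref{e:Lambdasksum4} I would split the sum at a large index. Given $\eps>0$, choose $K_0$ with $\sum_{k\ge K_0}\norm{\varphi^{(s,k)}}_{H^1}^4\le\eps$; the multilinear orthogonality computation above then gives $\sup_{K\ge K_0}\varlimsup_{n\to\I}\tnorm{\sum_{k=K_0}^K\Lambda_n^{(s,k)}}_{L^8L^4(\R)}^4\lesssim\eps$, uniformly in $\tau$. For the finite head $\sum_{k=1}^{K_0-1}\Lambda_n^{(s,k)}$, all terms share the shift $s_n^s$, and \eqref{e:Lambdaskbound3} gives $\varlimsup_{n\to\I}\tnorm{\Lambda_n^{(s,k)}}_{L^8L^4((-\I,-\tau+s_n^s)\cup(\tau+s_n^s,\I))}\to0$ as $\tau\to\I$ for each fixed $k$; summing the finitely many head terms and combining with the tail estimate yields \eqref{e:Lambdasksum4}. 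I expect the only point requiring genuine care to be checking that the $L^4_tL^2_x$-orthogonality survives the replacement of $\Lambda_n^{(s,k)}$ by its NLS approximant $T_{y_n^{(s,k)}}w_\I^{(s,k)}$ with enough uniformity to feed into the quadrilinear expansion; everything else is a routine transcription of the $\Lambda_n^{(0,k)}$ argument, with $s_n^s$ inserted into the time variable and the $\tau$-uniformity inherited from Lemma~\ref{l:wskest} and Proposition~\ref{p:Lambdaskbound}.
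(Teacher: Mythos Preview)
Your proposal is correct and follows exactly the approach the paper intends: the paper omits the proof of this lemma, stating only that it is essentially the same as that of Lemma~\ref{l:Lambda0ksum}. You have carried out precisely that transcription, correctly identifying the only new ingredients (the common time shift $s_n^s$ and the need for $\tau$-uniformity, which is supplied by Lemma~\ref{l:wskest} and Proposition~\ref{p:Lambdaskbound}).
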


\subsection{Completion of the proof}

Finally, we turn to the proof of Proposition~\ref{p:keykey}, which we will then use to complete the proof of Proposition~\ref{p:key}.

\begin{proof}[Proof of Proposition~\ref{p:keykey}] We consider the positive time direction only. We suppose towards a contradiction that 
\[
(\mathbb{M}(u_\I^s ), \mathbb{E}_V (u_\I^s) ) \neq (M_*,E_*)
\]
for all $s\in[0,s_{\max}]$.  Ultimately, we will prove that
\begin{equation}\label{ultimately}
\|\mathcal{L}(\cdot,\tau+s_n^{s_{\max}};z_n)[(\xi_n-\Gamma_n^J)(\tau+s_n^{s_{\max}})]\|_{L^8L^4([\tau+s_n^{s_{\max}},\infty))}\to 0
\end{equation}
as $n,J,\tau\to\infty$, from which one can deduce scattering of $\xi_n$ as $t\to\infty$ (and hence reach a contradiction).  The proof will rely on an induction argument.  In particular, for each $s_0\in[0,s_{\max}]$, we define approximations
\begin{align}
\tilde \xi_n^{s_0,J} &:= \sum_{k=0}^{K(s_0,J)}\Lambda_n^{(s_0,k)}+\sum_{s=s_0+1}^{s_{\max}}\sum_{k=0}^{K(s,J)}\lambda_n^{(s,k)}+\Gamma_n^J, \label{def-tilde}\\
\hat\xi_n^{s_0,J} &:= \sum_{s=s_0}^{s_{\max}} \sum_{k=0}^{K(s,J)}\lambda_n^{(s,k)}+\Gamma_n^J,\label{def-hat}
\end{align}
(see \eqref{KsJ} for the definition of $K(s,J$)) and introduce the notation
\begin{equation}\label{tau-notation}
\tilde\tau_n^{s_0}=\tau+s_n^{s_0} \qtq{and} \hat\tau_n^{s_0}=-\tau+s_n^{s_0}.
\end{equation}
Note that for fixed $\tau>0$, we have $\hat\tau_n^{s_0}<\tilde\tau_n^{s_0}<\hat\tau_n^{s_0+1}$ for all $n$ large.  We will prove by induction that for each $s_0\in[0,s_{\max}]$, we have
\begin{align}\label{k-induction-dominant}
&\sup_{\tau>0} \lim_{J\to\I} \varlimsup_{n\to\I} \norm{\mathcal{L}(\cdot,\tilde\tau_n^{s_0} ;z_n)[(\xi_n - \tilde{\xi}_n^{s_0,J} )(\tilde\tau_n^{s_0})]}_{ L^8L^4 ([\tilde\tau_n^{s_0},\I]) } =0,\\
\label{k-induction-perturbative}
&\lim_{\tau\to\I} \varlimsup_{J\to\I} \varlimsup_{n\to\I} 
\norm{\mathcal{L}(\cdot,\hat\tau_n^{s_0};z_n) [( \xi_n  - \hat{\xi}_n^{s_0,J})(\hat\tau_n^{s_0})]  }_{ L^8L^4 ([\hat\tau_n^{s_0},\I)) } =0.
\end{align}
More precisely, we will prove:
\begin{itemize}
\item[(i)] \eqref{k-induction-dominant} holds for $s_0=0$; 
\item[(ii)] \eqref{k-induction-dominant} at level $s_0<s_{\max}$ implies \eqref{k-induction-perturbative} at level $s_0+1$; 
\item[(iii)] \eqref{k-induction-perturbative} at level $s_0\in[1,s_{\max}]$ implies \eqref{k-induction-dominant} at level $s_0$; and  
\item[(iv)] \eqref{k-induction-dominant} at level $s_{\max}$ implies \eqref{ultimately}.
\end{itemize}
Recall that once we have \eqref{ultimately}, we complete the proof of Proposition~\ref{p:keykey}. In particular, if $s_{\max}=0$, then it is only necessary to prove (i) and (iv).  Thus, in what follows we assume that $s_{\max}\geq 1$.
 
 \smallskip
 
\underline{(i) Estimate \eqref{k-induction-dominant} for $s_0=0$.} Define $\tilde\xi_n^{0,J}$ as in \eqref{def-tilde} (with $s_0=0$), as well as the corresponding error 
\begin{align*}
\mathcal{E}_{0,n}&{}:=(i\d_t + H - B[z_n])\tilde{\xi}_n^{0,J}- \tilde{N} (z_n , \tilde{\xi}_n^{0,J}) \\
&{} =  -(B[z_n]-B[z_\I^0])\xi_\I^0  + [\tilde{N} (z_\I^0, \xi_\I^0) - \tilde{N} (z_n, \xi_\I^0) ] \\
&{}\quad - \bigl[\tilde{N} (z_n , \tilde{\xi}_n^{0,J})  
-\sum_{k=0}^{K(0,J)} \tilde{N} (z_n,\Lambda_n^{(0,k)}) - \tilde{N}(z_n,\Gamma_n^J)  \bigr].
\end{align*}
We claim (to be proven below) that
\begin{equation}\label{e:approxpf1}
\sup_{\tau>0}\lim_{J\to\I} \varlimsup_{n\to\I} \bigl\| \tilde{N} (z_n , \tilde{\xi}_n^{0,J}) -\!\!\sum_{k=0}^{K(0,J)} \tilde{N} (z_n,\Lambda_n^{(0,k)}) - \tilde{N}(z_n,\Gamma_n^J)\bigr\|_{L^\frac83 L^\frac43 ([-\tau,\tau])} = 0.
\end{equation}
By the local uniform convergence $z_n \to z_\I^0$, we also have that
\[
\norm{ (B[z_n]-B[z_\I^0])\xi_\I^0 }_{L^1 H^1([-\tau,\tau])} + \norm{\tilde{N} (z_\I^0, \xi_\I^0) - \tilde{N} (z_n, \xi_\I^0) }_{L^\frac83 L^\frac43 ([-\tau,\tau])} \to 0
\]
as $n\to\I$, for each $\tau>0$. Thus,
\[
\sup_{\tau>0} \lim_{J\to\I} \varlimsup_{n\to\I} \norm{\mathcal{D}(\cdot,0;z_n)({\bf 1}_{[0,\tau]}\mathcal{E}_{0,n})}_{L^8L^4 ([0,\I)) } = 0
\]
(cf. \eqref{def:D} for the definition of $\mathcal{D}$).

Now, by definition, we have
\[
\xi_n(0)- \tilde{\xi}_n^{0,J}(0)=\sum_{s=s_{\min}}^{-1} \sum_{k=0}^{K(s,J)} \lambda_n^{(s,k)},
\]
so that Lemma~\ref{l:differentlinearprofile} (along with the fact that $s_n^0\equiv 0$) implies that
\begin{equation}\label{e:approxpf2}
\sup_{\tau>0} \lim_{J\to\I} \varlimsup_{n\to\I} \norm{\mathcal{L}(\cdot,0;z_n) (\xi_n(0)- \tilde{\xi}_n^{0,J}(0))  }_{L^8L^4 ([0,\tau]) } =0.
\end{equation}
Thus, by stability, we have
\begin{align}
\sup_{\tau>0} \lim_{J\to\I} \varlimsup_{n\to\I} \norm{\xi_n - \tilde{\xi}_n^{0,J}  }_{L^8L^4 ([0,\tau]) } =0, \label{e:approxpf3}\\
\sup_{\tau>0} \lim_{J\to\I} \varlimsup_{n\to\I} \norm{\xi_n - \tilde{\xi}_n^{0,J}  }_{[z_n;0,\tau;\I] } =0 \label{e:approxpf4}
\end{align}
(see \eqref{nakanishi-seminorms} for the semi-norm notation).  Using this together with  \eqref{e:approxpf2}, we also obtain
\begin{equation}\nonumber
\sup_{\tau>0} \lim_{J\to\I} \varlimsup_{n\to\I} \norm{\mathcal{L}(\cdot,\tau;z_n) (\xi_n(\tau) - \tilde{\xi}_n^{0,J}(\tau) )  }_{ L^8L^4 ([\tau,\I]) } =0,
\end{equation}
which is the desired estimate \eqref{k-induction-dominant} for $s_0=0$.  It remains to establish \eqref{e:approxpf1}. 

\begin{proof}[Proof of \eqref{e:approxpf1}] Let $\tau>0$.  We prove the estimate on $[0,\tau]$; similar arguments treat the interval $[-\tau,0]$.  Noting that by construction, we have
\[
\tilde{\xi}_n^{0,J}  - \sum_{s=1}^{s_{\max}} \sum_{k=0}^{K(s,J)} \lambda_n^{(s,k)} -\Gamma_n^J  = \sum_{k=0}^{K(0,J)} \Lambda_n^{(0,k)},
\]
we first estimate
\begin{align*}
&\varlimsup_{n\to\I}\biggl\| \tilde{N} (z_n , \tilde{\xi}_n^{0,J})  - \tilde{N}\bigl(z_n, \sum_{k=0}^{K(0,J)} \Lambda_n^{(0,k)} \bigr)  \biggr\|_{L^\frac83 L^\frac43 ({[0,\tau]})} \\
&\lesssim \biggl[\varlimsup_{n\to\I} \bigl\| \sum_{k=0}^{K(0,J)} \Lambda_n^{(0,k)} \bigr\|_{L^8L^4([0,\tau])}\biggr]^2 \varlimsup_{n\to\I} \bigl\| \sum_{s= 1}^{s_{\max}} \sum_{k=0}^{K(s,J)} \lambda_n^{(s,k)} + \Gamma_n^J \bigr\|_{L^8L^4([0,\tau])}.
\end{align*}

Now, by Proposition~\ref{p:Lambdas0bound}, Lemma~\ref{l:Lambda0ksum}, and the assumption that $\mathbb{M}(u_\I^0)<M_*$, we have
\begin{align*}
\sup_{J \ge J^\dagger + 1} \varlimsup_{n\to\I} \biggl\| &\sum_{k=0}^{K(0,J)} \Lambda_n^{(0,k)}\biggr\|_{L^8L^4([0,\tau])} \\
&\le \norm{\xi_\I^0}_{L^8L^4(\R)} + \sup_{K \ge 1} \varlimsup_{n\to\I} \norm{ \sum_{k=1}^{K} \Lambda_n^{(0,k)}}_{L^8L^4([0,\tau])}<\I.
\end{align*}
On the other hand, we see from Lemma~\ref{l:differentlinearprofile} that
\[
\varlimsup_{n\to\I} \norm{  \sum_{s= 1}^{s_{\max}} \sum_{k=0}^{K(s,J)} \lambda_n^{(s,k)} + \Gamma_n^J }_{L^8L^4([0,\tau])}=\varlimsup_{n\to\I} \norm{  \Gamma_n^J }_{L^8L^4([0,\tau])}.
\]
Thus, in view of \eqref{e:7.18b}, we have
\[
\sup_{\tau>0} \lim_{J\to\I} \varlimsup_{n\to\I} \norm{ \tilde{N} (z_n , \tilde{\xi}_n^{0,J})  - \tilde{N}(z_n, 
{\sum_{k=0}^{K(0,J)}\Lambda_{n}^{(0,k )})}
 }_{L^\frac83 L^\frac43 ([0,\tau])} =0,
\]
and similarly
\[
\sup_{\tau>0} \lim_{J\to\I} \varlimsup_{n\to\I} \norm{ \tilde{N}(z_n, \Gamma_n^J)  }_{L^\frac83 L^\frac43 ([0,\tau])} =0.
\]

Finally, using  \eqref{e:Lambda0kbound1}, \eqref{e:Lambda0kbound2}, and
the fact that $\Lambda_n^{(0,0)}=\xi_\I^0 \in L^8L^4(\R)$ satisfies
\[
	\lim_{R\to\I} \sup_n \norm{ {\bf 1}_{\{|x|\ge R\}} \Lambda_n^{(0,0)} }_{L^8L^4(\R)} =0,
\]
we can estimate
\[
\lim_{J\to\I} \varlimsup_{n\to\I} \bigl\|\tilde{N}\bigl(z_n, \sum_{k=0}^{K(0,J)} \Lambda_n^{(0,k)} \bigr)	 - \sum_{k=0}^{K(0,J)} \tilde{N} (z_n,\Lambda_n^{(0,k)})  \bigr\|_{L^\frac83 L^\frac43 ([0,\tau])} =0.
\]
Collecting the estimates, we obtain \eqref{e:approxpf1}.\end{proof}

\smallskip

\underline{(ii) Estimate \eqref{k-induction-dominant} at level $s_0$ implies estimate \eqref{k-induction-perturbative} at level $s_0+1$.} Suppose that \eqref{k-induction-dominant} holds at level $s_0<s_{\max}$.  As every profile such that $s(j)=s_0$ scatters forward in time, we have
\begin{equation}\label{e:approxpf27}
\varlimsup_{n\to\I} \(\norm{ \Lambda_n^{(s_0,k)} }_{ L^8L^4 ([\tilde\tau_n^{s_0},\I)) } + \norm{ \mathcal{L}(\cdot,\tau;z_n)\Lambda_n^{(s_0,k)}(\tilde\tau_n^{s_0}) }_{ L^8L^4 ([\tilde\tau_n^{s_0},\I)) } \)= 0
\end{equation}
as $\tau \to\I$ for all $k$.  Using \eqref{e:Lambda0ksum4}, we also obtain
\begin{equation}\label{e:approxpf28}
\lim_{\tau \to \I} \sup_{J \ge J^\dagger+1} \varlimsup_{n\to\I} \norm{ \sum_{k=1}^{K(s_0,J)} \mathcal{L}(\cdot,\tilde\tau_n^{s_0};z_n) \Lambda_n^{(s_0,k)}(\tilde\tau_n^{s_0}) }_{ L^8L^4 ([\tilde\tau_n^{s_0},\I)) } = 0.
\end{equation}
Inserting \eqref{e:approxpf27} and \eqref{e:approxpf28} into \eqref{k-induction-dominant}, we derive 
\begin{equation}\label{e:approxpf29}
\lim_{\tau\to\I} \varlimsup_{J\to\I} \varlimsup_{n\to\I} \bigl\|\mathcal{L}(\cdot,\tilde\tau_n^{s_0}; z_n) \bigl(\xi_n -\hat\xi_n^{s_0+1,J}\bigr)(\tilde\tau_n^{s_0}) \bigr\|_{ L^8L^4 ([\tilde\tau_n^{s_0},\I)) }=0,
\end{equation}
where $\hat\xi_n^{s_0+1,J}$ is as in \eqref{def-hat}. 

We need to upgrade \eqref{e:approxpf29} to obtain \eqref{k-induction-perturbative} at level $s_0+1$.  To this end, we first observe that 
\[
\mathcal{L}(t,\tilde\tau_n^{s_0} ;z_n)  \lambda_n^{(s,k)}(\tilde\tau_n^{s_0})=\lambda_n^{(s,k)}(t).
\]
Then, by Lemma~\ref{l:differentlinearprofile}, we have
\begin{equation}\label{e:approxpf30}
\lim_{\tau\to\I} \sup_{J\ge J^\dagger+1 } \varlimsup_{n\to\I} \norm{ \sum_{s\ge s_0 + 1}\sum_{k=0}^{K(s,J)} \lambda_n^{(s,k)} }_{ L^8L^4 ([\tilde\tau_n^{s_0},\hat\tau_n^{s_0+1}]) } =0,
\end{equation}
which (together with \eqref{e:approxpf29}) yields
\begin{equation}\label{e:approxpf31}
\lim_{\tau\to\I} \varlimsup_{J\to\I} \varlimsup_{n\to\I} \norm{\mathcal{L}(\cdot,\tilde\tau_n^{s_0} ;z_n) \(\xi_n - \Gamma_n^J \)(\tilde\tau_n^{s_0})  }_{ L^8L^4 ([\tilde\tau_n^{s_0},\hat\tau_n^{s_0+1}]) } =0.
\end{equation}
Since $\xi_n$ and $\Gamma_n^j$ solve the nonlinear equation
\[
i\partial_{t}v+Hv=B[z_{n}]v+\tilde{N}(z_{n},v),
\]
the stability theory (Lemma \ref{l:6.3}) yields
\[
\lim_{\tau\to\I} \varlimsup_{J\to\I} \varlimsup_{n\to\I} \norm{ \xi_n - \Gamma_n^J }_{ [z_n;\tilde\tau_n^{s_0},\hat\tau_n^{s_0+1};\I] } =0.
\]
In particular,
\begin{multline}\label{e:approxpf32}
\lim_{\tau\to\I} \varlimsup_{J\to\I} \varlimsup_{n\to\I} \| \mathcal{L}(\cdot,\hat\tau_n^{s_0+1}; z_n)(\xi_n - \Gamma_n^J)(\hat\tau_n^{s_0+1})\\
 -\mathcal{L}(\cdot,\tilde\tau_n^{s_0} ;z_n)(\xi_n - \Gamma_n^J)(\tilde\tau_n^{s_0}) \|_{ L^8L^4 ([\hat\tau_n^{s_0+1} ,\I)) } =0.
\end{multline}
By \eqref{e:approxpf30} and \eqref{e:approxpf32}, we have
\begin{equation}\label{e:approxpf33}
\lim_{\tau\to\I} \varlimsup_{J\to\I} \varlimsup_{n\to\I} \norm{\mathcal{L}(\cdot,\hat\tau_n^{s_0+1};z_n) (\xi_n - \hat{\xi}_n^{s_0+1,J})(\hat\tau_n^{s_0+1})   }_{ L^8L^4 ([\hat\tau_n^{s_0+1},\I)) } =0,
\end{equation}
which is \eqref{k-induction-perturbative} at level $s_0+1$, as desired.

\smallskip

\underline{(iii) Estimate \eqref{k-induction-perturbative} at level $s_0$ implies estimate \eqref{k-induction-dominant} at level $s_0$.}  Suppose that \eqref{k-induction-perturbative} holds at level $s_0\in[1,s_{\max}]$, and let $\tau>0$.

Note that by construction, we may write
\begin{equation}\label{e:approxpf15.5}
\tilde\xi_n^{s_0,J} = \hat{\xi}_n^{s_0,J} + \sum_{k=0}^{K(s_0,J)} (\Lambda_n^{(s_0,k)}-\lambda_n^{(s_0,k)})
\end{equation}
(cf. \eqref{def-tilde} and \eqref{def-hat}).  We now let $\chi\in C_0^\I$ be a smooth radial cutoff to the region $1\leq|x|\leq 2$ and set $\chi_R(x) = \chi(\frac{x}{R})$. Using \eqref{k-induction-perturbative} at level $s_0$, we have
\begin{equation}\label{e:approxpf16}
\lim_{\tau\to\I} \varlimsup_{J\to\I} \sup_{R>0}\varlimsup_{n\to\I} \norm{\chi_R \cdot\mathcal{L}(\cdot,\hat\tau_n^{s_0};z_n) ( \xi_n  - \hat{\xi}_n^{s_0,J}) (\hat\tau_n^{s_0})  }_{ L^8L^4 ([-\tau ,\tau]) } =0.
\end{equation}
By the local uniform convergence 
\[
(z_n(\cdot + s_n^{s_0} ),\xi_n(\cdot + s_n^{s_0} ))\to (z_\I^{s_0},\xi_\I^{s_0})\qtq{in}\C \times \text{w-}H^1,
\]
we can also show
\begin{equation}\label{e:approxpf17}
\begin{aligned}
\lim_{n\to\I} \|&\chi_R\cdot\mathcal{L}(\cdot,-\tau;z_n(\cdot+s_n^{s_0})) ({\xi}_n- \lambda_n^{(s_0,0)})(\hat\tau_n^{s_0}) \bigr\|_{ L^8L^4 ([-\tau ,\tau]) }  \\
&=\norm{\chi_R\cdot\mathcal{L}(\cdot,-\tau;z_\I^{s_0}) {\xi}_\I^{s_0}(-\tau) -  \mathcal{L} (\cdot ,0;z_\I^{s_0}) \varphi^{(s_0,0)}  }_{ L^8L^4 ([-\tau ,\tau]) }
\end{aligned}
\end{equation}
for all $R>0$.

\begin{proof}[Proof of \eqref{e:approxpf17}] First observe that for each $t \in [-\tau,\tau]$, we have
\[
\mathcal{L}(t,-\tau;z_n(\cdot+s_n^{s_0})) {\xi}_n(-\tau +s_n^{s_0}) \rightharpoonup \mathcal{L}(t,-\tau;z_\I^{s_0}) {\xi}_\I^{s_0}(-\tau)
\]
weakly in $H^1$ as $n\to\I$. Furthermore, 
\begin{align*}
\mathcal{L}(t,-\tau;z_n(\cdot+s_n^{s_0})) \lambda_n^{(s_0,0)}(-\tau +s_n^{s_0})&{}= \mathcal{L}(t,0;z_n(\cdot+s_n^{s_0})) \varphi^{(s_0,0)} \\
&{}\to \mathcal{L}(t,0;z_\I^{s_0})) \varphi^{(s_0,0)}
\end{align*}
in $L^\I ([-\tau,\tau], H^1)$ as $n\to\I$. Hence, for each $R>0$ and $t \in [-\tau,\tau]$,
\begin{align*}
\lim_{n\to\I} &\norm{\chi_R \cdot\mathcal{L}(\cdot,-\tau;z_n(\cdot+s_n^{s_0})) ({\xi}_n- \lambda_n^{(s_0,0)})(\hat\tau_n^{s_0}) }_{ L^4_x }  \\
&=\norm{\chi_R\cdot\mathcal{L}(\cdot,-\tau;z_\I^{s_0}) {\xi}_\I^{s_0}(-\tau) -  \mathcal{L} (\cdot ,0;z_\I^{s_0}) \varphi^{(s_0,0)}  }_{ L^4_x }.
\end{align*}
Furthermore,
\[
\sup_n \norm{\chi_R (\mathcal{L}(\cdot,-\tau;z_n(\cdot+s_n^{s_0})) ({\xi}_n- \lambda_n^{(s_0,0)})(\hat\tau_n^{s_0}))  }_{ L^4_x }\lesssim \sup_n \norm{\xi_n}_{L^\I H^1}.
\]
Since a constant function belongs to $L^8_t([-\tau,\tau])$, \eqref{e:approxpf17} therefore follows from Lebesgue's convergence theorem.
\end{proof}

Next, observe that as $y^{(s_0,k)}=\I$ for $k\ge 1$, we have
\begin{equation}\label{e:approxpf18}
\lim_{n\to\I} \norm{\chi_R \lambda_n^{(s_0,k)}  }_{ L^8L^4 ([-\tau,\tau]) } =0
\end{equation}
for any $k \ge 0$ and $\tau >0$. 

Inserting \eqref{e:7.18a}, \eqref{e:approxpf17}, and \eqref{e:approxpf18} into \eqref{e:approxpf16}, we see that
\begin{equation}\label{e:approxpf19}
\lim_{\tau\to\I} \norm{\mathcal{L}(\cdot,-\tau;z_\I^{s_0}) {\xi}_\I^{s_0}(-\tau) -  \mathcal{L} (\cdot ,0;z_\I^{s_0}) \varphi^{(s_0,0)} }_{ L^8L^4 ([-\tau ,\tau]) } =0.
\end{equation}
Using this, we will prove below that ${\xi}_\I^{s_0}$ scatters to $\varphi^{(s_0,0)}$ backward in time, i.e.
\begin{equation}\label{e:approxpf19.5}
\lim_{t\to-\I} \norm{ {\xi}_\I^{s_0}(t) - \mathcal{L} (t ,0;z_\I^{s_0}) \varphi^{(s_0,0)} }_{H^1} =0.
\end{equation}

On the other hand, we have
\begin{align*}
	\lim_{n\to\I} &\norm{ \mathcal{L}(\cdot, \hat\tau_n^{s_0};z_n)(\Lambda_{n}^{(s_0,0)} - \lambda_n^{(s_0,0)})(\hat\tau_n^{s_0})  }_{ L^8L^4 ([\hat\tau_n^{s_0},\tilde\tau_n^{s_0}]) } \\
	& = \lim_{n\to\I} \norm{ \mathcal{L}(\cdot, -\tau ;z_n(\cdot+ s_n^{s_0}))\xi_\I^{s_0}(-\tau) - \mathcal{L}(\cdot, 0 ;z_n(\cdot+ s_n^{s_0}))\varphi^{(s_0,0)}  }_{ L^8L^4 ([-\tau ,\tau]) } \\
& = \norm{ \mathcal{L}(\cdot, -\tau ;z_\I^{s_0})\xi_\I^{s_0}(-\tau) - \mathcal{L}(\cdot, 0 ;z_\I^{s_0})\varphi^{(s_0,0)} }_{ L^8L^4 ([-\tau ,\tau]) },
\end{align*}
where we have relied on the fact that 
\[
z_n(\cdot+ s_n^{s_0}) \to z_\I^{s_0}\qtq{in}L^\I_t ([-\tau,\tau])\qtq{as}n\to\I.
\]
Thus, using \eqref{e:approxpf19}, one has
\begin{equation}\label{e:approxpf20}
\lim_{\tau\to\I}\lim_{n\to\I} \norm{ \mathcal{L}(\cdot, \hat\tau_n^{s_0};z_n)(\Lambda_{n}^{(s_0,0)} - \lambda_n^{(s_0,0)})(-\tau + s_n^{s_0})  }_{ L^8L^4 ([\hat\tau_n^{s_0} ,\tilde\tau_n^{s_0}]) } =0.
\end{equation}

Now, for $k\ge1$, we write 
\begin{align*}
&(\Lambda_{n}^{(s_0,k)} - \lambda_n^{(s_0,k)})(\hat\tau_n^{s_0}) \\
&=P_c T_{y_n^{(s_0,k)}} e^{i\tau \Delta} \varphi^{(s_0,k)} - \mathcal{L}(-\tau,0;z_n) P_c T_{y_n^{(s_0,k)}}  \varphi^{(s_0,k)}\\
&= P_cT_{y_n^{(s_0,k)}} (e^{i\tau \Delta}- (T_{y_n^{(s_0,k)}})^{-1}\mathcal{L}(-\tau,0;z_n) P_c T_{y_n^{(s_0,k)}})  \varphi^{(s_0,k)}.
\end{align*}
Thus, by Lemma~\ref{l:spacetranslation},
\begin{equation}\label{e:approxpf21}
\lim_{n\to\I} \norm{(\Lambda_{n}^{(s_0,k)} - \lambda_n^{(s_0,k)})(\hat\tau_n^{s_0})}_{H^1} = 0
\end{equation}
Combining \eqref{k-induction-perturbative} at level $s_0$, \eqref{e:approxpf15.5}, \eqref{e:approxpf20}, and \eqref{e:approxpf21}, we obtain
\begin{equation}\label{e:approxpf22}
\lim_{\tau\to\I} \varlimsup_{J\to\I} \varlimsup_{n\to\I} \norm{\mathcal{L}(\cdot,\hat\tau_n^{s_0};z_n) ( \xi_n  - \tilde{\xi}_n^{s_0,J}) (-\tau +s_n^{s_0})   }_{ L^8L^4 ([\hat\tau_n^{s_0},\I)) } =0.
\end{equation}

We now define the error
\begin{align*}
\mathcal{E}_{s_0,n}(t) &{}:= ((i\d_t + H- B[z_n])\tilde{\xi}_n^{J} - \tilde{N} (z_n , \tilde{\xi}_n^{J}))(t+s_n^{s_0}) \\
&{} =  \left(-(B[z_n(\cdot + s_n^{s_0})]-B[z_\I^{s_0}])\xi_\I^{s_0}  + (\tilde{N} (z_\I^{s_0}, \xi_\I^{s_0}) - \tilde{N} (z_n(\cdot + s_n^{s_0}), \xi_\I^{s_0}))\right)(t)\\
&{}\quad - \bigl(\tilde{N} (z_n , \tilde{\xi}_n^J)  -\sum_{k=0}^{K(s_0,J)} \tilde{N} (z_n,\Lambda_n^{(s_0,k)}) - \tilde{N}(z_n,\Gamma_n^J)  \bigr)(t+s_n^{s_0}).
\end{align*}
We will prove below that
\begin{equation}\label{e:approxpf23}
\sup_{\tau>0}\lim_{J\to\I} \varlimsup_{n\to\I} \norm{ \tilde{N} (z_n , \tilde{\xi}_n^{s_0,J}) -\!\!\!\sum_{k=0}^{K(s_0,J)}\!\!\!\tilde{N} (z_n,\Lambda_n^{(s_0,k)}) - \tilde{N}(z_n,\Gamma_n^J)  }_{L^\frac83 L^\frac43 ([\hat\tau_n^{s_0},\tilde\tau_n^{s_0}])} = 0.
\end{equation}
By the local uniform convergence $z_n(\cdot + s_n^{s_0}) \to z_\I^{s_0}$, we also obtain
\begin{align*}
\| (B&[z_n(\cdot + s_n^{s_0})]-B[z_\I^{s_0}])\xi_\I^0 \|_{L^1 H^1([-\tau,\tau])}  \\
&+\norm{\tilde{N} (z_\I^{s_0}, \xi_\I^{s_0}) - \tilde{N} (z_n(\cdot + s_n^{s_0}), \xi_\I^{s_0}) }_{L^\frac83 L^\frac43 ([-\tau,\tau])} \to 0
\end{align*}
as $n\to\I$, for each $\tau>0$. Thus, by stability (Lemma \ref{l:6.3}), we have
\begin{align}
&\lim_{\tau\to\I} \lim_{J\to\I} \varlimsup_{n\to\I} \norm{\xi_n - \tilde{\xi}_n^{s_0,J}  }_{L^8L^4 ([\hat\tau_n^{s_0},\tilde\tau_n^{s_0}]) } =0,\label{e:approxpf24} \\
&\lim_{\tau\to\I} \lim_{J\to\I} \varlimsup_{n\to\I} \norm{\xi_n - \tilde{\xi}_n^{s_0,J}  }_{[z_n;\hat\tau_n^{s_0},\tilde\tau_n^{s_0};\I] } =0. \label{e:approxpf25}
\end{align}
Using this together with \eqref{e:approxpf20}, we then obtain
\begin{equation}\nonumber
\sup_{\tau>0} \lim_{J\to\I} \varlimsup_{n\to\I} \norm{\mathcal{L}(\cdot,\tilde\tau_n^{s_0} ;z_n) (\xi_n - \tilde{\xi}_n^{s_0,J} ) (\tilde\tau_n^{s_0}) }_{ L^8L^4 ([\tilde\tau_n^{s_0},\I]) } =0,
\end{equation}
which is \eqref{k-induction-dominant} at level $s_0$, as desired. 

It remains to prove \eqref{e:approxpf19.5} and \eqref{e:approxpf23}.

\begin{proof}[Proof of \eqref{e:approxpf19.5}] Let $\eps>0$ to be determined below, and choose $\tau_*>0$ such that
\[
\norm{ \mathcal{L}(\cdot,0;z_\I^{s_0}) \varphi^{(s_0,0)} }_{L^8L^4((-\I,-\tau_*])} \le \eps.
\]
Using \eqref{e:approxpf19}, we may also find $\tau_{**} \ge \tau_*$ such that
\[
\sup_{\tau > \tau_{**}} \norm{\mathcal{L}(\cdot,-\tau;z_\I^{s_0}) {\xi}_\I^{s_0}(-\tau) -  \mathcal{L} (\cdot ,0;z_\I^{s_0}) \varphi^{(s_0,0)}  }_{ L^8L^4 ([-\tau ,\tau]) } \le \eps.
\]
Thus, for $\tau \ge \tau_{**}$ we have
\begin{equation*}
 \norm{\mathcal{L}(\cdot,-\tau;z_\I^{s_0}) {\xi}_\I^{s_0}(-\tau)  }_{ L^8L^4 ([-\tau ,-\tau_{*}]) } \le 2\eps.
\end{equation*}
By Lemma \ref{l:sdt}, this implies
\[
 \norm{ {\xi}_\I^{s_0}  }_{ L^8L^4 ([-\tau ,-\tau_{*}]) } \le 4\eps.
\]
As this holds for arbitrary $\tau > \tau_{**}$, we deduce
\[
\norm{ {\xi}_\I^{s_0}  }_{ L^8L^4 ((-\I ,-\tau_{*}]) } \le 4\eps,
\]
which implies that ${\xi}_\I^{s_0}$ scatters backward in time.  That is, there exists $\varphi \in H^1$ such that
\[
\lim_{t\to-\I} \norm{ {\xi}_\I^{s_0}(t) - \mathcal{L} (t ,0;z_\I^{s_0}) \varphi }_{H^1} =0.
\]

It remains to show that $\varphi = \varphi^{(s_0,0)} $. To this end, we apply Strichartz to obtain
\begin{align*}
\bigl\| \mathcal{L} &(\cdot ,0;z_\I^{s_0})( \varphi - \varphi^{(s_0,0)} ) \bigr\|_{ L^8L^4 ([-\tau ,\tau]) } \\
&\le \norm{ \mathcal{L} (\cdot ,0;z_\I^{s_0}) \varphi - \mathcal{L}(\cdot,-\tau;z_\I^{s_0}) {\xi}_\I^{s_0}(-\tau) }_{ L^8L^4 (\R) }  \\
&\quad + \norm{\mathcal{L}(\cdot,-\tau;z_\I^{s_0}) {\xi}_\I^{s_0}(-\tau) - \mathcal{L} (\cdot ,0;z_\I^{s_0}) \varphi^{(s_0,0)}  }_{ L^8L^4 ([-\tau ,\tau]) } \\
&\lesssim\norm{ \mathcal{L} (-\tau ,0;z_\I^{s_0}) \varphi - {\xi}_\I^{s_0}(-\tau) }_{ H^1 }  \\
&\quad + \norm{\mathcal{L}(\cdot,-\tau;z_\I^{s_0}) {\xi}_\I^{s_0}(-\tau) - \mathcal{L} (\cdot ,0;z_\I^{s_0}) \varphi^{(s_0,0)}  }_{ L^8L^4 ([-\tau ,\tau]) }.
\end{align*}
Sending $\tau\to\I$, we obtain
\[
\norm{ \mathcal{L} (\cdot ,0;z_\I^{s_0})( \varphi - \varphi^{(s_0,0)} ) }_{ L^8L^4 (\R) } =0,
\]
so that $\varphi = \varphi^{(s_0,0)} $, as desired. \end{proof}

\begin{proof}[Proof of \eqref{e:approxpf23}] Fix $\tau>0$ and recall that
\[
\tilde{\xi}_n^{s_0,J}  - \sum_{s= s_0+1}^{s_{\max}} \sum_{k=0}^{K(s,J)} \lambda_n^{(s,k)} -\Gamma_n^J  = \sum_{k=0}^{K(s_0,J)} \Lambda_n^{(s_0,k)}.
\]
We begin by estimating
\begin{align*}
\varlimsup_{n\to\I}& \bigl\| \tilde{N} (z_n , \tilde{\xi}_n^J) - \tilde{N}\bigl(z_n, \sum_{k=0}^{K(s_0,J)} \Lambda_n^{(s_0,k)} \bigr)  \bigr\|_{L^\frac83 L^\frac43 ([\hat\tau_n^{s_0},\tilde\tau_n^{s_0}])} \\
&\lesssim\bigl(\varlimsup_{n\to\I} \bigl\| \sum_{k=0}^{K(s_0,J)} \Lambda_n^{(s_0,k)} \bigr\|_{L^8L^4([\hat\tau_n^{s_0},\tilde\tau_n^{s_0}])}\bigr)^2 
\\ &\quad\quad\quad\times \varlimsup_{n\to\I} \bigl\|  \sum_{s= s_0+1}^{s_{\max}} \sum_{k=0}^{K(s,J)} \lambda_n^{(s,k)} + \Gamma_n^J \bigr\|_{L^8L^4([\hat\tau_n^{s_0},\tilde\tau_n^{s_0}])}.
\end{align*}

Now, by Proposition~\ref{p:Lambdas0bound} and Lemma~\ref{l:Lambdasksum},  we have
\begin{align*}
\sup_{J \ge J^\dagger + 1}& \varlimsup_{n\to\I} \norm{ \sum_{k=0}^{K(s_0,J)} \Lambda_n^{(s_0,k)}}_{L^8L^4([\hat\tau_n^{s_0},\tilde\tau_n^{s_0}])} \\
&\le \norm{\xi_\I^{s_0}}_{L^8L^4(\R)} + \sup_{K \ge 1} \varlimsup_{n\to\I} \norm{ \sum_{k=0}^{K} \Lambda_n^{(s_0,k)}}_{L^8L^4([\hat\tau_n^{s_0},\tilde\tau_n^{s_0}])}<\I,
\end{align*}
where we have used the assumption $\mathbb{M}(u^{s_0}_\infty)<M_*$.  Note that by \eqref{e:Lambdasksum1}, the supremum of the right-hand side with respect to $\tau>0$ is bounded.  On the other hand, we see from Lemma~\ref{l:differentlinearprofile} that
\[
\varlimsup_{n\to\I} \norm{  \sum_{s= s_0+1}^{s_{\max}} \sum_{k=0}^{K(s,J)} \lambda_n^{(s,k)} + \Gamma_n^J }_{L^8L^4([\hat\tau_n^{s_0},\tilde\tau_n^{s_0}])}=\varlimsup_{n\to\I} \norm{  \Gamma_n^J }_{L^8L^4([\hat\tau_n^{s_0},\tilde\tau_n^{s_0}])}.
\]
Thus, recalling \eqref{e:7.18b}, we have
\[
\sup_{\tau>0} \lim_{J\to\I} \varlimsup_{n\to\I} \norm{ \tilde{N} (z_n , \tilde{\xi}_n^{s_0,J})  - \tilde{N}(z_n, \tilde{\xi}_n^{s_0,J} -\Gamma_n^J)  }_{L^\frac83 L^\frac43 ([\hat\tau_n^{s_0},\tilde\tau_n^{s_0}])} =0.
\]

Similarly,
\[
\sup_{\tau>0} \lim_{J\to\I} \varlimsup_{n\to\I} \norm{ \tilde{N}(z_n, \Gamma_n^J)  }_{L^\frac83 L^\frac43 ([\hat\tau_n^{s_0},\tilde\tau_n^{s_0}])} =0.
\]

Finally, we use \eqref{e:Lambdaskbound2}, \eqref{e:Lambdaskbound2.5}, and
the fact that $\Lambda_n^{(s_0,0)}(\cdot-s_n^{s_0})=\xi_\I^{s_0} \in L^8L^4(\R)$ satisfies
\[
\lim_{R\to\I} \sup_n \norm{ {\bf 1}_{\{|x|\ge R\}} \Lambda_n^{(s_0,0)} }_{L^8L^4(\R)} =0
\]
to see that for any $\tau>0$, we have
\[
\lim_{J\to\I} \varlimsup_{n\to\I} \bigl\| \tilde{N}\bigl(z_n, \sum_{k=0}^{K(s_0,J)} \Lambda_n^{(s_0,k)} \bigr)- \sum_{k=0}^{K(s_0,J)} \tilde{N} (z_n,\Lambda_n^{(s_0,k)})  \bigr\|_{L^\frac83 L^\frac43 ([\hat\tau_n^{s_0},\tilde\tau_n^{s_0}])} =0.
\]
\end{proof}

\underline{(iv) Derivation of \eqref{ultimately}.} Finally, we use \eqref{k-induction-dominant} at level $s_{\max}$ to derive \eqref{ultimately} and complete the proof of Proposition~\ref{p:keykey}.  The argument is the same as part (ii), in which we used \eqref{k-induction-dominant} at level $s_0$ to deduce \eqref{k-induction-perturbative} at level $s_0+1$.  In particular, arguing as we did to obtain \eqref{e:approxpf29}, we find that
\[
\|\mathcal{L}(\cdot,\tau_n^{s_{\max}};z_n)(\xi_n - \Gamma_n^J)(\tau_n^{s_{\max}})\|_{L^8 L^4([\tau_n^{s_{\max}},\infty))} \to 0
\]
as $n,J,\tau\to\infty$, which is \eqref{ultimately} (Essentially,  $\hat\xi_n^{J,s_{\max}+1}$ just reduces to $\Gamma_n^J$.)  This completes the proof of Proposition~\ref{p:keykey}.\end{proof}

With Proposition~\ref{p:keykey} in hand, we are finally in a position to complete the proof of Proposition~\ref{p:key}. 

\begin{proof}[Completion of the proof of Proposition~\ref{p:key}] Let $s_+ \in [0,s_{\max}]$ be the number defined in Proposition~\ref{p:keykey}.

If $s_+=0$, then the weak $H^1$-convergence $u_n(0) \rightharpoonup u_\I^0(0)$ and the norm convergence
\[
\lim_{n\to\I} \mathbb{M}(u_n(0)) = \mathrm{M}(u_\I^0(0))
\]
imply that the convergence $u_n(0) \to u_\I^0(0)$ is strong in $H^1$.  This implies the desired result with $u_{0,\I}=u_\I^0(0)$.  Indeed, if $u_\I^0$ scatters to $\mathscr{S}_0$ forward in time, that is, $\norm{\xi_\I^0}_{L^8L^4[0,\I) }<\I,$ then the stability theory implies
\[
\sup_n \norm{\xi_n}_{L^8L^4([0,\I)) }<\I,
\]
which is a contradiction. Thus $u_\I^0$ does not scatter to $\mathscr{S}_0$ forward in time, and by the same argument, we obtain the failure of scattering to $\mathscr{S}_0$ backward in time.

Suppose instead that $s_+ \neq0$. Without loss of generality, we may suppose that $s_+$ is minimal in the sense that
\[
(\mathbb{M} (u_\I^{s}) , \mathbb{E}_V (u_\I^{s}))  \neq (M_*,E_*)\qtq{for}s\in[0,s_+-1].
\]
In this case, the induction argument in the proof of Proposition~\ref{p:keykey} works up to $s_+$; in particular, we have \eqref{e:approxpf19.5} for $s_0=s_+$. Then, since $\mathbb{E}_V (u_\I^{s_+})=E_*$, we see that
\[
\lim_{t\to-\I}\mathbb{H}_0 (\mathcal{L}(t,0;z_\I^{s_+}) \varphi^{(s_+,0)} )= \lim_{t\to-\I}\mathbb{H}_V (\xi_\I^{s_+}(t)) \ge\mathbb{E}_V (u_\I^{s_+})-C\mu=E_*- C\mu.
\]
Since $s_- \le -1$, the same argument shows
\[
\lim_{t\to-\I}\mathbb{H}_0 (\mathcal{L}(t,0;z_\I^{s_+}) \varphi^{(s_+,0)} ) \ge E_* - C \mu.
\]
Combining these bounds with the energy decomposition in the linear profile decomposition, we conclude that
\[
E_* \ge 2 (1+ o(\mu))(E_*- C \mu). 
\]
This implies $E_* \lesssim \mu$, which is a contradiction.\end{proof}


%

\end{document}